\documentclass[12pt,reqno]{amsart}

\usepackage[utf8]{inputenc}
\usepackage[T1]{fontenc}
\usepackage{amsmath,amssymb,amsthm}
\usepackage[a4paper,margin=28mm]{geometry}
\usepackage{setspace}
\usepackage{enumitem}
\usepackage{hyperref}
\hypersetup{
  colorlinks,
  citecolor=blue,
  linkcolor=blue,
  urlcolor=blue, 
}
\usepackage{marginnote}
\usepackage{xcolor}
\usepackage{upgreek}
\usepackage{tikz}
\usetikzlibrary{calc,intersections,patterns}
\usepgflibrary{shadings}
\usepackage{mathtools}
\usepackage{esint}
\usepackage{verbatim}
\usepackage{mathrsfs}
\usepackage{stmaryrd}
\usepackage[
  colorinlistoftodos,
  backgroundcolor=green!20!white,
  bordercolor=green!75!black,
  linecolor=green!75!black
]{todonotes}
\usepackage{cleveref}

\numberwithin{equation}{section}
\theoremstyle{plain}
\newtheorem{theorem}{Theorem}
\newtheorem{lemma}{Lemma}[section]
\newtheorem{proposition}[lemma]{Proposition}
\newtheorem{corollary}[lemma]{Corollary}

\theoremstyle{definition}
\newtheorem{definition}{Definition}

\newtheorem{remark}{Remark}
\newtheorem{example}{Example}

\newcommand{\Z}{\mathbf{Z}}
\newcommand{\C}{\mathbf{C}}
\newcommand{\R}{\mathbf{R}}
\newcommand{\Sp}{\mathbf{S}}
\newcommand{\vol}{\mathrm{vol}}
\newcommand{\dist}{\operatorname{dist}}
\newcommand{\Tub}{\mathrm{Tub}}
\newcommand{\eps}{\varepsilon}
\newcommand{\inn}{\mathrm{in}}
\newcommand{\out}{\mathrm{out}}

\newcommand{\SU}{\operatorname{SU}}
\newcommand{\su}{\mathfrak{su}}
\newcommand{\so}{\mathfrak{so}}
\newcommand{\U}{\operatorname{U}}
\DeclareMathOperator{\Ad}{Ad}
\DeclareMathOperator{\ad}{ad}
\DeclareMathOperator{\tr}{tr}

\DeclareMathOperator{\supp}{supp}

\newcommand{\restr}{\mathbin{\vrule height 1.4ex depth -0.3ex width .07ex
  \vrule height .07ex depth 0ex width .7ex}}

\begin{document}

\title{Nonabelian Higgs fields from minimal submanifolds}
\author{Marco Badran}
\address{Bocconi University, Via Roentgen 1, 20136 Milan, Italy}
 	\email{marco.badran@unibocconi.it}

\begin{abstract}
We construct $\mathrm{SU}(2)$ Yang--Mills--Higgs fields on a broad class of noncompact Riemannian manifolds, concentrating in the large-mass limit along prescribed nondegenerate minimal submanifolds of codimension three. The solutions have uniformly bounded rescaled energy. On closed manifolds with $b_1(M)=0$, adding a suitably scaled Higgs potential gives an analogous concentration result. Our construction completes one direction of the correspondence proposed by Parise, Pigati and Stern \cite[Question~1.1]{Parise-Pigati-Stern2026}, realising prescribed minimal submanifolds of codimension three as concentration sets of $\SU(2)$ Yang--Mills--Higgs fields.
\end{abstract}

\maketitle

\setcounter{tocdepth}{1}
\tableofcontents

\section{Introduction}
The emergence of minimal hypersurfaces from nonlinear elliptic equations has been a central theme in geometric analysis since the 1970s. The work of Modica and Mortola \cite{Modica-Mortola1977} revealed how hypersurface area arises as the limit of diffuse phase-transition energies. This insight initiated a programme that has since established a rich correspondence between minimal hypersurfaces and the transition layers of solutions to the Allen–Cahn equation \cite{Hutchinson-Tonegawa2000,Guaraco2018,Gaspar-Guaraco2018,Chodosh-Mantoulidis2023,Chodosh-Mantoulidis2020}.

\smallskip

Gauge theory provides a natural setting for extending this correspondence to higher codimension. In codimension two, the self-dual abelian Higgs model, with structure group $\U(1)$, plays a role analogous to that of the Allen–Cahn equation, relating critical points of its energy to minimal submanifolds through a singular concentration limit \cite{Pigati-Stern2021,Parise-Pigati-Stern2024a,Parise-Pigati-Stern2024b,BadranDelPino23,BadranDelPino24}.

\smallskip

In codimension three, the corresponding theory is the nonabelian Yang–Mills–Higgs model with structure group $\SU(2)$. Parise, Pigati and Stern \cite{Parise-Pigati-Stern2026} recently proved that suitably rescaled Yang–Mills–Higgs energies converge variationally to the codimension-three volume functional, laying the foundations for a correspondence between nonabelian Higgs fields and minimal submanifolds. The present paper contributes to this programme by proving the following theorem, stated here informally:

\medskip

\noindent{\bf Theorem A.} \emph{On a broad class of closed and noncompact Riemannian manifolds, the $\SU(2)$ Yang--Mills--Higgs equations---with a suitably scaled Higgs potential in the closed case---admit families of solutions concentrating, in the large-mass limit, along any prescribed closed, topologically admissible, nondegenerate minimal submanifold of codimension three, with uniformly bounded rescaled energy.}

\medskip

We first introduce the model and the geometric hypotheses, deferring the precise statements to \Cref{thm: main noncompact,thm: main} below.

\medskip

Let $(M,g)$ be an oriented Riemannian manifold of dimension $n\geq 4$. The $\SU(2)$-Higgs  model couples a connection $A$ on the trivial bundle $P\coloneqq M\times\SU(2)\to M$ to a Higgs field $\Psi$, a section of $E=\ad P\coloneqq P\times_{\Ad}\su(2)\simeq M\times\su(2)$. In a fixed trivialisation, $A\in\Omega^1(M,\su(2))$ and $\Psi\in\Omega^0(M,\su(2))$, with covariant derivative and curvature\footnote{We use the notation $[A\wedge B]$ as the wedge product of forms combined with the Lie bracket in $\su(2)$, that is, $[A\wedge B](X,Y)\coloneqq [A(X),B(Y)]-[A(Y),B(X)]$. In particular $[A\wedge A](X,Y)=2[A(X),A(Y)]$.}
\begin{equation*}
	d_A\Psi\coloneqq d\Psi+[A,\Psi],\qquad F_A\coloneqq dA+\frac12[A\wedge A].
\end{equation*}
The Yang--Mills--Higgs energy is
\begin{equation}\label{eq: energy no_pot}
	E(\Psi,A)\coloneqq \frac12\int_M\left(|d_A\Psi|^2+|F_A|^2\right)d\mathrm{vol}_g.
\end{equation}

On $\R^3$, the model admits special finite-energy solutions, called \emph{monopoles}, satisfying the first-order Bogomolnyi equation \cite{Bogomolnyi1976,Jaffe-Taubes1980}
\begin{equation}\label{eq: monopole}
	*F_A=-d_A\Psi.
\end{equation}
This equation characterises equality in the lower bound given by the Bogomolnyi decomposition
\begin{equation*}
	E(\Psi,A)=\frac12\int_{\R^3}|*F_A+d_A\Psi|^2\,dx+2\pi mk,
\end{equation*}
where $m$ and $k$ are respectively the \emph{mass} and the \emph{charge} of $(\Psi,A)$, given by the well-defined quantities
\begin{equation*}
	m\coloneqq\lim_{|x|\to\infty}|\Psi(x)|>0,\qquad k\coloneqq\lim_{R\to\infty}\deg\left(\left.\frac{\Psi}{|\Psi|}\right|_{\partial B_R}\right)\in\Z.
\end{equation*}
Monopoles therefore minimise the energy at fixed mass and charge. The explicit charge-one solution of Prasad and Sommerfield \cite{Prasad-Sommerfield1975} will be used below. Taubes constructed higher-charge monopoles by gluing \cite{Taubes1981Monopoles}.

\medskip

On a noncompact manifold, one similarly prescribes $|\Psi|\to m$ at infinity. The large mass limit $m\to +\infty$ is the regime in which the functional $\Gamma$-converges to the $(n-3)$-volume by \cite{Parise-Pigati-Stern2026}. A more convenient formulation follows by setting $\eps\coloneqq m^{-1}$ and $\Phi\coloneqq\eps\Psi$, so that $|\Phi|\to1$ at infinity and the normalised energy is given by
\begin{equation}\label{eq: energy nc_eps}
	E_\eps(\Phi,A)\coloneqq \eps E(\eps^{-1}\Phi,A)=\frac12\int_M\left(\frac1\eps|d_A\Phi|^2+\eps|F_A|^2\right)d\mathrm{vol}_g.
\end{equation}

On a closed manifold, varying $\Phi$ in its own direction at a critical point of \eqref{eq: energy no_pot} gives $\int_M|d_A\Phi|^2\,d\mathrm{vol}_g=0$. The Higgs field is therefore parallel and cannot form monopole cores. To allow concentration, we add a potential as in \cite{Parise-Pigati-Stern2026} favouring unit-length Higgs fields and consider
\begin{equation}\label{eq: energy}
	E_\eps(\Phi,A)\coloneqq \frac12\int_M\left(\frac{1}{\eps}|d_A\Phi|^2+\eps|F_A|^2+\frac{1}{\delta_\eps}(1-|\Phi|^2)^2\right)d\mathrm{vol}_g,
\end{equation}
where the rate $\delta_\eps>0$ is chosen, for fixed exponents $0<\tau<\tau'<\frac12$ and a sufficiently large constant $C_*$, so that
\begin{equation}\label{eq: deltaeps}
 C_*\eps^3\leq\delta_\eps\leq\eps^{1+2\tau'}.
\end{equation}
The potential replaces the condition at infinity by penalising deviations from $|\Phi|=1$.

\medskip

We treat two geometric settings, referred to throughout as the ``compact'' and ``noncompact'' cases. On noncompact manifolds, we study the energy \eqref{eq: energy nc_eps} and its associated second-order Yang--Mills--Higgs equations. On closed manifolds, the obstruction to monopole concentration leads us to the perturbed energy \eqref{eq: energy}. As in classical Ginzburg--Landau theory, the potential favours unit-length Higgs fields away from the cores, where topological defects are located. We now specify the geometric assumptions on the ambient manifold and the prescribed concentration set.

\begin{definition}[Compact admissible ambient space]\label{def: admissible amb} We say that a closed Riemannian manifold $(M,g)$ is an admissible ambient space if it is a smooth, complete, connected and oriented manifold of dimension $n\geq 4$, satisfying $b_1(M)=0$.
\end{definition}

For the noncompact version of admissibility, we need the following definition. On a noncompact manifold $(M,g)$ choose a point $o\in M$ and define $R\colon M\to (0,+\infty)$ by
\begin{equation*}
	R(x)\coloneqq (1+\dist_g(x,o)^2)^\frac{1}{2}.
\end{equation*}
For $\tau\in(0,1)$, $\nu>0$ and integers $k,p\geq0$, consider the spaces $C^{k,\tau}_{\nu}(M;\Lambda^pT^*M)$ of $C^{k,\tau}$ $p$-forms satisfying
\begin{equation*}
	\|\omega\|_{C^{k,\tau}_{\nu}}\coloneqq\sup_{x\in M}R(x)^\nu\|\omega\|_{C^{k,\tau}(B_{r_0}(x))}<\infty,
\end{equation*}
where $r_0>0$ is the fixed radius in the bounded geometry condition below.

\begin{definition}[Noncompact admissible ambient space]\label{def: admissible amb noncomp} We say that a noncompact Riemannian manifold $(M,g)$ is an admissible ambient space if it is a smooth, complete, connected and oriented manifold of dimension $n\geq 4$, without boundary and satisfying
\begin{itemize}
	\item bounded geometry: there are constants $r_0,c_1,c_2>0$ such that, in suitable coordinates on each $B_{r_0}(x_0)$,
	\begin{equation*}
		c_1^{-1}I\leq (g_{ij}(x))_{ij}\leq c_1I\quad(x\in B_{r_0}(x_0)),\qquad \|g_{ij}\|_{C^4(B_{r_0}(x_0))}\leq c_2
	\end{equation*}
	uniformly on $x_0\in M$;
	\item there exists a compact set $K\subset M$ such that $H^2_{\mathrm{dR}}(M\setminus K)=0$;
	\item there exists a positive minimal scalar Green function $G$ satisfying $\Delta_{g,x}G(\cdot,y)=\delta_y$ and $G(x,y)\to0$ as $x\to\infty$, for every $y\in M$;
	\item there exists $\nu>0$ such that, for $p=0,1$ and every $f\in C^{0,\tau}_{\nu+2}(M;\Lambda^pT^*M)$, the equation
	\begin{equation*}
		\Delta_pu=f
	\end{equation*}
	has a unique solution $u\in C^{2,\tau}_{\nu}(M;\Lambda^pT^*M)$, satisfying
	\begin{equation*}
		\|u\|_{C^{2,\tau}_{\nu}}\leq C\|f\|_{C^{0,\tau}_{\nu+2}},
	\end{equation*}
	where $C$ is independent of $f$;
	\item the exponent $\nu$ satisfies
	\begin{equation}\label{eq: noncompact weighted volume}
		\int_M R(x)^{-2\nu-1}\,d\mathrm{vol}_g(x)<\infty.
	\end{equation}
\end{itemize}
\end{definition}

\begin{example}
The following are examples of spaces satisfying \Cref{def: admissible amb noncomp}.
\begin{itemize}
 \item Euclidean space $(\R^n,g_{\mathrm{Eucl}})$, for $n\geq4$, with any $\nu\in(\frac{n-1}{2},n-2)$.

 \item The Eguchi--Hanson space $(T^*\Sp^2,g_{\mathrm{EH}})$ \cite{Eguchi-Hanson1979}, with $\frac32<\nu<2$. This complete, nonflat Ricci-flat four-manifold is asymptotic to $\R^4/\{\pm1\}$.
 \item The products $\R^d\times\Sp^k$ with the Euclidean and round metrics, for $d\geq4$ and $k\geq3$, with $\nu\in(\frac{d-1}{2},d-2)$.
 \item The punctured manifolds $N\setminus\{p\}$, where $N$ is a closed, connected, oriented $n$-manifold, $n\geq4$, with $b_1(N)=0$, equipped with a complete metric whose end is isometric to the complement of a Euclidean ball. Here, we can choose again $\nu\in(\frac{n-1}{2},n-2)$.
\end{itemize}
\end{example}

\begin{remark}
For a manifold with one Euclidean end and $b_1(M)=0$, every smooth change of metric supported in a compact set preserves ambient admissibility. Thus the geometry of the compact interior can be varied freely. This leaves considerable freedom in choosing the geometry near the concentration set.
\end{remark}

\begin{definition}[Admissible submanifold]\label{def: admissible} Let $(M,g)$ be either a compact or noncompact admissible ambient space.
	We say that $\Gamma\subset M$ is admissible if 
	\begin{itemize}
		\item it is a smooth, oriented, closed, embedded minimal submanifold of codimension $3$;
		\item there exists a $C^\infty$ map $F\colon M\to\R^3$ such that $\Gamma=F^{-1}(0)$, and $0$ is a regular value for $F$;
		\item it is nondegenerate, in the sense that the Jacobi operator
		\begin{equation*}
			J_{\Gamma}\colon C^{2,\tau}(N\Gamma)\to C^{0,\tau}(N\Gamma)
		\end{equation*}
		has trivial kernel.
		\item If $M$ is noncompact, $\Gamma\subset K$.
	\end{itemize}
\end{definition}

Our building blocks are the standard radial, mass-one, charge-one critical points $(\Phi_\lambda,A_\lambda)$ of the model energy 
\begin{equation}\label{eq: Elambda}
	E_\lambda(\Phi,A)\coloneqq \frac12\int_{\R^3}\left(|d_A\Phi|^2+|F_A|^2+\frac{\lambda}{2}(1-|\Phi|^2)^2\right)\,dy,
\end{equation}
that is, the Bogomolnyi--Prasad--Sommerfield (BPS) monopole for $\lambda=0$ and the 't Hooft--Polyakov monopoles for $\lambda>0$, see \Cref{sec: BPStHP}. Their energies will be denoted by $c_\lambda\coloneqq E_\lambda(\Phi_\lambda,A_\lambda)$, with $c_0=2\pi$. We use the normalised fields and energies \eqref{eq: energy nc_eps} and \eqref{eq: energy} throughout, and set
\begin{equation*}
\lambda_\eps\coloneqq\begin{cases}
0,&M\text{ noncompact},\\
2\eps^3/\delta_\eps,&M\text{ compact},
\end{cases}
\end{equation*}
where the compact rate satisfies \eqref{eq: deltaeps}.

To describe the profiles near $\Gamma$, fix an orthonormal frame $(\nu_1,\nu_2,\nu_3)$ of $N\Gamma$, supplied by admissibility. For normal fields $h_\eps\to0$ in $C^{2,\tau}(N\Gamma)$, define the shifted normal rescaling
\begin{equation}\label{eq: introductory normal rescaling}
f_{\eps,q}(y)\coloneqq\exp_q\left(h_\eps(q)+\eps y^a\nu_a(q)\right),\qquad q\in\Gamma,\quad y\in B_R\subset\R^3,
\end{equation}
where $\exp_q$ is the Riemannian exponential map and $R>0$ is fixed. Convergence to the normal profile means that such fields $h_\eps$ and smooth gauge choices exist for which, for every $R>0$,
\begin{equation}\label{eq: normal profile convergence}
\sup_{q\in\Gamma}\left(\|f_{\eps,q}^*\Phi_\eps-\Phi_{\lambda_\eps}\|_{C^{2,\tau}(B_R)}+\|f_{\eps,q}^*A_\eps-A_{\lambda_\eps}\|_{C^{2,\tau}(B_R)}\right)\longrightarrow0.
\end{equation}
The derivatives here are in the normal variable $y$; the pullback already includes the scaling of the connection one-form. Writing $e_\eps(\Phi,A)$ for the energy density in \eqref{eq: energy nc_eps} or \eqref{eq: energy}, respectively, energy concentration is expressed by
\begin{equation}\label{eq: energy concentration}
e_\eps(\Phi_\eps,A_\eps)\,d\mathrm{vol}_g-c_{\lambda_\eps}\mathcal H^{n-3}\restr_\Gamma\stackrel{*}{\rightharpoonup}0
\end{equation}
as Radon measures. The coefficient depends on the potential rate: if $\lambda_\eps\to\bar\lambda$, then $c_{\lambda_\eps}\to c_{\bar\lambda}$. In particular, $\delta_\eps/\eps^3\to\infty$ gives $c_{\lambda_\eps}\to2\pi$, recovering the BPS energy density.

\begin{theorem}\label{thm: main noncompact}
Let $\Gamma$ be an admissible submanifold in a noncompact admissible ambient space $(M,g)$. For every sufficiently small $\eps>0$, there are finite-energy critical points $(\Phi_\eps,A_\eps)$ of \eqref{eq: energy nc_eps}, with $|\Phi_\eps|\to1$ at infinity, satisfying \eqref{eq: normal profile convergence} and \eqref{eq: energy concentration}. Moreover,
\begin{equation*}
\lim_{\eps\to0}E_\eps(\Phi_\eps,A_\eps)=2\pi\,\mathscr{H}^{n-3}(\Gamma).
\end{equation*}
\end{theorem}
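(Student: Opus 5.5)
The proof is a Lyapunov--Schmidt gluing. The approximate solution is built from the BPS monopole $(\Phi_0,A_0)$ placed in every normal fibre of $\Gamma$, and then corrected to an exact solution by a fixed-point argument in weighted Hölder spaces. The only obstruction comes from the translational kernel of the linearised monopole, and we cancel it by adjusting the normal shift $h_\eps$, using nondegeneracy of $J_\Gamma$.

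\emph{Step 1: approximate solution.} Use the admissibility map $F$ to fix the frame $(\nu_1,\nu_2,\nu_3)$ and Fermi coordinates $(q,z)$ in a tubular neighbourhood of $\Gamma$. For a small normal field $h\in C^{2,\tau}(N\Gamma)$, set near $\Gamma$
\[
\Phi^{h}(\exp_q(h(q)+z^a\nu_a))=\Phi_0(z/\eps),
\]
and define $A^h$ as the analogous pullback of $A_0$. The BPS monopole is asymptotically abelian: $\Phi_0\to\hat z$ at rate $e^{-|z|/\eps}$ in the nonabelian directions, and $A_0$ tends to a Dirac-type monopole. Away from $\Gamma$ we therefore need a unit-length Higgs field extending $\hat z$, which is $F/|F|$ after a homotopy on $M\setminus\Gamma$. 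We also need a reducible $\U(1)$ connection whose curvature is the harmonic extension of the Dirac curvature. To get it, solve a Hodge problem for a closed $2$-form $\omega$ with $d^*\omega=0$ and $\omega$ equal to the Dirac $\frac12\,\mathrm{vol}_{S^2}$-form near $\Gamma$ up to corrections. Here $H^2_{\mathrm{dR}}(M\setminus K)=0$ and the invertibility of $\Delta_1$ on $C^{2,\tau}_\nu$ from Definition~\ref{def: admissible amb noncomp} make $\omega$ exact away from $\Gamma$, decaying like $R^{-\nu-1}$. Its primitive, in the fixed $\U(1)\subset\SU(2)$ direction given by $\Phi$, is then glued to $A_0$ with a cutoff at scale $\eps^{\tau}$. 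This yields $(\Phi_\eps^h,A_\eps^h)$ with $|\Phi|\to1$ at infinity and finite energy. Its error has two sources: the mean curvature and second fundamental form terms in the Fermi expansion of the metric, of order $\eps^{-1}(\eps|\nabla h|+\eps^2)$ in rescaled variables, and exponentially small gluing terms. The $O(\eps)$ term linear in $h$ together with the minimality of $\Gamma$ will later produce $J_\Gamma h$.

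\emph{Step 2: linear theory.} Work in Coulomb gauge relative to the approximate solution, and add the gauge-fixing term, so that the linearised operator $L_\eps$ is elliptic. Three facts are needed, and I would assume them from the sections treating the BPS monopole. First, the linearisation at $(\Phi_0,A_0)$ on $\R^3$ has $L^2$ kernel consisting exactly of the three translations $\partial_a(\Phi_0,A_0)$ corrected by gauge, and the $U(1)$-phase mode is removed by the gauge fixing. Second, on $\R^{n-3}\times\R^3$ this operator is invertible orthogonally to these modes, in weighted spaces with exponential or polynomial weight in $|y|$. Third, far from $\Gamma$ the linearisation splits into a massive part (the off-diagonal components, with mass $\eps^{-1}$) and a massless abelian part governed by the Hodge Laplacians $\Delta_0$ and $\Delta_1$, which are invertible by the fourth bullet of Definition~\ref{def: admissible amb noncomp}. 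Patching these by a standard blow-up/contradiction argument gives a uniform bound $\|u\|\le C\eps^{-?}\|L_\eps u\|$ for $u$ orthogonal, fibrewise, to the translation modes $Z_a$. It also gives solvability of $L_\eps u=f-\sum_a c_a(q)Z_a$, with coefficients $c_a\in C^{0,\tau}(N\Gamma)$.

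\emph{Step 3: nonlinear problem and reduction.} Solve the projected problem by contraction for each $h$ in a ball of size $\eps^{\tau'}$ in $C^{2,\tau}$, using quadratic bounds on the nonlinearity. Then the coefficients satisfy $c(h)=\eps\,\kappa\,(J_\Gamma h+O(\eps))+o(\eps\|h\|)$, with $\kappa=\|Z_a\|^2_{L^2}\neq0$. The key computation is that testing the error against $Z_a$ gives $\kappa J_\Gamma h$ plus the mean curvature, which vanishes. Nondegeneracy of $J_\Gamma$ and a second fixed point in $h$ solve $c(h)=0$, with $h_\eps\to0$. Convergence \eqref{eq: normal profile convergence} follows because the correction is small in the rescaled $C^{2,\tau}$ norm. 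The energy identity and \eqref{eq: energy concentration} follow from the Bogomolnyi decomposition fibrewise: the tube contributes $2\pi\mathscr H^{n-3}(\Gamma)(1+o(1))$, while the exterior abelian energy is $O(\eps)\int|\omega|^2$, which is finite by \eqref{eq: noncompact weighted volume}.

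\emph{Main obstacle.} The main difficulty is Step 2 in the far region. The massless abelian component couples the cores to the global topology and decays only polynomially. It must be controlled uniformly in $\eps$, with weights compatible with both the exponential decay near the core and the $R^{-\nu}$ decay at infinity. I would first try splitting perturbations as $u=u^{\parallel}+u^{\perp}$ relative to $\Phi$. I would treat $u^{\perp}$ with barrier and maximum-principle estimates from the mass term, and invert $u^{\parallel}$ using the admissibility Laplacians and the Green function $G$. A smallness argument then controls the coupling terms, which are supported where $|\Phi|<1$.
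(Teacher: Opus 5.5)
There is a genuine gap in your Step 2: the claim that the $\U(1)$ phase mode is removed by the gauge fixing is false. The mode ${\bf W}=(0,d_{A_0}(q_0\sigma))$ of \eqref{eq: monopole phase mode} is an infinitesimal gauge transformation whose generator $q_0\sigma$ tends to $\sigma$, not to $0$, at infinity. The choice of $q_0$ in \eqref{eq: qlambda} gives ${\bf G}_0^*{\bf W}=0$, so ${\bf W}$ already satisfies the Coulomb condition, decays like $|y|^{-2}$, and lies in $\ker{\bf L}_0\cap L^2$ (\Cref{lem: monopole ker}; it is the circle factor of the framed moduli space).

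Your second fact then fails. On $\R^{n-3}\times\R^3$, the field $\theta(x){\bf W}(y)$ is fibrewise orthogonal to the translations by rotational equivariance. Yet $(-\Delta_x+{\bf L}_0)(\theta{\bf W})=-(\Delta_x\theta){\bf W}$ is as small as you like relative to $\theta{\bf W}$. Along $\Gamma$ this produces approximate eigenvalues $\eps^2\mu_k$ of the rescaled operator, with $\mu_k$ the eigenvalues of $\Delta_\Gamma$, so there is no uniform inverse with only three constraints.

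The paper handles this in several steps:
\begin{itemize}
\item It introduces the phase amplitude $\theta$ as a second modulation parameter: the gauge $u_\theta$, together with the subtraction of $\xi_\eps d\bar\theta$, in \eqref{eq: shifted phase approximation}.
\item It imposes a fourth fibre constraint with multiplier $\mathfrak b_4$.
\item It solves a reduced equation with leading operator $\Delta_\Gamma$ alongside $J_\Gamma h$.
\end{itemize}
Since $\Delta_\Gamma$ annihilates constants, a constant phase residual $c_\eps$ survives the reduction. Removing it needs a separate idea: the modified equation with the projection $\Pi_h$, and the gauge-compatibility argument of \Cref{lem: constant phase residual}, which tests the residual against ${\bf G}_{(\Phi_\eps,A_\eps)}\xi_{\eps,h}$ and ${\bf G}_{(\Phi_\eps,A_\eps)}\gamma_{\bf\Psi}$. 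Nothing in your scheme plays this role.

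Step 1 also misstates the error. Only the transverse parts of the BPS monopole converge exponentially. Its length $\varphi_0(r)=\coth(2r)-\frac1{2r}$ has a Coulomb tail, so $1-|\Phi_0(z/\eps)|\approx\eps/(2|z|)$. Gluing to a unit-length exterior field therefore leaves a polynomially small source in the massless longitudinal Higgs equation, with nonzero total integral of order $\eps$ per unit length of $\Gamma$.

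The connection has the same problem. The Hodge correction $b\sigma$ does not extend across $\Gamma$: its singular part $b_s=\chi_\Gamma\langle\rho_j,\sigma\rangle dq^j$ and its trace $\beta$ are $O(1)$. Cutting it off against the Fermi pullback of $A_0$ is therefore not an exponentially small error either.

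The paper's construction differs at each point:
\begin{itemize}
\item The exterior Higgs field is $(1-\eps H)\sigma$, with $H=2\pi\int_\Gamma G(\cdot,q)\,d\vol_\Gamma(q)$ built from the Green function of \Cref{def: admissible amb noncomp}. Your construction never uses this hypothesis.
\item The Higgs field is glued at scale $\sqrt\eps$, and the regular trace $-\eps H_\Gamma$ is matched by the BPS mass variation \eqref{eq: noncompact mass correction}.
\item $b_s$ and $\beta$ are matched by the inner fields $M$ and $\xi_\eps\beta_j\,dq^j$ and the gauge $g_\eps$.
\end{itemize}
This is what produces the rate $a_\eps=\eps^{3/2}(1+|\log\eps|)$ that the reduction requires, namely $a_\eps\eps^{-1-\tau}\to0$.

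Finally, leaving the linear bound as $C\eps^{-?}$ leaves the real obstacle untouched, since any loss breaks both fixed-point arguments. The paper gets a loss-free inverse for the Poisson-type longitudinal equations only through the dyadic norms of \Cref{sec: norms}, which sum longitudinal source contributions over scales. It also needs weighted fibre constraints: because $|{\bf Z}_a|\sim|y|^{-2}$, unweighted orthogonality to ${\bf Z}_a$ is not even bounded on bounded longitudinal fields.
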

\begin{theorem}\label{thm: main}
Let $\Gamma$ be an admissible submanifold in a compact admissible ambient space $(M,g)$, and fix a rate $\delta_\eps$ as in \eqref{eq: deltaeps}. For every sufficiently small $\eps>0$, there are critical points $(\Phi_\eps,A_\eps)$ of \eqref{eq: energy} satisfying \eqref{eq: normal profile convergence} and \eqref{eq: energy concentration}. Moreover,
\begin{equation*}
E_\eps(\Phi_\eps,A_\eps)-c_{\lambda_\eps}\,\mathscr{H}^{n-3}(\Gamma)\longrightarrow0.
\end{equation*}
\end{theorem}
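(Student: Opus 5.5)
Our approach to \Cref{thm: main} is a Lyapunov--Schmidt gluing scheme, in the spirit of the abelian constructions \cite{BadranDelPino23,BadranDelPino24}. We first build an approximate solution. In Fermi coordinates $(q,y)\mapsto\exp_q(h(q)+\eps y^a\nu_a(q))$ on a tubular neighbourhood of $\Gamma$, we place the rescaled 't Hooft--Polyakov monopole $(\Phi_{\lambda_\eps},A_{\lambda_\eps})$ (with $\lambda_\eps=2\eps^3/\delta_\eps\le 2C_*^{-1}$), using the global frame $(\nu_a)$ supplied by $F$. Away from $\Gamma$ we need a unit Higgs field $\Phi_\out\colon M\setminus\Gamma\to \Sp^2\subset\su(2)$ of degree one on each normal sphere, together with a reducible connection $A_\out$ satisfying $d_{A_\out}\Phi_\out=0$. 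Its existence is exactly the topological admissibility: take $\Phi_\out=F/|F|$ composed with an identification $\R^3\simeq\su(2)$. Then $A_\out$ is the $\U(1)$ connection on the line bundle $\Phi_\out^\perp$, whose curvature is a closed $2$-form dual to $\Gamma$. We correct it by solving a Hodge-type equation, $d^*$ of the curvature equal to the current-free part, so that the abelian Maxwell equations hold on $M\setminus\Gamma$. Here $b_1(M)=0$ removes the harmonic obstructions. The interpolation between the two regions is done on the annulus $\eps^{1-\tau}\lesssim \dist(\cdot,\Gamma)\lesssim\eps^{\tau}$, using the exponential decay of the monopole's massive modes and the Coulomb tail $\frac{1}{2r}$ of its abelian part.

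Next we estimate the error $S_\eps(\Phi,A)$ of the Euler--Lagrange equations in weighted Hölder norms adapted to the scale $\eps$ near $\Gamma$. The leading terms are the second fundamental form terms, which are $O(\eps)$ in rescaled variables. Minimality of $\Gamma$ makes the component of the error along the translational kernel vanish to first order. The remaining $O(\eps^2)$ coupling, projected onto translations, produces $\eps^2 J_\Gamma h$ plus smaller terms. In the compact case we must also control the potential: $\delta_\eps\le\eps^{1+2\tau'}$ ensures that $|\Phi|=1$ is enforced strongly enough outside the core, so that the non-abelian massive modes decay on scales $\ll\eps^{\tau}$.

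For the linear theory we work in Coulomb gauge relative to the approximate solution. We linearise $(\Phi,A)\mapsto S_\eps+$ gauge-fixing and decompose the linearisation into three pieces. The first is the monopole deformation operator on $\R^3$, whose $L^2$ kernel modulo gauge consists of the three translations and the $\U(1)$ frame rotation; the latter is eliminated by the charge-one reducibility, or absorbed by gauge. The second is the massive (off-diagonal) part, which is invertible with exponential decay. The third is the massless abelian part, governed by the Hodge Laplacians $\Delta_0,\Delta_1$. On noncompact $M$ these are inverted via the weighted isomorphism hypothesis $C^{0,\tau}_{\nu+2}\to C^{2,\tau}_{\nu}$ and the Green function, with \eqref{eq: noncompact weighted volume} ensuring finiteness of the energy. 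On closed $M$ we use $b_1(M)=0$ together with the mass generated by $\delta_\eps^{-1}$ on the radial Higgs component. A contradiction/blow-up argument then gives uniform invertibility orthogonal to the approximate kernel $\{\eps^{-1}\partial_{y^a}(\Phi_\lambda,A_\lambda)\,\psi_a(q)\}$. A contraction mapping solves the projected problem for each small $h$.

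The final step is the reduced equation. Projecting onto the kernel yields $J_\Gamma h=\mathcal N_\eps(h)$ with $\|\mathcal N_\eps\|_{C^{0,\tau}}=o(1)$ and Lipschitz constant $o(1)$. Nondegeneracy of $J_\Gamma$ and a fixed point argument give $h_\eps\to0$, and this gives \eqref{eq: normal profile convergence}. For the energy, the core contributes $c_{\lambda_\eps}\mathscr H^{n-3}(\Gamma)+o(1)$ via the coarea formula in Fermi coordinates, while the outer abelian field has energy $O(\eps|\log\eps|)\to0$; this gives \eqref{eq: energy concentration}. We expect the main obstacle to be the uniform linear estimate across scales. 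The massless abelian modes decay only like $r^{-1}$, so they couple the core to the global Hodge theory of $M$. Our first approach will be to split the solution into its core part, solved on $\R^3$ fibres with a weight $(1+|y|)^{-1-\gamma}$, and a global $\U(1)$ part, solved with the assumed Hodge inverses, and to iterate between the two. The smallness of the coupling comes from the annulus cutoffs.
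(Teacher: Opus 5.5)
Your outline shares the paper's architecture: monopole slices in Fermi coordinates, an exterior pair built from $F/|F|$ with a Hodge correction $b\sigma$ of the longitudinal connection using $b_1(M)=0$, coercivity of the longitudinal Higgs part from $\lambda_\eps/\eps^2\to\infty$, and reduction to $J_\Gamma$. It has a genuine gap at the kernel. You assert that the $\U(1)$ frame rotation is ``eliminated by the charge-one reducibility, or absorbed by gauge''. Neither works as stated in the Coulomb-gauge framework you adopt. The charge-one monopole is irreducible. The framing mode ${\bf W}=(0,d_{A_\lambda}(q_\lambda\sigma))$ is generated by the \emph{non-decaying} rotation $\exp(tq_\lambda\sigma)$ with $q_\lambda\to1$. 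It decays like $|y|^{-2}$, satisfies ${\bf G}_\lambda^*{\bf W}=0$, and lies in the $L^2$ kernel of the gauge-corrected linearisation (\Cref{lem: monopole ker}). Along $\Gamma$, the fields $\theta(q){\bf W}$ are approximate kernel elements of ${\bf L}_\lambda+\eps^2(\nabla^\top)^*\nabla^\top$, with eigenvalues $\eps^2\mu_k$, where $\mu_k$ are the eigenvalues of $\Delta_\Gamma$. For constant $\theta$ this is the near-stabiliser direction ${\bf G}\xi_{\eps,h}$ of the approximate solution. So your blow-up argument, with orthogonality only to the translations, cannot yield a uniform inverse: the limit can be $c\,{\bf W}$, which no Liouville theorem rules out without a fourth moment condition.

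Moreover, the fourth multiplier is not negligible. The paper shows $\mathfrak b_4=\eps^2F_{\mathrm{ph}}(\eps^2\Delta_\Gamma)\bigl(\Delta_\Gamma\theta-d_\Gamma^*\beta^{\mathrm{tan}}+o(1)\bigr)$, where $\beta^{\mathrm{tan}}$ is the tangential trace on $\Gamma$ of the regular part of $b$, an $O(1)$ source. This is tied to a step your approximation skips. The field $b\sigma$ does not extend across $\Gamma$: its singular part is $\langle\rho_j,\sigma\rangle dq^j$, and $b_r$ has nonzero trace $\beta$. So it cannot simply be interpolated against the monopole; the paper replaces it near the core by the angular correction $M_j\,dq^j$, the lift $\xi_\eps\beta_j\,dq^j$ and the gauge $g_\eps$. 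Hence, even with a fourth constraint, solving only $J_\Gamma h=\mathcal N_\eps(h)$ leaves the residual $\mathfrak B^*(0,0,0,\mathfrak b_4)$ with $\mathfrak b_4\approx-\eps^2F_{\mathrm{ph}}\,d_\Gamma^*\beta^{\mathrm{tan}}\neq0$ in general, and the pair is not critical. The missing ingredients are:
\begin{itemize}
\item a phase modulation $\theta$ along $\Gamma$, realised as the genuine deformation $\bar\theta\,d_A\xi_\eps$ rather than a gauge transformation;
\item the coupled reduced system $(J_\Gamma h,\Delta_\Gamma\theta)=\Pi_0\mathcal R_\eps(h,\theta)$;
\item a separate argument for the constant phase multiplier, since $\Delta_\Gamma$ annihilates constants. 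Only this constant part is disposed of by gauge: the paper adds $\eps^{-1}{\bf G}\gamma_{\bf\Psi}$, with $\gamma_{\bf\Psi}=(I-\Pi_h){\bf G}^*{\bf\Psi}$, to the equation and uses gauge invariance, testing against ${\bf G}\gamma_{\bf\Psi}$ and ${\bf G}\xi_{\eps,h}$ (\Cref{lem: constant phase residual}).
\end{itemize}
A lesser point: you assert, but do not show, that the inner--outer feedback for the massless longitudinal part is small. Longitudinal Poisson contributions accumulate logarithmically across the dyadic scales from $\eps$ to $\delta$. This is why the paper uses a single global correction, with norms that sum the longitudinal contributions over scales.
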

We develop the two proofs in parallel: the main difference lies in the approximate solutions, while the linear theory and nonlinear scheme have a common structure.

\begin{remark}
We make three comments on the admissibility assumptions in \Cref{def: admissible amb,def: admissible amb noncomp,def: admissible}.
\begin{itemize}
	\item In the compact case, the hypothesis $b_1(M)=0$ appears to be technical, but we do not know how to remove it. It would be interesting to determine whether a further reduction, possibly combined with a suitable geometric modification of the initial approximation, could extend the construction to the general case.
	\item The existence of a global defining map $F$ reflects the topology expected of a multiplicity-one concentration set. In this sense, the assumption is consistent with the class of limiting submanifolds that the construction is intended to describe.
	\item The assumption $H^2_{\mathrm{dR}}(M\setminus K)=0$ in \Cref{def: admissible amb noncomp} can be replaced by the vanishing of a single cohomology class. More precisely, it suffices that the defining map $F$ can be chosen so that, for some compact neighbourhood $K$ of $\Gamma$,
	\begin{equation*}
		\left[\left.\left(\frac{F}{|F|}\right)^*\omega_{\Sp^2}\right|_{M\setminus K}\right]=0\qquad\text{in }H^2_{\mathrm{dR}}(M\setminus K),
	\end{equation*}
	where $\omega_{\Sp^2}$ is the standard area form on $\Sp^2$.
\end{itemize}
\end{remark}

\subsection{Related works}
In dimension three, the first-order Bogomolnyi equation \eqref{eq: monopole} provides a distinguished class of critical points of \eqref{eq: energy no_pot}. On asymptotically conical three-manifolds with $b_2(M)=0$, Oliveira \cite{Oliveira2016AC} constructed large-mass monopoles concentrating at prescribed points by gluing methods. Fadel and Oliveira \cite{Fadel-Oliveira2019} subsequently analysed sequences of fixed-charge monopoles in the large-mass limit, identifying the finite concentration set with the limiting Higgs zeros and describing the Euclidean monopole bubbles that account for the concentrated energy.

\smallskip

For the model with a Higgs potential, Cheng, Fadel and Lara \cite{Cheng-Fadel-Lara2025} constructed nontrivial critical points on closed three-manifolds by min--max, with uniformly bounded rescaled energy. The assumption $b_1(M)=0$ ensures that these solutions are irreducible. 

\smallskip

Related higher-dimensional results concern first-order monopole equations on manifolds with special holonomy. Oliveira \cite{Oliveira2016Stenzel} constructed invariant Calabi--Yau monopoles on the Stenzel manifold $T^*S^3$. Li \cite{Li2025Monopoles} developed a large-mass limit theory on asymptotically conical $G_2$-manifolds and Calabi--Yau three-folds, extracting singular abelian limits along calibrated cycles and proving energy identities for normal monopole bubbles. Fadel and Oliveira \cite{Fadel-Oliveira2026} further related calibrated energy concentration, limiting Higgs zeros and abelianisation. These first-order systems imply the potential-free Yang--Mills--Higgs equations but impose additional geometric constraints: the limiting cycles are coassociative or special Lagrangian, respectively. 

\medskip

On general Riemannian ambients, Parise, Pigati and Stern \cite{Parise-Pigati-Stern2026} established the variational convergence to codimension-three area recalled above. In the abelian $\U(1)$ model, the author and M.~del Pino \cite{BadranDelPino23,BadranDelPino24} constructed solutions concentrating along prescribed nondegenerate minimal submanifolds of codimension two by gluing, with precise control of their local profiles. A complementary approach was developed by De Philippis and Pigati \cite{DePhilippis-Pigati2024}, whose purely variational construction establishes concentration at the level of energy measures and applies to several related phase-transition energies. These two approaches provide different forms of control: gluing resolves the local structure of the solutions, while the variational method flexibly treats a broader class of functionals within a common framework.

\subsection{Proof strategy and comparison with models in lower codimension}

The proofs of \Cref{thm: main noncompact,thm: main} follow the geometric gluing strategy developed for the Allen--Cahn equation \cite{Pacard-Ritore2003,delPino-Kowalczyk-Wei2011,delPino-Kowalczyk-Wei2013,Badran-delPino-Guaraco2026}, the abelian Higgs model \cite{BadranDelPino23,BadranDelPino24} and many other problems. We outline the steps common to these constructions.

\medskip

We first construct an approximate solution by placing $\eps$-rescaled model solutions in the normal fibres of $\Gamma$ and extending them to the ambient manifold using cutoffs and the geometric and topological assumptions. The compact and noncompact constructions are carried out in \Cref{sec: approx compact,sec: noncompact approximation}, and their errors are estimated in \Cref{sec: approximation errors}.

\medskip

We then seek an exact solution as a small perturbation of this approximation, measured in the norms of \Cref{sec: norms}. Taylor expansion separates the linearised operator from a quadratic remainder, leading to the nonlinear scheme in \Cref{sec: nonlinear scheme}. The quadratic estimate needed for the contraction is proved in \Cref{sec: quadratic estimate proof}.

\medskip

The symmetries of the model, in particular translation invariance, produce approximate kernel elements that obstruct a uniform inverse on the full space. An infinite-dimensional Lyapunov--Schmidt reduction separates these modes from their complement and allows the centre of the normal profile to vary along $\Gamma$. The uniform inverse on the complement is established in \Cref{sec: invertibility}; the contraction in \Cref{sec: complementary nonlinear equation} then produces a small correction for fixed modulation parameters.

\medskip

Projection onto the translation modes yields an equation for the normal displacement, whose leading operator is the Jacobi operator $J_\Gamma$. The estimates in \Cref{sec: reduced equation proof} control the remainder, while nondegeneracy of $\Gamma$ provides the inverse needed for the contraction in \Cref{sec: reduced equation}. This determines the position of the concentrating profile.

\medskip

The nonabelian model introduces several additional difficulties with respect to the Allen--Cahn and abelian Higgs case.

\begin{itemize}[leftmargin=*,itemsep=0.5\baselineskip]
\item Away from Higgs zeros, we split the Lie algebra components into a \emph{longitudinal} part, parallel to the Higgs field, and a \emph{transverse} part, orthogonal to it. The initial approximation leaves an $O(1)$ error in the exterior longitudinal connection equation, as shown in \eqref{eq: initial error}. We cancel this error by solving a Hodge equation for a longitudinal correction and matching it to a smooth replacement near the core; see \Cref{sec: approx compact,sec: noncompact approximation}.
\item After normal rescaling, the transverse equations have a positive zeroth-order term and are schematically of the form $-\Delta+\eps^{-2}$, as in Allen--Cahn or abelian Higgs, whereas the longitudinal connection equation is of Poisson type, $-\Delta$. The absence of this positive term allows contributions from different scales to accumulate, making uniform estimates more delicate. The weighted dyadic norms of \Cref{sec: norms} distinguish these components and sum the longitudinal source contributions over scales. They provide the framework for the inverse estimates in \Cref{sec: invertibility}.
\item The Poisson character of the longitudinal equations described in the preceding item makes the feedback from the outer problem into the inner equation harder to control. We therefore seek a single global correction and construct a projected right inverse on the ambient manifold, rather than solving a coupled inner--outer system for separate corrections; see \Cref{sec: invertibility}. This streamlines the linear theory, but makes the reduction somewhat more delicate: the reduced coefficients must account for the global correction and its longitudinal tail, as analysed in \Cref{sec: reduced equation proof}.
\item  Besides translations, the kernel of the normal linearised operator contains a phase mode, described in \Cref{sec: second variation}. Its amplitude $\theta$ varies along $\Gamma$ and satisfies a reduced equation with leading operator $\Delta_\Gamma$; see \Cref{sec: reduced equation}. For connected $\Gamma$, we invert this operator on functions of mean zero, leaving a constant phase residual. A generalisation of the gauge-compatibility argument used in the abelian Higgs model \cite{BadranDelPino23,BadranDelPino24} eliminates this residual and recovers the original equations; see \Cref{sec: technical gauge}. The extension to several components is discussed in Appendix~\ref{app: disconnected components}.
\end{itemize}

\section{Setting}\label{sec:setting}
In what follows we fix the following constants, which should be interpreted as absolute parameters in the construction.
\begin{itemize}
	\item $0<\tau<\frac12$: the H\"older exponent of the Banach spaces, see \Cref{sec: norms}.
	\item $\lambda_*>0$ is the constant predicted by \Cref{lem: normal weighted coercivity}.
	\item $C_*>2/\lambda_*$ determines the lower bound of the rate for $\delta_\eps$, see \eqref{eq: deltaeps}.
	\item $K>0$ a constant determining the size of the parameters $h,\theta$, see \eqref{eq: assumptions htheta}.
\end{itemize}
We will often use the notation $A\lesssim B$ to say that $A\leq CB$ for a constant $C>0$ silently depending on the parameters above.

\smallskip

The Lie algebra $\su(2)$ denotes the space of traceless skew-Hermitian matrices 
\begin{equation*}
	\su(2)=\{H\in\mathrm{Mat}_2(\C): \tr H=0,\quad H^\dagger =-H\}
\end{equation*}
with basis
\begin{equation*}
	T_1\coloneqq \begin{pmatrix} i & 0 \\ 0 & -i \end{pmatrix},\qquad
	T_2\coloneqq \begin{pmatrix} 0 & 1 \\ -1 & 0 \end{pmatrix},\qquad
	T_3\coloneqq \begin{pmatrix} 0 & i \\ i & 0 \end{pmatrix}.
\end{equation*}
which is orthonormal with respect to the inner product $\langle X,Y\rangle\coloneqq-\frac12\tr(XY)$, and satisfies $[T_\alpha,T_\beta]=2\eps_{\alpha\beta\gamma}T_\gamma$.
For $u\in\SU(2)$, the adjoint action on $\su(2)$ is
\begin{equation}\label{eq: adjoint action}
 \Ad_u\colon\su(2)\to\su(2),\qquad \Ad_u X\coloneqq uXu^{-1}.
\end{equation}
We use the same notation for its pointwise action on Lie-algebra-valued forms and on pairs, acting on each component.

\medskip

\subsection{The nonabelian Higgs model} We retain the rate $\delta_\eps$ and the exponents fixed in \eqref{eq: deltaeps}.
Observe that this is larger than the Parise--Pigati--Stern \cite{Parise-Pigati-Stern2026} bound $\eps^2\ll\delta_\eps\ll\eps$ on the lower end, while it is slightly more restrictive on the upper end. The lower bound comes from the available range of model coercivity, see \Cref{lem: normal weighted coercivity}. The upper end is a bound that ensures enough smallness for the error of approximation. 

\medskip

To account for different scaling properties of functions and forms, we define the $\eps$-dependent $L^2$ pairing on pairs
\begin{equation}\label{eq: L^2 inner product}
	\langle(\Phi,A),(\Psi,B)\rangle_{L^2_\eps(M)}\coloneqq \int_{M}\eps^{-1}\langle \Phi,\Psi\rangle +\eps A\cdot B.
\end{equation}
Then, the first variation of \eqref{eq: energy} is 
\begin{equation*}
	\frac{d}{dt}\Big\vert_{t=0}E_\eps(\Phi+t\Psi,A+tB)=\langle {\bf S}_\eps(\Phi,A),(\Psi,B)\rangle_{L^2_\eps(M)}
\end{equation*}
where
\begin{equation}\label{eq:euler-lagrange}
	{\bf S}_\eps(\Phi,A)\coloneqq \begin{pmatrix}
		d_A^*d_A\Phi+\frac{2\eps}{\delta_\eps}(|\Phi|^2-1)\Phi\\[6pt]
		d_A^*F_A+\frac{1}{\eps^{2}}[\Phi,d_A\Phi]
	\end{pmatrix}.
\end{equation}
Here $d_A^*$ denotes the formal $L^2$-adjoint of $d_A$, with respect to $g$. The corresponding expression for the first variation of \eqref{eq: energy nc_eps} is the same without the potential term in the field component. 

\medskip

The energy is invariant under $\SU(2)$-gauge transformations. For $u\in \Omega^0(M,\SU(2))$, the transformed fields are
\begin{equation*}
	\Phi^u=u^{-1}\Phi u,\quad A^u=u^{-1}Au+u^{-1}du
\end{equation*}
and satisfy $E_\eps(\Phi^u,A^u)=E_\eps(\Phi,A)$.

\subsection{The BPS and the 't Hooft--Polyakov monopoles}\label{sec: BPStHP} Fix $\lambda\geq 0$. The entire model we use as building block of the construction is a smooth, finite energy critical point of \eqref{eq: Elambda}
satisfying $|\Phi|\to 1$. This is called the 't Hooft--Polyakov monopole \cite{tHooft1974,Polyakov1974}, which we will denote with $(\Phi_\lambda,A_\lambda)$; the subscript $\lambda$ highlights its dependence on it. Note that choosing 
\begin{equation}\label{eq: lambdaeps}
	\lambda=\lambda_\eps\coloneqq \frac{2\eps^3}{\delta_\eps},\qquad 0<\lambda_\eps\leq\lambda_*,\qquad \lambda_\eps/\eps^2\to\infty,
\end{equation}
the energy \eqref{eq: Elambda} corresponds to an $\eps$-rescaling of \eqref{eq: energy}. When dealing with the compact problem, we fix this choice of $\lambda$ hereon, leaving implicit its dependence on $\eps$ where it causes no confusion.

Explicitly, we can represent $(\Phi_\lambda,A_\lambda)$ as 
\begin{equation}\label{eq:monopole-ansatz}
	\Phi_\lambda(y)\coloneqq \varphi_\lambda(|y|)\frac{y^\alpha T_\alpha}{|y|},\quad A_\lambda(y)\coloneqq \alpha_\lambda(|y|)\eps_{\alpha\beta\gamma}\frac{y^\alpha T_\beta }{2|y|^2}dy^\gamma
\end{equation}
The profiles $\varphi_\lambda$ and $\alpha_\lambda$ solve
\begin{equation}\label{eq:monopole-profiles}
	\begin{cases}
		\varphi_\lambda''+\dfrac2r\varphi_\lambda'=\dfrac{2(1-\alpha_\lambda)^2\varphi_\lambda}{r^2}+\lambda(\varphi_\lambda^2-1)\varphi_\lambda,\\[6pt]
		\alpha_\lambda''=(1-\alpha_\lambda)\left(\dfrac{\alpha_\lambda(2-\alpha_\lambda)}{r^2}-4\varphi_\lambda^2\right),
	\end{cases}
\end{equation}
with boundary conditions
\begin{equation*}
	\varphi_\lambda(0)=\alpha_\lambda(0)=0,\qquad
	\lim_{r\to\infty}\varphi_\lambda(r)=\lim_{r\to\infty}\alpha_\lambda(r)=1.
\end{equation*}
The standard radial minimising solution has expansions $\varphi_\lambda(r)=ar+O(r^3)$ and $\alpha_\lambda(r)=br^2+O(r^4)$ at the origin, ensuring that \eqref{eq:monopole-ansatz} extends smoothly across $y=0$. 

Lastly, we remark that the 't Hooft--Polyakov monopole corresponding to $\lambda=0$ is the classical Bogomolnyi--Prasad--Sommerfield monopole \cite{Bogomolnyi1976,Prasad-Sommerfield1975}
\begin{equation}\label{eq: BPS}
	\varphi_{0}(r)=\coth(2r)-\frac1{2r},\qquad
	\alpha_{0}(r)=1-\frac{2r}{\sinh(2r)}.
\end{equation}

\subsection{Second variation, invariances and linear kernel}\label{sec: second variation}
At a smooth pair $(\Phi,A)$, the second variation of energy \eqref{eq: energy} in the direction $(\Psi,B)$ is
\begin{equation*}
	\frac{d^2}{dt^2}\Big\vert_{t=0}E_\eps(\Phi+t\Psi,A+tB)=	\widetilde{\bf Q}_{(\Phi,A)}(\Psi,B),
\end{equation*}
where
\begin{equation*}
\begin{split}
	\widetilde {\bf Q}_{(\Phi,A)}(\Psi,B)&\coloneqq \frac1\eps\int_M\left(|d_A\Psi+[B,\Phi]|^2+2\langle d_A\Phi,[B,\Psi]\rangle\right)\,d\vol_g\\
	&\quad+\eps\int_M\left(|d_AB|^2+\langle F_A,[B\wedge B]\rangle\right)\,d\vol_g\\
	&\quad+\frac{2}{\delta_\eps}\int_M\left((|\Phi|^2-1)|\Psi|^2+2\langle\Phi,\Psi\rangle^2\right)\,d\vol_g.
\end{split}
\end{equation*}
The second variation of \eqref{eq: energy nc_eps} is the same but without the last line.
At a critical pair $(\Phi,A)$, gauge invariance generates the infinite-dimensional subspace
\begin{equation*}
	\mathcal{Z}_{\mathrm{gauge}}\coloneqq \left\lbrace {\bf G}_{(\Phi,A)}[\gamma]\coloneqq ([\Phi,\gamma],d_A\gamma): \gamma\in \Omega^0(M,\su(2)) \right\rbrace
\end{equation*}
contained in the kernel of the linearised operator, which therefore cannot be elliptic. For any smooth pair $(\Phi,A)$, define the gauge-corrected quadratic form
\begin{equation}\label{eq: gauge decom Q}
 	{\bf Q}_{(\Phi,A)}(\Psi,B)\coloneqq \widetilde {\bf Q}_{(\Phi,A)}(\Psi,B)+\frac{1}{\eps}\int_M\left|{\bf G}_{(\Phi,A)}^*[(\Psi,B)]\right|^2\,d\vol_g.
\end{equation}
where ${\bf G}_{(\Phi,A)}^*[(\Psi,B)]=\eps d^*_AB-\frac{1}{\eps}[\Phi,\Psi]$ is the adjoint of ${\bf G}_{(\Phi,A)}$ with respect to \eqref{eq: L^2 inner product}. 
The corresponding gauge-corrected linearised operator is
\begin{equation}\label{eq: model linearised operator}
	{\bf L}_{(\Phi,A)}(\Psi,B)\coloneqq \begin{pmatrix}
		\Delta_A\Psi-\eps^{-2}[\Phi,[\Phi,\Psi]]+2\ad_B^*d_A\Phi+\frac{2\eps}{\delta_\eps}\bigl((|\Phi|^2-1)\Psi+2\langle\Phi,\Psi\rangle\Phi\bigr)\\[6pt]
		\Delta_AB+\ad_B^*F_A-\eps^{-2}[\Phi,[\Phi,B]]-2\eps^{-2}[d_A\Phi,\Psi]
	\end{pmatrix},
\end{equation}
where $\Delta_A\coloneqq d_A^*d_A+d_Ad_A^*$ is the covariant Hodge Laplacian and $\ad_B^*$ is the pointwise adjoint of $\ad_B\omega\coloneqq[B\wedge\omega]$, given explicitly by 
\begin{equation*}
	\ad_B^*\eta=-\sum_{j=1}^n[\iota_{e_j}B,\iota_{e_j}\eta].
\end{equation*}
Here $(e_j)$ is any local orthonormal frame and $\iota_{e_j}$ denotes the contraction. This expression is independent of the frame and satisfies $\langle\ad_B^*\eta,\omega\rangle=\langle\eta,[B\wedge\omega]\rangle$ pointwise for every $k$-form $\eta$ and $(k-1)$-form $\omega$, with values in $\su(2)$ and $k\geq1$.
The linearised operator satisfies
\begin{equation*}
	{\bf Q}_{(\Phi,A)}(\Psi,B)=\langle {\bf L}_{(\Phi,A)}(\Psi,B),(\Psi,B)\rangle_{L^2_\eps(M)}.
\end{equation*}
For the Euclidean monopole $(\Phi_\lambda,A_\lambda)$, let ${\bf L}_\lambda$ denote the gauge-corrected Hessian of $E_\lambda$ with the ordinary $L^2$ pairing. It has four explicit kernel elements,
\begin{equation}\label{eq: linearised kernel}
	\mathrm{span}\{{\bf Z}_1,{\bf Z}_2,{\bf Z}_3,{\bf W}\}\subseteq \ker{\bf L}_\lambda\cap L^2(\R^3).
\end{equation}
The elements
\begin{equation}\label{eq: monopole translations}
	{\bf Z}_a=(\iota_{e_a}d_{A_\lambda}\Phi_\lambda,\iota_{e_a}F_{A_\lambda}),\qquad a=1,2,3,
\end{equation}
are the gauge-corrected infinitesimal translations.
The element ${\bf W}$, which we now describe, arises from a gauge transformation itself: this means that 
\begin{equation*}
	\widetilde {\bf L}_{\lambda}[{\bf W}]={\bf G}_{\lambda}{\bf G}_{\lambda}^*[{\bf W}]=0,
\end{equation*}
where ${\bf G}_\lambda\coloneqq {\bf G}_{(\Phi_\lambda,A_\lambda)}$.
The pair ${\bf W}$ represents an infinitesimal internal rotation about the Higgs direction . On $\R^3\setminus\{0\}$, set
\begin{equation*}
	\sigma(y)=\frac{y^a T_a}{|y|},\qquad A_\sigma=-\frac14[\sigma,d\sigma].
\end{equation*}
Then,
\begin{equation*}
\Phi_\lambda=\varphi\sigma,\quad A_\lambda=\alpha A_\sigma\quad \text{and}\quad d_{A_\sigma}\sigma=0	
\end{equation*}
Let $q_\lambda\colon [0,+\infty)\to \R$ be the solution to  
\begin{equation}\label{eq: qlambda}
	 q_\lambda''+\frac2r q_\lambda'-\frac{2(1-\alpha(r))^2}{r^2}q_\lambda=0,\qquad q_\lambda(0)=0,\qquad\lim_{r\to\infty}q_\lambda(r)=1.
\end{equation}
It can be checked that $q_\lambda(r)=cr+O(r^3)$ at the origin and $q_\lambda(r)\to1$ at infinity. For every $t$, we set 
\begin{equation*}
	u_t\coloneqq \exp(t\, q_\lambda\sigma)\in \SU(2)
\end{equation*}
and define 
\begin{equation}\label{eq: monopole phase mode}
	{\bf W}\coloneqq \frac{d}{dt}\Big|_{t=0}(\Phi_\lambda,A_\lambda)^{u_t}=(0,d_{A_\lambda}(q_\lambda\sigma))=(0,q_\lambda'\sigma\,dr+q_\lambda(1-\alpha)d\sigma).
\end{equation}
The choice of $q_\lambda$ in \eqref{eq: qlambda} ensures that $d_{A_\lambda}^*d_{A_\lambda}(q_\lambda\sigma)=0$, thus fixing ${\bf G}_\lambda^*[{\bf W}]=0$.

\begin{lemma}\label{lem: monopole ker}
	There exists $\lambda_0>0$ such that for all $\lambda\in [0,\lambda_0)$ the inclusion \eqref{eq: linearised kernel} is an equality. 
\end{lemma}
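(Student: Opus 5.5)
Two ingredients give the statement: the known $L^2$-kernel of the linearised BPS operator at $\lambda=0$, and a perturbation argument for small $\lambda>0$.

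\emph{Step 1: the case $\lambda=0$.} At the BPS monopole, the gauge-corrected operator ${\bf L}_0$ factors as ${\bf L}_0=\mathcal D^*\mathcal D$. Here $\mathcal D(\Psi,B)=(*d_{A}B+d_{A}\Psi+[\Phi,B],\;d_{A}^*B-[\Phi,\Psi])$ is the linearised Bogomolnyi operator combined with the Coulomb gauge condition, a Dirac-type operator. This factorisation follows because the Bogomolnyi decomposition makes $E_0$ equal to $\tfrac12\|{*F_A}+d_A\Phi\|^2$ plus a topological term, and the gauge term in \eqref{eq: gauge decom Q} supplies the remaining square. Consequently $L^2$ elements of $\ker{\bf L}_0$ lie in $\ker\mathcal D\cap L^2$.

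By the index computation of Weinberg and Taubes (equivalently, the dimension of the charge-one monopole moduli space), $\ker\mathcal D\cap L^2$ is $4k=4$ dimensional. Since ${\bf Z}_1,{\bf Z}_2,{\bf Z}_3,{\bf W}$ lie in this kernel and are linearly independent, they span it. Independence holds because ${\bf W}$ has a nonzero longitudinal radial component $q_0'\sigma\,dr$ and is odd/even differently from the ${\bf Z}_a$, which are permuted by rotations as a vector. The only nontrivial check is that these four elements lie in $L^2$. The ${\bf Z}_a$ decay like $|y|^{-2}$. For ${\bf W}$, $q_0'$ decays like $r^{-2}$ and $(1-\alpha_0)$ decays exponentially, so ${\bf W}$ is also in $L^2$.

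\emph{Step 2: perturbation to small $\lambda$.} This is the main obstacle, because the longitudinal components have no mass gap, so ${\bf L}_\lambda$ is not Fredholm on plain $L^2$ and the kernel could a priori jump under perturbation. I would handle this through partial wave and weighted analysis. The equivariant structure lets me decompose perturbations under the combined rotation/isospin $\mathrm{SU}(2)$ action. Transverse components carry a mass $\sim 1$ at infinity (the term $-[\Phi,[\Phi,\cdot]]$ has eigenvalue $4$ off the Higgs direction). The longitudinal Higgs component acquires mass $\sqrt{2\lambda}$ from the potential term $2\lambda\langle\Phi,\Psi\rangle\Phi$. The longitudinal connection component remains massless, governed by the abelian Hodge Laplacian on $\R^3$.

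I would set up ${\bf L}_\lambda$ on weighted spaces $H^2_\delta\to L^2_{\delta+2}$ with a small nonintegral weight $\delta$ (polynomial weights $\langle y\rangle^\delta$). On these spaces ${\bf L}_\lambda$ is Fredholm, uniformly in $\lambda\in[0,\lambda_0)$. The indicial roots of the massless part avoid $\delta$. The transverse part, and the Higgs longitudinal part for $\lambda>0$, are invertible at infinity; for $\lambda=0$ the latter is treated as massless and is also Fredholm in the weighted space.

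In addition, ${\bf L}_\lambda$ depends continuously on $\lambda$ in operator norm. This uses smooth dependence of $(\varphi_\lambda,\alpha_\lambda)$ on $\lambda$, with $\varphi_\lambda-\varphi_0$ small in the appropriate weighted norm; this requires some care since $1-\varphi_0\sim 1/(2r)$ while $1-\varphi_\lambda$ decays exponentially.

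By upper semicontinuity of kernel dimension for continuous Fredholm families, $\dim\ker{\bf L}_\lambda\le\dim\ker{\bf L}_0=4$ for $\lambda$ small. One must check that the weighted kernel of ${\bf L}_0$ coincides with its $L^2$-kernel; any additional kernel element would need to be a slowly decaying massless longitudinal mode, which is excluded by the harmonic analysis at infinity. Since the four explicit elements lie in $\ker{\bf L}_\lambda\cap L^2$ for every $\lambda$ and remain independent, equality follows.

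If the continuity in $\lambda$ at $\lambda=0$ proves delicate because of the change from $r^{-1}$ to exponential decay, I would argue by contradiction instead. Take a fifth kernel element, normalised and $L^2_\eps$-orthogonal to the span, for a sequence $\lambda_j\to0$. Use uniform weighted coercivity away from the span to pass to a nonzero limit in $\ker{\bf L}_0$, which is orthogonal to the span and so contradicts Step 1.
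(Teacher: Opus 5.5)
Your Step 1 is a legitimate alternative for $\lambda=0$. The paper instead cites Stuart's nondegeneracy result (kernel $=$ translations modulo gauge). It then recovers ${\bf W}$ by showing that the leftover gauge piece ${\bf G}_\lambda\xi$, with ${\bf G}_\lambda^*{\bf G}_\lambda\xi=0$, equals $c\,{\bf G}_\lambda(q_\lambda\sigma)$, via an $L^6$ Liouville argument. The $4k$-dimensional $L^2$ kernel of the linearised Bogomolnyi--Coulomb operator gives you all four modes at once.

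The gap is Step 2. The weighted Fredholm argument fails as stated. For $\lambda>0$, ${\bf L}_\lambda$ contains the zeroth-order term $2\lambda\langle\Phi_\lambda,\Psi\rangle\Phi_\lambda$ on the longitudinal Higgs component. This term gains no decay, so it is not even a bounded map $H^2_\delta\to L^2_{\delta+2}$, let alone a small, norm-continuous perturbation of ${\bf L}_0$ there. On any spaces where ${\bf L}_\lambda$ with $\lambda>0$ is Fredholm, that component must be measured with a $\lambda$-dependent norm. The limit $\lambda\to0$ is then singular: the bottom $2\lambda$ of the continuous spectrum of that component reaches zero. This threshold is exactly where extra kernel could emerge, so upper semicontinuity of the kernel dimension is not available. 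The problem is not only the change of tail of $1-\varphi_\lambda$ that you flag; the perturbation itself does not fit in one pair of Banach spaces.

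Your fallback compactness argument is the right kind of idea, but it lacks its essential ingredients.

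First, ``uniform weighted coercivity away from the span'' for $\lambda>0$ already contains the conclusion, since it excludes extra kernel directly. In the paper, that statement (\Cref{lem: normal weighted coercivity}) is proved using the present lemma at $\lambda=0$ together with the Androulakis--Dostoglou nonnegativity ${\bf Q}_\lambda\ge0$. What a contradiction argument for exact kernel elements actually needs is weaker. You need a uniform non-escape bound $\|{\bf v}\|_{\mathcal H_\lambda}^2\lesssim{\bf Q}_\lambda({\bf v})+\|{\bf v}\|_{L^2(B_{R_0})}^2$, as in \eqref{eq: normal compact remainder}, with the sequence normalised in the energy norm $\mathcal H_\lambda$. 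If you normalise in $L^2$, longitudinal mass can escape to the scale $\lambda_j^{-1/2}$ and the limit is zero.

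Second, orthogonality to ${\bf Z}_a$ and ${\bf W}$ does not pass to the limit. These modes decay only like $|y|^{-2}$, so their $L^2$ pairing is not controlled by energy bounds (neither via Hardy nor via $L^6$). You need faster-decaying tests such as ${\bf E}_i$, together with a uniformly invertible moment matrix.

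Third, the limit is only known to lie in $\mathcal H_0$. Before Step 1 applies, you must show that the $\mathcal H_0$-kernel of ${\bf L}_0$ coincides with its $L^2$-kernel. The paper does this through exponential decay of the transverse part, and through the first-order equations, which force the $1/r$ longitudinal coefficients to vanish.

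With these three pieces the argument closes. Without them, Step 2 is not a proof. The paper avoids this by importing small-coupling nondegeneracy modulo gauge from Androulakis--Dostoglou and then running the same gauge analysis as at $\lambda=0$.
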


\begin{proof}
The BPS nondegeneracy theorem at $\lambda=0$ \cite[Theorem~3.1, Corollary~3.4]{Stuart1994Monopoles}, and \cite[Theorem~5.6]{Androulakis-Dostoglou1998} for small $\lambda>0$, give nonnegativity of the ungauged Hessian and precisely three translation classes modulo gauge. The gauge quotient here includes transformations with constant limiting phase; the framed kernel also retains the phase mode, which we recover below. Hence, for ${\bf v}\in\ker{\bf L}_\lambda\cap L^2$,
\begin{equation*}
0=\langle{\bf L}_\lambda{\bf v},{\bf v}\rangle_{L^2}
=\widetilde{\bf Q}_\lambda({\bf v})+\|{\bf G}_\lambda^*{\bf v}\|_{L^2}^2
\end{equation*}
implies ${\bf G}_\lambda^*{\bf v}=0$ and ${\bf v}=\sum_{\alpha=1}^3a_\alpha{\bf Z}_\alpha+{\bf G}_\lambda\xi$. Since ${\bf G}_\lambda^*{\bf Z}_\alpha=0$, we have ${\bf G}_\lambda^*{\bf G}_\lambda\xi=0$ and ${\bf G}_\lambda\xi\in L^2$.

Outside a fixed ball, write $\xi=f\sigma+\zeta$, with $\zeta\perp\sigma$. Finite gauge energy gives $\zeta\in L^2$ and
\begin{equation*}
 df=\langle d_{A_\lambda}\xi,\sigma\rangle+\langle\zeta,d_{A_\lambda}\sigma\rangle\in L^2.
\end{equation*}
The homogeneous Sobolev inequality therefore gives $f-c\in L^6$ for a constant $c$. Since $d_{A_\lambda}\sigma$ decays exponentially,
\begin{equation*}
 d_{A_\lambda}\zeta=d_{A_\lambda}\xi-df\,\sigma-f\,d_{A_\lambda}\sigma\in L^2,
\end{equation*}
and the same inequality, applied after a cutoff, gives $\zeta\in L^6$. Finally, $1-q_\lambda=O(r^{-1})\in L^6$ outside the ball, so $\eta\coloneqq\xi-cq_\lambda\sigma\in L^6(\R^3)$. By \eqref{eq: qlambda}, ${\bf G}_\lambda^*{\bf G}_\lambda\eta=0$. Testing with $\chi_R^2\eta$, where $\chi_R=1$ on $B_R$, vanishes outside $B_{2R}$ and $|d\chi_R|\le C/R$, yields
\begin{equation*}
\int_{B_R}|{\bf G}_\lambda\eta|^2
\le 4\int_{B_{2R}\setminus B_R}|d\chi_R|^2|\eta|^2
\le C\|\eta\|_{L^6(B_{2R}\setminus B_R)}^2\longrightarrow0.
\end{equation*}
Consequently ${\bf G}_\lambda\eta=0$, so ${\bf v}=\sum_{\alpha=1}^3a_\alpha{\bf Z}_\alpha+c{\bf W}$. The reverse inclusion was established above.
\end{proof}

\section{The approximate solution: compact case}\label{sec: approx compact}
For simplicity, we assume throughout the construction and proof that $\Gamma$ is connected. The modifications required for finitely many connected components are described in Appendix~\ref{app: disconnected components}.

\medskip

In this section we construct the approximate solution that will be used in the perturbative scheme. The steps are the following: first, define a pair locally near $\Gamma$ by placing 't Hooft--Polyakov monopoles in its normal bundle fibres, and extend it globally using a cut-off and exponential decay. The second assumption in \Cref{def: admissible} is a topological condition ensuring that such extension is possible. This approximation turns out to be far from being a solution because of geometric terms appearing in the neutral connection component of the error of approximation in regions away from $\Gamma$. To repair this, we modify the approximate solution by adding a neutral connection term, which is carefully obtained by gluing different profiles near and away from the core. Lastly, we include the translation and phase fields defined in \eqref{eq: shifted phase approximation} to the approximation, to allow the flexibility needed to solve the reduced equation. 

\medskip

In what follows, we use Einstein summation convention. Moreover, we use $1\leq i,j,k\dots\leq n-3$ to denote indices of tangential variables, $1\leq a,b,c,\dots\leq 3$ to denote indices of normal variables.

\subsection{The initial approximation and large errors from the neutral connection}
We begin the construction on an admissible compact ambient space $(M,g)$. Remark that the restriction of $dF$ to $N\Gamma$ gives a natural trivialisation, which provides an orthonormal frame $(\nu_1,\nu_2,\nu_3)$. For $\delta>0$ sufficiently small, the Fermi map is a diffeomorphism
\begin{equation*}
\mathscr F_{\Gamma}\colon\Gamma\times B_\delta^3\longrightarrow\Tub_\delta(\Gamma),\qquad \mathscr F_{\Gamma}(q,y)=\exp_q\left(y^a\nu_a(q)\right).
\end{equation*}
Here $\Tub_\delta(\Gamma)=\{p\in M:\dist(p,\Gamma)<\delta\}$ and we can always assume
\begin{equation*}
F(\mathscr F_{\Gamma}(q,y))=y.
\end{equation*}
We define 
\begin{equation*}
	\sigma\coloneqq\frac{F^a T_a}{|F|}\colon M\setminus\Gamma\to \su(2),\qquad A_\sigma\coloneqq -\frac14[\sigma,d\sigma]\in\Omega^1(M\setminus \Gamma,\su(2))
\end{equation*}
We use as interior approximation the pullback of the $\eps$-rescaled 't Hooft--Polyakov monopole via Fermi coordinates. To extend it to the whole $M$, consider a monotone, smooth cut-off function $\eta\colon [0,+\infty)\to [0,1]$ such that $\eta\equiv 1$ in $[0,1]$, $\eta\equiv 0$ in $(2,+\infty)$ and $|\nabla\eta|\leq 2$. From $\eta$, we construct a new cut-off $\chi_{\Gamma}\colon M\to [0,1]$ by setting 
\begin{equation*}
	\chi_{\Gamma}(x)\coloneqq 
	\begin{cases}
		\eta \left(\frac{2|y|}{\delta}\right)&\text{if } x=\mathscr{F}_{\Gamma}(q,y)\in \Tub_\delta(\Gamma)\\
		0&\text{if } x\in M\setminus \Tub_\delta(\Gamma).
	\end{cases}
\end{equation*}
We define the global scalar functions
\begin{equation*}
\begin{split}
	\varphi_\eps(x)&\coloneqq \chi_{\Gamma}(x)\varphi_{\lambda_\eps}\left(\frac{|y|}{\eps}\right)+(1-\chi_{\Gamma}(x)),\\ 
	\alpha_\eps(x)&\coloneqq \chi_{\Gamma}(x)\alpha_{\lambda_\eps}\left(\frac{|y|}{\eps}\right)+(1-\chi_{\Gamma}(x)),
\end{split}
\end{equation*}
where on the support of $\chi_{\Gamma}$ we understand $x=\mathscr{F}_{\Gamma}(q,y)$.
Then, the global approximate solution is the pair $(\Phi^0_{\Gamma},A^0_{\Gamma})$ defined by 
\begin{equation*}
	\Phi^0_{\Gamma}\coloneqq \varphi_\eps\sigma,\qquad 	A^0_{\Gamma}\coloneqq \alpha_\eps A_\sigma
\end{equation*}

\medskip

We use the error of approximation \eqref{eq:euler-lagrange} to measure how ``good'' the approximation is. The field component of the error vanishes on $\{\chi_\Gamma=0\}$ since $|\sigma|=1$ and $d_{A_\sigma}\sigma=0$. On the other hand, the connection component of the error need not be small. Indeed, setting
\begin{equation*}
	f_0\coloneqq \langle \sigma,F_{A_\sigma}\rangle\in\Omega^2(M\setminus\Gamma),
\end{equation*}
we have
\begin{equation}\label{eq: curvature identity}
	F_{A_\sigma}(X,Y)=-\frac14[d\sigma(X),d\sigma(Y)]=\sigma f_0(X,Y).
\end{equation}
We claim that $f_0=-\frac12\sigma^*\omega_{\Sp^2}$. Indeed, identifying $\su(2)$ with $\R^3$ through the oriented orthonormal basis $(T_1,T_2,T_3)$, we have $[u,v]=2u\times v$ and $(\omega_{\Sp^2})_p(u,v)=\langle p,u\times v\rangle$ for $u,v\in T_p\Sp^2$. Thus, for $X,Y\in T_xM$, \eqref{eq: curvature identity} gives
\begin{equation*}
\begin{split}
 f_0(X,Y)&=-\frac14\langle\sigma,[d\sigma(X),d\sigma(Y)]\rangle\\
 &=-\frac12\langle\sigma,d\sigma(X)\times d\sigma(Y)\rangle
 =-\frac12(\sigma^*\omega_{\Sp^2})(X,Y).
\end{split}
\end{equation*}
Hence, we have 
\begin{equation}\label{eq: initial error}
	{\bf S}_\eps(\Phi_\Gamma^0,A_\Gamma^0)\vert_{\{\chi_\Gamma=0\}}=(0,(d_g^*f_0)\sigma)
\end{equation}
which is $O(1)$. In other words, the canonical initial approximation obtained by normal slices of the monopole generates large errors in the connection component parallel to the field, in the Lie algebra sense. 

\subsection{The improved approximation and its error}
We first cancel the large connection error \eqref{eq: initial error} in the outer region $\{\chi_\Gamma=0\}$ by adding a connection term $b\sigma$, where $b$ is a real one-form. The equation for $b$ is
\begin{equation}\label{eq: eq for b}
	\Delta_1b=-d_g^*f_0,\quad d_g^*b=0
\end{equation}
where $\Delta_1$ is the Hodge Laplacian acting on 1-forms. 
Indeed, the corrected pair on $\{\chi_\Gamma=0\}$ is $(\sigma,A_\sigma+b\sigma)$. Since $d_{A_\sigma}\sigma=0$ and $[\sigma,\sigma]=0$, we have
\begin{equation*}
	d_{A_\sigma+b\sigma}\sigma=0,\qquad
	F_{A_\sigma+b\sigma}=F_{A_\sigma}+d_{A_\sigma}(b\sigma)
	=\sigma(f_0+db).
\end{equation*}
Thus the Higgs equation remains satisfied, while the connection equation reduces to
\begin{equation*}
	{\bf S}_\eps(\sigma,A_\sigma+b\sigma)
	=\bigl(0,\sigma d_g^*(f_0+db)\bigr).
\end{equation*}
and imposing the Coulomb condition $d_g^*b=0$, the connection error becomes $d_g^*f_0+\Delta_1b$.

\medskip
The existence of a solution to \eqref{eq: eq for b} is the content of \Cref{lem: exterior correction} below. Since the solution is in general not smooth on $\Gamma$ we need a precise asymptotic description to match it with a smooth replacement. Recall the smooth orthonormal normal frame $(\nu_1,\nu_2,\nu_3)$ fixed above using $F$. In local coordinates $(q^j)$ on $\Gamma$, set $e_j=\partial_{q^j}$ and define the smooth normal connection coefficients by
\begin{equation}\label{eq: omega}
	\nabla^\perp_{e_j}\nu_b=\omega_{jb}^a\nu_a,\qquad \omega_{jb}^a=-\omega_{ja}^b,
\end{equation}
The second relation follows by differentiating $\langle\nu_a,\nu_b\rangle=\delta_{ab}$. Thus $\omega_j=(\omega_{jb}^a)_{a,b}\in\so(3)$, and the local expression $\omega\coloneqq\omega_jdq^j$ defines a smooth form in $\Omega^1(\Gamma;\so(3))$. Its lift $\rho\coloneqq\rho_jdq^j\in\Omega^1(\Gamma;\su(2))$ under the adjoint isomorphism is determined by
\begin{equation}\label{eq: rho}
	[\rho_j,T_b]=\omega_{jb}^{a}T_a\qquad\Longleftrightarrow\qquad \rho_j=\frac14\eps_{abc}\omega_{ja}^{b}T_c.
\end{equation}
\begin{lemma}\label{lem: exterior correction}
There is a unique distributional solution $b$ of \eqref{eq: eq for b} with $b\perp\ker\Delta_1$. It is smooth on $M\setminus\Gamma$ and belongs to $L^\infty(M)\cap W^{2,p}(M;T^*M)$ for every $1<p<\frac32$. Moreover, $b$ can be decomposed as $b=b_s+b_r$, where $b_r\in C^{0,\gamma}(M;T^*M)$ for every $0<\gamma<1$ and, on $\Tub_\delta(\Gamma)\setminus\Gamma$,
\begin{equation*}
 b_s=\chi_\Gamma\langle\pi^*\rho,\sigma\rangle=\chi_\Gamma\langle \rho_j,\sigma\rangle dq^j,
\end{equation*}
where $\pi\colon\Tub_\delta(\Gamma)\to\Gamma$ is the nearest-point projection and $b_s$ is understood to be zero outside the tube.
\end{lemma}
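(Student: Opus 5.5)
The approach is to solve \eqref{eq: eq for b} by Hodge theory on the closed manifold $M$, and then to read off the singular behaviour near $\Gamma$ from a local model computation. The first step is to check the regularity of the data. In Fermi coordinates $\sigma$ is, to leading order, the radial map $y/|y|$ in the normal variables. Hence $f_0=-\frac12\sigma^*\omega_{\Sp^2}$ is homogeneous of degree $-2$ in $|y|$, so $f_0\in L^p$ for $p<\frac32$. The distribution $d_g^*f_0$ then lies in $W^{-1,p}$, and the equation is posed in this weak sense.

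The solvability condition is that $d_g^*f_0$ is $L^2$-orthogonal to harmonic $1$-forms. For harmonic $h$ we have $\langle d^*f_0,h\rangle=\langle f_0,dh\rangle=0$, because harmonic forms on a closed manifold are closed; alternatively, $b_1(M)=0$ makes $\ker\Delta_1$ trivial. Elliptic $L^p$ theory for $\Delta_1$ then produces a unique $b\perp\ker\Delta_1$ with $b\in W^{1,p}$. The Coulomb condition comes for free. Applying $d^*$ to the equation gives $\Delta_0 d^*b=d^*d^*(\cdots)=0$, so $d^*b$ is a constant of integral zero, hence zero.

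Smoothness on $M\setminus\Gamma$ follows from local elliptic regularity, since $f_0$ is smooth there. One caution about the claimed $W^{2,p}$ regularity: it cannot come directly from the data, which only give $W^{1,p}$. It will instead follow from the decomposition, because $b_s$ is homogeneous of degree $0$ and so has second derivatives of order $|y|^{-2}$, which lie in $L^p$ for $p<\frac32$.

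The core of the proof is the decomposition $b=b_s+b_r$. I would compute $\Delta_1 b_s+d^*f_0$ in Fermi coordinates. Write $b_s=\chi_\Gamma\langle\rho_j,\sigma\rangle dq^j$. The function $\langle\rho_j,\sigma\rangle$ is a linear function of $y/|y|$, so it is a degree-one spherical harmonic divided by nothing: it is homogeneous of degree $0$, and its flat normal Laplacian is $2|y|^{-2}\langle\rho_j,\sigma\rangle$.

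On the other side, $f_0$ decomposes into two parts:
\begin{itemize}
\item a purely normal part, $-\frac12$ times the pullback of the area form, which is co-closed in the flat normal model;
\item mixed terms $dq^j\wedge dy^a$, produced because $\sigma$ is transported by the normal frame $(\nu_a)$. These arise from the $q$-derivatives $\partial_j\sigma=[\rho_j,\sigma]+O(|y|)$, using \eqref{eq: rho}.
\end{itemize}
The target is that the mixed part of $d^*f_0$, namely the normal divergence of terms of order $|y|^{-1}$, has leading part of order $|y|^{-2}$ that matches $-\Delta_1 b_s$ exactly. This identity is the definition of $\rho$ through $\omega^a_{jb}$ at work.

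After this cancellation the remainder $d^*f_0+\Delta_1b_s$ should be $O(|y|^{-1})$ plus terms in divergence form with $O(1)$ coefficients. These corrections come from curvature and the second fundamental form in the metric expansion $g=g_\Gamma+O(|y|)$, together with the cutoff, which contributes only smooth terms. Such a remainder lies in $L^p$ for every $p<3$, or in $W^{-1,p}$ for all $p<\infty$. Then $b_r=b-b_s$ solves $\Delta_1 b_r=$ (this remainder), with the source orthogonal to $\ker\Delta_1$ as before. Elliptic estimates put $b_r$ in $W^{1,p}$ for every $p<\infty$, and Morrey's embedding gives $b_r\in C^{0,\gamma}$ for every $\gamma<1$. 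Since $b_s$ is bounded, $b\in L^\infty$ follows.

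The main obstacle is the exact cancellation of the $|y|^{-2}$ terms. It requires careful bookkeeping in Fermi coordinates. One needs the variation of $\sigma$ along $\Gamma$, which is given by $\ad\rho$, and the sign conventions in \eqref{eq: omega} and \eqref{eq: rho}. The normal-normal block must be co-closed to leading order; this holds because it is the pullback of a closed form by a degree-$0$ map in $\R^3$, and in the flat model $d^*\omega_{\Sp^2}(y/|y|)$ is zero away from the origin. Any $\delta$-mass at $y=0$ is a concern, but this is codimension three with a $|y|^{-2}$ singularity, which integrates fine. Uniqueness is immediate from the triviality of $\ker\Delta_1$ on the orthogonal complement.
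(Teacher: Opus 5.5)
Your overall route is the paper's: Hodge inversion on closed $M$, orthogonality to harmonic forms, $d_g^*b=0$ from $\Delta_0d_g^*b=0$, and cancellation of the $r^{-2}$ terms of $\Delta_1b_s$ and $d_g^*f_0$ through the degree-one harmonic $\langle\rho_j,\sigma\rangle$.

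\textbf{The gap.} The genuine gap is the step you dismiss at the end: whether the distribution $d_g^*f_0$ has a part supported on $\Gamma$. Integrability cannot decide this. The form $df_0$ has exactly the same $r^{-2}$ singularity, yet $df_0=\pm2\pi\,\delta_\Gamma$ as currents, because $f_0$ has flux $-2\pi$ through every normal sphere. Likewise, being the pullback of a closed form gives closedness of the normal block off $\Gamma$, not co-closedness; the latter is the identity $\partial_{y^a}(f_0)_{ab}=0$.

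The missing idea is $\iota_{\partial_r}f_0=0$. Since $\sigma$ is constant along normal geodesics, the boundary term on $\partial\Tub_t(\Gamma)$ in Stokes' formula on $M\setminus\Tub_t(\Gamma)$ vanishes identically for every test form, which is exactly how the paper argues. Without this, $\Delta_1b_r$ could contain a term $c(q)\,\delta_\Gamma$. Its potential grows like $r^{-1}$ and would destroy both $b\in L^\infty$ and $b_r\in C^{0,\gamma}$. With it, $d_g^*f_0$ is the pointwise $O(r^{-2})$ function, hence lies in $L^p$ for $p<\frac32$. Then $b\in W^{2,p}$ follows directly from the data, and your caution on that point is unnecessary.

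\textbf{The cancellation.} Two further points need correcting.

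First, you never use minimality, but it is needed. The trace of the second fundamental form contributes a normal term proportional to $\langle\vec H,\nu_c\rangle(f_0)_{cb}\,dy^b$, of order $r^{-2}$. The purely tangential $b_s$ cannot absorb it; it vanishes only because $\vec H=0$.

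Second, you misattribute the source of the tangential $r^{-2}$ term. In the paper's Fermi coordinates, normalised by $F(\mathscr F_\Gamma(q,y))=y$, one has $\sigma=y^aT_a/|y|$ exactly. So $\partial_{q^j}\sigma=0$ and $f_0$ has no mixed components. The term comes from the mixed metric coefficient $g_{ja}=\omega^a_{jb}y^b+O(|y|^2)$, i.e.\ from the $O(|y|)$ metric correction that you assign to the remainder.

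Your picture is right in the horizontal frame $X_j=\partial_{q^j}-\omega^a_{jb}y^b\partial_{y^a}$. There the normal and mixed metric defects are $O(|y|^2)$, and the commutators $[X_j,\partial_{y^a}]=\omega^c_{ja}\partial_{y^c}$ drop out at this order by antisymmetry of $\omega$. But in this frame $X_j\sigma=-[\rho_j,\sigma]$, not $+[\rho_j,\sigma]$. With the correct sign, $f_0(\partial_{y^a},X_j)=-\partial_{y^a}\langle\rho_j,\sigma\rangle$, so
$(d_g^*f_0)(X_j)=\sum_a\partial_{y^a}^2\langle\rho_j,\sigma\rangle+O(r^{-1})=-\frac{2}{r^2}\langle\rho_j,\sigma\rangle+O(r^{-1})$,
which cancels $\Delta_1b_s$. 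With your sign the two terms add.

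\textbf{Regularity of $b_r$.} An $O(r^{-1})$ source lies in $L^q$ only for $q<3<n$, so membership in $W^{-1,p}$ for every $p<\infty$ does not follow from Sobolev embedding. You must either write the source as a normal divergence of a bounded field (radial integration in the fibres), or use the kernel bound $|\nabla_xK|\lesssim d_M^{1-n}$ as the paper does.
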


\begin{proof}
	We first argue that $d^*_gf_0\in L^{p}(M,T^*M)$ for $1<p<\frac{3}{2}$.
Put $r=\dist(\cdot,\Gamma)=|y|$ in Fermi coordinates. By \eqref{eq: curvature identity}, we have 
\begin{equation*}
	f_0(X,Y)=-\frac{1}{4}\langle \sigma,[d\sigma(X),d\sigma(Y)]\rangle
\end{equation*}
and since $\sigma=\frac{y^aT_a}r$, we readily check that 
\begin{equation*}
 (f_0)_{ab}=-\frac{\eps_{cab}y^c}{2r^3},\qquad
 (f_0)_{ia}=(f_0)_{ij}=0,\qquad
 \partial_{y^a}(f_0)_{ab}=0,
\end{equation*}
the last identity following by antisymmetry. Thus $d_{\bar g}^*f_0=0$ for the product metric $\bar g=g_\Gamma+\delta_{ab}\,dy^a\,dy^b$. Since $g-\bar g=O(r)$ with bounded first derivatives, comparison of the first-order codifferentials yields
\begin{equation*}
 |d_g^*f_0|=|(d_g^*-d_{\bar g}^*)f_0|
 \le Cr|\partial f_0|+C|f_0|\le Cr^{-2}.
\end{equation*}
The normal volume element is comparable to $r^2\,dr\,d\mathrm{vol}_{\Sp^2}$, from which the claim follows. On $M_t=M\setminus\Tub_t(\Gamma)$ the outward normal is $-\partial_r$ and $\iota_{\partial_r}f_0=0$. Hence, for every harmonic one-form $\zeta$,
\begin{equation*}
 \int_{M_t}\langle d_g^*f_0,\zeta\rangle
 =\int_{M_t}\langle f_0,d\zeta\rangle
 -\int_{\partial M_t}\langle\iota_{-\partial_r}f_0,\zeta\rangle=0.
\end{equation*}
Letting $t\to 0$ we obtain orthogonality with harmonic forms. The Hodge Green operator $G_1$ for $\Delta_1$ therefore gives the unique normalised solution $b=-G_1d_g^*f_0$, with the asserted $W^{2,p}$ and off-core smoothness. Furthermore, $\Delta_0d_g^*b=-(d_g^*)^2f_0=0$ and its average is 0 on each connected component of $M$, thus $d_g^*b=0$.

\smallskip

Next, we prove the decomposition. Since $\langle \rho_j,\sigma\rangle$ is a restriction of a linear function on the sphere $\Sp^2$ (for $q$ fixed), it is a spherical harmonic of degree 1. Using the comparison with the product metric and the bounds $|b_s|+r|\nabla b_s|+r^2|\nabla^2b_s|\leq C$, we see that 
\begin{equation*}
	\Delta_1b_s=\frac{2}{r^2}\langle\rho_j,\sigma\rangle\,dq^j+O(r^{-1})
\end{equation*} 
On the other hand, a careful expansion of $d^*_gf_0$ also shows
\begin{equation*}
	d^*_gf_0=-\frac{2}{r^2}\langle\rho_j,\sigma\rangle\,dq^j+O(r^{-1}).
\end{equation*}
Thus, we get the remainder bound for $b_r\coloneqq b-b_s$
\begin{equation}\label{eq: exterior regular source}
	\Delta_1b_r=R\coloneqq -d^*_gf_0-\Delta_1b_s,\qquad r|R|\leq C.
\end{equation}
The Green kernel estimates $|K(x,z)|\leq Cd_M(x,z)^{2-n}$, $|\nabla_xK(x,z)|\leq Cd_M(x,z)^{1-n}$ imply the stated regularity for $b_r$. We also need regularity along $\Gamma$. Differentiating the smooth Fermi coefficients in the cancellation above gives $|\partial_q^kR|\leq C_k r^{-1}$ for every $k$. In local Fermi coordinates, integration by parts transfers tangential derivatives of the Green kernel to the source; simultaneous tangential derivatives in its two variables preserve its singular order. Thus, up to a smooth term, $\partial_q^k b_r$ is a finite sum of integrals with source $\partial_{q'}^\ell R$, $\ell\leq k$, and kernels satisfying the same two bounds. The preceding Green estimate consequently gives
\begin{equation}\label{eq: exterior tangential regularity}
 \partial_q^k b_r\in C^{0,\gamma}_{\mathrm{loc}}\quad(k\geq0,\ 0<\gamma<1),
 \qquad \beta\coloneqq b_r|_\Gamma\in C^\infty(\Gamma;T^*M|_\Gamma).
\end{equation} 
\end{proof}

Unfortunately, the outer correction $b\sigma$ need not extend even continuously across $\Gamma$ by the singularity of $\sigma$. To repair this, write $\beta\coloneqq b_r\vert_\Gamma=\beta_jdq^j+\beta_a dy^a\in C^\infty(\Gamma;T^*M\vert_\Gamma)$. For any fixed $0<\gamma<1$,
\begin{equation*}
	b\sigma=\langle \rho_j,\sigma\rangle \sigma dq^j+\beta_j\sigma dq^j+\beta_a\sigma dy^a+O(r^\gamma).
\end{equation*}
We match the three leading terms with smooth inner fields.

\begin{itemize}
	\item \emph{The angular part $\langle \rho_j,\sigma\rangle \sigma dq^j$}. Define the connection correction
	\begin{equation*}
		M\coloneqq M_jdq^j,\qquad M_j\coloneqq \rho_j-\alpha_\lambda\left(\tfrac{r}{\eps}\right)\left(\rho_j-\langle \rho_j,\sigma\rangle \sigma\right).
	\end{equation*}
	The smooth radial expansion of $\alpha_\lambda$ gives $M\vert_\Gamma=\rho$, while $\alpha_\lambda\to1$ gives the required angular limit as $\tfrac{r}{\eps}\to\infty$.
	\item \emph{The tangential trace $\beta_j\sigma dq^j$}. Use the lift $\xi_\eps\beta_jdq^j$, where
	\begin{equation}\label{eq: q_eps}
		\xi_\eps\coloneqq q_\eps\sigma,\qquad q_\eps\coloneqq\chi_\Gamma q_\lambda\left(\tfrac{r}{\eps}\right)+(1-\chi_\Gamma).
	\end{equation}
	The radial expansion of $q_\lambda$ makes $\xi_\eps$ smooth at the core, and $q_\lambda\to1$ supplies the matching limit.
	\item \emph{The normal trace $\beta_a\sigma dy^a$}. Use the smooth gauge
	\begin{equation}\label{eq: normal trace gauge}
		g_\eps\coloneqq\exp((\beta_a y^a)\xi_\eps)\in\SU(2).
	\end{equation}
	In the abelian limit its action adds $\sigma\,d(\beta_a y^a)=\beta_a\sigma\,dy^a+y^a(\partial_{q^j}\beta_a)\sigma\,dq^j$, supplying the normal trace.
\end{itemize}
Define
\begin{equation*}
	A^{\inn}\coloneqq g_\eps^{-1}\bigl(A_\Gamma^0+M+\xi_\eps\beta_jdq^j\bigr)g_\eps+g_\eps^{-1}dg_\eps.
\end{equation*}
Since $g_\eps$ commutes with $\Phi_\Gamma^0$, the Higgs field in this gauge transformation is unchanged.

We match the inner and outer connections at radius $\sqrt\eps$, using
\begin{equation}\label{eq: s_eps}
 \kappa_\eps(\mathscr F_\Gamma(q,y))\coloneqq\eta\left(\frac{|y|}{\sqrt\eps}\right),
\end{equation}
extending $\kappa_\eps$ by zero outside the tube. Define
\begin{equation*}
 \Phi_\Gamma^{\mathrm c}\coloneqq\Phi_\Gamma^0,\qquad A_\Gamma^{\mathrm c}\coloneqq\kappa_\eps A^{\inn}+(1-\kappa_\eps)(A_\Gamma^0+b\sigma).
\end{equation*}

\subsection{Vertical shift and phase adjustment}
Lastly, we introduce the parameters 
\begin{equation*}
	h\in C^{2,\tau}(\Gamma,\R^3),\qquad \theta\in C^{2,\tau}(\Gamma,\R)
\end{equation*}
satisfying
\begin{equation}\label{eq: assumptions htheta}
 	\|h\|_{C^{2,\tau}}\leq K\eps|\log\eps|^2,\qquad \|\theta\|_{C^{2,\tau}}\leq K,\qquad \int_\Gamma\theta=0. 
\end{equation}
Using the fixed normal trivialisation, for small $h$ define the diffeomorphism $\mathscr{T}_h\colon M\to M$
\begin{equation*}
	\mathscr T_h(\mathscr F_\Gamma(q,y))\coloneqq
	\mathscr F_\Gamma\left(q,y+\eta\left(\frac{2|y|}{\delta}\right)h(q)\right),
\end{equation*}
extended by the identity outside $\Tub_\delta(\Gamma)$, so that $\mathscr T_h(\Gamma)=\Gamma_h\coloneqq\exp_\Gamma h$. Set
\begin{equation}\label{eq: phase gauge}
	\bar\theta\coloneqq\chi_\Gamma(\theta\circ\pi),\qquad
	u_\theta\coloneqq\exp(\bar\theta\xi_\eps),
\end{equation}
where $\bar\theta=0$ outside the tube.
Define the shifted compact approximation by
\begin{equation}\label{eq: shifted phase approximation}
\begin{split}
 \Phi_{h,\theta}^{\mathrm c}&\coloneqq(\mathscr T_h^{-1})^*\Phi_\Gamma^{\mathrm c},\\
 A_{h,\theta}^{\mathrm c}&\coloneqq(\mathscr T_h^{-1})^*\bigl(u_\theta^{-1}A_\Gamma^{\mathrm c}u_\theta+u_\theta^{-1}du_\theta-\xi_\eps d\bar\theta\bigr).
\end{split}
\end{equation}
The phase does not change the Higgs field because $[\xi_\eps,\Phi_\Gamma^{\mathrm c}]=0$. Differentiating the connection at $\theta=0$ gives
\begin{equation*}
 \left.\partial_t A_{0,t\theta}^{\mathrm c}\right|_{t=0}=d_{A_\Gamma^{\mathrm c}}(\bar\theta\xi_\eps)-\xi_\eps d\bar\theta=\bar\theta\,d_{A_\Gamma^{\mathrm c}}\xi_\eps.
\end{equation*}
Thus the subtraction in \eqref{eq: shifted phase approximation} removes the term $\xi_\eps d\bar\theta$ from the infinitesimal gauge variation.

\section{The approximate solution: noncompact case}\label{sec: noncompact approximation}
Let $M$ be a noncompact admissible ambient space and $\Gamma\subset M$ an admissible submanifold. We retain the notation from the preceding section, and set $\lambda=0$, so that the normal profiles are $\varphi_0$ and $\alpha_0$.

\subsection{The initial approximation}
Let $G$ be the positive minimal Green function in \Cref{def: admissible amb noncomp}, normalised by $\Delta_{g,x}G(x,q)=\delta_q$. Define the scalar potential
\begin{equation}\label{eq: noncompact scalar potential}
 H(x)\coloneqq2\pi\int_\Gamma G(x,q)\,d\mathrm{vol}_\Gamma(q),
 \qquad x\in M\setminus\Gamma.
\end{equation}
It satisfies $\Delta_gH=2\pi\delta_\Gamma$ distributionally and tends to zero at infinity. Minimality of $\Gamma$ and this normalisation give
\begin{equation*}
 H(x)=\frac1{2r(x)}+O(1),\qquad r(x)\coloneqq\dist_g(x,\Gamma),
\end{equation*}
uniformly near $\Gamma$, matching the Coulomb term in the BPS Higgs' field profile \eqref{eq: BPS}.
Using the cutoffs in \eqref{eq: s_eps} and \Cref{sec: approx compact}, set
\begin{equation*}
\begin{split}
 \varphi_\eps&\coloneqq\kappa_\eps\varphi_0\left(\frac r\eps\right)+(1-\kappa_\eps)(1-\eps H),\\
 \alpha_\eps&\coloneqq\chi_\Gamma\alpha_0\left(\frac r\eps\right)+(1-\chi_\Gamma).
\end{split}
\end{equation*}
The profile terms are defined in the Fermi tube and extended by zero after multiplication by their cutoffs. 
The reason behind the different cut-off scale in the field part is the slow Higgs' tail \eqref{eq: BPS}. Confining the matching error to a shrinking tube, together with the improvement below, ensures enough global smallness of the error of approximation.

The initial approximation is
\begin{equation*}
 \Phi_\Gamma^0\coloneqq\varphi_\eps\sigma,
 \qquad A_\Gamma^0\coloneqq\alpha_\eps A_\sigma.
\end{equation*}
This pair extends smoothly across $\Gamma$, agrees with the rescaled BPS monopole near $\Gamma$, and equals $((1-\eps H)\sigma,A_\sigma)$ outside the tube. In particular, $|\Phi_\Gamma^0|\to1$ at infinity.

\medskip

As in the compact case, this choice produces an $O(1)$ error in the connection side and thus needs improvement. 

\subsection{The improved approximation}
\begin{lemma}\label{lem: noncompact exterior correction}
There is a real one-form $b$, smooth on $M\setminus\Gamma$ and coclosed near $\Gamma$, such that
\begin{equation*}
 f_1\coloneqq f_0+db,\qquad d_g^*f_1=0,\qquad \|f_1\|_{C^{2,\tau}_\nu(M\setminus\Tub_\delta(\Gamma))}<\infty.
\end{equation*}
Near $\Gamma$ it has the decomposition $b=b_s+b_r$ of \Cref{lem: exterior correction}, with $b_r\in C^{0,\gamma}$ for every $0<\gamma<1$ and smooth tangential trace $\beta\coloneqq b_r|_\Gamma$.
\end{lemma}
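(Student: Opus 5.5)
The plan is to reduce the noncompact problem to a compactly supported one using the topological hypothesis $H^2_{\mathrm{dR}}(M\setminus K)=0$, and then solve the remaining Hodge equation with the weighted isomorphism of \Cref{def: admissible amb noncomp}. The singularity near $\Gamma$ is treated exactly as in \Cref{lem: exterior correction}.

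\emph{Step 1: remove $f_0$ near infinity.} Since $f_0=-\frac12\sigma^*\omega_{\Sp^2}$ is closed on $M\setminus\Gamma$ and $\Gamma\subset K$, the hypothesis gives a smooth one-form $a$ on $M\setminus K$ with $da=-f_0$. Choose a smooth cutoff $\psi$ with $\psi\equiv1$ outside a compact set $K'\supset K$ and $\psi\equiv0$ near $K$. Then
\begin{equation*}
 f_0'\coloneqq f_0+d(\psi a)
\end{equation*}
has the following properties:
\begin{itemize}
 \item it vanishes outside $K'$;
 \item it is closed;
 \item it coincides with $f_0$ near $\Gamma$.
\end{itemize}
We look for $b=\psi a+b'$, so that $f_1=f_0'+db'$.

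\emph{Step 2: solve for $b'$ with a local parametrix near $\Gamma$.} The natural equation is $\Delta_1b'=-d_g^*f_0'$. Away from $\Gamma$ the source is smooth and compactly supported. Near $\Gamma$, however, it is only $O(r^{-2})$, so it does not lie in $C^{0,\tau}_{\nu+2}$.

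\begin{itemize}
 \item First subtract the explicit singular part $b_s=\chi_\Gamma\langle\pi^*\rho,\sigma\rangle$. The cancellation computed in \Cref{lem: exterior correction} leaves a source $R$ with $r|R|+r|\partial_q^kR|\le C$.
 \item Next, solve $\Delta_1 b_{\mathrm{loc}}=\tilde\chi R$ on a fixed compact neighbourhood of $\Gamma$, for instance via the Dirichlet problem or a local Green kernel with the bounds $|K|\lesssim d^{2-n}$, $|\nabla K|\lesssim d^{1-n}$. The kernel argument of \Cref{lem: exterior correction}, including tangential differentiation, gives $b_{\mathrm{loc}}\in C^{0,\gamma}$ with smooth trace on $\Gamma$. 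Multiply $b_{\mathrm{loc}}$ by a cutoff.
 \item The remaining error is smooth and compactly supported, hence lies in $C^{0,\tau}_{\nu+2}$. The weighted isomorphism for $p=1$ then produces $b_w\in C^{2,\tau}_\nu$.
\end{itemize}
Setting $b'=b_s+\tilde\chi b_{\mathrm{loc}}+b_w$, we get $\Delta_1b'=-d_g^*f_0'$ with $b_r\coloneqq\tilde\chi b_{\mathrm{loc}}+b_w$ of the required regularity near $\Gamma$.

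\emph{Step 3: coclosedness and $d_g^*f_1=0$.} We have $d_g^*f_1=d_g^*f_0'+d_g^*db'=-d\,d_g^*b'$, so it suffices to show $d_g^*b'=0$. Applying $d_g^*$ to the equation gives
\begin{equation*}
 \Delta_0 d_g^*b'=-(d_g^*)^2f_0'=0
\end{equation*}
distributionally across $\Gamma$. The source is $L^p$ with $p<\frac32$, and $b'$ lies in $W^{2,p}$ locally near $\Gamma$, so the identity holds across $\Gamma$ as in the compact lemma. Thus $u=d_g^*b'$ is harmonic on $M$ and decays at infinity, since $b_w\in C^{2,\tau}_\nu$ and the other pieces are compactly supported. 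By the maximum principle on exhaustions, using that $M$ is nonparabolic thanks to the minimal Green function, or directly by uniqueness in the $p=0$ weighted isomorphism, $u=0$.

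It follows that $b=\psi a+b'$ is coclosed near $\Gamma$, where $\psi=0$, and $d_g^*f_1=0$.

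\emph{Step 4: decay and the main obstacle.} Outside $\Tub_\delta(\Gamma)\cup K'$ we have $f_1=db_w$, which lies a priori in $C^{1,\tau}_{\nu+1}$. Since $f_1$ is closed and coclosed there, interior elliptic estimates with bounded geometry on the balls $B_{r_0}(x)$ upgrade this to $C^{2,\tau}$ with the same weight. On the compact region, smoothness off $\Gamma$ gives finiteness, so $\|f_1\|_{C^{2,\tau}_\nu(M\setminus\Tub_\delta(\Gamma))}<\infty$.

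The main obstacle I expect is Step 2: making the local parametrix compatible with the global weighted solve, so that the decomposition $b=b_s+b_r$ keeps the $C^{0,\gamma}$ and smooth-trace conclusions of \Cref{lem: exterior correction}. My first attempt would be to reuse that proof verbatim, with the compact-manifold Green operator replaced by a Dirichlet Green operator on a fixed tube.
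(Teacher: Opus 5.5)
Your argument is correct and follows the paper's proof. It uses the cohomological cutoff via $H^2_{\mathrm{dR}}(M\setminus K)=0$, then a compact solve near $\Gamma$ followed by a cutoff, then the weighted inverse of $\Delta_1$ for the smooth compactly supported remainder. It finishes with the maximum-principle argument showing that the harmonic function $d_g^*(\cdot)$ vanishes, and interior estimates for the harmonic $f_1$ at infinity.

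The only variation is in the singular local problem. You solve it by a Dirichlet problem or parametrix on a tube. The paper instead solves $\Delta_1\widetilde b=-d^*\widetilde f_0$ on an auxiliary closed manifold $\widetilde M$, which requires checking orthogonality to harmonic one-forms; your route avoids that step.

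One harmless slip: with the paper's weighted norms, $b_w\in C^{2,\tau}_\nu$ only gives $db_w\in C^{1,\tau}_\nu$, not $C^{1,\tau}_{\nu+1}$. Weight $\nu$ is all the interior-estimate step needs, so the conclusion is unaffected.
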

\begin{proof}
First, we make the curvature compactly supported. Since $\Gamma\subset K$ and $H^2_{\mathrm{dR}}(M\setminus K)=0$, write $f_0=d\alpha$ outside $K$. Cutting off $-\alpha$ gives a smooth one-form $a$, zero near $\Gamma$, for which
\begin{equation*}
 \widetilde f_0\coloneqq f_0+da=0\qquad\text{outside a compact set}.
\end{equation*}
Let $\widetilde M$ be an auxiliary closed manifold agreeing with $M$ on $\mathrm{spt}\widetilde f_0$. There, we can solve $\Delta_1\widetilde b=-d^*\widetilde f_0$ orthogonally to harmonic one-forms, as in \Cref{lem: exterior correction}. The source is orthogonal to harmonic one-forms because their exterior derivative vanishes. Cutting off $\widetilde b$ away from $\Gamma$ to obtain a compactly supported $b_0$, coclosed near $\Gamma$, with
\begin{equation*}
 v\coloneqq-d^*\widetilde f_0-\Delta_1b_0\in C_c^\infty(M;T^*M).
\end{equation*}
Next, apply the inverse in \Cref{def: admissible amb noncomp}, $u\coloneqq \Delta_1^{-1}v$ and set 
\begin{equation*}
b\coloneqq a+b_0+u.
\end{equation*}
The function $d^*(b_0+u)$ is distributionally harmonic, hence smooth, and tends to zero at infinity, hence $d^*(b_0+u)\equiv 0$ by maximum principle. Therefore
\begin{equation*}
 d^*(f_0+db)=d^*\widetilde f_0+d^*d(b_0+u)=d^*\widetilde f_0+\Delta_1(b_0+u)=0.
\end{equation*}
Outside a compact set $f_1=du$ is harmonic. The weighted bound for $u$ and interior estimates in bounded-geometry charts give the claimed bound for $f_1$.

Finally, near $\Gamma$ the equation is $\Delta_1b=-d^*f_0$. The local decomposition in \Cref{lem: exterior correction} applies. 
\end{proof}

The scalar potential has a smooth regular trace
\begin{equation*}
 H_\Gamma\coloneqq\left.\left(H-\frac1{2r}\right)\right|_\Gamma\in C^\infty(\Gamma).
\end{equation*}
Indeed, minimality gives $\Delta_g(H-\chi_\Gamma/(2r))=O(r^{-1})$, also after tangential differentiation, so the preceding local argument applies. In the following formulas, fields on $\Gamma$ are pulled back by $\pi$.

To match the regular term $-\eps H_\Gamma$, use the infinitesimal mass variation of the BPS monopole:
\begin{equation}\label{eq: noncompact mass variation}
\begin{split}
 \left.\partial_m\left[m\varphi_0\left(\frac{mr}{\eps}\right)\right]\right|_{m=1}&=\varphi_0\left(\frac r\eps\right)+\frac r\eps\varphi_0'\left(\frac r\eps\right),\\
 \left.\partial_m\alpha_0\left(\frac{mr}{\eps}\right)\right|_{m=1}&=\frac r\eps\alpha_0'\left(\frac r\eps\right).
\end{split}
\end{equation}
Set ${\bf C}_\eps\coloneqq(C_\eps^\Phi,C_\eps^A)$, where
\begin{equation}\label{eq: noncompact mass correction}
\begin{split}
 C_\eps^\Phi&\coloneqq-\eps H_\Gamma\left[\varphi_0\left(\frac r\eps\right)+\frac r\eps\varphi_0'\left(\frac r\eps\right)\right]\sigma,\\
 C_\eps^A&\coloneqq-\eps H_\Gamma\frac r\eps\alpha_0'\left(\frac r\eps\right)\left[A_\sigma-(\rho_j-\langle\rho_j,\sigma\rangle\sigma)dq^j\right].
\end{split}
\end{equation}
Retain $\xi_\eps$ and $g_\eps$ from the compact construction, with $\lambda=0$ and the trace $\beta$ just obtained. Define
\begin{equation*}
\begin{split}
 M&\coloneqq\left[\rho_j-\alpha_0\left(\frac r\eps\right)(\rho_j-\langle\rho_j,\sigma\rangle\sigma)\right]dq^j+C_\eps^A,\\
 A^{\inn}&\coloneqq g_\eps^{-1}\bigl(A_\Gamma^0+M+\xi_\eps\beta_jdq^j\bigr)g_\eps+g_\eps^{-1}dg_\eps.
\end{split}
\end{equation*}
The improved approximation is
\begin{equation*}
\begin{split}
 \Phi_\Gamma^{\mathrm{nc}}&\coloneqq\Phi_\Gamma^0+\kappa_\eps C_\eps^\Phi,\\
 A_\Gamma^{\mathrm{nc}}&\coloneqq\kappa_\eps A^{\inn}+(1-\kappa_\eps)(A_\Gamma^0+b\sigma).
\end{split}
\end{equation*}
All terms supported in the tube are extended by zero. Introducing the translation and phase as in \eqref{eq: shifted phase approximation}, set
\begin{equation}\label{eq: noncompact shifted approximation}
\begin{split}
 \Phi_{h,\theta}^{\mathrm{nc}}&\coloneqq(\mathscr T_h^{-1})^*\Phi_\Gamma^{\mathrm{nc}},\\
 A_{h,\theta}^{\mathrm{nc}}&\coloneqq(\mathscr T_h^{-1})^*\bigl(u_\theta^{-1}A_\Gamma^{\mathrm{nc}}u_\theta+u_\theta^{-1}du_\theta-\xi_\eps d\bar\theta\bigr).
\end{split}
\end{equation}

In statements common to both cases, we denote with $(\Phi_{h,\theta},A_{h,\theta})$ the corresponding compact or noncompact approximation.

\section{Norms}\label{sec: norms} Here we define the topology in which we measure the error of approximation and the corresponding perturbation. The norms will depend on $\eps>0$, the parameters $h$ and $\theta$ and the background approximation $(\Phi_{h,\theta},A_{h,\theta})$. To define the norm, we need a few preliminary definitions.

\begin{itemize}
	\item Put $r\coloneqq\dist(\cdot,\Gamma)$, so that $r=|y|$ on $\Tub_\delta(\Gamma)$. Fix a constant $M_0>0$ and consider the dyadic covering 
\begin{equation*}
	\Omega_j\coloneqq \{r_j<r<4r_j\},\qquad r_j\coloneqq 2^{j-1}M_0\eps ,\qquad j=1,\dots,J_\eps
\end{equation*}
where $J_\eps\coloneqq 1+\lfloor\log_2\frac{\delta}{4M_0\eps} \rfloor=O(|\log\eps|)$. Also, set
\begin{equation*}
	\begin{aligned}
	\Omega_0&\coloneqq \{r<2M_0\eps\},& r_0&\coloneqq\eps,\\
	\Omega_{J_\eps+1}&\coloneqq\Omega_\out\coloneqq \{r>\delta/4\},& r_{J_\eps+1}&\coloneqq\delta.
	\end{aligned}
\end{equation*}
Define the transported regions as $\Omega_{j,h}\coloneqq \mathscr{T}_h(\Omega_j)$, $j=0,\dots,J_\eps+1$.
\item For an $\su(2)$-valued tensor $T$ defined on any precompact region of $M\setminus \Gamma_h$, we have the following smooth splitting into longitudinal and transverse components:
\begin{equation*}
	T=T^\perp+T^\parallel,\qquad T^\parallel\coloneqq \frac{\langle T,\Phi_{h,\theta}\rangle\Phi_{h,\theta}}{|\Phi_{h,\theta}|^2},\qquad T^\perp\coloneqq T-T^\parallel.
\end{equation*}
\item For a tensor $T$ on a domain $\Omega$, a H\"older exponent $\tau\in (0,1)$ and a scale $s>0$, define the H\"older norm 
\begin{equation*}
	\|T\|_{\tau,s,\Omega}\coloneqq \|T\|_{L^\infty(\Omega)}+s^\tau[T]_{\tau,\Omega}
\end{equation*}
where the H\"older seminorm is computed on pairs of distinct points whose distance is at most $s$, and tensors are compared by parallel transport for the Levi--Civita connection and $A_{h,\theta}$ along the unique minimising geodesic (taking $\delta$ below the injectivity radius).
\end{itemize}
\subsubsection*{The compact case} Let $M$ be a compact admissible ambient space. 
For a $C^{0,\tau}$ pair $(\Psi,B)$ on $M$, define the source norm
\begin{equation}\label{eq: source norm}
	\begin{split}
		\|(\Psi,B)\|_{Y_{\eps,h,\theta}}&\coloneqq
			r_0^2\bigl(\|\Psi\|_{\tau,r_0,\Omega_{0,h}}+\eps\|B\|_{\tau,r_0,\Omega_{0,h}}\bigr)\\
			&+\sum_{j=1}^{J_\eps+1}r_j^2\bigl(\|\Psi^\parallel\|_{\tau,r_j,\Omega_{j,h}}+\eps\|B^\parallel\|_{\tau,r_j,\Omega_{j,h}}\bigr)\\
			&+\max_{1\leq j\leq J_\eps+1}r_j^2\bigl(\|\Psi^\perp\|_{\tau,r_j,\Omega_{j,h}}+\eps\|B^\perp\|_{\tau,r_j,\Omega_{j,h}}\bigr).
	\end{split}
\end{equation}
Define the solution norm of a $C^{2,\tau}$ pair by
\begin{equation}\label{eq: sol norm}
	\begin{split}
	\|(\Psi,B)\|_{X_{\eps,h,\theta}}
	&\coloneqq\max_{0\leq j\leq J_\eps+1}\sum_{k=0}^2r_j^k
	\bigl(\|\nabla_{A_{h,\theta}}^k\Psi\|_{\tau,r_j,\Omega_{j,h}}+\eps\|\nabla_{A_{h,\theta}}^kB\|_{\tau,r_j,\Omega_{j,h}}\bigr)\\
	&+\max_{1\leq j\leq J_\eps+1}\frac{r_j^2}{\eps^2}
	\bigl(\|\Psi^\perp\|_{\tau,r_j,\Omega_{j,h}}+\eps\|B^\perp\|_{\tau,r_j,\Omega_{j,h}}\bigr)\\
	&+\frac{2\lambda}{\eps^2}\max_{1\leq j\leq J_\eps+1}r_j^2
	\|\Psi^\parallel\|_{\tau,r_j,\Omega_{j,h}}.
	\end{split}
\end{equation}
The terms ``solution norm'' and ``source norm'' come from the linearised equation, where we use the $Y_{\eps,h,\theta}$ norm of the right-hand side to control the $X_{\eps,h,\theta}$ norm of the solution, see \Cref{prop: uniform inverse} below.

\subsubsection*{The noncompact case}
In the noncompact case, the definition of the norms \eqref{eq: source norm} and \eqref{eq: sol norm} is almost identical, with only two differences. The first is setting $\lambda=0$ in \eqref{eq: sol norm}, the second is including the decay at infinity:
let $\nu>0$ be the exponent from \Cref{def: admissible amb noncomp}. We replace the terms in \eqref{eq: source norm} and \eqref{eq: sol norm}, corresponding to the index $j=J_\eps+1$ with the following weighted exterior norms:
\begin{equation}\label{eq: noncompact exterior source norm}
 \|(\Psi,B)\|_{Y^\nu_{\eps,h,\theta}(\Omega_{J_\eps+1,h})}
 \coloneqq\|\Psi\|_{C^{0,\tau}_{\nu+2}}+\eps\|B\|_{C^{0,\tau}_{\nu+2}},
\end{equation}
and
\begin{equation}\label{eq: noncompact exterior solution norm}
\begin{split}
 \|(\Psi,B)\|_{X^\nu_{\eps,h,\theta}(\Omega_{J_\eps+1,h})}
 &\coloneqq \|\Psi^\parallel\|_{C^{2,\tau}_\nu}+\eps\|B^\parallel\|_{C^{2,\tau}_\nu}\\
 &+\|\Delta_{A_{h,\theta}}\Psi^\parallel\|_{C^{0,\tau}_{\nu+2}}+\eps\|\Delta_{A_{h,\theta}}B^\parallel\|_{C^{0,\tau}_{\nu+2}}\\
 &+\|\Psi^\perp\|_{C^{2,\tau}_{\nu+2}}+\eps\|B^\perp\|_{C^{2,\tau}_{\nu+2}}\\
 &+\eps^{-2}\left(\|\Psi^\perp\|_{C^{0,\tau}_{\nu+2}}
 +\eps\|B^\perp\|_{C^{0,\tau}_{\nu+2}}\right).
\end{split}
\end{equation}
Here the weighted norms are covariant and taken on $\Omega_{J_\eps+1,h}$, and $\Delta_{A_{h,\theta}}$ denotes the covariant Hodge Laplacian in degree zero or one, respectively. We denote by $X_{\eps,h,\theta}$ and $Y_{\eps,h,\theta}$ the Banach spaces of pairs with finite solution and source norm, respectively, using the full H\"older spaces.

\begin{remark}
The norms just defined deserve some explanation.
The splitting into longitudinal and transverse components reflects the different action of the Higgs field on these directions. Away from the core, the transverse equations contain a positive zeroth-order term of size $\eps^{-2}$, whereas the longitudinal connection equation is of Poisson type. The longitudinal Higgs equation has the additional term $2\lambda\eps^{-2}$, which disappears in the noncompact case, when $\lambda=0$. The corresponding weights in the solution norm \eqref{eq: sol norm} record this zeroth-order control. The core is treated separately, without using a projection that becomes undefined at zeros of the Higgs field.

The dyadic decomposition connects the core scale $\eps$ to the fixed tubular scale. The factor $\eps$ multiplying the connection components in \eqref{eq: source norm} and \eqref{eq: sol norm} accounts for their scaling as one-forms.

The distinction between sums and maxima is already visible in the three-dimensional Poisson model. Write $f=\sum_j f_j$, where $f_j$ is supported in a normal annulus of radius comparable to $r_j$. The Newtonian potential $u$ of $f$ satisfies
$$
|u(0)|\leq \frac{1}{4\pi}\sum_j\int_{\R^3}\frac{|f_j(y)|}{|y|}\,dy\lesssim \sum_j r_j^2\|f_j\|_{L^\infty}.
$$
For example, the source $f(y)=|y|^{-2}\mathbf{1}_{\{\eps<|y|<\delta\}}$ gives $u(0)=\log(\delta/\eps)$, although each weighted annular contribution is bounded independently of $\eps$. Thus a maximum over scales misses the possible accumulation, whereas the sum in the longitudinal source norm \eqref{eq: source norm} captures it and yields a bound whose constant is independent of the number of annuli.

The positive transverse zeroth-order term prevents the same accumulation, so a maximum suffices there. Finally, the solution norm \eqref{eq: sol norm} takes maxima because it measures uniform control across scales, without introducing an additional factor proportional to the number of annuli.
\end{remark}

Lastly, for any integer $k\geq0$ and $f\in C^{k,\tau}(\Gamma;\R^m)$, define
\begin{equation*}
	\|f\|_{C^{k,\tau}_\eps(\Gamma)}\coloneqq
	\sum_{j=0}^k\eps^j\|\nabla_\Gamma^j f\|_{L^\infty(\Gamma)}
	+\eps^{k+\tau}[\nabla_\Gamma^k f]_{\tau,\Gamma}.
\end{equation*}
Here derivatives and the ordinary H\"older seminorm use the fixed metric on $\Gamma$, componentwise in $\R^m$. We write $C^{k,\tau}_\eps(\Gamma;\R^m)$ for $C^{k,\tau}(\Gamma;\R^m)$ equipped with this norm.

\medskip

We can now measure the error of approximation. The proof of the following result is deferred to \Cref{sec: approximation errors}.

\begin{lemma}\label{lem: error approx}
For all sufficiently small $\eps$ and all parameters $h,\theta$ satisfying \eqref{eq: assumptions htheta},
\begin{equation*}
\begin{split}
 \|{\bf S}_\eps(\Phi_{h,\theta}^{\mathrm c},A_{h,\theta}^{\mathrm c})\|_{Y_{\eps,h,\theta}}&\lesssim \sqrt{\eps\delta_\eps}+\eps^{3/2}(1+|\log\eps|),\\
 \|{\bf S}_\eps(\Phi_{h,\theta}^{\mathrm{nc}},A_{h,\theta}^{\mathrm{nc}})\|_{Y_{\eps,h,\theta}}&\lesssim \eps^{3/2}(1+|\log\eps|)
\end{split}
\end{equation*}
where the expression of ${\bf S}_\eps$ does not contain the potential in the second line.
\end{lemma}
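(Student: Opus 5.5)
The plan is to estimate ${\bf S}_\eps(\Phi_{h,\theta},A_{h,\theta})$ region by region, following the dyadic decomposition $\Omega_{j,h}$ of \Cref{sec: norms}, and sum the weighted contributions in \eqref{eq: source norm}. First I would reduce to $h=0$, $\theta=0$. The phase gauge $u_\theta$ is a genuine gauge transformation, so ${\bf S}_\eps$ transforms covariantly under it. The only nongauge part is the subtracted term $\xi_\eps d\bar\theta$, whose contribution to ${\bf S}_\eps$ is, by the computation after \eqref{eq: shifted phase approximation}, $\bar\theta$ times a term in $d_{A}\xi_\eps$, together with terms in $d\bar\theta$ and $\nabla^2\bar\theta$ multiplying the kernel-like field $d_{A}\xi_\eps$. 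Because ${\bf W}$ lies in the kernel of the normal linearisation by \Cref{lem: monopole ker}, the normal part cancels. What remains carries only tangential derivatives of $\theta$, at order $\eps$ relative to the core scale, and is exponentially localised in $r/\eps$ since $1-q_\lambda$ and $(1-\alpha)$ decay.

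The pullback by $\mathscr T_h$ changes the metric by $O(|h|_{C^2})=O(\eps|\log\eps|^2)$ in the tube. In the core this produces errors of size $\eps^{-1}|\nabla^2 h|$, weighted by $r_0^2=\eps^2$. This is acceptable given the size assumption \eqref{eq: assumptions htheta}, and the leading $J_\Gamma h$-type term is exactly what the reduced equation later absorbs, so here it only needs to be bounded.

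Next, with $h=\theta=0$, I would treat the core $\Omega_0$ and the annuli $r\lesssim\sqrt\eps$. Write the metric in Fermi coordinates as $\bar g+O(r)$. The rescaled monopole solves the product-metric equations exactly, so the error comes from three sources:
\begin{itemize}
\item the $O(r)$ metric perturbation, in particular the mean-curvature term, which vanishes by minimality, leaving $O(r)$ coefficients times $\eps^{-2}$-size derivatives;
\item tangential derivatives of the frame, which are cancelled at leading order by $M$ through $\rho$;
\item the regular traces $\beta$, and $H_\Gamma$ in the noncompact case, which are gauged or mass-varied in exactly by $g_\eps$, $\xi_\eps\beta_jdq^j$ and ${\bf C}_\eps$.
\end{itemize}
The leftover is then $O(\eps^{-1})$ pointwise at the core, decaying like $r/\eps^2\cdot(\text{profile tail})$. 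Weighted by $r_j^2$, this gives per annulus a longitudinal contribution $\lesssim r_j^{\gamma}$ or $\eps\, r_j^{-1}\cdot r_j^2/\eps$-type bounds, while transverse terms are exponentially small.

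In the matching annulus $r\sim\sqrt\eps$, the mismatch between $A^{\inn}$ and $A^0_\Gamma+b\sigma$ is $O(r^\gamma)+O(e^{-cr/\eps})$ by \Cref{lem: exterior correction} (or \Cref{lem: noncompact exterior correction}). Its derivatives $d\kappa_\eps$ produce source $\lesssim \eps^{\gamma/2-1}$ with weight $\eps\cdot r^2=\eps^2$. Choosing $\gamma$ near $1$, the total is $\lesssim\eps^{3/2}$. The sum over the $O(|\log\eps|)$ longitudinal annuli gives the factor $(1+|\log\eps|)$; to control that sum I would need each annulus contribution bounded by $\eps^{3/2}$, not something larger.

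In the outer region the connection error vanishes by \eqref{eq: eq for b}. In the compact case the Higgs error is $\frac{2\eps}{\delta_\eps}(\varphi_\eps^2-1)\varphi_\eps$. Near the cutoff $\chi_\Gamma$ this is controlled by the exponential decay of $1-\varphi_\lambda$ at rate $\sqrt{2\lambda}/\eps$, and the tail $\varphi_\lambda-1\sim e^{-\sqrt{2\lambda}r/\eps}$ between the core and $\delta$ is where $\sqrt{\eps\delta_\eps}$ arises. Concretely, the longitudinal sum $\sum_j r_j^2\frac{\eps}{\delta_\eps}|1-\varphi|$ over the scale $r\sim\eps/\sqrt\lambda=\sqrt{\delta_\eps/\eps}\,\eps$ gives $\eps\cdot\frac{\eps}{\delta_\eps}\cdot\frac{\delta_\eps}{\eps}\cdot\eps^{-1}\cdots$, which I would compute carefully to land on $\sqrt{\eps\delta_\eps}$.

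In the noncompact case the outer Higgs field $(1-\eps H)\sigma$ gives the error $-\eps(\Delta H)\sigma=0$. There remains the cross term $\eps^{-2}[\Phi,d_A\Phi]$, which vanishes since $d_{A_\sigma}\sigma=0$ and $dH\sigma$ is longitudinal, plus $O(\eps^2)$ curvature-coupling terms decaying in $C^{0,\tau}_{\nu+2}$ by \Cref{lem: noncompact exterior correction} and the decay of $dH$.

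\textbf{Main obstacle.} I expect the hardest step to be the core expansion in the inner region. There one must verify that all $O(\eps^{-1}r^0)$ terms cancel exactly, using minimality, the choice of $\rho$, and $\beta$ and $H_\Gamma$, so that only terms with an extra factor $r$ survive. Otherwise the longitudinal sum would give $|\log\eps|\cdot\eps$ instead of $\eps^{3/2}|\log\eps|$.
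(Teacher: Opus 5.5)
Your overall plan (remove the phase gauge and graph shift, estimate shell by shell in the dyadic decomposition, use the $\rho$- and $\beta$-cancellations near $\Gamma$ and the exact exterior cancellations) is the one the paper follows. However, the two steps that actually produce the stated rates do not work as you describe them.

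First, you misidentify the source of $\sqrt{\eps\delta_\eps}$. The compact Higgs residual is \emph{not} the potential term $\frac{2\eps}{\delta_\eps}(\varphi_\eps^2-1)\varphi_\eps$. Wherever $\varphi_\eps=\varphi_{\lambda_\eps}(r/\eps)$, that term cancels exactly against the flat radial operator by \eqref{eq:monopole-profiles}, and where $\varphi_\eps\equiv1$ both vanish. If it were a genuine residual, the bound $1-\varphi_{\lambda_\eps}(r/\eps)\lesssim(\eps/r)e^{-cr/\ell_\eps}$ would make its weighted dyadic sum of order $\frac{\lambda_\eps}{\eps^2}\,\eps\,\ell_\eps\simeq\sqrt{\lambda_\eps}$. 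That does not tend to zero when $\delta_\eps\simeq C_*\eps^3$, so no careful evaluation of this term can give $\sqrt{\eps\delta_\eps}$. The error is geometric instead. Minimality makes the normal volume factor $r^2(1+O(r^2))$, so the Fermi Laplacian differs from the flat radial one by an $O(r)$ first-order coefficient (plus $O(r^2)$ second-order ones). This leaves a radial Higgs residual $\lesssim r|\partial_r\varphi_{\lambda_\eps}(r/\eps)|\lesssim(\eps/r)e^{-cr/\ell_\eps}$, whose shell contributions $\eps r_je^{-cr_j/\ell_\eps}$ sum to $\eps\ell_\eps=\frac12\sqrt{\eps\delta_\eps}$. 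Since $\ell_\eps$ ranges from order $\eps$ up to $\eps^{\tau'}\gg\sqrt\eps$, this term lives on inner and outer shells alike. It disappears beyond $2\sqrt\eps$ in the noncompact case only because $(1-\eps H)\sigma$ is exact there.

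Second, $b_r\in C^{0,\gamma}$ is not enough in the matching annulus. The cutoff terms contain $\nabla^2\kappa_\eps\,\mathcal D_\eps$ and $\nabla\kappa_\eps\cdot\nabla\mathcal D_\eps$, measured in scale-$r_j$ H\"older norms, so you need scaled $C^2$ control of $b_r-\beta$. Even your sup bound $r^\gamma$ gives $\eps^{1+\gamma/2}$, which exceeds $\eps^{3/2}(1+|\log\eps|)$ for every $\gamma<1$. The missing ingredient is \Cref{lem: approximation regular matching}. From $r|R|\lesssim1$ and the Green representation, a dyadic splitting of the kernel integral gives the log-Lipschitz modulus $|b_r(x)-\beta(p)|\lesssim r(1+|\log r|)$ at the nearest point $p\in\Gamma$. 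Scaled Schauder estimates then upgrade this to $\sum_{k\le2}r^k\|\nabla^k(b_r-\beta)\|_{\tau,r}\lesssim r(1+|\log r|)$. The noncompact Higgs matching needs the same estimate for $H-(2r)^{-1}-H_\Gamma$. Your mismatch also omits $(q_\eps-1)\beta_i\sigma\,dq^i=O(\eps/r)$, which is algebraic rather than exponentially small; for the same reason the phase terms are not exponentially localised.

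Two further bookkeeping claims fail as written. After the cancellations, the inner shells must contribute $\eps r_j+\eps^2$ (from $|d_A^*F_A^{\mathrm{tan}}|\lesssim r^{-1}$ and Fermi-metric errors), whereas contributions of size $r_j^\gamma$ would sum to $\eps^{\gamma/2}$. For the graph parameter, you need that $\mathscr T_h$ changes the normal metric coefficients only by $O(r\|h\|_{C^{2,\tau}}+\|h\|_{C^{2,\tau}}^2)$. A uniform $O(\|h\|_{C^{2,\tau}})$ change, against the $\eps^{-2}$ core derivatives, already costs $\|h\|_{C^{2,\tau}}$, which may be as large as $K\eps|\log\eps|^2\gg\eps^{3/2}$.
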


\section{The nonlinear scheme}\label{sec: nonlinear scheme}
In this section we prove the existence of a perturbation reducing the equation to a problem for $h$ and $\theta$. We will state several results and postpone their proof to following sections for clarity of exposition. Our goal is to find a pair ${\bf\Psi}\coloneqq(\Psi,B)$ such that 
\begin{equation}\label{eq: final eq}
	{\bf S}_\eps(\Phi_\eps,A_\eps)=0,\qquad (\Phi_\eps,A_\eps)\coloneqq (\Phi_{h,\theta}+\Psi,A_{h,\theta}+B)
\end{equation}
Set
\begin{equation*}
	{\bf G}_{h,\theta}\coloneqq{\bf G}_{(\Phi_{h,\theta},A_{h,\theta})},
	\qquad
	{\bf G}_{h,\theta}^*{\bf\Psi}
	=\eps d_{A_{h,\theta}}^*B-\eps^{-1}[\Phi_{h,\theta},\Psi].
\end{equation*}
Transport the smooth phase generator from \eqref{eq: q_eps} by defining $\xi_{\eps,h}\coloneqq(\mathscr T_h^{-1})^*\xi_\eps$.
Since $u_\theta$ commutes with $\xi_\eps$, this generator is independent of $\theta$ and satisfies $[\Phi_{h,\theta},\xi_{\eps,h}]=0$.

\medskip

Fix a nonnegative $\chi\in C_c^\infty(M)$ equal to one on a neighbourhood of all the shifted gluing regions; when $M$ is compact, take $\chi\equiv1$. Define the projection onto the span of $\xi_{\eps,h}$ by
\begin{equation}\label{eq: phase projection}
	\Pi_h\zeta\coloneqq
	\frac{\int_M\chi\langle\zeta,\xi_{\eps,h}\rangle\,d\vol_g}
	{\int_M\chi|\xi_{\eps,h}|^2\,d\vol_g}\,\xi_{\eps,h}.
\end{equation}
For each correction, put
\begin{equation*}
	\gamma_{\bf\Psi}\coloneqq(I-\Pi_h){\bf G}_{h,\theta}^*{\bf\Psi}.
\end{equation*}
Rather than \eqref{eq: final eq}, we work with the modified equation
\begin{equation}\label{eq: modified perturbation}
	{\bf S}_\eps(\Phi_{h,\theta}+\Psi,A_{h,\theta}+B)
	+\eps^{-1}{\bf G}_{(\Phi_{h,\theta}+\Psi,A_{h,\theta}+B)}
	\gamma_{\bf\Psi}=0.
\end{equation}
This modification allows us to use the gauge-corrected linearised operator at leading order and to cancel the phase residual; see \Cref{lem: constant phase residual} below. 
Using the notation of \Cref{sec: second variation}, we write
\begin{equation*}
{\bf L}_{h,\theta} \coloneqq{\bf L}_{(\Phi_{h,\theta},A_{h,\theta})}=D{\bf S}_\eps(\Phi_{h,\theta},A_{h,\theta}) +\eps^{-1}{\bf G}_{h,\theta}{\bf G}_{h,\theta}^*.
\end{equation*}
The linearisation of the left-hand side of \eqref{eq: modified perturbation} at ${\bf\Psi}=0$ is therefore
\begin{equation}\label{eq: restricted linearised operator}
	\widehat{\bf L}_{h,\theta}
	={\bf L}_{h,\theta}
	-\eps^{-1}{\bf G}_{h,\theta}\Pi_h{\bf G}_{h,\theta}^*,
\end{equation}
where the subtracted term is explicitly given by
\begin{equation*}
	\eps^{-1}{\bf G}_{h,\theta}\Pi_h{\bf G}_{h,\theta}^*{\bf\Psi}
	=
	\frac{\langle{\bf\Psi},{\bf G}_{h,\theta}(\chi\xi_{\eps,h})\rangle_{L^2_\eps(M)}}
	{\eps\int_M\chi|\xi_{\eps,h}|^2\,d\vol_g}
	\,{\bf G}_{h,\theta}\xi_{\eps,h}.
\end{equation*}
We also define the nonlinear quadratic remainder by
\begin{equation}\label{eq: nonlinear remainder definition}
\begin{split}
	\mathcal N_{\eps,h,\theta}({\bf\Psi})
	&\coloneqq{\bf S}_\eps(\Phi_{h,\theta}+\Psi,A_{h,\theta}+B)
	-{\bf S}_\eps(\Phi_{h,\theta},A_{h,\theta})\\
	&-D{\bf S}_\eps(\Phi_{h,\theta},A_{h,\theta}){\bf\Psi}+\eps^{-1}\bigl([\Psi,\gamma_{\bf\Psi}],[B,\gamma_{\bf\Psi}]\bigr).
\end{split}
\end{equation}
This vanishes to second order at zero. Equation \eqref{eq: modified perturbation} consequently becomes
\begin{equation*}
	\widehat{\bf L}_{h,\theta}{\bf\Psi}
	=-{\bf S}_\eps(\Phi_{h,\theta},A_{h,\theta})
	-\mathcal N_{\eps,h,\theta}({\bf\Psi}).
\end{equation*}
\begin{lemma}\label{lem: quadratic estimate}
Supposing that \eqref{eq: assumptions htheta} holds, there is a constant $C>0$ such that for any pairs ${\bf\Psi_1},{\bf\Psi_2}$ satisfying
\begin{equation*}
 \|{\bf\Psi}_j\|_{X_{\eps,h,\theta}}\leq R\leq1,\quad j=1,2,
\end{equation*}
one has
\begin{equation*}
 \|\mathcal N_{\eps,h,\theta}({\bf\Psi}_1)
       -\mathcal N_{\eps,h,\theta}({\bf\Psi}_2)\|_{Y_{\eps,h,\theta}}
 \le CR\|{\bf\Psi}_1-{\bf\Psi}_2\|_{X_{\eps,h,\theta}}.
\end{equation*}
\end{lemma}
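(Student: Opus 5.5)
Since ${\bf S}_\eps$ is polynomial in $(\Phi,A)$ of degree at most three, I will write $\mathcal N_{\eps,h,\theta}$ explicitly as a sum of quadratic and cubic multilinear expressions in ${\bf\Psi}=(\Psi,B)$. The Taylor expansion around $(\Phi_{h,\theta},A_{h,\theta})$ produces, in the field component,
\[
d_{A}^*[B,\Psi]+[B,d_A\Psi]-\ast[B,\ast[B,\Phi]]+\dots,\qquad \tfrac{2\eps}{\delta_\eps}\bigl(2\langle\Phi,\Psi\rangle\Psi+|\Psi|^2\Phi+|\Psi|^2\Psi\bigr),
\]
and, in the connection component, the Yang--Mills terms $d_A^*\tfrac12[B\wedge B]$, $\ad_B^*d_AB$, $\ad_B^*[B\wedge B]$, together with the Higgs coupling $\eps^{-2}\bigl([\Psi,d_A\Psi]+[\Phi,[B,\Psi]]+[\Psi,[B,\Phi]]+[\Psi,[B,\Psi]]\bigr)$. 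The gauge-fixing piece contributes $\eps^{-1}([\Psi,\gamma_{\bf\Psi}],[B,\gamma_{\bf\Psi}])$ with $\gamma_{\bf\Psi}=(I-\Pi_h)(\eps d_A^*B-\eps^{-1}[\Phi,\Psi])$. Each term is a multilinear form $\mathcal M({\bf\Psi},\dots,{\bf\Psi})$. The difference $\mathcal N({\bf\Psi}_1)-\mathcal N({\bf\Psi}_2)$ telescopes into terms in which one slot is ${\bf\Psi}_1-{\bf\Psi}_2$ and the remaining slots are ${\bf\Psi}_1$ or ${\bf\Psi}_2$. It therefore suffices to prove the multilinear bounds
\[
\|\mathcal M({\bf a},{\bf b})\|_{Y}\lesssim\|{\bf a}\|_X\|{\bf b}\|_X,\qquad \|\mathcal M({\bf a},{\bf b},{\bf c})\|_Y\lesssim\|{\bf a}\|_X\|{\bf b}\|_X\|{\bf c}\|_X,
\]
and then use $R\le1$.

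These bounds are local on each region $\Omega_{j,h}$, because Hölder norms at scale $r_j$ form an algebra with constants independent of $j$. On the core $\Omega_{0,h}$, where $r_0=\eps$, the $X$-norm gives $|\Psi|+\eps|\nabla\Psi|+\eps^2|\nabla^2\Psi|\lesssim\|{\bf\Psi}\|_X$, and the analogous bound for $\eps B$. A term such as $\eps^{-2}[\Psi,[B,\Phi]]$ is then bounded in the connection slot by $\eps\cdot r_0^2\cdot\eps^{-2}|\Psi||B|\lesssim\|\Psi\|\,\|\eps B\|$, and all other core terms scale in the same way. The potential terms carry the factor $\eps/\delta_\eps=\lambda/(2\eps^2)$, and $r_0^2\lambda\eps^{-2}\le\lambda_*$ keeps them bounded.

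On the annuli $\Omega_{j,h}$ with $j\ge1$ I will split every factor into $\parallel$ and $\perp$ parts. The key algebraic fact is that a bracket of two longitudinal quantities vanishes, since both are multiples of $\Phi_{h,\theta}$. Every $\eps^{-2}$-weighted bracket, and every $\eps^{-1}$ term involving $[\Phi,\cdot]$, therefore contains at least one transverse factor, and the $X$-norm controls that factor by $\eps^2r_j^{-2}$. This cancels the $\eps^{-2}$ weight. The terms without $\eps^{-2}$ are bounded using $r_j^2\cdot r_j^{-1}\cdot r_j^{-1}$ counting of derivatives. For the potential terms on the annuli, I use that $\langle\Phi,\Psi\rangle$ is the longitudinal part, which is controlled by the $\lambda\eps^{-2}$-weighted norm, while $|\Psi|^2$ terms are handled using $\lambda\le\lambda_*$. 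The projection $\Pi_h$ is bounded on these spaces because $\xi_{\eps,h}$ is smooth and longitudinal, and it commutes with the splitting.

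The main obstacle is the longitudinal source norm, which is a \emph{sum} over $j$ rather than a maximum. A quadratic term of the form $\parallel\times\parallel$ with derivatives, such as $d_A^*[B,\Psi]$ with both factors longitudinal, vanishes identically. Nonzero longitudinal outputs come from $\perp\times\perp$ products, or from a $\parallel$ factor combined with $d_A\Phi$ or $d_A\sigma$, which is transverse and exponentially decaying in $r/\eps$. In the $\perp\times\perp$ case each factor gains $\eps^2r_j^{-2}$, so the $j$-th contribution is $O(\eps^2 r_j^{-2})$, which is summable geometrically in $j$. In the other case exponential decay gives summability. I expect the delicate bookkeeping to lie here, namely checking that every longitudinal output term has such a gain, including the terms from the gauge correction $\eps^{-1}[\Psi,\gamma_{\bf\Psi}]$. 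In the noncompact case, the exterior region uses the weighted norms, and products gain extra decay $R^{-\nu}$, which makes them easier to control.
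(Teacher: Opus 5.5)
Your architecture is the same as the paper's: telescope the multilinear remainder, then estimate on the core, on the annuli via the $\parallel/\perp$ splitting with $[\R\sigma_h,\R\sigma_h]=0$, and on the exterior. Your treatment of the commutator terms, including $\eps^{-1}[\Psi,\gamma_{\bf\Psi}]$, goes through. There is, however, a genuine gap in the compact case, in the longitudinal part of the source norm, which is summed over $j$.

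\textbf{The Higgs potential remainder.} You claim that every nonzero longitudinal output comes from a $\perp\times\perp$ product or from a background derivative $d_A\sigma$. This fails for
\begin{equation*}
\frac{\lambda}{\eps^2}\bigl(2\langle\Phi_{h,\theta},\Psi\rangle\Psi+|\Psi|^2\Phi_{h,\theta}+|\Psi|^2\Psi\bigr).
\end{equation*}
This term is not a commutator. With $\Psi=\Psi^\parallel$ it produces a longitudinal output of size $\frac{\lambda}{\eps^2}|\Psi^\parallel|^2$ with no gain.

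Neither of your proposed treatments gives a uniform bound. Put $x_j\coloneqq\lambda r_j^2/\eps^2$.
\begin{itemize}
\item Using only $\lambda\le\lambda_*$ and $\|\Psi^\parallel\|\lesssim R$, the $j$-th weighted contribution is $R^2x_j$. This is unbounded once $r_j\gg\ell_\eps$. In the compact case $\ell_\eps\le\eps^{\tau'}\ll\delta$, so there are $\sim|\log\eps|$ such annuli.
\item Using the $\lambda\eps^{-2}$-weighted norm only on the factor $\langle\Phi,\Psi\rangle$ gives $R^2x_j/(1+x_j)$. This is $\gtrsim R^2$ on every annulus with $x_j\ge1$, hence $\gtrsim R^2|\log\eps|$ after summation. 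That is not the $\eps$-independent constant the lemma asserts.
\end{itemize}

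The missing step is to combine the first and last terms of the solution norm into
\begin{equation*}
\|\Psi^\parallel\|_{\tau,r_j,\Omega_{j,h}}\lesssim\frac{R}{1+\lambda r_j^2/\eps^2},
\end{equation*}
and to use it on \emph{both} longitudinal factors (all three in the cubic term). This yields the contribution $R\|{\bf\Psi}_1-{\bf\Psi}_2\|_{X_{\eps,h,\theta}}\,x_j(1+x_j)^{-2}$. That is summable in $j$ uniformly: it grows geometrically up to the scale $r_j\sim\eps/\sqrt\lambda$ and decays geometrically beyond it. The potential terms containing a transverse factor are then harmless. They contribute $\lambda\eps^2r_j^{-2}$ longitudinally and $O(\lambda_*)$ transversally.

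\textbf{Transverse derivative factors.} In your commutator bookkeeping the transverse factor is sometimes differentiated, e.g.\ $\eps^{-2}[\Psi^\parallel,(d_A\Psi)^\perp]$. The first line of the solution norm only gives $\|\nabla\Psi\|\lesssim R/r_j$, which produces $R^2r_j/\eps$ after weighting. You need the interpolated bound
\begin{equation*}
\|\nabla_{A_{h,\theta}}\Psi^\perp\|_{\tau,r_j}+\eps\|\nabla_{A_{h,\theta}}B^\perp\|_{\tau,r_j}\lesssim R\eps r_j^{-2}.
\end{equation*}
It follows by interpolating the transverse bound $R\eps^2r_j^{-2}$ against the second-derivative bound at scale $\eps$. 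You also need $\|\nabla_{A_{h,\theta}}\sigma_h\|_{\tau,r_j}\lesssim\eps r_j^{-2}$ to commute derivatives with the splitting. This power bound suffices; no exponential decay of $d_A\sigma$ is required.

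\textbf{Noncompact exterior.} An ``extra decay $R^{-\nu}$'' is not enough by itself, since $2\nu<\nu+2$ is allowed. The correct reason is that there is no potential there. Every nonlinear term is then a commutator and so contains a transverse factor of weight $\nu+2$.
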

\Cref{lem: quadratic estimate} is proved in \Cref{sec: quadratic estimate proof}.

\subsection{Projections and the linear theory}\label{sec: projections linear theory}
Define the rescaled Fermi map $\mathscr F_{\eps,h}\colon\Gamma\times B_{\delta/\eps}^3\to\mathscr T_h(\Tub_\delta(\Gamma))$
as
\begin{equation}\label{eq: rescaled Fermi map}
	\mathscr F_{\eps,h}(q,y)\coloneqq\mathscr T_h\bigl(\mathscr F_\Gamma(q,\eps y)\bigr).
\end{equation}
Let $\nu_{b,h}(q,z)$ be $\nu_b(q)$ parallel transported first along the normal radial geodesic to $\mathscr F_\Gamma(q,z)$, then along the short geodesic to $\mathscr T_h(\mathscr F_\Gamma(q,z))$. In the fibre formulas below, $F_h(q,z)\coloneqq\mathscr T_h(\mathscr F_\Gamma(q,z))$ denotes the Fermi map in physical normal coordinates. With $\lambda=\lambda_\eps$ in the compact case and $\lambda=0$ in the noncompact case, define the lifted modes by
\begin{equation}\label{eq: lifted monopole modes}
\begin{aligned}
 {\bf Z}_{a,\eps,h,\theta}\bigl(\mathscr F_{\eps,h}(q,y)\bigr)
 &\coloneqq\Ad_{(g_\eps u_\theta)^{-1}}
 \left(D_a\Phi_\lambda,\,\eps^{-1}(F_{A_\lambda})_{ab}\nu_{b,h}^{\flat}\right),
 &&a=1,2,3,\\[3pt]
 {\bf Z}_{4,\eps,h,\theta}\bigl(\mathscr F_{\eps,h}(q,y)\bigr)
 &\coloneqq\Ad_{(g_\eps u_\theta)^{-1}}
 \left(0,\,\eps^{-1}D_b(q_\lambda\sigma)\nu_{b,h}^{\flat}\right).
\end{aligned}
\end{equation}
Here
\begin{equation}\label{eq: model normal covariant derivative}
 D_a\coloneqq\partial_{y^a}+[A_{\lambda,a},\,\cdot\,],\qquad a=1,2,3.
\end{equation}
Model quantities are evaluated at $y$, the transported frame at $(q,\eps y)$, and the gauge at $\mathscr F_{\eps,0}(q,y)$.

\medskip

Next, consider the weighted cut-off 
\begin{equation*}
	\rho_\eps(y)\coloneqq \frac{\eta(4\delta^{-1}|y|)}{1+\frac{|y|^2}{\eps^2}}
\end{equation*}
Define
\begin{equation*}
 \mathfrak B_{\eps,h,\theta}\colon X_{\eps,h,\theta}\to C^{0,\tau}_\eps(\Gamma;\R^4),\qquad {\bf V}\longmapsto\bigl(\mathfrak b_j[{\bf V}]\bigr)_{j=1}^4,
\end{equation*}
where
\begin{equation}\label{eq: fibre constraints}
 \mathfrak b_j[{\bf V}](q)\coloneqq\frac1{\eps^{2}}\int_{B_{\delta/2}^3}\rho_\eps(z)\left\langle{\bf V}(F_h(q,z)),{\bf Z}_{j,\eps,h,\theta}(F_h(q,z))\right\rangle_\eps\,dz.
\end{equation}
The reconstruction map is
\begin{equation*}
 \mathfrak B_{\eps,h,\theta}^*\colon C^{0,\tau}_\eps(\Gamma;\R^4)\longrightarrow Y_{\eps,h,\theta}
\end{equation*}
as the adjoint for $L^2(\Gamma;\R^4)$ and $L^2_\eps(M)$:
\begin{equation}\label{eq: dual residual injection}
 \langle\mathfrak B_{\eps,h,\theta}^*\mathfrak b,{\bf V}\rangle_{L^2_\eps(M)}=\int_\Gamma\mathfrak b\cdot\mathfrak B_{\eps,h,\theta}[{\bf V}]d\mathrm{vol} _\Gamma.
\end{equation}
More explicitly, if $F_h^*d\vol_g=J_h(q,z)\,d\vol_\Gamma\,dz$, then
\begin{equation}\label{eq: explicit residual injection}
 (\mathfrak B_{\eps,h,\theta}^*\mathfrak b)(F_h(q,z))=\frac{\rho_\eps(z)}{\eps^2J_h(q,z)}\sum_{j=1}^4\mathfrak b_j(q){\bf Z}_{j,\eps,h,\theta}(F_h(q,z)),
\end{equation}
extended by zero outside the tube. For fixed $\eps,h,\theta$, suppress these subscripts and set
\begin{equation*}
 \mathcal E_{\eps,h,\theta}\coloneqq\{{\bf\Psi}\in X_{\eps,h,\theta}:\mathfrak B_{\eps,h,\theta}{\bf\Psi}=0\}.
\end{equation*}
The four orthogonality conditions hold at every point of $\Gamma$; the correction itself is defined on all of $M$.

\begin{proposition}\label[proposition]{prop: uniform inverse}
Under \eqref{eq: deltaeps} and \eqref{eq: assumptions htheta}, there exists $\eps_0>0$ such that, for $0<\eps<\eps_0$ and every ${\bf F}\in Y_{\eps,h,\theta}$, there is a unique pair
\begin{equation*}
 ({\bf\Psi},\mathfrak b)\in\mathcal E_{\eps,h,\theta}\times C^{0,\tau}_\eps(\Gamma;\R^4)
\end{equation*}
satisfying
\begin{equation}\label{eq: constrained linear equation}
 {\bf L}_{\eps,h,\theta}{\bf\Psi}={\bf F}+\mathfrak B_{\eps,h,\theta}^*\mathfrak b.
\end{equation}
Moreover, with $X=X_{\eps,h,\theta}$ and $Y=Y_{\eps,h,\theta}$,
\begin{equation}\label{eq: uniform inverse estimate}
 \|{\bf\Psi}\|_{X_{\eps,h,\theta}}+\|\mathfrak b\|_{C^{0,\tau}_\eps}+\|\mathfrak B^*\mathfrak b\|_{Y_{\eps,h,\theta}}\lesssim \|{\bf F}\|_{Y_{\eps,h,\theta}}.
\end{equation}
The same assertions hold for $\widehat{\bf L}_{h,\theta}$ in place of ${\bf L}_{h,\theta}$.
\end{proposition}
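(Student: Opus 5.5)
The proof is an a priori estimate combined with a blow-up (contradiction) argument, followed by existence via Fredholm theory or a continuity method.

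\emph{A priori estimate.} We first show that for every ${\bf\Psi}\in\mathcal E_{\eps,h,\theta}$ and $\mathfrak b\in C^{0,\tau}_\eps(\Gamma;\R^4)$ with ${\bf L}_{h,\theta}{\bf\Psi}={\bf F}+\mathfrak B^*\mathfrak b$, one has \eqref{eq: uniform inverse estimate}.

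The multipliers $\mathfrak b$ are handled first. Testing the equation fibrewise against the lifted modes ${\bf Z}_{j,\eps,h,\theta}$ with weight $\rho_\eps$ gives a $4\times4$ linear system for $\mathfrak b(q)$. Its matrix is the Gram matrix of $\{{\bf Z}_1,{\bf Z}_2,{\bf Z}_3,{\bf W}\}$ in $L^2(\rho\,dy)$, up to $o(1)$ errors, and is therefore uniformly invertible. The right-hand side has two contributions:
\begin{itemize}
\item the pairing of ${\bf F}$ with the modes, which is bounded by $\|{\bf F}\|_Y$ using the decay of the ${\bf Z}_j$ and the weights in \eqref{eq: source norm};
\item the pairing $\langle{\bf L}{\bf\Psi},{\bf Z}_j\rangle$, which by self-adjointness equals $\langle{\bf\Psi},{\bf L}{\bf Z}_j\rangle$ plus commutator terms from $\rho_\eps$ and tangential derivatives.
\end{itemize}
Since ${\bf L}_\lambda{\bf Z}_j=0$ by \Cref{lem: monopole ker}, the second contribution is $o(1)\|{\bf\Psi}\|_X$. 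This gives
\[
\|\mathfrak b\|_{C^{0,\tau}_\eps}+\|\mathfrak B^*\mathfrak b\|_Y\lesssim\|{\bf F}\|_Y+o(1)\|{\bf\Psi}\|_X,
\]
so it suffices to prove $\|{\bf\Psi}\|_X\lesssim\|{\bf F}+\mathfrak B^*\mathfrak b\|_Y$ on $\mathcal E$.

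We prove this by contradiction. Suppose there are sequences with $\|{\bf\Psi}_k\|_X=1$ and $\|{\bf F}_k\|_Y\to0$. We locate the region where the $X$-norm is attained and rule out each case.

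\emph{Core scale, $j$ bounded.} Rescale via $\mathscr F_{\eps,h}$ around points $q_k$ and undo the gauges $g_\eps u_\theta$. The tangential directions become flat, and local Schauder estimates give convergence to a bounded solution of ${\bf L}_\lambda{\bf v}=0$ on $\R^3\times\R^{n-3}$ (or with tangential dependence). Taking a Fourier transform in the tangential variables and using nonnegativity of ${\bf L}_\lambda$ leaves only tangentially constant modes. The boundedness encoded by the $X$-weights places ${\bf v}$ in the kernel of \Cref{lem: monopole ker}, and the orthogonality $\mathfrak B{\bf\Psi}=0$ passes to the limit and forces ${\bf v}=0$.

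A subtlety is that the longitudinal part is only bounded, not decaying, and ${\bf W}$ has a nondecaying gauge limit. The weight $\rho_\eps$ in \eqref{eq: fibre constraints} is chosen to be integrable against bounded fields, precisely so that the constraint survives the limit.

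\emph{Intermediate dyadic annuli, $1\le j\le J_\eps$.}
\begin{itemize}
\item Transverse components: the $-\Delta+\eps^{-2}$ structure gives exponential localisation. A maximum-principle or Agmon barrier estimate bounds them annulus by annulus by the maximum of the source, matching the max in \eqref{eq: source norm}.
\item Longitudinal components: these obey an abelian Poisson equation, coupled to the transverse ones only through quadratic or exponentially small terms. We represent them via the Green kernel of $\Delta_0$ and $\Delta_1$, using the normal Newtonian potential estimate described in the Remark. Summing dyadic contributions $r_j^2\|f_j\|$ bounds the sup, which is why the longitudinal source norm is a sum.
\item In the compact case the extra $2\lambda\eps^{-2}$ term only helps.
\end{itemize}
The blow-up at these scales converges to a solution of $\Delta u=0$ on $\R^3\setminus\{0\}\times\R^{n-3}$ that is bounded with a removable singularity, hence constant. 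The constant is excluded by the phase projection and orthogonality in the compact case, or by decay in the noncompact case.

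\emph{Outer region.} In the compact case we use Hodge theory on $M$. Here $b_1(M)=0$ rules out harmonic one-forms, and the only harmonic functions are constants, which are removed by $\Pi_h$ in the $\widehat{\bf L}$ version and by the phase constraint otherwise. In the noncompact case we use the weighted inverse of \Cref{def: admissible amb noncomp} together with \eqref{eq: noncompact weighted volume}.

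\emph{Existence.} Once the estimate holds, it implies injectivity. Existence then follows by solving on bounded domains, or by a Fredholm index-zero argument for the operator $({\bf\Psi},\mathfrak b)\mapsto{\bf L}{\bf\Psi}-\mathfrak B^*\mathfrak b$ restricted to $\mathcal E$. Concretely, we solve the coercive variational problem for ${\bf Q}$ on $\mathcal E$; coercivity there follows from the same blow-up argument. We then pass to the limit with the a priori bound.

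\emph{The operator $\widehat{\bf L}$.} The difference $\eps^{-1}{\bf G}\Pi_h{\bf G}^*$ is rank one. It is treated as a perturbation whose effect on the phase direction is precisely compensated by the $\mathfrak b_4$ constraint. The same contradiction argument applies verbatim.

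The main obstacle is the longitudinal estimate that is uniform in $J_\eps\sim|\log\eps|$. We must show that the coupling between scales, through the core matching and the longitudinal tail feeding back into the core orthogonality conditions, does not produce logarithmic losses. I would first try to prove a sharp weighted $L^\infty$ Green estimate for the longitudinal Poisson system directly, and reserve the blow-up argument for the transverse and core parts.
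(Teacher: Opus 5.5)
Your architecture (a Gram-matrix bound for the multipliers, blow-up to a product model, transverse control from the $\eps^{-2}$ mass, longitudinal control from Green kernels, $\widehat{\bf L}$ as a small perturbation) matches the paper's. However, the core blow-up does not close as you state it.

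\textbf{Boundedness does not kill the core limit.} Boundedness plus the fibre constraints does not force ${\bf v}=0$, because the homogeneous constrained product problem has bounded, nondecaying solutions. For example, $(0,q_\lambda\sigma\,dx^j)$ solves the tangential equation, since $d_{A_\lambda}^*d_{A_\lambda}(q_\lambda\sigma)=0$ and $[\Phi_\lambda,\sigma]=0$, and the moment conditions are vacuous for it. At $\lambda=0$ the extensions ${\bf U}_{a,\vartheta}$ of \Cref{lem: noncompact product constructions} are further examples with zero moments. \Cref{lem: monopole ker} only describes the $L^2$ kernel, so ``boundedness places ${\bf v}$ in the kernel'' is false.

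What is missing is quantitative normal decay of the longitudinal part of the limit. In the paper this comes from a global estimate. One cuts off at $r_h\simeq A\eps$, projects onto $\sigma_h$, and represents the longitudinal coefficients with the Green kernels of $\Delta_0+V_\eps$ and $\Delta_1$. These kernels are bounded by $C(1+d_M^{2-n})$ precisely because $V_\eps\geq1$ and $b_1(M)=0$; in the noncompact case one uses the weighted inverses instead. The cutoff commutator produces the $A\eps/r_h$ term in \eqref{eq: compact neutral tail proof}. Taking $A=|y|^{1/3}$ gives $O(|y|^{-2/3})$ decay. Only then does \Cref{lem: product Liouville} apply, since it needs a decay exponent above $\frac12$, for the tangential components as well.

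So the Green estimate you defer to the end is not a separate ``intermediate annulus'' case; it is the input that makes the core blow-up work. Relatedly, constants at intermediate scales are not excluded by $\Pi_h$, which acts on gauge functions, nor by the core-scale fibre constraints. For the connection they are excluded by $b_1(M)=0$ or by decay at infinity, and for the compact scalar part by $\lambda_\eps/\eps^2\to\infty$.

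\textbf{The multiplier remainder is not $o(1)$.} Testing against ${\bf E}_j$ does not give an $o(1)\|{\bf\Psi}\|_X$ remainder. The term ${\bf L}_\lambda{\bf E}_j=[{\bf L}_\lambda,\eta_R\langle y\rangle^{-2}]{\bf Z}_j$ decays like $\langle y\rangle^{-6}$ but has unit size on the core. You therefore only get $\|\mathfrak b\|_{C^{0,\tau}_\eps}\lesssim\|{\bf F}\|_Y+\|\Psi\|_{\tau,\eps,M}+\eps\|B\|_{\tau,\eps,M}+o(1)\|{\bf\Psi}\|_X$. Testing against the unweighted modes instead leaves $\eps^2\Delta_\Gamma$ applied to the uncontrolled unweighted moments, which is again of order one.

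Your reduction to $\|{\bf\Psi}\|_X\lesssim\|{\bf L}{\bf\Psi}\|_Y$ on $\mathcal E$ therefore does not close by itself. The paper keeps this $L^\infty$ remainder and absorbs it by interpolation and a separate supremum estimate. Its blow-up limit then carries bounded residual coefficients $g_j(x)$, which the Liouville lemma kills using positivity of the matrix $\langle{\bf E}_i,({\bf L}_\lambda+|\zeta|^2)^{-1}{\bf E}_j\rangle$.

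\textbf{Existence.} On noncompact $M$ there is no uniform coercivity along the ends. Lax--Milgram on $\mathcal E$ is therefore unavailable, and ``Fredholm of index zero'' between the weighted spaces is unsupported. The paper instead builds an approximate inverse from the weighted exterior inverses, the product inverse and the extensions ${\bf U}_{a,\vartheta}$. It then restores the constraints with $\mathscr J_{\eps,h,\theta}$ and concludes with a Neumann series. In the compact case, existence uses \Cref{prop: global constrained coercivity}, which is proved by an inner/outer localisation rather than by blow-up.
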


The proof of \Cref{prop: uniform inverse} is deferred to \Cref{sec: invertibility}.

\subsection{The nonlinear contraction}\label{sec: complementary nonlinear equation}
Let $\mathcal T_{\eps,h,\theta}$ denote the correction component of the inverse for $\widehat{\bf L}_{\eps,h,\theta}$ in \Cref{prop: uniform inverse}.
By \Cref{lem: error approx}, we have 
\begin{equation*}
	\sup_{h,\theta}\|{\bf S}_\eps(\Phi_{h,\theta},A_{h,\theta})\|_{Y_{\eps,h,\theta}}\lesssim a_\eps
\end{equation*}
where the supremum ranges over the space defined by \eqref{eq: assumptions htheta} and 
\begin{equation}\label{eq: error rate for contraction}
a_\eps\coloneqq \begin{cases}
 \sqrt{\eps\delta_\eps}+\eps^{3/2}(1+|\log\eps|),&M\text{ compact},\\
 \eps^{3/2}(1+|\log\eps|),&M\text{ noncompact}.
 \end{cases}
\end{equation}
For fixed $h,\theta$, consider
\begin{equation*}
 {\bf\Psi}=-\mathcal T_{\eps,h,\theta}({\bf S}_\eps(\Phi_{h,\theta},A_{h,\theta})+\mathcal N_{\eps,h,\theta}({\bf\Psi})).
\end{equation*}
By \Cref{lem: quadratic estimate}, the right-hand side preserves the closed ball of radius $C a_\eps$ in $\mathcal E_{\eps,h,\theta}$ and has Lipschitz constant at most $C' a_\eps$, where $C,C'>0$ are large constants. Therefore, it is a contraction in this ball for sufficiently small $\eps$ and it has a unique fixed point by Banach fixed point theorem. Recovering $\mathfrak b=\mathfrak b_{\eps,h,\theta}$ from the same mixed inverse gives
\begin{equation}\label{eq: complementary nonlinear system}
\begin{split}
 \widehat{\bf L}_{\eps,h,\theta}{\bf\Psi}&=-{\bf S}_\eps(\Phi_{h,\theta},A_{h,\theta})-\mathcal N_{\eps,h,\theta}({\bf\Psi})+\mathfrak B_{\eps,h,\theta}^*\mathfrak b_{\eps,h,\theta},\\
 \mathfrak B_{\eps,h,\theta}{\bf\Psi}&=0,
\end{split}
\end{equation}
and
\begin{equation}\label{eq: complementary solution bound}
 \|{\bf\Psi}\|_{X_{\eps,h,\theta}}+\|\mathfrak b_{\eps,h,\theta}\|_{C^{0,\tau}_\eps}+\|\mathfrak B_{\eps,h,\theta}^*\mathfrak b_{\eps,h,\theta}\|_{Y_{\eps,h,\theta}}\lesssim  a_\eps.
\end{equation}
Consequently, the perturbed pair in \eqref{eq: final eq}, still depending on $h,\theta$, satisfies the exact equation
\begin{equation}\label{eq: nonlinear residual equation}
 {\bf S}_\eps(\Phi_\eps,A_\eps)=\mathfrak B_{\eps,h,\theta}^*\mathfrak b_{\eps,h,\theta}-\eps^{-1}{\bf G}_{(\Phi_\eps,A_\eps)}\gamma_{\bf\Psi}.
\end{equation}
Our goal is to choose the parameters so that 
\begin{equation}\label{eq: b=0}
	\mathfrak b_{\eps,h,\theta}=(0,0,0,c_\eps)\quad \text{on }\Gamma,
\end{equation}
for some $c_\eps\in\R$. Then \Cref{lem: constant phase residual} gives ${\bf S}_\eps(\Phi_\eps,A_\eps)=0$. We do so in the next section.

\subsection{The reduced equation}\label{sec: reduced equation}

Consider the operator $\mathcal J\colon C^{2,\tau}(\Gamma;\R^4)\to C^{0,\tau}(\Gamma;\R^4)$ defined by
\begin{equation*}
	\mathcal J(h,\theta)\coloneqq (J_\Gamma h,\Delta_\Gamma\theta),
\end{equation*}
where $h=(h_1,h_2,h_3)$ is identified with the normal field $h^\alpha\nu_\alpha$.
Define the projection
\begin{equation}\label{eq: mean zero projection}
	\Pi_0(g,\varphi)\coloneqq
	\left(g,\varphi-
	\fint_{\Gamma}\varphi\,d\vol_\Gamma\right).
\end{equation}
Let $\mathcal J_0$ be the restriction of $\mathcal J$ to pairs whose phase has zero integral on $\Gamma$, with the same restriction on its codomain. Nondegeneracy of $\Gamma$ makes $\mathcal J_0$ invertible.

Define
\begin{equation*}
	d_\eps\bigl((h,\theta),(\widetilde h,\widetilde\theta)\bigr)
	\coloneqq\frac{\|h-\widetilde h\|_{C^{2,\tau}}}{\eps|\log\eps|^2}
	+\|\theta-\widetilde\theta\|_{C^{2,\tau}}.
\end{equation*}
We work on the complete closed ball
\begin{equation*}
	\mathcal B_{\eps,K}\coloneqq
	\left\{(h,\theta):d_\eps\bigl((h,\theta),(0,0)\bigr)\leq K,
	\quad\int_{\Gamma}\theta=0\right\}.
\end{equation*}
The distance condition implies the bounds in \eqref{eq: assumptions htheta}.

\begin{proposition}\label[proposition]{prop: reduced equation}
There is a map
$\mathcal R_\eps=(\mathcal R_\eps^{\mathrm{tr}},\mathcal R_\eps^{\mathrm{ph}})\colon\mathcal B_{\eps,K}\to C^{0,\tau}(\Gamma;\R^3)\times C^{0,\tau}(\Gamma;\R)=C^{0,\tau}(\Gamma;\R^4)$ such that \eqref{eq: b=0} is equivalent to
\begin{equation}\label{eq: reduced Jacobi Laplacian system}
	\mathcal J(h,\theta)=\Pi_0\mathcal R_\eps(h,\theta).
\end{equation}
Moreover, there are rates $\omega_\eps,\upsilon_\eps\to0$ such that
\begin{equation}\label{eq: reduced size estimates}
\begin{split}
	\|\mathcal R_\eps^{\mathrm{tr}}(h,\theta)\|_{C^{0,\tau}}
	&\lesssim \eps|\log\eps|^2+\omega_\eps\|h\|_{C^{2,\tau}},\\
	\|\mathcal R_\eps^{\mathrm{ph}}(h,\theta)\|_{C^{0,\tau}}
	&\lesssim 1,
\end{split}
\end{equation}
For all $(h_j,\theta_j)\in\mathcal B_{\eps,K}$, $j=1,2$,
\begin{equation}\label{eq: reduced Lipschitz estimates}
\begin{split}
 \|\mathcal R_\eps^{\mathrm{tr}}(h_1,\theta_1)-\mathcal R_\eps^{\mathrm{tr}}(h_2,\theta_2)\|_{C^{0,\tau}}
 &\lesssim \upsilon_\eps\|h_1-h_2\|_{C^{2,\tau}}
 +\upsilon_\eps\eps|\log\eps|^2\|\theta_1-\theta_2\|_{C^{2,\tau}},\\
 \|\mathcal R_\eps^{\mathrm{ph}}(h_1,\theta_1)-\mathcal R_\eps^{\mathrm{ph}}(h_2,\theta_2)\|_{C^{0,\tau}}
 &\lesssim \upsilon_\eps\left(
 \frac{\|h_1-h_2\|_{C^{2,\tau}}}{\eps|\log\eps|^2}
 +\|\theta_1-\theta_2\|_{C^{2,\tau}}\right).
\end{split}
\end{equation}
\end{proposition}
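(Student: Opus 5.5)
The plan is to compute $\mathfrak b_{\eps,h,\theta}$ by testing the residual equation \eqref{eq: complementary nonlinear system} against the lifted modes, fibre by fibre. Pairing the first line of \eqref{eq: complementary nonlinear system} with $\rho_\eps{\bf Z}_{k,\eps,h,\theta}$ on each normal slice at $q\in\Gamma$, and using \eqref{eq: explicit residual injection}, gives a $4\times4$ linear system
\begin{equation*}
 \sum_{j=1}^4 \mathcal M_{kj}(q)\,\mathfrak b_j(q)=\mathcal I_k(q),\qquad \mathcal M_{kj}\coloneqq\frac1{\eps^{2}}\int\frac{\rho_\eps^2}{\eps^2J_h}\langle{\bf Z}_j,{\bf Z}_k\rangle_\eps\,dz,
\end{equation*}
with right-hand side
\begin{equation*}
 \mathcal I_k\coloneqq\frac1{\eps^2}\int\rho_\eps\Bigl\langle\widehat{\bf L}_{h,\theta}{\bf\Psi}+{\bf S}_\eps(\Phi_{h,\theta},A_{h,\theta})+\mathcal N({\bf\Psi}),{\bf Z}_k\Bigr\rangle_\eps dz.
\end{equation*}
After rescaling $z=\eps y$, the matrix $\mathcal M$ converges to the Gram matrix of ${\bf Z}_1,{\bf Z}_2,{\bf Z}_3,{\bf W}$ weighted by $(1+|y|^2)^{-2}$. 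This limit is diagonal by the radial and equivariant symmetries of the monopole, and invertible by \Cref{lem: monopole ker}. Hence \eqref{eq: b=0} is equivalent to $\Pi_0$ applied to the properly normalised vector $\mathcal I$ vanishing, up to a constant in the fourth entry. We then define $\mathcal R_\eps$ by writing $\mathcal I=c\,(\mathcal J(h,\theta)-\mathcal R_\eps(h,\theta))$, absorbing the invertible diagonal normalisation.

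The key step is the expansion of $\mathcal I_k$. First, the error term. For $k=1,2,3$ we expand ${\bf S}_\eps(\Phi_{h,\theta},A_{h,\theta})$ in Fermi coordinates as in \Cref{sec: approximation errors}, to first order in $h$ and its tangential derivatives. The leading $h$-dependent contribution, projected onto translations, is $c_\lambda\eps\,(J_\Gamma h)_k$: the normal Laplacian of $h$ enters through $\partial_q^2$ acting on the shifted profile, and the curvature terms $|A_\Gamma|^2$ and $\mathrm{Ric}$ enter through the metric expansion. The $h$-independent part pairs with the odd translation modes. Minimality of $\Gamma$ kills the term linear in the second fundamental form, which leaves $O(\eps^2|\log\eps|^2)$ after division by $\eps$; this gives the bound $\eps|\log\eps|^2$ in \eqref{eq: reduced size estimates}.

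For $k=4$, the phase mode, the subtraction $-\xi_\eps d\bar\theta$ in \eqref{eq: shifted phase approximation} leaves a connection error whose longitudinal part is $\bar\theta\,d_{A}\xi_\eps$-driven. Its projection onto ${\bf W}$ yields $\Delta_\Gamma\theta$ times a positive constant (the $L^2$ norm of ${\bf W}$), plus $O(1)$ terms coming from $b_r$, $\beta$ and the noncompact mass correction. This is why $\mathcal R^{\mathrm{ph}}$ is only $O(1)$; it is admissible since $\|\theta\|\le K$ with $K$ large.

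Next, the linear term $\int\rho_\eps\langle\widehat{\bf L}{\bf\Psi},{\bf Z}_k\rangle$. We integrate by parts in the normal variables and use that ${\bf L}_\lambda{\bf Z}_k=0$, so only commutators with $\rho_\eps$, the tangential Laplacian, and metric/shift corrections remain. With \eqref{eq: complementary solution bound} these are $o(1)\cdot a_\eps\eps^{-1}$ relative to the normalisation. The projected gauge term $\eps^{-1}{\bf G}\Pi_h{\bf G}^*$ pairs with ${\bf Z}_k$ through ${\bf G}^*{\bf Z}_k\approx0$. The nonlinear term is controlled by \Cref{lem: quadratic estimate}, giving $a_\eps^2$. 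Since $a_\eps/\eps\to0$ only up to logarithms ($\eps^{1/2}|\log\eps|$, and $\sqrt{\delta_\eps/\eps}\le\eps^{\tau'}$ in the compact case), these remainders are $o(1)$; they produce the rates $\omega_\eps,\upsilon_\eps$.

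The Lipschitz estimates \eqref{eq: reduced Lipschitz estimates} follow by differentiating in $(h,\theta)$. The fixed point ${\bf\Psi}$ depends Lipschitz-continuously on the parameters, by the contraction with parameters together with the uniform inverse of \Cref{prop: uniform inverse} applied to the differenced equation, once the spaces are identified across different $h$ via $\mathscr T_h$. The weight $\eps|\log\eps|^2$ in $d_\eps$ balances the $h$-difference against the $\theta$-difference.

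I expect the main obstacle to be the linear-term pairing for the phase mode, together with the longitudinal tail. ${\bf W}$ decays only like $r^{-2}$, and the global correction ${\bf\Psi}$ has a Poisson-type longitudinal component. The cutoff commutators summed over dyadic annuli must therefore be bounded using the summed longitudinal source norm, not the maximum. I would first try to show that these contributions are tangentially constant to leading order, so that $\Pi_0$ removes them, and then bound the rest by $\upsilon_\eps$ using the dyadic decay of $\rho_\eps$.
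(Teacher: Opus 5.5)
\textbf{There is a genuine gap, in your treatment of the linear term.} After rescaling, pairing with $\rho_\eps{\bf Z}_{k,\eps,h,\theta}$ means testing against ${\bf E}_k$ from \eqref{eq: normal weighted tests}. Self-adjointness and the pointwise constraint do remove the tangential part $\eps^2(\nabla^\top)^*\nabla^\top$. But ${\bf L}_\lambda{\bf E}_k=[{\bf L}_\lambda,\eta_R\langle y\rangle^{-2}]{\bf Z}_k\neq0$, so the term $\int_{\R^3}\langle{\bf L}_\lambda{\bf E}_k,\widehat{\bf\Psi}\rangle\,dy$ survives.

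This is not a small commutator error. It is bounded only by $\|{\bf\Psi}\|_{X_{\eps,h,\theta}}\lesssim a_\eps$, with an $O(1)$ constant. It also contains, at leading order, the response of ${\bf\Psi}$ to the kernel-direction part of the source. In the reference system \eqref{eq: reduced reference system}, a source $g{\bf Z}_a$ at zero tangential frequency produces $c_a=F_a(0)g$ by \eqref{eq: reduced kernel source coefficients}. Hence
\[\langle{\bf L}_\lambda{\bf E}_a,{\bf V}\rangle_2=g\bigl(F_a(0)\|{\bf E}_a\|_2^2-\langle{\bf E}_a,{\bf Z}_a\rangle_2\bigr),\]
which is strictly positive by Cauchy--Schwarz, since ${\bf E}_a$ is not a multiple of ${\bf Z}_a$.

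After your normalisations this has three consequences.
\begin{itemize}
\item $\mathcal R^{\mathrm{tr}}_\eps$ receives a contribution you can only bound by $a_\eps/\eps\simeq\eps^{1/2}|\log\eps|$. This is far above the $\eps|\log\eps|^2$ needed to keep $h$ in $\mathcal B_{\eps,K}$.
\item $\mathcal R^{\mathrm{tr}}_\eps$ also receives an $O(\|h\|_{C^{2,\tau}})$ term whose coefficient is $O(1)$ rather than $\omega_\eps$.
\item In the phase component, the factor $\eps^{-2}$ turns the same term into $O(a_\eps\eps^{-2})\to\infty$, plus an $O(1)$-Lipschitz dependence on $\theta$.
\end{itemize}
So neither \eqref{eq: reduced size estimates} nor \eqref{eq: reduced Lipschitz estimates} follows. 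Your assertion that these terms are $o(1)\cdot a_\eps\eps^{-1}$ is false, and even $o(1)$ would not be the required rate.

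You correctly flag this step as the obstacle, but your remedy does not work. Nothing makes these terms tangentially constant, since ${\bf\Psi}$ depends on $q$ through the geometry and through $(h,\theta)$. Moreover, $\Pi_0$ acts only on the phase constant, never on the translation components. Testing instead with the unweighted ${\bf Z}_k$ kills the normal part, but leaves $\eps^2\Delta_\Gamma$ applied to the unweighted moments $\int\langle\widehat{\bf\Psi},{\bf Z}_k\rangle\,dy$. The weighted constraints \eqref{eq: fibre constraints} do not control these moments; this is exactly the obstruction the paper points out.

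The missing idea is to use tests adapted both to the constraint and to the tangential spectrum. For an eigenmode $s$ of $\Delta^\perp_\Gamma$ or $\Delta_\Gamma$ with eigenvalue $\mu$, take the resolvent test ${\bf u}_{s,t}=s^a({\bf L}_\lambda+t)^{-1}{\bf E}_a$ with $t=\eps^2\mu$. It satisfies ${\bf L}^{\mathrm{prod}}_{\eps,\lambda}{\bf u}_{s,t}=s^a{\bf E}_a$, so the constraint annihilates the correction exactly at product level. The paper's argument then proceeds as follows.
\begin{itemize}
\item The feedback you discarded is encoded exactly in the multipliers $F_{\mathrm{tr}}(\eps^2\Delta_\Gamma^\perp)$ and $F_{\mathrm{ph}}(\eps^2\Delta_\Gamma)$.
\item Sources along ${\bf Z}_a$ are recovered exactly as $F(\eps^2\Delta)g$ by \eqref{eq: reduced kernel source coefficients}. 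This is how $J_\Gamma h$ and $\Delta_\Gamma\theta$ appear with the correct normalisation, and $\mathcal R_\eps$ is then defined directly from $\mathfrak b$ via \eqref{eq: reduced exact coefficients}.
\item The correction enters only through the operator-difference, cutoff, gauge, nonlinear and Jacobian sources of \eqref{eq: reduced complementary source terms}. Each carries an extra factor, such as $a_\eps\eps^{1-\tau}$, that survives division by $\eps$ and by $\eps^2$.
\item The uniform bounds $|{\bf J}_{a,t}|\lesssim\langle y\rangle^{-2}$ and $\|{\bf J}_{a,t}-{\bf J}_{a,0}\|_2\lesssim t^{1/4}$ make these coefficient operators bounded in H\"older norms. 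They also allow comparison with the $t=0$ test ${\bf J}_{a,0}\propto{\bf Z}_a$, which is used to identify the Jacobi term.
\end{itemize}
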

The proof of \Cref{prop: reduced equation}, including the identification and estimates of $\mathcal R_\eps$, is given in \Cref{sec: reduced equation proof}. Set
\begin{equation*}
	\mathcal T_\eps\coloneqq\mathcal J_0^{-1}\Pi_0\mathcal R_\eps.
\end{equation*}
For $K$ sufficiently large and $0<\eps<\eps_0(K)$, the estimates in \Cref{prop: reduced equation}, Schauder bounds for $\mathcal J_0^{-1}$ and 
Banach's fixed point theorem give a unique $(h_\eps,\theta_\eps)\in\mathcal B_{\eps,K}$ satisfying
\begin{equation*}
	\mathcal J(h_\eps,\theta_\eps)=\Pi_0\mathcal R_\eps(h_\eps,\theta_\eps).
\end{equation*}
hence, we proved \eqref{eq: b=0}.
By \eqref{eq: nonlinear residual equation}, the corresponding perturbed pair satisfies
\begin{equation}\label{eq: remaining phase residual}
	{\bf S}_\eps(\Phi_\eps,A_\eps)
	=\mathfrak B_{\eps,h_\eps,\theta_\eps}^*(0,0,0,c_\eps)
	-\eps^{-1}{\bf G}_{(\Phi_\eps,A_\eps)}\gamma_{\bf\Psi},
\end{equation}
for some constant $c_\eps\in\R$. The following lemma removes this remaining phase residual. 
\begin{lemma}\label{lem: constant phase residual}
There exist $\eps_0,m_0>0$ such that, if $0<\eps<\eps_0$, the parameters satisfy \eqref{eq: assumptions htheta},
\begin{equation*}
 {\bf\Psi}\in X_{\eps,h,\theta},\qquad
 \|{\bf\Psi}\|_{X_{\eps,h,\theta}}\leq m_0,
\end{equation*}
and a constant $c_\eps\in\R$ satisfies
\begin{equation*}
 {\bf S}_\eps(\Phi_{h,\theta}+\Psi,A_{h,\theta}+B)
 +\eps^{-1}{\bf G}_{(\Phi_{h,\theta}+\Psi,A_{h,\theta}+B)}\gamma_{\bf\Psi}
 =\mathfrak B_{\eps,h,\theta}^*(0,0,0,c_\eps),
\end{equation*}
then
\begin{equation*}
 c_\eps=0,\qquad
 {\bf G}_{(\Phi_{h,\theta}+\Psi,A_{h,\theta}+B)}\gamma_{\bf\Psi}=0.
\end{equation*}
This holds in both the compact and noncompact cases.
\end{lemma}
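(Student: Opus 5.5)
Write $(\Phi,A)\coloneqq(\Phi_{h,\theta}+\Psi,A_{h,\theta}+B)$, ${\bf G}\coloneqq{\bf G}_{(\Phi,A)}$ and $\gamma\coloneqq\gamma_{\bf\Psi}$. The plan is a generalisation of the gauge-compatibility argument from the abelian Higgs setting: test the identity against the infinitesimal gauge directions ${\bf G}\zeta$. The key input is gauge invariance of $E_\eps$, which gives the Noether identity
\[
{\bf G}^*{\bf S}_\eps(\Phi,A)=0\qquad\text{pointwise}.
\]
This holds in both the compact and noncompact cases, since the potential is gauge invariant. Applying ${\bf G}^*$ to the assumed identity therefore yields
\[
\eps^{-1}{\bf G}^*{\bf G}\gamma=c_\eps\,{\bf G}^*\mathfrak B^*(0,0,0,1).
\]

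The next step is to pair this equation with $\gamma$ itself, and separately with the phase generator $\xi_{\eps,h}$. Pairing with $\gamma$ and integrating by parts gives
\[
\eps^{-1}\|{\bf G}\gamma\|_{L^2_\eps}^2=c_\eps\langle \mathfrak B^*(0,0,0,1),{\bf G}\gamma\rangle_{L^2_\eps}.
\]
In the noncompact case the decay built into $X$ justifies the integration by parts. To determine $c_\eps$, I would test the original identity against ${\bf G}(\chi\xi_{\eps,h})$, or more conveniently against ${\bf G}(\bar q\,\xi_{\eps,h})$ with $\bar q$ a cutoff equal to one on the tube. The ${\bf S}_\eps$ term then drops out by the Noether identity. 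Through \eqref{eq: explicit residual injection}, the $\mathfrak B^*$ term produces
\[
c_\eps\int_\Gamma \mathfrak b_4\bigl[{\bf G}(\bar q\xi_{\eps,h})\bigr].
\]
The point is that ${\bf G}_{h,\theta}\xi_{\eps,h}$ is, up to small errors, exactly the lifted mode ${\bf Z}_{4,\eps,h,\theta}$. Indeed, ${\bf W}=(0,d_{A_\lambda}(q_\lambda\sigma))$ and $[\Phi_{h,\theta},\xi_{\eps,h}]=0$. Hence the coefficient $\mathfrak b_4[{\bf G}\xi_{\eps,h}]$ is bounded below by a positive multiple of $\|{\bf W}\|^2_{L^2(\rho)}$. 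The perturbation ${\bf\Psi}$ contributes only $O(m_0)$, since the gauge action of ${\bf G}_{(\Phi,A)}$ differs from ${\bf G}_{h,\theta}$ by $([\Psi,\cdot],[B,\cdot])$.

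We thus obtain a $2\times2$ homogeneous linear system in the unknowns $c_\eps$ and $\gamma$. The remaining task is to show that $\gamma$ is itself controlled by $c_\eps$ and must vanish. Here I would use that $\gamma=(I-\Pi_h){\bf G}_{h,\theta}^*{\bf\Psi}$ lies in the range of $I-\Pi_h$, so $\gamma$ has no $\xi_{\eps,h}$-component. I would also use coercivity of $\eps^{-1}{\bf G}^*{\bf G}$ on the complement of the span of $\xi_{\eps,h}$. On the complement of the span of $\xi_{\eps,h}$ this operator has no kernel: $\xi_{\eps,h}$ is only an approximate kernel direction, and in the compact case $b_1(M)=0$ kills the remaining parallel sections. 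This should give an estimate of the form
\[
\|\gamma\|\lesssim |c_\eps|\,\|{\bf G}^*\mathfrak B^*(0,0,0,1)\|.
\]
Here ${\bf G}^*\mathfrak B^*(0,0,0,1)$ is small, because ${\bf G}_\lambda^*{\bf W}=0$, leaving only cutoff, geometric and ${\bf\Psi}$ errors. Inserting this bound into the $\xi$-pairing leaves a relation $c_\eps(\kappa+o(1))=0$ with $\kappa>0$. Hence $c_\eps=0$, then ${\bf G}^*{\bf G}\gamma=0$, and pairing with $\gamma$ gives ${\bf G}\gamma=0$.

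I expect the main obstacle to be the uniform coercivity of ${\bf G}^*{\bf G}$ on the $\Pi_h$-complement, in norms compatible with the small cross term. The longitudinal part is of Poisson type, so the constants must not deteriorate with $\eps$. My first attempt would be an $L^2$ energy argument: split $\zeta=f\xi_{\eps,h}+\zeta^\perp$. The transverse part is controlled by the $\eps^{-2}|[\Phi,\zeta^\perp]|^2$ term, and the longitudinal part by the Poincaré inequality for $|df|^2$ on $M$, once the weighted mean is removed by $\Pi_h$. Then I would only need to compare pairings, which avoids any need for pointwise inverse estimates.
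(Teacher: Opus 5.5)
The skeleton of your argument matches the paper's: the Noether identity ${\bf G}^*{\bf S}_\eps=0$, one test against ${\bf G}\gamma_{\bf\Psi}$, one against the phase direction, and positivity of $\langle\mathfrak B^*_{\eps,h,\theta}(0,0,0,1),{\bf G}\xi_{\eps,h}\rangle_{L^2_\eps}$. The step that closes the system fails, however.

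\textbf{The claimed smallness is false.} Your claim that ${\bf G}^*\mathfrak B^*_{\eps,h,\theta}(0,0,0,1)$ is small contradicts your own positivity claim. Since $\mathfrak B^*(0,0,0,1)$ is supported in the tube and $|\xi_{\eps,h}|\leq1$, we have $\langle{\bf G}^*\mathfrak B^*(0,0,0,1),\xi_{\eps,h}\rangle_{L^2}=\langle\mathfrak B^*(0,0,0,1),{\bf G}\xi_{\eps,h}\rangle_{L^2_\eps}\geq c_0$. The identity ${\bf G}_\lambda^*{\bf W}=0$ does not survive the weight $\rho_\eps$, which varies on the core scale. Using $d^*_{A_\lambda}d_{A_\lambda}(q_\lambda\sigma)=0$, one finds $d^*_{A_\lambda}\bigl[(1+|y|^2)^{-1}d_{A_\lambda}(q_\lambda\sigma)\bigr]=2|y|q_\lambda'(|y|)(1+|y|^2)^{-2}\sigma$. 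Up to Jacobian, geometric and ${\bf\Psi}$ errors, ${\bf G}^*\mathfrak B^*(0,0,0,1)$ is $\eps^{-3}$ times this positive profile at $y=z/\eps$, and this is exactly what produces $c_0$.

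Consequently your coercivity step gives nothing beyond $\|{\bf G}\gamma_{\bf\Psi}\|_{L^2_\eps}\lesssim|c_\eps|$. That already follows from your first pairing together with $\|\mathfrak B^*(0,0,0,1)\|_{L^2_\eps}\lesssim\eps^{-1}$. Since $\|{\bf G}\xi_{\eps,h}\|_{L^2_\eps}\simeq\eps$, Cauchy--Schwarz bounds the cross term $\eps^{-1}\langle{\bf G}\gamma_{\bf\Psi},{\bf G}\xi_{\eps,h}\rangle_{L^2_\eps}$ only by $C|c_\eps|$, the same order as $c_0|c_\eps|$. So $c_\eps(\kappa+o(1))=0$ does not follow.

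\textbf{The missing idea.} The smallness must sit on the phase generator, not on the residual injection. The quantity ${\bf G}^*_{h,\theta}{\bf G}_{h,\theta}\xi_{\eps,h}$ is small, because $[\Phi_{h,\theta},\xi_{\eps,h}]=0$ and normal harmonicity of $q_\lambda\sigma$ leave only Fermi and minimality defects. The paper writes it as $F_0+F_1$, with $\|F_0\|_{L^2}\lesssim\eps^2$ and, from the noncompact mass correction, $|F_1|\lesssim(1+r_h/\eps)^{-4}$. One then writes the cross term as $\eps^{-1}\langle\gamma_{\bf\Psi},{\bf G}^*_{h,\theta}{\bf G}_{h,\theta}\xi_{\eps,h}\rangle$ plus the ${\bf\Psi}$-perturbation.

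To control $\gamma_{\bf\Psi}$, use the normalisation $\int_M\chi\langle\gamma_{\bf\Psi},\xi_{\eps,h}\rangle\,d\vol_g=0$ and two inequalities:
\begin{itemize}
\item $\eps\|v\|^2_{L^2(K)}\lesssim\|{\bf G}_{h,\theta}v\|^2_{L^2_\eps}$;
\item $\eps^{-1}\int_K(1+r_h/\eps)^{-4}|v|^2\,d\vol_g\lesssim\|{\bf G}_{h,\theta}v\|^2_{L^2_\eps}$.
\end{itemize}
These bound the cross term by $C(\sqrt\eps+m_0)|c_\eps|$, which is then absorbed.

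Your splitting $\zeta=f\xi_{\eps,h}+\zeta^\perp$ cannot give the core inequality, since it degenerates on $\Gamma_h$ where $\Phi_{h,\theta}$ vanishes. The paper uses irreducibility of the monopole ($D_a\Phi_0(0)=\tfrac23T_a$) together with compactness. Also, $b_1(M)=0$ plays no role here: off the core the approximate kernel is $\R\sigma$, and $\Pi_h$ removes it using connectedness of $K\setminus\Gamma$.

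\textbf{Do not cut off the phase test.} Replacing ${\bf G}\xi_{\eps,h}$ by ${\bf G}(\bar q\,\xi_{\eps,h})$ with a fixed cutoff $\bar q$ introduces the collar term $\eps(\Delta_0\bar q)\sigma$ into ${\bf G}^*{\bf G}(\bar q\,\xi_{\eps,h})$. Its contribution $\int_M\langle d\langle\gamma_{\bf\Psi},\sigma\rangle,d\bar q\rangle\,d\vol_g$ is a flux of the longitudinal part of $\gamma_{\bf\Psi}$, and is only bounded by $C\eps^{-1/2}|c_\eps|$. The paper tests with ${\bf G}\xi_{\eps,h}$ itself ($\chi\equiv1$ in the compact case). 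In the noncompact case it shows that the first-variation boundary current vanishes at infinity, using exponential decay of the transverse components.
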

The proof is deferred to \Cref{sec: technical gauge}.

\begin{proof}[Proof of Theorems \ref{thm: main noncompact} and \ref{thm: main}]
The fixed points give parameters $(h_\eps,\theta_\eps)$ and a correction ${\bf\Psi}$ satisfying \eqref{eq: remaining phase residual}. Moreover, $\|{\bf\Psi}\|_{X_{\eps,h_\eps,\theta_\eps}}\lesssim a_\eps\to0$. Hence \Cref{lem: constant phase residual} gives $c_\eps=0$ and \eqref{eq: final eq}.

Pull back to each normal fibre by \eqref{eq: introductory normal rescaling} and undo the inner and phase gauges. On every fixed $B_R$, the normal components of the approximation differ from $(\Phi_{\lambda_\eps},A_{\lambda_\eps})$ by $O_R(\eps)$ in $C^{2,\tau}$, and the pulled-back correction is $O_R(a_\eps)$, uniformly along $\Gamma$. Since $h_\eps\to0$ in $C^{2,\tau}$, this proves \eqref{eq: normal profile convergence}.

For the noncompact exterior, choose fixed smooth compact domains $K\Subset K_1$ containing all gluing regions. If a real scalar or one-form $v$ satisfies
\begin{equation*}
 \|v\|_{C^{2,\tau}_\nu(M\setminus K)}
 +\|\Delta_pv\|_{C^{0,\tau}_{\nu+2}(M\setminus K)}\leq C_v,
 \qquad p=0,1,
\end{equation*}
testing against a cutoff times $v$ gives
\begin{equation}\label{eq: exterior graph energy}
\begin{split}
 \int_{M\setminus K_1}(|dv|^2+|d^*v|^2)\,d\vol_g
 &\lesssim C_v^2+\int_{M\setminus K}|v|\,|\Delta_pv|\,d\vol_g\\
 &\quad+\limsup_{L\to\infty}L^{-2}\int_{\{L<R<2L\}}|v|^2\,d\vol_g
 \lesssim C_v^2.
\end{split}
\end{equation}
Here $d^*v=0$ for scalars; the last term vanishes because $R^{-2\nu-2}$ is integrable by \eqref{eq: noncompact weighted volume}. A cutoff of $H$ has compactly supported Laplacian, so the scalar inverse in \Cref{def: admissible amb noncomp} and the maximum principle give $H\in C^{2,\tau}_\nu$ off $K$. The form $u$ in the proof of \Cref{lem: noncompact exterior correction} also belongs to this space, has compactly supported Laplacian, and satisfies $f_1=du$ off $K$. Thus \eqref{eq: exterior graph energy} yields
\begin{equation*}
 \int_{M\setminus K_1}(|dH|^2+|f_1|^2)\,d\vol_g<\infty.
\end{equation*}
The exterior approximation $((1-\eps H)\sigma,A_D)$ consequently has energy $O(\eps)$.

Normal rescaling gives the energy density $c_{\lambda_\eps}$ along the shifted submanifold. The radial profile estimates give a uniform $O(R^{-1})$ bound for the model energy outside $B_R$; the geometric and matching corrections contribute $o(1)$. First letting $\eps\to0$ and then $R\to\infty$ proves \eqref{eq: energy concentration} and the asserted total-energy asymptotics for the approximation.

These conclusions persist under the correction. Indeed, the solution norm bounds the squared weighted $L^2$ differences of the covariant Higgs derivatives and curvatures by $Ca_\eps^2/\eps=o(1)$. On the noncompact exterior, write $\Psi^\parallel=\psi\sigma$ and $B^\parallel=b\sigma$. Apply \eqref{eq: exterior graph energy} to $\psi$ and $\eps b$, with $C_v\lesssim a_\eps$, using \eqref{eq: noncompact exterior solution norm}. This gives precisely the stated bound for their derivative terms. The transverse terms and commutators satisfy the same bound directly from the weights $\nu+2$ in that norm and $[\R\sigma,\R\sigma]=0$. In the compact case, its longitudinal weight also gives
\begin{equation*}
\frac1{\delta_\eps}\int_M\left(|\Phi_{h_\eps,\theta_\eps}+\Psi|^2-|\Phi_{h_\eps,\theta_\eps}|^2\right)^2\,d\vol_g\lesssim\frac{a_\eps^2}{\eps}=o(1).
\end{equation*}
Cauchy--Schwarz and the bounded energy of the approximation therefore yield
\begin{equation*}
\left\|e_\eps(\Phi_\eps,A_\eps)-e_\eps(\Phi_{h_\eps,\theta_\eps},A_{h_\eps,\theta_\eps})\right\|_{L^1(M)}=o(1).
\end{equation*}
This proves the measure convergence and energy asymptotics for the exact solutions. Finally, continuity of the model profiles and the uniform tail bound give $c_{\lambda_\eps}\to c_{\bar\lambda}$ whenever $\lambda_\eps\to\bar\lambda$, with $c_0=2\pi$.
\end{proof}

\section{Errors of approximation}\label{sec: approximation errors}
In this section we prove \Cref{lem: error approx}, starting with the compact case.
\subsection{The error in the compact case}

First, we record the profile and matching estimates. The scalar decay length is
\begin{equation}\label{eq: approximation scalar length}
\ell_\eps\coloneqq\frac{\eps}{\sqrt{2\lambda_\eps}} =\frac12\sqrt{\frac{\delta_\eps}{\eps}},\qquad \frac{\sqrt C_*}{2}\eps\leq \ell_\eps\leq \eps^{\tau'},
\end{equation}
where $\tau'\in (\tau,\frac12)$ and we used \eqref{eq: deltaeps}. We need two preliminary results.
\begin{lemma}\label{lem: approximation profile bounds}
There exist constants $M_0>0$ and $c\in (0,1)$, independent of $\eps$, such that, for all sufficiently small $\eps>0$, $r\geq M_0\eps$ and $0\leq k\leq3$,
\begin{equation}\label{eq: approximation profile bounds}
\begin{split}
r^k\left|\partial_r^k\left(1-\varphi_{\lambda_\eps}(r/\eps)\right)\right| &\lesssim \frac{\eps}{r}e^{-cr/\ell_\eps},\\
\eps^k\left|\partial_r^k\left(1-\alpha_{\lambda_\eps}(r/\eps)\right)\right| &\lesssim e^{-cr/\eps},\\
r^k\left|\partial_r^k\left(1-q_{\lambda_\eps}(r/\eps)\right)\right| &\lesssim \frac{\eps}{r}.
\end{split}
\end{equation}
\end{lemma}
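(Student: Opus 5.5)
We prove \Cref{lem: approximation profile bounds} by working in the rescaled variable $s=r/\eps$, where the claims read, for $s\geq M_0$,
\[
s^k|\partial_s^k(1-\varphi_\lambda)|\lesssim s^{-1}e^{-c\sqrt{2\lambda}\,s},\qquad
|\partial_s^k(1-\alpha_\lambda)|\lesssim e^{-cs},\qquad
s^k|\partial_s^k(1-q_\lambda)|\lesssim s^{-1},
\]
uniformly in $\lambda\in[0,\lambda_*]$, using $\ell_\eps=\eps/\sqrt{2\lambda_\eps}$. The natural order is: $\alpha$ first, then $\varphi$, and finally $q$. The $\alpha$ bound feeds into the other two equations, and the bound on $q$ depends only on $\alpha$.

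\emph{Step 1 (uniform coarse control).} We first show that there is $M_0$ such that $\varphi_\lambda(s)\geq\frac12$ and $0\leq 1-\alpha_\lambda\leq\frac12$ for $s\geq M_0$, uniformly in $\lambda\in[0,\lambda_*]$. For this we use the continuity of the minimising radial profiles in $\lambda$, locally uniformly, together with the explicit BPS formulas \eqref{eq: BPS}. The energy bound $E_\lambda\leq C$ gives $\int_{s}^\infty\bigl(\varphi'^2+\lambda(1-\varphi^2)^2+(1-\alpha)^2\varphi^2/t^2\bigr)t^2\,dt\to0$ as $s\to\infty$, uniformly in $\lambda$. Combined with the monotonicity $0\leq\varphi,\alpha\leq1$ of the minimisers, this yields the claim. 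I expect this uniformity in $\lambda$ to be the main obstacle, since it is exactly the input that prevents $M_0$ from degenerating as $\lambda_\eps$ varies. If the continuity argument turns out to be too soft, I would instead argue by compactness: suppose a sequence $\lambda_k$ has $\varphi_{\lambda_k}(s_k)<\frac12$ with $s_k\to\infty$. Passing to a limit contradicts the charge-one energy identity, since the energy would have to split off and the total energy exceeds $c_\lambda$.

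\emph{Step 2 (the connection profile).} Set $w=1-\alpha$. Its equation reads
\[
w''=w\bigl(4\varphi^2-\alpha(2-\alpha)/s^2\bigr)\geq w\bigl(1-s^{-2}\bigr)\qquad (s\geq M_0).
\]
Comparison with the barrier $Ce^{-cs}$ on $[M_0,\infty)$, using $w\to0$ and the maximum principle, gives $w\lesssim e^{-cs}$. The derivative bounds for $1\leq k\leq3$ then follow from interior ODE/Schauder estimates on unit intervals, differentiating the equation.

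\emph{Step 3 (the Higgs profile).} Set $u=1-\varphi$. It satisfies
\[
u''+\tfrac2s u'=\lambda\varphi(1+\varphi)u-\tfrac{2w^2\varphi}{s^2}.
\]
The source term is $O(e^{-cs})$ by Step 2. For $\lambda=0$, comparing with $C/s$, which is harmonic for the radial Laplacian, gives $u\lesssim s^{-1}$; this matches \eqref{eq: BPS}. For $\lambda>0$, we use the barrier $Cs^{-1}e^{-c\sqrt{2\lambda}s}+Ce^{-c's}$, which is a supersolution once $c<1$, because $\lambda\varphi(1+\varphi)\geq 2\lambda c^2$ for $\varphi$ close to one. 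Since $\sqrt{2\lambda}\leq\sqrt{2\lambda_*}$, the exponentially small term is absorbed into the main one after reducing $c$. For the derivatives, we apply scaled estimates on the intervals $[s/2,2s]$: these give the $s^k$ factors, and on these intervals the exponential weight changes by a bounded factor because $\sqrt\lambda s$ only varies by a factor of $4$. This is why the weight $s^k$, rather than $1$, appears in the bound.

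\emph{Step 4 (the phase profile).} Set $v=1-q$. From \eqref{eq: qlambda} we get
\[
v''+\tfrac2sv'=\tfrac{2w^2}{s^2}(v-1),
\]
whose right-hand side is $O(e^{-cs})$. Writing $v=a/s+$ (an exponentially decaying correction) by integrating the radial ODE explicitly from infinity, and using $v\to0$, gives $s^k|\partial_s^kv|\lesssim s^{-1}$. Finally, we convert back to $r$ via $\partial_r=\eps^{-1}\partial_s$, which yields \eqref{eq: approximation profile bounds}.
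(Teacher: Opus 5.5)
Your plan has the same structure as the paper's one-line proof: comparison for $1-\alpha_\lambda$ and $1-\varphi_\lambda$, explicit integration of \eqref{eq: qlambda}, then differentiation of the equations. Steps 2 and 4 are sound. The comparison in Step 3, however, fails as written. This is exactly where uniformity as $\lambda\to0^+$ is decided, and that uniformity matters because $\lambda_\eps$ may tend to zero in the compact case.

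Write the equation as $Pu\coloneqq -u''-\tfrac2su'+Vu=g$, with $V=\lambda\varphi_\lambda(1+\varphi_\lambda)\in[\tfrac34\lambda,2\lambda]$ and $g=2w^2\varphi_\lambda s^{-2}\geq0$. An upper barrier must satisfy $P\bar u\geq g$.
\begin{itemize}
\item At $\lambda=0$ you have $P(C/s)=0<g$. Comparison with a harmonic function therefore bounds the superharmonic $u$ only from below; there you can simply use \eqref{eq: BPS}.
\item For $\lambda>0$, $P(s^{-1}e^{-\mu s})=(V-\mu^2)s^{-1}e^{-\mu s}$ has a margin of size $O(\lambda)$. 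This cannot dominate the $\lambda$-independent source $g$ with constants uniform in $\lambda$.
\item The added term has the wrong sign: $P(e^{-c's})=(-c'^2+2c'/s+V)e^{-c's}<0$ as soon as $V=O(\lambda_*)$ is small compared with $c'^2$.
\end{itemize}
The fix is to subtract a faster-decaying radial profile rather than add one:
\[
\bar u=A\,s^{-1}e^{-\mu s}-B\,s^{-1}e^{-\gamma s},\qquad \mu=c_1\sqrt{2\lambda},\quad c_1^2\leq\tfrac38,\quad 2\lambda_*<\gamma^2,\quad \gamma\leq 2c,
\]
where $c$ is the rate from Step 2. Then $P\bar u=A(V-\mu^2)s^{-1}e^{-\mu s}+B(\gamma^2-V)s^{-1}e^{-\gamma s}\geq g$ once $B$ is large. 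Choosing $A\geq B+\tfrac{M_0}{2}e^{\sqrt{2\lambda_*}M_0}$ gives $\bar u(M_0)\geq\tfrac12\geq u(M_0)$. This works uniformly for $\lambda\in[0,\lambda_*]$, including $\lambda=0$.

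Step 1 also needs repair, though more easily. A uniform bound on $c_\lambda$ does not by itself give uniform decay of the energy tails, so your first justification is not valid, and the compactness fallback is only a sketch. Neither is needed. Since $|d_A\Phi|\geq|d|\Phi||$, one has $2\pi\int_0^\infty\varphi_\lambda'^2t^2\,dt\leq c_\lambda$. If $\varphi_\lambda(s)\leq\tfrac12$, then
\[
\tfrac14\leq\Bigl(\int_s^\infty|\varphi_\lambda'|\,dt\Bigr)^2\leq\frac1s\int_s^\infty\varphi_\lambda'^2t^2\,dt\leq\frac{c_\lambda}{2\pi s}.
\]
Hence $\varphi_\lambda\geq\tfrac12$ on $[M_0,\infty)$, with $M_0$ depending only on $\sup_{\lambda\leq\lambda_*}c_\lambda$. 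That supremum is finite by comparison with a truncated BPS pair; the BPS pair itself has infinite potential energy when $\lambda>0$. The bound $1-\alpha_\lambda\leq\tfrac12$ is unnecessary, since Step 2 only uses $0\leq w\leq1$ at $s=M_0$.

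Finally, in the derivative bounds of Step 3, the weight $e^{-\mu s'}$ is not comparable across $[s/2,2s]$, and the rescaled potential $s^2V\sim\lambda s^2$ is unbounded. Both are harmless after shrinking $c$, since $(1+\lambda s^2)e^{-\mu s/2}\lesssim e^{-\mu s/4}$.
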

\begin{proof}
Apply comparison to the radial profile equations and integrate \eqref{eq: qlambda}, then differentiate these equations to obtain the derivative bounds.
\end{proof}

The following matching estimate refines \Cref{lem: exterior correction}.
\begin{lemma}\label{lem: approximation regular matching}
Let $b_r$ and $\beta\coloneqq b_r|_\Gamma$ be as in \Cref{lem: exterior correction}, and extend $\beta$ smoothly using Fermi coordinates. For all sufficiently small $r>0$, let 
\begin{equation*}
	\Omega_r\coloneqq \{x\in M:r<\dist_g(x,\Gamma)<2r\}.
\end{equation*}
Then,
\begin{equation}\label{eq: approximation regular matching}
\sum_{k=0}^2r^k\|\nabla^k(b_r-\beta)\|_{\tau,r}\lesssim r(1+|\log r|).
\end{equation}
\end{lemma}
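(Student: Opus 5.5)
We derive the estimate from the representation $b_r = G_1 R + (\text{smooth})$ and the bounds on the source $R$ in \eqref{eq: exterior regular source}, and we split the derivatives into tangential and normal ones.

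\textbf{Step 1: a Poisson equation for $b_r-\beta$.} In Fermi coordinates write $\beta$ for the Fermi extension of the trace; it is smooth, so $\Delta_1\beta$ is bounded. Then
\begin{equation*}
\Delta_1(b_r-\beta)=\widetilde R\coloneqq R-\Delta_1\beta,\qquad |\partial_q^k\widetilde R|\lesssim_k r^{-1},
\end{equation*}
by \eqref{eq: exterior regular source} and the tangential bounds recorded in the proof of \Cref{lem: exterior correction}. We also know $(b_r-\beta)|_\Gamma=0$. The task is therefore to show that a solution of $\Delta_1 w=\widetilde R$ with $|\widetilde R|\lesssim r^{-1}$ and $w|_\Gamma=0$ satisfies $|w|\lesssim r(1+|\log r|)$, together with the scaled derivative and H\"older bounds on $\Omega_r$.

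\textbf{Step 2: the $C^0$ bound.} Fix $x$ with $\dist(x,\Gamma)=r$ and let $q=\pi(x)$. Write
\begin{equation*}
w(x)-w(q)=\int_M\bigl(K(x,z)-K(q,z)\bigr)\widetilde R(z)\,dz+(\text{smooth, }O(r)).
\end{equation*}
Split $M$ into $\{d(z,q)<2r\}$ and its complement.
\begin{itemize}
\item On the near region, bound the two kernels separately by $d^{2-n}$. In Fermi coordinates $z=(q',y')$, integrate $d^{2-n}|y'|^{-1}$ over the $(n-3)$ tangential directions. This gives roughly $\int_{|y'|<2r}|y'-y|^{-1}|y'|^{-1}\,dy'\lesssim r$.
\item On the far region, use the mean value bound $|K(x,z)-K(q,z)|\lesssim r\,d(q,z)^{1-n}$. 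Integrating $d^{1-n}|y'|^{-1}$ over $2r<d<\delta$ and summing dyadic shells gives $r\sum_{\text{shells}}O(1)=O(r|\log r|)$, since each shell contributes $\rho^{1-n}\cdot\rho^{n-3}\cdot\rho^{2}\cdot\rho^{-1}\cdot\rho^{0}\sim 1$.
\end{itemize}
The logarithm comes precisely from this sum over scales, in line with the remark on norms. Since $w(q)=0$, we get $|w(x)|\lesssim r(1+|\log r|)$.

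\textbf{Step 3: derivatives and H\"older seminorms by scaling.} On $\Omega_r$ we rescale by $r$: set $\widehat w(\cdot)=w(r\,\cdot)$ on a fixed annulus $\{1/2<|y|<4\}$, enlarged and taken locally in $q$. It satisfies $\Delta\widehat w=r^2\widetilde R(r\,\cdot)=O(r)$ with a uniformly smooth rescaled metric. Interior Schauder estimates then give
\begin{equation*}
\sum_{k\le2}r^k\|\nabla^k w\|_{\tau,r}\lesssim \|w\|_{L^\infty(\widetilde\Omega_r)}+r^{2}\|\widetilde R\|_{\tau,r,\widetilde\Omega_r}.
\end{equation*}
The second term needs the H\"older bound $r^{2+\tau}[\widetilde R]_\tau\lesssim r$. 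This follows from $|\nabla \widetilde R|\lesssim r^{-2}$, which we obtain by differentiating the explicit cancellation formula for $R$: it is built from $d_g^*f_0$ and $\Delta_1 b_s$, which are homogeneous in $y$ up to smooth Fermi corrections, so each normal derivative costs $r^{-1}$ and each tangential derivative costs nothing. Combining this with Step 2 yields \eqref{eq: approximation regular matching}.

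\textbf{Main obstacle.} The main obstacle is the far-field part of Step 2. The kernel $K$ of $G_1$ acts on one-forms, so the difference $K(x,z)-K(q,z)$ requires the gradient bound together with parallel-transport identifications. We also need the crude source bound $|\widetilde R|\lesssim r^{-1}$ rather than any cancellation, which is what produces the $|\log r|$ loss. If the Green-kernel difference proves awkward, the fallback is to compare $w$ in the rescaled annuli with its values on adjacent annuli, giving a dyadic telescoping bound $|w|_{\Omega_{2^{-k}}}\lesssim\sum 2^{-k}$, which yields the same $r|\log r|$ bound.
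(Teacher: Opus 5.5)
Your proposal is correct and follows essentially the paper's route: the Green representation of $b_r$ (available because $b_1(M)=0$), a near/far splitting using the kernel and gradient bounds, the crude mass bound $\int_{B_t}|R|\lesssim t^{n-1}$ giving an $O(1)$ contribution per dyadic scale and hence the logarithm, and then scaled Schauder estimates with $\rho^2\|R-\Delta_1\beta\|_{\tau,\rho}\lesssim\rho$. The only difference is that you compare $x$ directly with its foot point $q\in\Gamma$, whereas the paper first proves $|b_r(x)-b_r(x')|\lesssim d(1+|\log d|)$ for arbitrary nearby pairs and then lets $x'\to p$. Your shell bookkeeping $\rho^{1-n}\cdot\rho^{n-3}\cdot\rho^{2}\cdot\rho^{-1}$ multiplies out to $\rho^{-1}$, not $O(1)$. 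The correct count is the kernel factor $\rho^{1-n}$ times the source mass $\rho^{n-3}\cdot\rho^{2}=\rho^{n-1}$ of a ball of radius $\rho$, where $\rho^2$ is already $\int_{|y'|<\rho}|y'|^{-1}\,dy'$, and this gives the $O(1)$ per shell you claimed.
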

\begin{proof}
The source $R$ defined in \eqref{eq: exterior regular source} satisfies
\begin{equation}\label{eq: bound R}
 \Delta_1b_r=R,\qquad
 \rho\|R\|_{\tau,\rho,B_{\rho/2}(x)}\lesssim 1,\qquad
 \int_{B_t(z)}|R|\,d\vol_g\lesssim t^{n-1},
\end{equation}
where $\rho=r(x)>0$ and $0<t\leq2t_0$, for a fixed sufficiently small $t_0>0$. Since $b_1(M)=0$, the Hodge Green kernel $K$ from the proof of \Cref{lem: exterior correction} gives
\begin{equation*}
 b_r(x)=\int_M K(x,z)R(z)\,d\vol_g(z).
\end{equation*}
For $0<d_M(x,x')<t_0/4$, split the integral over $B_{2d_M(x,x')}(x)\cup B_{2d_M(x,x')}(x')$ and its complement. The kernel bounds and the preceding mass estimate give
\begin{equation*}
\begin{split}
 |b_r(x)-b_r(x')|&\lesssim d_M(x,x')+\sum_{p\in\{x,x'\}}\int_{B_{3d_M(x,x')}(p)}\frac{|R(z)|}{d_M(p,z)^{n-2}}\,d\vol_g(z)\\
 &+d_M(x,x')\sum_{\substack{j\, :\, 2^jd_M(x,x')<t_0}}
 \frac{\displaystyle\int_{B_{2^{j+1}d_M(x,x')}(x)}|R|\,d\vol_g}{(2^jd_M(x,x'))^{n-1}}\\
 &\lesssim d_M(x,x')\Bigl(1+\sum_{k=0}^{\infty}2^{-k}+\sum_{\substack{j\, :\,2^jd_M(x,x')<t_0}}1\Bigr)\\
 &\lesssim d_M(x,x')(1+|\log d_M(x,x')|).
\end{split}
\end{equation*}
Sending $x'$ to the nearest point $p$ of $x$ in $\Gamma$, we get $|b_r(x)-\beta(p)|\lesssim r(x)(1+|\log r(x)|).$
Extending $\beta$ smoothly in the fixed Fermi coordinates and setting $w=b_r-\beta$ we find $|w(x)|\lesssim r(x)(1+|\log r(x)|)$. Scaled Schauder estimates with $\rho=r(x)$ now yield, using the second bound in \eqref{eq: bound R} and smoothness of $\Delta_1\beta$,
\begin{equation*}
\begin{split}
 \sum_{k=0}^2\rho^k\|\nabla^kw\|_{\tau,\rho,B_{\rho/4}(x)}&\lesssim \|w\|_{L^\infty(B_{\rho/2}(x))}
 +\rho^2\|R-\Delta_1\beta\|_{\tau,\rho,B_{\rho/2}(x)}\\
 &\lesssim \rho(1+|\log\rho|).
\end{split}
\end{equation*}
Covering the tubular annuli by these balls proves \eqref{eq: approximation regular matching}.
\end{proof}

\begin{proof}[Proof of \Cref{lem: error approx}, compact case]
We first estimate the error for $h=\theta=0$ in the following three regions:
\begin{equation*}
 \{r<\sqrt\eps\},\quad \{\sqrt\eps\leq r\leq2\sqrt\eps\},\quad
 \{r>2\sqrt\eps\}.
\end{equation*}

\subsubsection*{The core and inner annuli: $r<\sqrt\eps$}
On this region define the complete inner pair of \Cref{sec: approx compact}, before applying $g_\eps$, by
\begin{equation*}
 \Phi\coloneqq\Phi_\Gamma^0=\varphi_{\lambda_\eps}(r/\eps)\sigma,
 \qquad A\coloneqq A_\Gamma^0+M+\xi_\eps\beta_i\,dq^i.
\end{equation*}
Thus $\Phi_\Gamma^{\mathrm c}=\Phi$ and $A_\Gamma^{\mathrm c}=g_\eps^{-1}Ag_\eps+g_\eps^{-1}dg_\eps$.
Since $g_\eps$ commutes with $\Phi$, gauge covariance allows us to compute the estimate with $(\Phi,A)$; the H\"older norms from \Cref{sec: norms}, defined using covariant parallel transport, are preserved by this change of gauge.

\smallskip

In Fermi coordinates set, for a vector field $V$,
\begin{equation*}
 X_i\coloneqq\partial_{q^i}-\omega_{ib}^{a}y^b\partial_{y^a},
 \qquad D_Vu\coloneqq V(u)+[A(V),u],\qquad D_a\coloneqq D_{\partial_{y^a}}.
\end{equation*}
where $\omega_{ib}^a$ are as in \eqref{eq: omega}.
The definition of $M$ in \Cref{sec: approx compact}, together with \eqref{eq: q_eps} and \eqref{eq: rho}, gives
\begin{equation}\label{eq: approximation horizontal cancellation}
\begin{aligned}
 A(X_i)&=\rho_i+\beta_i\xi_\eps,
 &D_{X_i}\Phi&=D_{X_i}\xi_\eps=0,\\
 F_A(X_i,\partial_{y^a})&=-\beta_iD_a\xi_\eps,
 &F_A(\partial_{y^a},\partial_{y^b})&=F_{A_\Gamma^0}(\partial_{y^a},\partial_{y^b}).
\end{aligned}
\end{equation}
Indeed, $X_i\sigma=-[\rho_i,\sigma]$ and $A_\sigma(X_i)=\rho_i-\langle\rho_i,\sigma\rangle\sigma$; rotation equivariance of the normal connection gives the curvature identities. Since $A(\partial_{y^a})=A_\Gamma^0(\partial_{y^a})$, \eqref{eq: approximation horizontal cancellation} and \eqref{eq: qlambda} yield
\begin{equation}\label{eq: approximation normal cancellation}
 D_aD_a\xi_\eps=0,
 \qquad D_a\bigl(F_A(X_i,\partial_{y^a})\bigr)
 =-\beta_iD_aD_a\xi_\eps=0.
\end{equation}
Also, define the tangential curvature two-form
\begin{equation*}
 F_A^{\mathrm{tan}}\coloneqq\frac12F_A(X_i,X_k)\,dq^i\wedge dq^k.
\end{equation*}
We have
\begin{equation}\label{eq: FAtan}
 |F_A^{\mathrm{tan}}|+r|\nabla_A F_A^{\mathrm{tan}}|\lesssim 1,
 \qquad |d_A^*F_A^{\mathrm{tan}}|\lesssim r^{-1}.
\end{equation}
Write the residual of the actual approximation as
\begin{equation*}
 (E_\Phi,E_A)\coloneqq{\bf S}_\eps(\Phi_\Gamma^{\mathrm c},A_\Gamma^{\mathrm c}).
\end{equation*}
We use the regions and scales of \Cref{sec: norms}:
\begin{equation*}
 \Omega_0=\{r<2M_0\eps\},\quad r_0=\eps,\qquad
 \Omega_j=\{r_j<r<4r_j\},\quad r_j=2^{j-1}M_0\eps.
\end{equation*}
For now, take $1\leq j\leq J_\eps$ with $4r_j\leq \sqrt\eps$, so that $\Omega_j$ is contained in the region $\{r<\sqrt\eps\}$. In \eqref{eq: source norm}, the Higgs and connection components on $\Omega_j$ carry the weights $r_j^2$ and $\eps r_j^2$, respectively. The projections of \Cref{sec: norms} along $\sigma$ and onto $\sigma^\perp$ are uniformly bounded in the scaled H\"older norms, so the full-component expression
\begin{equation*}
 r_j^2\bigl(\|E_\Phi\|_{\tau,r_j,\Omega_j}
 +\eps\|E_A\|_{\tau,r_j,\Omega_j}\bigr)
\end{equation*}
controls each of the corresponding projected contributions, up to a uniform constant. Recall that the longitudinal contributions are summed and the transverse contributions maximised, by \eqref{eq: source norm}.

In the frame $(X_i,\partial_{y^a})$, the normal and mixed metric coefficients differ from their product values by $O(r^2)$, and the tangential coefficients differ by $O(r)$. Minimality gives the radial volume factor $r^2(1+O(r^2))$. Using \eqref{eq: approximation horizontal cancellation} and \eqref{eq:monopole-profiles}, the Euclidean normal terms cancel, leaving a radial contribution to $E_\Phi$ bounded by $Cr|\partial_r\varphi_{\lambda_\eps}(r/\eps)|$. By \eqref{eq: approximation profile bounds}, its contribution with the source-norm weight is
\begin{equation}\label{eq: approximation inner Higgs bound}
 r_j^2\bigl\|r\partial_r\varphi_{\lambda_\eps}(r/\eps)\bigr\|_{\tau,r_j,\Omega_j}
 \lesssim \eps r_j e^{-cr_j/\ell_\eps}.
\end{equation}
For $E_A$, use \eqref{eq: approximation horizontal cancellation} and \eqref{eq: approximation normal cancellation}. The metric errors in the normal curvature are bounded by $C(r^2|\nabla_A F_A|+r|F_A|)$, \eqref{eq: FAtan} controls $d_A^*F_A^{\mathrm{tan}}$, and \eqref{eq: approximation profile bounds} bounds the remaining trace terms by $C\eps/r^2$. Equation \eqref{eq: approximation profile bounds} also controls the scaled H\"older seminorms. Since $r_j<r<4r_j$ on $\Omega_j$, multiplication by the connection weight $\eps r_j^2$ gives
\begin{equation}\label{eq: approximation inner connection bound}
\eps r_j^2\left(r_j^2\|\nabla_A F_A\|_{\tau,r_j,\Omega_j} +r_j\|F_A\|_{\tau,r_j,\Omega_j}+r_j^{-1}+\eps r_j^{-2}\right)\lesssim (\eps r_j+\eps^2).
\end{equation}
The norms involving $A$ in this calculation use its covariant parallel transport; gauge covariance transfers \eqref{eq: approximation inner connection bound} to the residual $(E_\Phi,E_A)$. Together with \eqref{eq: approximation inner Higgs bound}, this gives
\begin{equation}\label{eq: approximation inner shells}
 r_j^2\bigl(\|E_\Phi\|_{\tau,r_j,\Omega_j}+\eps\|E_A\|_{\tau,r_j,\Omega_j}\bigr)
 \lesssim \eps r_j e^{-cr_j/\ell_\eps}+\eps r_j+\eps^2.
\end{equation}
On the core, rescaling \eqref{eq:monopole-profiles}, \eqref{eq: approximation horizontal cancellation} and \eqref{eq: approximation normal cancellation} by $r_0=\eps$ gives the first term of \eqref{eq: source norm}:
\begin{equation}\label{eq: approximation core bound}
 r_0^2(\|E_\Phi\|_{\tau,r_0,\Omega_0}
 +\eps\|E_A\|_{\tau,r_0,\Omega_0})\lesssim \eps^2.
\end{equation}
The finitely many shells meeting $r=\sqrt\eps$ have $r_j\asymp \sqrt\eps$; their inner portions satisfy \eqref{eq: approximation inner shells}, and their full norms follow by combining it with \eqref{eq: approximation gluing shells} below. 

\subsubsection*{The gluing annulus: $\sqrt\eps\leq r\leq2\sqrt\eps$}
For each $1\leq j\leq J_\eps$ whose shell meets this region, put
\begin{equation*}
 \Omega_{j,\mathrm g}\coloneqq\Omega_j\cap\{\sqrt\eps\leq r\leq2\sqrt\eps\}.
\end{equation*}
There are uniformly finitely many such indices, and $r_j\asymp \sqrt\eps$. Define the connection difference
\begin{equation*}
 \mathcal D_\eps\coloneqq A^{\inn}-(A_\Gamma^0+b\sigma).
\end{equation*}
Here $\nabla$ denotes covariant differentiation using $A_\Gamma^{\mathrm c}$ and the Levi--Civita connection. By \eqref{eq: approximation regular matching} and \eqref{eq: approximation profile bounds}, the two connections being interpolated satisfy
\begin{equation}\label{eq: approximation gluing difference}
 \sum_{k=0}^2r_j^k\|\nabla^k\mathcal D_\eps\|_{\tau,r_j,\Omega_{j,\mathrm g}}
 \lesssim r_j(1+|\log r_j|)+\frac{\eps}{r_j}
 +\eps^{-N}e^{-cr_j/\eps}
\end{equation}
for a fixed $N>0$, independent of $j$ and $\eps$. The angular and tangential trace differences are 
\begin{equation*}
\begin{split}
 M-b_s\sigma&=(1-\alpha_{\lambda_\eps}(r/\eps))
 (\rho_i-\langle\rho_i,\sigma\rangle\sigma)dq^i,\\
 \xi_\eps\beta_idq^i-\beta_i\sigma dq^i&=(q_\eps-1)\beta_i\sigma dq^i.
\end{split}
\end{equation*}
In the residual of $A_\Gamma^{\mathrm c}=A_\Gamma^0+b\sigma+\kappa_\eps\mathcal D_\eps$, let $(E_{\Phi,\kappa},E_{A,\kappa})$ denote the terms containing a derivative of $\kappa_\eps$. By \eqref{eq: s_eps}, $|\nabla^k\kappa_\eps|\leq C\eps^{-k/2}$ for $k=1,2$. Using \eqref{eq: approximation gluing difference}, their contribution with the weights in \eqref{eq: source norm} satisfies
\begin{equation}\label{eq: approximation cutoff error}
\begin{split}
 r_j^2\bigl(\|E_{\Phi,\kappa}\|_{\tau,r_j,\Omega_{j,\mathrm g}}
 +\eps\|E_{A,\kappa}\|_{\tau,r_j,\Omega_{j,\mathrm g}}\bigr)
 &\lesssim \eps\sum_{k=0}^2r_j^k\|\nabla^k\mathcal D_\eps\|_{\tau,r_j,\Omega_{j,\mathrm g}}
 +\eps^{-N}e^{-cr_j/\eps}\\
 &\lesssim \eps r_j(1+|\log r_j|)+\frac{\eps^2}{r_j}.
\end{split}
\end{equation}
Here $\mathcal D_\eps^\parallel$ commutes with $\Phi_\Gamma^0$, while \eqref{eq: approximation profile bounds} makes $\mathcal D_\eps^\perp$ and its scaled derivatives exponentially small. Thus the commutator terms with coefficient $\eps^{-2}$ are absorbed by the exponential remainder. By \eqref{eq: approximation gluing difference}, the terms quadratic in $\mathcal D_\eps$ contribute at most $C(\eps r_j(1+|\log r_j|)+\eps^2/r_j)$ as well.

Combining \eqref{eq: approximation cutoff error} and \eqref{eq: approximation inner shells} with the outer-pair calculation leading to \eqref{eq: approximation outer shells} below gives
\begin{equation}\label{eq: approximation gluing shells}
r_j^2\bigl(\|E_\Phi\|_{\tau,r_j,\Omega_{j,\mathrm g}}+\eps\|E_A\|_{\tau,r_j,\Omega_{j,\mathrm g}}\bigr) \lesssim \eps r_j e^{-cr_j/\ell_\eps} +\eps r_j(1+|\log r_j|)+\frac{\eps^2}{r_j}.
\end{equation}

By \eqref{eq: source norm}, the left-hand side of \eqref{eq: approximation gluing shells} controls the longitudinal and transverse contributions on $\Omega_{j,\mathrm g}$. The local bounds in \eqref{eq: approximation inner shells}, \eqref{eq: approximation gluing shells} and \eqref{eq: approximation outer shells} also control H\"older pairs across the common boundaries, giving the full norm on each crossing shell at the same scale $r_j$.

\subsubsection*{The outer region: $r>2\sqrt\eps$}
By \eqref{eq: s_eps}, $\kappa_\eps=0$ here. With the compact profiles $\varphi_\eps,\alpha_\eps$ of \Cref{sec: approx compact}, the complete pair is
\begin{equation*}
 \Phi_\Gamma^{\mathrm c}=\varphi_\eps\sigma,\qquad
 A_\Gamma^{\mathrm c}=\alpha_\eps A_\sigma+b\sigma.
\end{equation*}
Writing $A_D$ for the exterior connection \eqref{eq: exterior connection}, \eqref{eq: eq for b} and \eqref{eq: curvature identity} give
\begin{equation}\label{eq: approximation exterior cancellation}
 d_{A_D}\sigma=0,\qquad F_{A_D}=\sigma(f_0+db),\qquad
 d_{A_D}^*F_{A_D}=\sigma d_g^*(f_0+db)=0.
\end{equation}
For $1\leq j\leq J_\eps$, put
\begin{equation*}
 \Omega_{j,\mathrm o}\coloneqq\Omega_j\cap\{r>2\sqrt\eps\}.
\end{equation*}
On $2\sqrt\eps<r<\delta/2$, the profiles are $\varphi_{\lambda_\eps}(r/\eps)$ and $\alpha_{\lambda_\eps}(r/\eps)$. By \eqref{eq: approximation profile bounds}, the difference $A_\Gamma^{\mathrm c}-A_D=(\alpha_{\lambda_\eps}(r/\eps)-1)A_\sigma$ and its derivatives are exponentially small. On $\delta/2\leq r<\delta$, derivatives of $\chi_\Gamma$ multiply the same profile tails. Thus \eqref{eq: approximation exterior cancellation}, \eqref{eq: approximation profile bounds} and \eqref{eq:monopole-profiles} give, for every nonempty $\Omega_{j,\mathrm o}$,
\begin{equation}\label{eq: approximation outer shells}
 r_j^2\bigl(\|E_\Phi\|_{\tau,r_j,\Omega_{j,\mathrm o}}
 +\eps\|E_A\|_{\tau,r_j,\Omega_{j,\mathrm o}}\bigr)
 \lesssim \eps r_j e^{-cr_j/\ell_\eps}+\eps^{-N}e^{-cr_j/\eps},
\end{equation}
where $N>0$ is fixed. The left-hand side uses exactly the Higgs and connection weights in \eqref{eq: source norm}; it controls each projected contribution on this part of $\Omega_j$. The first term on the right bounds the radial Higgs error, and the second bounds the transverse connection and Higgs errors. Fixed cutoff terms obey the same estimate after decreasing $c>0$.

The remaining norm region is $\Omega_{J_\eps+1}=\{r>\delta/4\}$, with its prescribed scale $r_{J_\eps+1}=\delta$. By \eqref{eq: deltaeps}, $\ell_\eps\leq\tfrac12\eps^{\tau'}$, so \eqref{eq: approximation profile bounds} makes the profile tails and their derivatives smaller than every power of $\eps$. Hence \eqref{eq: approximation outer shells} gives, for every $P>0$,
\begin{equation}\label{eq: approximation fixed cutoff}
 r_{J_\eps+1}^2\bigl(\|E_\Phi\|_{\tau,r_{J_\eps+1},\Omega_{J_\eps+1}}
 +\eps\|E_A\|_{\tau,r_{J_\eps+1},\Omega_{J_\eps+1}}\bigr)
 \lesssim_P\eps^P.
\end{equation}
This controls the $j=J_\eps+1$ contributions in \eqref{eq: source norm}. On $r\geq\delta$, the construction in \Cref{sec: approx compact} gives $(\Phi_\Gamma^{\mathrm c},A_\Gamma^{\mathrm c})=(\sigma,A_D)$, so \eqref{eq: approximation exterior cancellation} gives $(E_\Phi,E_A)=(0,0)$. By \eqref{eq: shifted phase approximation}, the shifted residual also vanishes there.

\smallskip\noindent\emph{Dependence on $h$ and $\theta$.}
We now include the parameters in all three regions.
Pulling \eqref{eq: shifted phase approximation} back by $\mathscr T_h$ and undoing the phase gauge gives the pair $(\Phi_\Gamma^{\mathrm c},A_\Gamma^{\mathrm c}-\xi_\eps d\bar\theta)$ with metric $\mathscr T_h^*g$. Let $(E_{\Phi,h,\theta},E_{A,h,\theta})$ denote its residual in these fixed coordinates. Since $[\xi_\eps,\Phi_\Gamma^{\mathrm c}]=0$, the phase changes the curvature by
\begin{equation}\label{eq: approximation phase curvature}
 F_{A_\Gamma^{\mathrm c}-\xi_\eps d\bar\theta}=F_{A_\Gamma^{\mathrm c}}-d_{A_\Gamma^{\mathrm c}}\xi_\eps\wedge d\bar\theta.
\end{equation}
Using \eqref{eq: approximation normal cancellation} in \eqref{eq: approximation phase curvature}, the phase terms left in the residual contain tangential derivatives of $\theta$ or metric errors. Equations \eqref{eq: approximation profile bounds} and \eqref{eq: assumptions htheta} bound their weighted norm on each annulus by $C\eps^2$. On the gluing annulus, \eqref{eq: qlambda} and \eqref{eq: approximation profile bounds} make the defect in the Euclidean normal equation for $\xi_\eps$ exponentially small.

Replacing $g$ by $\mathscr T_h^*g$ changes the normal principal coefficients by $O(r\|h\|_{C^{2,\tau}}+\|h\|_{C^{2,\tau}}^2)$, the mixed coefficients by $O(\|h\|_{C^{2,\tau}})$, and their first derivatives by $O(\|h\|_{C^{2,\tau}})$. Applying these coefficient bounds with \eqref{eq: approximation profile bounds}, \eqref{eq: approximation horizontal cancellation} and \eqref{eq: FAtan} gives the following estimate for the change in the residual on each $\Omega_j$, $1\leq j\leq J_\eps$. The left-hand side uses the same weights as the corresponding source-norm contributions:
\begin{equation}\label{eq: approximation parameter shells}
\begin{split}
 &r_j^2\bigl(\|E_{\Phi,h,\theta}-E_\Phi\|_{\tau,r_j,\Omega_j}
 +\eps\|E_{A,h,\theta}-E_A\|_{\tau,r_j,\Omega_j}\bigr)\lesssim \eps^2+\eps\|h\|_{C^{2,\tau}}
 +\frac{\eps\|h\|_{C^{2,\tau}}^2}{\eps+r_j}.
\end{split}
\end{equation}
On the core, \eqref{eq: approximation parameter shells} holds with $j=0$ and $r_0=\eps$. On $\Omega_{J_\eps+1}$, using $r_{J_\eps+1}=\delta$, the comparison gives
\begin{equation}\label{eq: approximation exterior parameters}
\begin{split}
 &r_{J_\eps+1}^2\bigl(\|E_{\Phi,h,\theta}-E_\Phi\|_{\tau,r_{J_\eps+1},\Omega_{J_\eps+1}}
 +\eps\|E_{A,h,\theta}-E_A\|_{\tau,r_{J_\eps+1},\Omega_{J_\eps+1}}\bigr)\lesssim \eps^2+\eps\|h\|_{C^{2,\tau}}.
\end{split}
\end{equation}
On $\{r\geq\delta\}$, both residuals vanish by \eqref{eq: approximation exterior cancellation} and \eqref{eq: shifted phase approximation}. The source norms before and after pullback and gauge transformation are uniformly equivalent under \eqref{eq: assumptions htheta}.

\smallskip\noindent\emph{Summation of the error estimates.}
To obtain the full source norm \eqref{eq: source norm}, sum the longitudinal contributions over the annuli and bound the maximum of the transverse contributions by their sum. The dyadic radii from \Cref{sec: norms} satisfy
\begin{equation}\label{eq: approximation dyadic sums}
 \sum_{j=1}^{J_\eps}r_j e^{-cr_j/\ell_\eps}\lesssim \ell_\eps,\qquad
 \sum_{\substack{1\leq j\leq J_\eps\\r_j\leq2\sqrt\eps}}r_j\lesssim \sqrt\eps,\qquad
 \sum_{j=1}^{J_\eps}\frac{\eps}{\eps+r_j}\lesssim 1.
\end{equation}
Applying \eqref{eq: approximation dyadic sums} to \eqref{eq: approximation inner shells} and \eqref{eq: approximation outer shells}, the radial Higgs error contributes $\eps\ell_\eps$, the inner connection error contributes $\eps^{3/2}$, and the $\eps^2$ terms contribute $\eps^2(1+|\log\eps|)$. Add \eqref{eq: approximation core bound}, \eqref{eq: approximation gluing shells} and \eqref{eq: approximation fixed cutoff}, taking $P\geq2$ in the latter. Summing \eqref{eq: approximation parameter shells} by \eqref{eq: approximation dyadic sums} and including \eqref{eq: approximation exterior parameters} yields
\begin{equation*}
\begin{split}
\|{\bf S}_\eps(\Phi_{h,\theta}^{\mathrm c},A_{h,\theta}^{\mathrm c})\|_{Y_{\eps,h,\theta}}&\lesssim \biggl(\eps\ell_\eps+\eps^{3/2}(1+|\log\eps|)+\eps^2(1+|\log\eps|)\\
&\hspace{2em}+\eps(1+|\log\eps|)\|h\|_{C^{2,\tau}}+\|h\|_{C^{2,\tau}}^2\biggr)\\
&\lesssim \sqrt{\eps\delta_\eps}+\eps^{3/2}(1+|\log\eps|)+\eps^2(1+|\log\eps|)^4\\
&\lesssim \sqrt{\eps\delta_\eps}+\eps^{3/2}(1+|\log\eps|).
\end{split}
\end{equation*}
The second inequality uses \eqref{eq: approximation scalar length} and \eqref{eq: assumptions htheta}; the last uses $\eps^{1/2}(1+|\log\eps|)^3\to0$. All constants are uniform for $0<\lambda_\eps\leq\lambda_*$. Finally, the fixed-cutoff contribution is controlled by \eqref{eq: approximation fixed cutoff}.

\end{proof}

\subsection{The error in the noncompact case}
\begin{proof}[Proof of \Cref{lem: error approx}, noncompact case]
Use the regions and scales of \Cref{sec: norms}, as in the compact case, and set
\begin{equation*}
 (E_\Phi,E_A)\coloneqq{\bf S}_\eps(\Phi_\Gamma^{\mathrm{nc}},A_\Gamma^{\mathrm{nc}}).
\end{equation*}
Only \eqref{eq: noncompact mass correction} and the scalar matching require additional estimates. As in \Cref{sec: noncompact approximation}, minimality gives $\Delta_g(H-(2r)^{-1})=O(r^{-1})$, with the scaled bounds of \eqref{eq: bound R}. Following the same steps as in the proof of \Cref{lem: approximation regular matching} it can be shown that, for $1\leq j\leq J_\eps$,
\begin{equation}\label{eq: noncompact approximation matching}
\sum_{k=0}^2r_j^k\Bigl(\|\nabla^k(H-(2r)^{-1}-H_\Gamma)\|_{\tau,r_j,\Omega_j}+\|\nabla^k(b_r-\beta)\|_{\tau,r_j,\Omega_j}\Bigr) \lesssim r_j(1+|\log r_j|).
\end{equation}

\subsubsection*{The core and inner annuli: $r<\sqrt\eps$}
Before adding ${\bf C}_\eps$, the compact calculation \eqref{eq: approximation horizontal cancellation}--\eqref{eq: approximation inner shells} applies with $\lambda=0$ and with the scalar exponential factor omitted. 

Fix $q\in\Gamma$ and set $t=r/\eps$ and $s=\eps H_\Gamma(q)$. The mass variation \eqref{eq: noncompact mass variation} is
\begin{equation*}
 U_m(t)\coloneqq\binom{m\varphi_0(mt)}{\alpha_0(mt)},\qquad
 V(t)\coloneqq\left.\partial_mU_m(t)\right|_{m=1}
 =\binom{\varphi_0(t)+t\varphi_0'(t)}{t\alpha_0'(t)}.
\end{equation*}
Thus \eqref{eq: noncompact mass correction} replaces $U_1$ by $U_1-sV$, including the corresponding angular correction. By \eqref{eq: BPS}, $V(t)\to(1,0)$ as $t\to\infty$, so its Higgs component supplies the required regular term $-\eps H_\Gamma(q)$.
Writing the residual of \eqref{eq:monopole-profiles} at $\lambda=0$ explicitly, set
\begin{equation*}
 \mathcal R_0\binom{\varphi}{\alpha}
 \coloneqq
 \begin{pmatrix}
  \varphi''+\dfrac2t\varphi'-\dfrac{2(1-\alpha)^2\varphi}{t^2}\\[6pt]
  \alpha''-(1-\alpha)\left(\dfrac{\alpha(2-\alpha)}{t^2}-4\varphi^2\right)
 \end{pmatrix},
\end{equation*}
where primes denote derivatives in $t$. Since $U_m$ is an exact solution for every $m>0$,
\begin{equation*}
 \mathcal R_0(U_m)=0,\qquad
 D\mathcal R_0(U_1)V
 =\left.\partial_m\mathcal R_0(U_m)\right|_{m=1}=0.
\end{equation*}
Since $s$ is constant in the normal variables, the corrected normal residual is therefore
\begin{equation}\label{eq: noncompact mass normal cancellation}
 \mathcal R_0(U_1-sV)
 =\mathcal R_0(U_1)
 -sD\mathcal R_0(U_1)V+\mathcal Q_s
 =\mathcal Q_s,
\end{equation}
where $\mathcal Q_s$ contains only terms of degree at least two in $s$. This is the cancellation obtained by using the coupled mass derivatives. To control the remainder, compare with the exact solution of mass $1-s$:
\begin{equation*}
 U_1-sV-U_{1-s}
 =-s^2\int_0^1(1-\theta)\left.\partial_m^2U_m\right|_{m=1-\theta s}\,d\theta.
\end{equation*}
By \eqref{eq: BPS}, $m\varphi_0(mt)=m\coth(2mt)-(2t)^{-1}$, so the Coulomb term disappears under mass differentiation and $\partial_m^2U_m$ decays exponentially. The quadratic Taylor remainder contributes at most $C\eps^2$ with the weights in \eqref{eq: source norm}, on the core and after summation over the inner shells.

The trace lift \eqref{eq: q_eps} still uses $q_0(r/\eps)\sigma$. For the comparison mass $m=1-\eps H_\Gamma(q)$, \eqref{eq: qlambda} gives the normal-equation defect
\begin{equation}\label{eq: noncompact phase profile defect}
 \frac{2}{r^2}\left((1-\alpha_0(mr/\eps))^2-(1-\alpha_0(r/\eps))^2\right)
 q_0(r/\eps)\sigma.
\end{equation}
By \eqref{eq: BPS} and \eqref{eq: qlambda}, \eqref{eq: noncompact phase profile defect} is bounded, together with scaled derivatives, by $C\eps^{-1}(1+r/\eps)^N e^{-cr/\eps}$ for a fixed $N>0$. Multiplication by the connection weight in \eqref{eq: source norm} and dyadic summation give $C\eps^2$; the rescaling used in \eqref{eq: approximation core bound} gives the same bound on $\Omega_0$.

By \eqref{eq: noncompact mass correction}, $[\Phi_\Gamma^0,C_\eps^\Phi]=0$, so the scalar mass commutator vanishes. Tangential differentiation of \eqref{eq: noncompact mass correction}, using \eqref{eq: BPS}, adds scalar terms, and connection terms multiplied by $\eps$, bounded by
\begin{equation}\label{eq: noncompact mass derivative bound}
 C\left(\frac{\eps|\nabla_\Gamma H_\Gamma|}{\eps+r}
 +\eps|\nabla_\Gamma^2H_\Gamma|+\eps^2|\nabla_\Gamma H_\Gamma|^2\right).
\end{equation}
Here $\nabla_\Gamma$ is the connection of the induced metric on $\Gamma$. The contribution of \eqref{eq: noncompact mass derivative bound} with the weight in \eqref{eq: source norm} is at most $C\eps r_j$. Combining it with \eqref{eq: approximation inner shells} at $\lambda=0$ and \eqref{eq: noncompact phase profile defect} gives, for $4r_j\leq \sqrt\eps$,
\begin{equation}\label{eq: noncompact inner shells}
 r_j^2\bigl(\|E_\Phi\|_{\tau,r_j,\Omega_j}+\eps\|E_A\|_{\tau,r_j,\Omega_j}\bigr)
 \lesssim \eps r_j+\eps^2.
\end{equation}
The core rescaling in \eqref{eq: approximation core bound}, with \eqref{eq: noncompact mass correction}, gives at most $C\eps^2$. By \eqref{eq: source norm}, these bounds control the projected contributions; crossing shells are handled by combining \eqref{eq: noncompact inner shells} with \eqref{eq: noncompact gluing shells} and \eqref{eq: noncompact outer shells} below.

\subsubsection*{The gluing annulus: $\sqrt\eps\leq r\leq2\sqrt\eps$}
By \eqref{eq: BPS} and \eqref{eq: noncompact mass correction}, the difference between the corrected inner Higgs field and $(1-\eps H)\sigma$ is
\begin{equation}\label{eq: noncompact Higgs matching difference}
 \eps\left(H-\frac1{2r}-H_\Gamma\right)\sigma
\end{equation}
up to exponentially small terms. Equation \eqref{eq: noncompact approximation matching} bounds the first two scaled derivatives of \eqref{eq: noncompact Higgs matching difference} on $\Omega_{j,\mathrm g}$ by $C\eps r_j(1+|\log r_j|)$. By \eqref{eq: BPS}, the connection correction in \eqref{eq: noncompact mass correction} is exponentially small there, so \eqref{eq: approximation gluing difference} and \eqref{eq: approximation cutoff error} apply. Since both scalar and longitudinal connection differences commute, they produce no additional commutator term. Consequently,
\begin{equation}\label{eq: noncompact gluing shells}
 r_j^2\bigl(\|E_\Phi\|_{\tau,r_j,\Omega_{j,\mathrm g}}
 +\eps\|E_A\|_{\tau,r_j,\Omega_{j,\mathrm g}}\bigr)
 \lesssim \eps r_j(1+|\log r_j|)+\frac{\eps^2}{r_j}.
\end{equation}
The weights are those of \eqref{eq: source norm}, and the dyadic construction in \Cref{sec: norms} gives uniformly finitely many such shells, with $r_j\asymp \sqrt\eps$.

\subsubsection*{The outer region: $r>2\sqrt\eps$}
By \eqref{eq: s_eps} and the construction in \Cref{sec: noncompact approximation}, the Higgs field here is exactly $(1-\eps H)\sigma$. With $A_D$ as in \eqref{eq: exterior connection}, \Cref{lem: noncompact exterior correction} and \eqref{eq: noncompact scalar potential} give
\begin{equation}\label{eq: noncompact exterior cancellation}
 \Delta_gH=0,\qquad d_{A_D}\sigma=0,\qquad d_{A_D}^*F_{A_D}=\sigma d_g^*f_1=0.
\end{equation}
Using \eqref{eq: noncompact exterior cancellation} in the calculation of \eqref{eq: approximation outer shells} removes the radial Higgs error; \eqref{eq: BPS} controls the remaining connection tails. For $1\leq j\leq J_\eps$ with $\Omega_{j,\mathrm o}\ne\varnothing$,
\begin{equation}\label{eq: noncompact outer shells}
 r_j^2\bigl(\|E_\Phi\|_{\tau,r_j,\Omega_{j,\mathrm o}}
 +\eps\|E_A\|_{\tau,r_j,\Omega_{j,\mathrm o}}\bigr)
 \lesssim \eps^{-N}e^{-cr_j/\eps}.
\end{equation}
On $\Omega_{J_\eps+1}=\{r>\delta/4\}$, use \eqref{eq: noncompact exterior source norm}. Equation \eqref{eq: noncompact outer shells} makes the residual exponentially small on the fixed tube, while \eqref{eq: noncompact exterior cancellation} makes it zero for $r\geq\delta$. Hence, for every $P>0$,
\begin{equation}\label{eq: noncompact fixed exterior bound}
 \|E_\Phi\|_{C^{0,\tau}_{\nu+2}(\Omega_{J_\eps+1})}
 +\eps\|E_A\|_{C^{0,\tau}_{\nu+2}(\Omega_{J_\eps+1})}
 \lesssim_P\eps^P.
\end{equation}

\smallskip\noindent\emph{Parameters and summation.}
Using \eqref{eq: noncompact shifted approximation}, the parameter comparison \eqref{eq: approximation phase curvature}, \eqref{eq: approximation parameter shells} and \eqref{eq: approximation exterior parameters} applies in the same reference regions. The additional terms are controlled by \eqref{eq: noncompact phase profile defect} and \eqref{eq: noncompact mass derivative bound}, while \eqref{eq: noncompact approximation matching} gives $|\nabla^kH|\leq Cr^{-1-k}$ for $k\leq2$. The weights in \eqref{eq: noncompact exterior source norm} are bounded wherever the residual is nonzero, and \eqref{eq: noncompact shifted approximation} is unchanged outside the tube. Summing \eqref{eq: noncompact inner shells}, \eqref{eq: noncompact gluing shells} and \eqref{eq: noncompact outer shells} by \eqref{eq: approximation dyadic sums}, and adding the core contribution, \eqref{eq: noncompact fixed exterior bound} and the parameter bounds \eqref{eq: approximation parameter shells} and \eqref{eq: approximation exterior parameters}, yields
\begin{equation*}
\begin{split}
 \|{\bf S}_\eps(\Phi_{h,\theta}^{\mathrm{nc}},A_{h,\theta}^{\mathrm{nc}})\|_{Y_{\eps,h,\theta}}&\lesssim\biggl(\eps^{3/2}(1+|\log\eps|)
 +\eps^2(1+|\log\eps|)\\
 &\hspace{2em}+\eps(1+|\log\eps|)\|h\|_{C^{2,\tau}}+\|h\|_{C^{2,\tau}}^2\biggr)\\
 &\lesssim \eps^{3/2}(1+|\log\eps|),
\end{split}
\end{equation*}
where the last inequality uses \eqref{eq: assumptions htheta}.
\end{proof}

\section{Invertibility theory for the linearised operator}\label{sec: invertibility}

We prove \Cref{prop: uniform inverse}, constructing the constrained inverses and establishing their uniform bounds. The argument has the following steps.
\begin{itemize}
 \item Lemmas~\ref{lem: normal weighted coercivity} and~\ref{lem: product Liouville} establish normal coercivity under the fibre constraints and a product Liouville theorem for fields with normal decay.
 \item \Cref{prop: global constrained coercivity} gives global coercivity for the compact variational problem, which provides weak solutions.
 \item \Cref{lem: local estimates supremum remainder} and \Cref{prop: compact supremum estimate} give local and supremum bounds; \Cref{cor: local estimates supremum remainder} combines them into the full compact a priori estimate.
 \item \Cref{lem: noncompact local remainder} and \Cref{prop: noncompact a priori estimate} give the corresponding noncompact estimates, using the weighted exterior inverses.
 \item \Cref{lem: noncompact product constructions} gives the product inverse and longitudinal extensions used to construct the noncompact inverse in \Cref{prop: noncompact ordinary inverse}.
 \item Corollaries~\ref{cor: compact restricted inverse} and~\ref{cor: noncompact restricted inverse} control the gauge projection as a small perturbation, giving the inverses for $\widehat{\bf L}_{h,\theta}$.
\end{itemize}

\medskip

\begin{definition}[Rescaling convention]\label{def: rescaling convention}
For a physical pair ${\bf V}=(V_\Phi,V_A)$, use the Fermi map \eqref{eq: rescaled Fermi map} and the transported normal frame $\nu_{a,h}$ from \Cref{sec: projections linear theory}. Let $e_i$, $1\leq i\leq n-3$, be a local orthonormal frame of $T\Gamma$, with dual frame $e^i$, and transport it along the same geodesics as $\nu_{a,h}$ to obtain $e_{i,h}$. With the gauges \eqref{eq: normal trace gauge}, \eqref{eq: phase gauge} and the adjoint action \eqref{eq: adjoint action}, define
\begin{equation*}
\begin{aligned}
 U_\eps(q,y)&\coloneqq(g_\eps u_\theta)(\mathscr F_{\eps,0}(q,y)),\\
 \widehat V_\Phi(q,y)&\coloneqq\Ad_{U_\eps(q,y)}(\mathscr F_{\eps,h}^*V_\Phi)(q,y),\\
 \widehat V_a(q,y)&\coloneqq\eps\Ad_{U_\eps(q,y)}
 \left[V_A\big|_{\mathscr F_{\eps,h}(q,y)}(\nu_{a,h}(q,\eps y))\right],\quad a=1,2,3,\\
 \widehat b_i(q,y)&\coloneqq\eps\Ad_{U_\eps(q,y)}
 \left[V_A\big|_{\mathscr F_{\eps,h}(q,y)}(e_{i,h}(q,\eps y))\right].
\end{aligned}
\end{equation*}
Write $\widehat V_N=\widehat V_a\,dy^a$, $\widehat b=\widehat b_i e^i$, and $\widehat{\bf V}=(\widehat V_\Phi,\widehat V_N)$. For a ${\bf F}=(F_\Phi,F_A)$, the rescaled normal forcing is
\begin{equation*}
 f\coloneqq\eps^2\widehat{\bf F}
   =\bigl(\eps^2\widehat F_\Phi,\eps^2\widehat F_a\,dy^a\bigr).
\end{equation*}
\end{definition}
Thus $\widehat{\bf V}$ records the scalar and normal components, while $\widehat b$ records the tangential component. The adjoint action undoes the construction gauge, evaluated at the unshifted point $\mathscr F_{\eps,0}(q,y)$. The hat acts on transported-frame coefficients: each one-form coefficient is multiplied by $\eps$, the scalar is unscaled, and $q$ is unchanged. The same convention applies to sources and operator outputs. The additional source factor $\eps^2$ normalises the second-order operator, so the total factors on the physical scalar and one-form source coefficients are $\eps^2$ and $\eps^3$, respectively.

\medskip

For a pair ${\bf\Psi}=(\Psi,B)$ on $\R^3$ and with $(\Phi_\lambda,A_\lambda)$ as in \eqref{eq:monopole-ansatz}, define
\begin{equation}\label{eq: normal energy norm}
\|{\bf\Psi}\|_{\mathcal H_\lambda}^2\coloneqq \int_{\R^3}\bigl(|\nabla_{A_\lambda}\Psi|^2+|\nabla_{A_\lambda}B|^2 +|[\Phi_\lambda,\Psi]|^2+|[\Phi_\lambda,B]|^2+\lambda|\langle\Phi_\lambda,\Psi\rangle|^2\bigr)\,dy.
\end{equation}
Let $\mathcal H_\lambda$ be the completion of smooth compactly supported pairs in this norm. Recall the kernel pairs ${\bf Z}_1,{\bf Z}_2,{\bf Z}_3,{\bf W}$ from \Cref{sec: second variation}. For simplicity, we put ${\bf Z}_4\coloneqq{\bf W}$ and, for $R>0$, set
\begin{equation*}
 \eta_R(y)\coloneqq\eta\left(\frac{|y|}{R}\right).
\end{equation*}
For $i=1,\ldots,4$, define the pair-valued tests
\begin{equation}\label{eq: normal weighted tests}
 {\bf E}_{i,R}(y)\coloneqq\frac{\eta_R(y)}{1+|y|^2}{\bf Z}_i(y),
 \qquad 0<R\leq\infty,\qquad \eta_\infty\equiv1.
\end{equation}
The modes ${\bf Z}_i$ and hence ${\bf E}_{i,R}$ are evaluated at the same parameter $\lambda$; this dependence is suppressed in the notation.

We start by proving the coercivity of the monopole.

\begin{lemma}\label{lem: normal weighted coercivity}
There exist $\lambda_*,R_*,c,C>0$ such that, for every $0\leq\lambda\leq\lambda_*$, $R\geq R_*$ and ${\bf\Psi}\in\mathcal H_\lambda$,
\begin{equation*}
 {\bf Q}_\lambda({\bf\Psi}) \geq c\|{\bf\Psi}\|_{\mathcal H_\lambda}^2 -C\sum_{j=1}^4\left| \int_{\R^3}\langle{\bf\Psi},{\bf E}_{j,R}\rangle\,dy\right|^2.
\end{equation*}
In particular, ${\bf Q}_\lambda$ is uniformly coercive on the subspace where these four integrals vanish.
\end{lemma}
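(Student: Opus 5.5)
We argue by contradiction and compactness, using \Cref{lem: monopole ker} as the only spectral input. First we reduce to a coercivity statement modulo the four kernel directions. Write ${\bf Q}_\lambda({\bf\Psi})=\langle{\bf L}_\lambda{\bf\Psi},{\bf\Psi}\rangle_{L^2}$. Expanding the quadratic form (the gauge-corrected Hessian of $E_\lambda$) with a Weitzenböck-type identity gives
\begin{equation*}
{\bf Q}_\lambda({\bf\Psi})=\|{\bf\Psi}\|_{\mathcal H_\lambda}^2-\int_{\R^3}\langle \mathcal P_\lambda{\bf\Psi},{\bf\Psi}\rangle\,dy+\lambda\text{-terms},
\end{equation*}
where $\mathcal P_\lambda$ is a zeroth-order potential built from $d_{A_\lambda}\Phi_\lambda$, $F_{A_\lambda}$ and $(1-|\Phi_\lambda|^2)$. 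The commutator terms $|[\Phi_\lambda,\cdot]|^2$ are produced by the $\eps^{-2}[\Phi,[\Phi,\cdot]]$ part of \eqref{eq: model linearised operator}. The potential term $\lambda(|\Phi_\lambda|^2-1)|\Psi|^2$ is negative but $O(\lambda)$ times a decaying function. For $\lambda\le\lambda_*$ it is absorbed using the Hardy inequality $\int|\Psi|^2/|y|^2\le 4\int|\nabla|\Psi||^2$ together with Kato's inequality, since $1-\varphi_\lambda^2\lesssim(1+|y|)^{-1}$. The BPS decay gives $|d_{A_\lambda}\Phi_\lambda|+|F_{A_\lambda}|\lesssim(1+|y|)^{-2}$. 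Hence $|\langle\mathcal P_\lambda{\bf\Psi},{\bf\Psi}\rangle|\lesssim(1+|y|)^{-2}|{\bf\Psi}|^2$, which by Hardy is a compact perturbation relative to $\|\cdot\|_{\mathcal H_\lambda}$ (Rellich on balls, smallness of the tail from the weight). This yields a Gårding inequality
\begin{equation*}
{\bf Q}_\lambda({\bf\Psi})\ge \tfrac12\|{\bf\Psi}\|_{\mathcal H_\lambda}^2-C_0\int_{B_{R_0}}|{\bf\Psi}|^2\,dy,
\end{equation*}
with $R_0,C_0$ uniform in $\lambda\in[0,\lambda_*]$.

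Suppose now the lemma fails. Then there are $\lambda_k\in[0,\lambda_*]$, radii $R_k\ge R_*$ (possibly $\to\infty$) and ${\bf\Psi}_k$ with $\|{\bf\Psi}_k\|_{\mathcal H_{\lambda_k}}=1$ such that ${\bf Q}_{\lambda_k}({\bf\Psi}_k)<1/k$ and $\sum_j|\int\langle{\bf\Psi}_k,{\bf E}_{j,R_k}\rangle|^2<1/k$. Pass to subsequences with $\lambda_k\to\bar\lambda$, $R_k\to\bar R\in[R_*,\infty]$ and ${\bf\Psi}_k\rightharpoonup{\bf\Psi}$ weakly. The profiles depend continuously on $\lambda$ and have uniform decay, so the norms $\mathcal H_{\lambda_k}$ are uniformly comparable and the quadratic forms converge. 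The Gårding inequality then forces $\int_{B_{R_0}}|{\bf\Psi}|^2\ge 1/(2C_0)$, so ${\bf\Psi}\neq0$. Lower semicontinuity gives ${\bf Q}_{\bar\lambda}({\bf\Psi})\le0$.

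The key input is nonnegativity ${\bf Q}_{\bar\lambda}\ge0$, as used in the proof of \Cref{lem: monopole ker} via \cite{Stuart1994Monopoles,Androulakis-Dostoglou1998}. With it, ${\bf\Psi}$ minimises ${\bf Q}_{\bar\lambda}$, hence ${\bf L}_{\bar\lambda}{\bf\Psi}=0$ weakly. Fix $\lambda_*\le\lambda_0$. A Hardy-type embedding places $\mathcal H_{\bar\lambda}$ inside weighted $L^2$, and an elliptic bootstrap puts ${\bf\Psi}$ in the class handled in the proof of \Cref{lem: monopole ker}, which uses $L^6$ and finite gauge energy. Therefore ${\bf\Psi}=\sum a_j{\bf Z}_j$.

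The orthogonality conditions pass to the limit. The weight $(1+|y|^2)^{-1}{\bf Z}_j$ lies in the dual of $\mathcal H$ by Hardy, since ${\bf Z}_j$ is bounded. The truncations $\eta_{R_k}$ converge in that dual, and the ${\bf E}_{j,R_k}$ depend continuously on $\lambda$. We therefore get $\int\langle{\bf\Psi},{\bf E}_{j,\bar R}\rangle=0$ for each $j$.

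It remains to show that the $4\times4$ Gram matrix $\int\langle{\bf Z}_i,{\bf E}_{j,R}\rangle$ is invertible uniformly for $R\ge R_*$ and small $\lambda$. This then forces $a_j=0$, a contradiction. The translations are mutually orthogonal by radial symmetry and have positive weighted norm. Their pairing with ${\bf W}$ vanishes by parity: ${\bf Z}_a$ is odd-type under $y\mapsto -y$ while ${\bf W}$ is even-type, or one checks this via the $\SU(2)$-equivariance. The diagonal entry for ${\bf W}$ is positive; it may grow like $\log R$ because ${\bf W}\sim (1+|y|)^{-2}$, which only helps. The lemma's bound then follows with uniform constants after rescaling.

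The main obstacle is the first step: obtaining the Gårding inequality uniformly in $\lambda$ down to $\lambda=0$. In the BPS case $1-\varphi_0\sim 1/(2r)$, so the longitudinal directions have no mass and everything rests on Hardy. Care is needed that the $\lambda$-dependent negative potential stays subordinate; this is precisely where smallness of $\lambda_*$ enters.
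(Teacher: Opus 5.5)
Your overall architecture (G\aa{}rding inequality, contradiction with weak limits, identification of the null space, uniform Gram matrix) matches the paper's. Running the contradiction directly on the full inequality, instead of first proving coercivity on the constrained subspace and then subtracting kernel modes, is a fine variant.

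The first gap is in the G\aa{}rding step. You flag this step as the main obstacle, but you locate the difficulty in the $\lambda$-potential; the real issue is the inverse-square coupling. You bound the zeroth-order potential by $C(1+|y|)^{-2}|{\bf\Psi}|^2$ and assert that Hardy makes it a compact perturbation of $\|\cdot\|_{\mathcal H_\lambda}$ with a small tail. The weight $|y|^{-2}$ is scale-invariant against $\int|\nabla_{A_\lambda}{\bf\Psi}|^2$, so it gives no smallness at infinity. Take a longitudinal field $R^{-1/2}f(|y|/R)\sigma$ with $f$ supported in $[1,2]$. Then both $\int|y|^{-2}|{\bf\Psi}|^2$ and $\|{\bf\Psi}\|_{\mathcal H_0}^2$ are essentially independent of $R$, while the field converges weakly to zero. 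At $\lambda=0$ the norm controls longitudinal components only through their gradient. Hence a generic $O(r^{-2})$ coupling among longitudinal components is not relatively compact, and with a coefficient beyond the Hardy constant it would destroy the inequality.

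What makes the step work is structural, and it is what the paper uses. Split along $\R\sigma\oplus\sigma^\perp$. The $O(r^{-2})$ components of $F_{A_\lambda}$ and $d_{A_\lambda}\Phi_\lambda$ along $\sigma$ pair only with two transverse components of $(\Psi,B)$, because $[\R\sigma,\R\sigma]=0$. They are therefore absorbed by the mass $4\varphi_\lambda^2$ of $\Psi^\perp,B^\perp$. Only the transverse background components, which decay exponentially, are handled by Hardy, through $\sup_{r>R_0}r^2e^{-cr}\to0$. The same issue affects your absorption of $\lambda(1-\varphi_\lambda^2)|\Psi|^2$: decay $(1+|y|)^{-1}$ is weaker than $|y|^{-2}$, so Hardy does not control it at large $|y|$. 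It must instead be absorbed by the positive coefficient $\lambda(3\varphi_\lambda^2-1)$ of $|\Psi^\parallel|^2$ and by the transverse mass. Without these points, neither the nonvanishing of your weak limit nor the weak lower semicontinuity of ${\bf Q}$ is justified.

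The second gap is the identification of the null space. \Cref{lem: monopole ker} describes $\ker{\bf L}_\lambda\cap L^2$. The nondegeneracy theorems it rests on are $L^2$ statements, and the $L^6$ argument in its proof concerns only the gauge parameter $\xi$. Your weak limit is only known to lie in $\mathcal H_{\bar\lambda}\subset L^6$. Its longitudinal components solve Laplace-type equations at infinity and could a priori decay like $r^{-1}$, which is not square-integrable in $\R^3$. This concerns both $\langle\Psi,\sigma\rangle$ and $\langle B,\sigma\rangle$ when $\bar\lambda=0$, and $\langle B,\sigma\rangle$ even when $\bar\lambda>0$. Elliptic bootstrap gives smoothness, not this decay. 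An extra zero mode of that kind would make the lemma false, so it has to be excluded.

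The paper does this at $\lambda=0$:
\begin{itemize}
\item ${\bf Q}_0({\bf\Psi})=0$ gives the first-order Bogomolny system;
\item comparison gives exponential decay of ${\bf\Psi}^\perp$;
\item the projected equations give $\langle\Psi,\sigma\rangle=a/r+O(r^{-2})$ and $\langle B_i,\sigma\rangle=c_i/r+O(r^{-2})$;
\item the first-order equations force $c_i=0$ and then $a=0$, so ${\bf\Psi}=O(r^{-2})\in L^2$.
\end{itemize}
To need this only at $\lambda=0$, let $\lambda_k\to0$ in the contradiction. This is legitimate because $\lambda_*$ is yours to choose. With $\lambda_*$ fixed and $\bar\lambda>0$, no Bogomolny factorisation is available, and you would need a separate tail argument.

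Minor points:
\begin{itemize}
\item The ${\bf W}$ entry of the Gram matrix stays bounded, since its integrand is $O(r^{-4})$; it does not grow like $\log R$.
\item Continuity of the moments on $\mathcal H_\lambda$, uniformly in $R$, uses $|{\bf Z}_j|\lesssim(1+|y|)^{-2}$; boundedness of ${\bf Z}_j$ alone is not enough.
\item The norms $\mathcal H_{\lambda_k}$ are not uniformly comparable as $\lambda_k\to0$, though only the lower bound is actually needed.
\end{itemize}
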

\begin{proof}
First, record that the small-coupling stability of \cite[Theorem~5.6]{Androulakis-Dostoglou1998} and \eqref{eq: gauge decom Q} gives ${\bf Q}_\lambda\geq0$. Also, note that Hardy inequality and the bounds 
\begin{equation}\label{eq: Kato}
	|d|\Psi||^2\leq |\nabla_{A_\lambda}\Psi|^2,\quad |d|B||^2\leq |\nabla_{A_\lambda}B|^2
\end{equation}
give
\begin{equation}\label{eq: normal Hardy bound}
 \int_{\R^3}\frac{|\Psi|^2+|B|^2}{|y|^2}\,dy \leq4\int_{\R^3}(|\nabla_{A_\lambda}\Psi|^2+|\nabla_{A_\lambda}B|^2)\,dy.
\end{equation}
For a fixed sufficiently large $R_0$, we claim
\begin{equation}\label{eq: normal compact remainder}
 \|{\bf\Psi}\|_{\mathcal H_\lambda}^2 \lesssim_{R_0} {\bf Q}_\lambda({\bf\Psi})+ \|{\bf\Psi}\|_{L^2(B_{R_0})}^2.
\end{equation}
To prove \eqref{eq: normal compact remainder}, write, for $r=|y|>0$,
\begin{equation}\label{eq: normal component splitting}
 \Psi^\parallel=\langle\Psi,\sigma\rangle\sigma,\quad
 \Psi^\perp=\Psi-\Psi^\parallel,
\end{equation}
with the same splitting for each coefficient of $B$. Integrating the covariant Hodge Laplacian in \eqref{eq: model linearised operator} by parts gives
\begin{equation}\label{eq: normal quadratic form expansion}
\begin{split}
 {\bf Q}_\lambda({\bf\Psi})=\int_{\R^3}\Bigl(&|\nabla_{A_\lambda}\Psi|^2+|\nabla_{A_\lambda}B|^2
 +\lambda(3\varphi_\lambda^2-1)|\Psi^\parallel|^2\\
 &+\bigl(4\varphi_\lambda^2+\lambda(\varphi_\lambda^2-1)\bigr)|\Psi^\perp|^2
 +4\varphi_\lambda^2|B^\perp|^2\Bigr)\,dy
 +\mathcal R_\lambda({\bf\Psi}).
\end{split}
\end{equation}
Here $\mathcal R_\lambda$ is the integral of the remaining zeroth-order terms, which contain $F_{A_\lambda}$ or $d_{A_\lambda}\Phi_\lambda$. Their components along $\sigma$ have size $O(r^{-2})$ and pair only with two perpendicular components of $(\Psi,B)$; their perpendicular components decay exponentially. Consequently, for $R_0$ sufficiently large,
\begin{equation}\label{eq: normal curvature remainder}
 |\mathcal R_\lambda({\bf\Psi})|
 \lesssim \int_{r>R_0}\left[r^{-2}(|\Psi^\perp|^2+|B^\perp|^2)
 +e^{-cr}(|\Psi|^2+|B|^2)\right]dy
 +\|{\bf\Psi}\|_{L^2(B_{R_0})}^2.
\end{equation}
By \eqref{eq: approximation profile bounds} and \eqref{eq: BPS}, the profile bounds are uniform for small $\lambda$:
\begin{equation}\label{eq: normal profile bounds}
 \varphi_\lambda(r)\to 1,\qquad
 |\varphi_\lambda'(r)|\lesssim r^{-2},\qquad
 |1-\alpha_\lambda(r)|+|\alpha_\lambda'(r)|\lesssim e^{-cr}.
\end{equation}
By \eqref{eq: normal profile bounds}, after choosing $R_0$ large and $\lambda_*$ small, the coefficient of $|\Psi^\parallel|^2$ in \eqref{eq: normal quadratic form expansion} is bounded below by $c\lambda$, while the coefficients of $|\Psi^\perp|^2$ and $|B^\perp|^2$ are bounded below by a fixed $c>0$. The $Cr^{-2}$ term in \eqref{eq: normal curvature remainder} is absorbed by the coefficients of $|\Psi^\perp|^2+|B^\perp|^2$. Moreover, \eqref{eq: normal Hardy bound} gives
\begin{equation}\label{eq: normal exponential absorption}
 \int_{r>R_0}e^{-cr}(|\Psi|^2+|B|^2)\,dy
 \lesssim \sup_{r>R_0}(r^2e^{-cr})
 \int_{\R^3}(|\nabla_{A_\lambda}\Psi|^2+|\nabla_{A_\lambda}B|^2)\,dy.
\end{equation}
The coefficient in \eqref{eq: normal exponential absorption} tends to zero as $R_0\to\infty$. Since
$|[\Phi_\lambda,\Psi]|^2=4\varphi_\lambda^2|\Psi^\perp|^2$ and likewise for $B$, the exterior zeroth-order terms control the corresponding terms of \eqref{eq: normal energy norm}. On $B_{R_0}$, all zeroth-order coefficients are uniformly bounded, so their contribution is bounded below by $-C\|{\bf\Psi}\|_{L^2(B_{R_0})}^2$. Combining \eqref{eq: normal curvature remainder}--\eqref{eq: normal exponential absorption} with the derivative integral in \eqref{eq: normal quadratic form expansion} gives
\begin{equation*}
 {\bf Q}_\lambda({\bf\Psi})
 \geq c\|{\bf\Psi}\|_{\mathcal H_\lambda}^2-C\|{\bf\Psi}\|_{L^2(B_{R_0})}^2,
\end{equation*}
which is \eqref{eq: normal compact remainder}, first for compactly supported pairs and then by density.

\smallskip

Next, we show that the nullspace in $\mathcal H_0$ is the span of the four modes in \eqref{eq: linearised kernel}. Let ${\bf\Psi}=(\Psi,B)\in\mathcal H_0$ satisfy ${\bf Q}_0({\bf\Psi})=0$. The Bogomolny factorisation of ${\bf Q}_0$ gives
\begin{equation}\label{eq: normal BPS first order}
 d_{A_0}\Psi+[B,\Phi_0]+*d_{A_0}B=0,
 \qquad d_{A_0}^*B-[\Phi_0,\Psi]=0.
\end{equation}
We claim that ${\bf\Psi}\in L^2(\R^3)$, so \Cref{lem: monopole ker} identifies it with a combination of the four modes. Sobolev inequality and \eqref{eq: Kato} give
\begin{equation}\label{eq: normal Sobolev bound}
 \|{\bf\Psi}\|_{L^6(\R^3)}
 \lesssim \|d|{\bf\Psi}|\|_{L^2(\R^3)}
 \lesssim \|\nabla_{A_0}{\bf\Psi}\|_{L^2(\R^3)}
 \lesssim \|{\bf\Psi}\|_{\mathcal H_0}.
\end{equation}
Equation \eqref{eq: normal Sobolev bound} extends to $\mathcal H_0$ by its definition as a completion. Since ${\bf L}_0{\bf\Psi}=0$, elliptic regularity makes the pair smooth; interior estimates on unit balls and \eqref{eq: normal Sobolev bound} show that ${\bf\Psi}$ and $\nabla_{A_0}{\bf\Psi}$ are bounded and tend to zero at infinity.

To estimate ${\bf\Psi}^\perp=(\Psi^\perp,B^\perp)$, observe that
\begin{equation}\label{eq: normal BPS direction derivative}
 d_{A_0}\sigma=(1-\alpha_0)d\sigma,
 \qquad 1-\alpha_0(r)=\frac{2r}{\sinh(2r)}.
\end{equation}
By \eqref{eq: normal BPS direction derivative}, the covariant derivatives of the projection onto $\sigma^\perp$ decay exponentially. Projecting ${\bf L}_0{\bf\Psi}=0$ and using \eqref{eq: normal quadratic form expansion}, \eqref{eq: normal curvature remainder} and \eqref{eq: normal profile bounds} gives, for large $r$,
\begin{equation}\label{eq: normal transverse equation}
 \left|\left(\nabla_{A_0}^*\nabla_{A_0}+4\varphi_0^2\right){\bf\Psi}^\perp\right|
 \leq Cr^{-2}|{\bf\Psi}^\perp|
 +Ce^{-cr}\bigl(|{\bf\Psi}|+|\nabla_{A_0}{\bf\Psi}|\bigr).
\end{equation}
In particular, the coupling to ${\bf\Psi}^\parallel$ is exponentially small. Choose $\mu>0$ and a sufficiently large radius so that $4\varphi_0^2-Cr^{-2}\geq\mu^2$. Equations \eqref{eq: Kato} and \eqref{eq: normal transverse equation} then imply, weakly on this exterior region,
\begin{equation}\label{eq: normal transverse comparison}
 \left(-\sum_{i=1}^3\partial_{y^i}^2+\mu^2\right)|{\bf\Psi}^\perp|
 \lesssim e^{-cr}.
\end{equation}
Since ${\bf\Psi}^\perp\to0$ at infinity, comparison in \eqref{eq: normal transverse comparison} with $Ce^{-\gamma r}$, for $0<\gamma<\min\{c,\mu\}$, proves $|{\bf\Psi}^\perp|\leq Ce^{-\gamma r}$. Interior elliptic estimates give the same decay for its derivatives, after decreasing $\gamma$ if necessary.

Write $B=B_i\,dy^i$ and let $\langle B,\sigma\rangle=\langle B_i,\sigma\rangle\,dy^i$ denote its real coefficient one-form along $\sigma$. Taking the inner product of \eqref{eq: normal BPS first order} with $\sigma$ now gives
\begin{equation}\label{eq: normal longitudinal first order}
 d\langle\Psi,\sigma\rangle+*d\langle B,\sigma\rangle=O(e^{-\gamma r}),
 \qquad d^*\langle B,\sigma\rangle=O(e^{-\gamma r}),
\end{equation}
also after differentiation. Consequently, \eqref{eq: normal longitudinal first order} gives
\begin{equation}\label{eq: normal longitudinal Laplacians}
 d^*d\langle\Psi,\sigma\rangle=O(e^{-\gamma r}),\qquad
 d^*d\langle B_i,\sigma\rangle=O(e^{-\gamma r}),\quad i=1,2,3.
\end{equation}
Multiply these coefficients by a smooth cutoff that is zero near the origin and one for large $r$. By \eqref{eq: normal longitudinal Laplacians}, their Euclidean Laplacians then decay exponentially outside a compact set. The Newton-potential representation gives their asymptotic expansions; the difference from this potential is an entire harmonic $L^6$ function and hence vanishes. Thus there exist real constants $a,c_1,c_2,c_3$, the coefficients of the leading $1/r$ terms, such that
\begin{equation}\label{eq: normal longitudinal tails}
 \langle\Psi,\sigma\rangle=\frac{a}{r}+O(r^{-2}),\qquad \langle B_i,\sigma\rangle=\frac{c_i}{r}+O(r^{-2}),\quad i=1,2,3.
\end{equation}
The first derivatives of the remainders in \eqref{eq: normal longitudinal tails} are $O(r^{-3})$. Evaluating the second equation in \eqref{eq: normal longitudinal first order} at $y=r\omega$, with $\omega\in\Sp^2$, yields
\begin{equation*}
 \frac{\sum_{i=1}^3c_i\omega_i}{r^2}+O(r^{-3})=O(e^{-\gamma r}).
\end{equation*}
Multiplying by $r^2$ and letting $r\to\infty$ gives $\sum_i c_i\omega_i=0$ for every $\omega$, so $c_1=c_2=c_3=0$. The first equation in \eqref{eq: normal longitudinal first order} then becomes
\begin{equation*}
 -\frac{a}{r^2}\,dr+O(r^{-3})=O(e^{-\gamma r}),
\end{equation*}
and therefore $a=0$ as well. Hence $|{\bf\Psi}|=O(r^{-2})$, which is square-integrable at infinity in dimension three. Smoothness on the ball gives ${\bf\Psi}\in L^2(\R^3)$, proving the claim.

\smallskip

Next, define the moments and the $4\times4$ Gram matrix
\begin{equation}\label{eq: normal moment Gram matrix}
\begin{split}
 \mathfrak{m}_i({\bf\Psi})&\coloneqq\int_{\R^3}\frac{\eta_R}{1+|y|^2}
 \langle{\bf\Psi},{\bf Z}_i\rangle\,dy,\\
 G_{\lambda,R,ij}&\coloneqq\int_{\R^3}\frac{\eta_R}{1+|y|^2}
 \langle{\bf Z}_j,{\bf Z}_i\rangle\,dy,\qquad 1\leq i,j\leq4.
\end{split}
\end{equation}
The modes are evaluated at $\lambda$. Choose $R_*$ so that the four modes at $\lambda=0$ are linearly independent on the ball where $\eta_{R_*}=1$. Equation \eqref{eq: normal moment Gram matrix} then gives a positive lower bound for $G_{0,R}$, uniformly for $R\geq R_*$. Local convergence of the modes as $\lambda\to0$ preserves this lower bound for small $\lambda$.

The uniform bounds $|{\bf Z}_j(y)|\lesssim (1+|y|)^{-2}$ from \eqref{eq: monopole translations}, \eqref{eq: monopole phase mode} and \eqref{eq: normal profile bounds}, together with \eqref{eq: normal Hardy bound}, show that
\begin{equation}\label{eq: normal moment tail}
 \left|\int_{|y|>T}\frac{\eta_R\langle{\bf\Psi},{\bf Z}_j\rangle}{1+|y|^2}\,dy\right| \lesssim T^{-3/2}\|{\bf\Psi}\|_{\mathcal H_\lambda},\qquad T\geq1.
\end{equation}
Suppose that coercivity on $\mathfrak{m}_j=0$ fails on every sufficiently small interval of $\lambda$. Choose $\lambda_k\to0$, $R_k\geq R_*$ and pairs satisfying
\begin{equation*}
 \|{\bf\Psi}_k\|_{\mathcal H_{\lambda_k}}=1,\qquad \mathfrak{m}_j({\bf\Psi}_k)=0,\qquad {\bf Q}_{\lambda_k}({\bf\Psi}_k)\longrightarrow0.
\end{equation*} Local compactness gives a limit in $\mathcal H_0$, strongly in $L^2_{\rm loc}$. By \eqref{eq: normal compact remainder} this limit is nonzero. Cauchy--Schwarz for the nonnegative forms shows that it lies in the BPS nullspace. After taking $R_k\to R_\infty\in[R_*,\infty]$, \eqref{eq: normal moment tail} passes all four zero moments to the limit. Writing the limit as $\sum_{j=1}^4a_j{\bf Z}_j$ at $\lambda=0$, its zero moments give $G_{0,R_\infty}a=0$. The positive lower bound for \eqref{eq: normal moment Gram matrix} forces $a=0$, contradicting the nonzero limit.

For an arbitrary pair, let $\mathfrak{m}=(\mathfrak{m}_1({\bf\Psi}),\ldots,\mathfrak{m}_4({\bf\Psi}))^T$, set $a=G_{\lambda,R}^{-1}\mathfrak{m}$, and put
\begin{equation*}
 {\bf\Psi}^{\circ}\coloneqq{\bf\Psi}-\sum_{j=1}^4a_j{\bf Z}_j.
\end{equation*}
Then $\mathfrak{m}_i({\bf\Psi}^{\circ})=\mathfrak{m}_i({\bf\Psi})-(G_{\lambda,R}a)_i=0$. The uniform inverse bound for \eqref{eq: normal moment Gram matrix} and the bounded norms \eqref{eq: normal energy norm} of the modes give
$\|\sum_ja_j{\bf Z}_j\|_{\mathcal H_\lambda}\leq C|\mathfrak{m}|$.
Since ${\bf L}_\lambda{\bf Z}_j=0$ by \eqref{eq: linearised kernel}, subtracting this combination leaves ${\bf Q}_\lambda$ unchanged. Therefore
\begin{equation*}
 \|{\bf\Psi}\|_{\mathcal H_\lambda}^2
 \leq2\|{\bf\Psi}^{\circ}\|_{\mathcal H_\lambda}^2+C|\mathfrak{m}|^2
 \leq C{\bf Q}_\lambda({\bf\Psi})+C\sum_{j=1}^4|\mathfrak{m}_j({\bf\Psi})|^2,
\end{equation*}
which is the assertion. The same argument permits $R=\infty$, with $\eta_\infty=1$.
\end{proof}

\begin{lemma}\label{lem: product Liouville}
Let $(x,y)\in\R^n=\R^{n-3}\times\R^3$. For $\lambda\in[0,\lambda_*]$, put ${\bf E}_i={\bf E}_{i,\infty}$, with the tests from \eqref{eq: normal weighted tests} evaluated at $\lambda$. Suppose that ${\bf\Psi}=(\Psi,B)$ is a bounded $C^2$ pair on $\R^{n-3}\times\R^3$, with $B$ a one-form in the normal variables, and that $g_1,\ldots,g_4$ are bounded functions on $\R^{n-3}$ satisfying
\begin{equation}\label{eq: product Liouville system}
\begin{split}
 \left(-\Delta_x+{\bf L}_\lambda\right){\bf\Psi}(x,y) &=\sum_{j=1}^4g_j(x){\bf E}_j(y),\\
 \int_{\R^3}\langle{\bf\Psi}(x,y),{\bf E}_j(y)\rangle\,dy&=0
\end{split}
\end{equation}
for every $x$ and $j=1,\ldots,4$. If, for some $\nu>\frac12$,
\begin{equation}\label{eq: product Liouville decay}
 \sup_{(x,y)\in\R^{n-3}\times\R^3} (1+|y|)^\nu\bigl(|\Psi(x,y)|+|B(x,y)|\bigr)<\infty,
\end{equation}
then ${\bf\Psi}=0$ and $g_j=0$ for every $j$. Likewise, a bounded $\su(2)$-valued function $b$ satisfying
\begin{equation}\label{eq: product Liouville tangential equation}
 \left(-\Delta_x+\nabla_{A_\lambda}^*
 \nabla_{A_\lambda}\right)b -[\Phi_\lambda,[\Phi_\lambda,b]]=0
\end{equation}
and the same normal decay vanishes identically.
\end{lemma}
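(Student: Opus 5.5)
We first remove the forcing by testing against the kernel modes, and then run a Fourier-type argument in the tangential variable $x$, using the weighted coercivity of \Cref{lem: normal weighted coercivity}.

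\emph{Step 1 (the forcing vanishes).} Since ${\bf L}_\lambda$ is formally self-adjoint and ${\bf L}_\lambda{\bf Z}_i=0$, pair the first equation of \eqref{eq: product Liouville system} with ${\bf Z}_i$ over $\R^3$ at fixed $x$. Set $m_i(x)\coloneqq\int_{\R^3}\langle{\bf\Psi}(x,y),{\bf Z}_i(y)\rangle\,dy$. The decay \eqref{eq: product Liouville decay} with $\nu>\frac12$, combined with $|{\bf Z}_i|\lesssim(1+|y|)^{-2}$, makes this integral absolutely convergent. The boundary terms at $|y|=R$ in the integration by parts are of size $R^2\cdot R^{-\nu}\cdot R^{-3}$ (using interior estimates for $\nabla{\bf\Psi}$), and these tend to zero. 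We therefore obtain $-\Delta_x m_i=\sum_j G_{ij}g_j$, where $G_{ij}=\int\frac{\langle{\bf Z}_j,{\bf Z}_i\rangle}{1+|y|^2}$ is the Gram matrix $G_{\lambda,\infty}$ of \eqref{eq: normal moment Gram matrix}. This matrix is invertible, so $g=-G^{-1}\Delta_x m$.

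This does not immediately kill $g$. To do so, split ${\bf\Psi}={\bf\Psi}^\circ+\sum_j a_j(x){\bf Z}_j$ with $a=G^{-1}m$, exactly as at the end of the proof of \Cref{lem: normal weighted coercivity}. The constraints in \eqref{eq: product Liouville system} then reduce the forcing to the relation $\Delta_x a=-g$, with $a$ bounded. The conclusion $g=0$ will follow once we show ${\bf\Psi}^\circ=0$ and $a=0$, so we treat the whole system with an energy argument.

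\emph{Step 2 (energy/Caccioppoli in $x$).} Multiply the equation by $\chi_L(x)^2{\bf\Psi}$, where $\chi_L$ is a cutoff in $x$ at scale $L$, and integrate over $\R^{n-3}\times\R^3$. The forcing term drops out because of the orthogonality constraints. Using $\int_{\R^3}\langle{\bf L}_\lambda{\bf\Psi},{\bf\Psi}\rangle\,dy={\bf Q}_\lambda({\bf\Psi}(x,\cdot))$ and the coercivity of \Cref{lem: normal weighted coercivity} with $R=\infty$ (all moments vanish), we get
\begin{equation*}
\int\chi_L^2\bigl(|\nabla_x{\bf\Psi}|^2+c\|{\bf\Psi}(x,\cdot)\|_{\mathcal H_\lambda}^2\bigr)\,dx\lesssim L^{-2}\int_{|x|<2L}\|{\bf\Psi}(x,\cdot)\|_{L^2(\R^3,dy/(1+|y|^2))}^2\,dx .
\end{equation*}
By \eqref{eq: normal Hardy bound}, the weighted $L^2$ norm on the right is controlled by the $\mathcal H_\lambda$ norm. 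This yields a reverse inequality of the form $E(L)\lesssim L^{-2}E(2L)$, where $E(L)$ is the energy over $|x|<L$. Since boundedness plus decay gives $E(L)\lesssim L^{n-3}$, iterating the inequality would force $E\equiv0$.

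\emph{Main obstacle.} The delicate point is that for $\nu\le\frac32$ the pointwise bound ${\bf\Psi}=O(|y|^{-\nu})$ does not place $\|{\bf\Psi}(x,\cdot)\|_{\mathcal H_\lambda}$ in the finite range a priori, and the iteration $L^{-2}$ versus $L^{n-3}$ does not close in large dimension. I would address both issues together by a Fourier transform in $x$. We may tempered-distribute ${\bf\Psi}$ in $x$, which is allowed because ${\bf\Psi}$ is bounded. Then $\widehat{\bf\Psi}(\xi,\cdot)$ solves $(|\xi|^2+{\bf L}_\lambda)\widehat{\bf\Psi}=\sum_j\widehat g_j{\bf E}_j$ with zero moments. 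For $\xi\ne0$, coercivity makes this operator injective on the constrained space, so the spectral support lies in $\{\xi=0\}$. Hence ${\bf\Psi}$ is polynomial in $x$, and being bounded it is independent of $x$.

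In the $x$-independent case the decay $\nu>\frac12$ puts ${\bf\Psi}$ in $L^6$, and the $\mathcal H_\lambda$-finiteness follows from the transverse exponential decay argument in the proof of \Cref{lem: normal weighted coercivity}. Coercivity with vanishing moments then gives ${\bf\Psi}=0$, and returning to the equation gives $\sum g_j{\bf E}_j=0$, so $g_j=0$.

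For the tangential equation \eqref{eq: product Liouville tangential equation} the same Fourier reduction applies. The operator $\nabla_{A_\lambda}^*\nabla_{A_\lambda}-\ad_{\Phi_\lambda}^2$ is nonnegative, with no kernel decaying faster than $|y|^{-1/2}$, by the Hardy inequality \eqref{eq: normal Hardy bound}. So $b$ is $x$-independent, and testing the equation against $b$ gives $\nabla_{A_\lambda}b=0$, hence $b=0$ by the decay.
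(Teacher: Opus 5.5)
Your final route is the paper's: Fourier transform in $x$, support of $\widehat{\bf\Psi}$ and $\widehat g$ at zero frequency, $x$-independence, then the normal energy identity with the constraints and \Cref{lem: normal weighted coercivity} at $R=\infty$. Your endgame is even slightly leaner than the paper's separate test against ${\bf Z}_i$. Pairing ${\bf L}_\lambda{\bf\Psi}=\sum_jg_j{\bf E}_j$ with $\eta_R^2{\bf\Psi}$ kills the forcing through the constraints, at a cutoff cost of $O(R^{1-2\nu})$, which is exactly where $\nu>\frac12$ enters.

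The decisive step, however, is asserted rather than proved. Because ${\bf\Psi}$ is only bounded in $x$ and $O(|y|^{-\nu})$ in $y$ (not in $L^2_y$ when $\nu\le\frac32$), $\widehat{\bf\Psi}$ is a distribution in the frequency $\zeta$. You cannot fix $\zeta\neq0$ and apply injectivity of $|\zeta|^2+{\bf L}_\lambda$ to it. What is needed is an inverse acting on the test side.

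On a compact $K\subset\R^{n-3}\setminus\{0\}$, pair the transformed equation with $({\bf L}_\lambda+|\zeta|^2)^{-1}{\bf E}_i$. The moment conditions give $A(|\zeta|^2)\widehat g=0$, where $A(s)_{ij}=\langle{\bf E}_i,({\bf L}_\lambda+s)^{-1}{\bf E}_j\rangle$ is positive definite (this is the correct form of your injectivity), so $\widehat g=0$ on $K$. Pairing next with $({\bf L}_\lambda+|\zeta|^2)^{-1}f$ for compactly supported $f$ gives $\widehat{\bf\Psi}=0$ on $K$.

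For these pairings to be legitimate against ${\bf\Psi}$, you must prove that the resolvent tests decay in $y$: like $(1+|y|)^{-4}$ for $f={\bf E}_i$, and exponentially for compactly supported $f$. This decay must hold uniformly for $\zeta\in K$ and for all $\zeta$-derivatives, so that the tests are Schwartz in $x$, integrable against ${\bf\Psi}$, and produce vanishing boundary terms under normal cutoffs. The paper gets this from the Kato inequality and the comparison \eqref{eq: product resolvent comparison}, applied also to the differentiated equations \eqref{eq: product resolvent frequency derivative}. This is the bulk of the proof, and your proposal does not supply it.

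Two statements in your preliminary steps are also false, although your final argument does not use them.
\begin{enumerate}
\item Since $|{\bf Z}_i|\sim|y|^{-2}$, the moment $m_i(x)=\int_{\R^3}\langle{\bf\Psi},{\bf Z}_i\rangle\,dy$ converges absolutely only for $\nu>1$, not for all $\nu>\frac12$. This is why the paper pairs with ${\bf Z}_i$ only after $x$-independence, with the cutoff placed on ${\bf Z}_i$.
\item The error produced by an $x$-cutoff is $\int|\nabla_x\chi_L|^2|{\bf\Psi}|^2\,dx\,dy$, with the \emph{unweighted} $L^2_y$ norm, which \eqref{eq: normal Hardy bound} does not control. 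That, not the dimension, is why your energy iteration fails: a genuine inequality $E(L)\lesssim L^{-2}E(2L)$ combined with $E(L)\lesssim L^{n-3}$ would force $E\equiv0$ in every dimension.
\end{enumerate}
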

\begin{proof}
For $s>0$ the matrix
\begin{equation}\label{eq: product resolvent matrix}
 A(s)_{ij}=\left\langle {\bf E}_i,({\bf L}_\lambda+s)^{-1}{\bf E}_j\right\rangle_{L^2(\R^3)}
\end{equation}
is positive definite by the nonnegativity established in the proof of \Cref{lem: normal weighted coercivity} and the independence of \eqref{eq: normal weighted tests}. Write $g=(g_1,\ldots,g_4)^T$, and use a hat for the Fourier transform in $x$, with frequency $\zeta\in\R^{n-3}$. Localise to a compact set $K\subset\R^{n-3}\setminus\{0\}$, and test the Fourier transform of \eqref{eq: product Liouville system} with $({\bf L}_\lambda+|\zeta|^2)^{-1}{\bf E}_i$. The moment conditions in \eqref{eq: product Liouville system} give
\begin{equation*}
 A(|\zeta|^2)\widehat g=0,\qquad\text{hence}\qquad\widehat g=0
\end{equation*}
on this set by the positivity of \eqref{eq: product resolvent matrix}. Testing next with $({\bf L}_\lambda+|\zeta|^2)^{-1}f$, for arbitrary compactly supported smooth normal pairs $f$, gives $\widehat{\bf\Psi}=0$ there. These Fourier transforms are understood as tempered distributions. To justify the tests, set
\begin{equation*}
 u_\zeta\coloneqq({\bf L}_\lambda+|\zeta|^2)^{-1}f,
 \qquad s_0\coloneqq\min_{\zeta\in K}|\zeta|^2>0,
\end{equation*}
where $f$ is either ${\bf E}_i$ or a smooth compactly supported normal pair. Use the componentwise splitting $u_\zeta=u_\zeta^\parallel+u_\zeta^\perp$ along $\R\sigma\oplus\sigma^\perp$, defined in \eqref{eq: normal component splitting}. The quadratic-form expansion \eqref{eq: normal quadratic form expansion} shows that, outside a sufficiently large ball, the zeroth-order quadratic form after adding $|\zeta|^2$, including the curvature terms, is bounded below by $(s_0/2)|u_\zeta|^2$. Equation \eqref{eq: Kato} therefore gives
\begin{equation}\label{eq: product resolvent comparison}
 \left(-\sum_{a=1}^3\partial_{y^a}^2+s_0/2\right)|u_\zeta|\leq |f|.
\end{equation}
The $L^2$ resolvent solution tends to zero at infinity by interior estimates. For $f={\bf E}_i=O((1+|y|)^{-4})$, a sufficiently large multiple of $(1+|y|)^{-4}$ is a supersolution of \eqref{eq: product resolvent comparison} and bounds $|u_\zeta|$. For compactly supported $f$, comparison outside its support with $Ce^{-\gamma|y|}$, where $0<\gamma<\sqrt{s_0/2}$, gives exponential decay. Interior elliptic estimates give the corresponding bounds for spatial derivatives. Differentiating the resolvent equation gives
\begin{equation}\label{eq: product resolvent frequency derivative}
 ({\bf L}_\lambda+|\zeta|^2)\partial_{\zeta_\alpha}u_\zeta=-2\zeta_\alpha u_\zeta,
 \qquad \alpha=1,\ldots,n-3.
\end{equation}
Repeated differentiation of \eqref{eq: product resolvent frequency derivative} and comparison in \eqref{eq: product resolvent comparison} give these decay bounds for every frequency derivative, uniformly on $K$. They make the tests integrable in $y$ and make the integration-by-parts errors from a normal cutoff tend to zero, justifying their pairing with the bounded solution.

Next, both Fourier transforms are supported at zero. A distribution with this support is a finite sum of derivatives of the delta distribution; boundedness in $x$ leaves only the constant term. Equation \eqref{eq: product Liouville system}, tested against ${\bf Z}_i$ using \eqref{eq: linearised kernel}, gives
\begin{equation*}
 \sum_j g_j\int_{\R^3}\frac{\langle{\bf Z}_j,{\bf Z}_i\rangle}{1+|y|^2}\,dy=0.
\end{equation*}
The matrix in \eqref{eq: normal moment Gram matrix} with $R=\infty$ is invertible, so $g=0$. The integrations by parts are valid by \eqref{eq: monopole translations}, \eqref{eq: monopole phase mode}, \eqref{eq: normal profile bounds} and local elliptic estimates. Testing the homogeneous equation with a normal cutoff times ${\bf\Psi}$ now gives finite homogeneous energy: by \eqref{eq: product Liouville decay}, the cutoff loss on $R<|y|<2R$ is $O(R^{1-2\nu})=o(1)$, and \eqref{eq: normal compact remainder} bounds the remaining interior term. Consequently ${\bf Q}_\lambda({\bf\Psi})=0$, and \Cref{lem: normal weighted coercivity}, with $R=\infty$, gives ${\bf\Psi}=0$.

Finally, apply the Fourier argument to \eqref{eq: product Liouville tangential equation}, whose normal operator is nonnegative, without a coefficient matrix. It makes $b$ independent of $x$; cutoff testing then gives
\begin{equation*}
 \int_{\R^3}\left(|\nabla_{A_\lambda}b|^2+|[\Phi_\lambda,b]|^2\right)=0.
\end{equation*}
Its decay implies $b=0$.
\end{proof}

\subsection{The compact case} We first assume that $M$ is compact and admissible. Let  $r(x)\coloneqq\dist_g(x,\Gamma)$ and put $r_h\coloneqq r\circ\mathscr T_h^{-1}$. Define $\mathcal H_{\eps,h,\theta}$ as the completion of smooth pairs ${\bf\Psi}=(\Psi,B)$ on $M$ in the norm
\begin{equation}\label{eq: compact energy norm}
\begin{split}
 \|{\bf\Psi}\|_{\mathcal H_{\eps,h,\theta}}^2&\coloneqq \frac1\eps\int_M\left(|\nabla_{A_{h,\theta}}\Psi|^2 +\frac{|\Psi|^2}{(\eps+r_h)^2}\right)\,d\vol_g\\
 &+\eps\int_M\left(|\nabla_{A_{h,\theta}}B|^2 +\frac{|B|^2}{(\eps+r_h)^2}\right)\,d\vol_g\\
 &+\frac1{\eps^3}\int_M\left(|[\Phi_{h,\theta},\Psi]|^2 +\lambda_\eps|\langle\Phi_{h,\theta},\Psi\rangle|^2\right)\,d\vol_g\\
 &+\frac1\eps\int_M|[\Phi_{h,\theta},B]|^2\,d\vol_g.
\end{split}
\end{equation}
For fixed $\eps,h,\theta$, this norm is equivalent to the usual $H^1$ norm. The map $\mathfrak B_{\eps,h,\theta}$ extends continuously to this space.

\begin{proposition}\label[proposition]{prop: global constrained coercivity}
There exist $c,\eps_0>0$ such that, for $\eps\in (0,\eps_0)$ and parameters $(h,\theta)$ satisfying \eqref{eq: assumptions htheta},
\begin{equation*}
 {\bf Q}_{(\Phi_{h,\theta},A_{h,\theta})}({\bf\Psi}) \geq c\|{\bf\Psi}\|_{\mathcal H_{\eps,h,\theta}}^2
\end{equation*}
for every ${\bf\Psi}\in\mathcal H_{\eps,h,\theta}$ satisfying $\mathfrak B_{\eps,h,\theta}{\bf\Psi}=0$. 
\end{proposition}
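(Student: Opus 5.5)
The proof will argue by contradiction, using a blow-up and compactness scheme in which the local model estimates are \Cref{lem: normal weighted coercivity} and \Cref{lem: product Liouville}. Suppose the inequality fails. Then there are $\eps_k\to0$, parameters $(h_k,\theta_k)$ satisfying \eqref{eq: assumptions htheta}, and pairs ${\bf\Psi}_k$ with
\begin{equation*}
\|{\bf\Psi}_k\|_{\mathcal H_{\eps_k,h_k,\theta_k}}=1,\qquad \mathfrak B_{\eps_k,h_k,\theta_k}{\bf\Psi}_k=0,\qquad {\bf Q}_k({\bf\Psi}_k)\to0.
\end{equation*}
Alternatively, I would first try a direct localisation argument. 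Take a partition of unity $\chi_{\mathrm{in}}^2+\chi_{\mathrm{out}}^2=1$, with $\chi_{\mathrm{in}}$ supported in $\{r_h<\delta/2\}$ and depending only on $r_h$ at an intermediate scale. The IMS-type formula then gives
\begin{equation*}
{\bf Q}({\bf\Psi})={\bf Q}(\chi_{\mathrm{in}}{\bf\Psi})+{\bf Q}(\chi_{\mathrm{out}}{\bf\Psi})-\frac1\eps\int|\nabla\chi|^2|\Psi|^2-\eps\int|\nabla\chi|^2|B|^2.
\end{equation*}
I would choose the cutoff to vary over the logarithmic range $\eps^{1/2}<r<\delta$, for instance $\chi$ a function of $\log r/|\log\eps|$. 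With this choice the gradient loss is at most $|\log\eps|^{-2}(\eps+r)^{-2}$ times the weights. It is therefore absorbed by the Hardy-type terms in \eqref{eq: compact energy norm}, with a factor that tends to zero.

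The outer piece is handled as follows. On $\{r_h\gtrsim\eps^{1/2}\}$ the background is $(\varphi\sigma,A_D)$ up to exponentially small errors. Splitting into longitudinal and transverse parts, the transverse part carries the term $\eps^{-3}|[\Phi,\Psi]|^2\approx4\eps^{-3}|\Psi^\perp|^2$, which dominates the curvature cross terms of size $O(1)$ after weighting. The longitudinal part reduces to the abelian quadratic form $\eps^{-1}(|d\psi|^2+\lambda_\eps\eps^{-2}\psi^2)+\eps(|db|^2+|d^*b|^2)$ for real $(\psi,b)$. This form controls the Hardy weights only modulo harmonic one-forms and constants. The hypothesis $b_1(M)=0$ rules out harmonic one-forms. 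The constant longitudinal Higgs mode is controlled by $\lambda_\eps$, because $\lambda_\eps\eps^{-3}\gtrsim\eps^{-1}(\eps+r)^{-2}$ once $r\gtrsim\ell_\eps$. The constant phase gauge mode $\xi_{\eps,h}$ is an honest kernel direction, but it is removed by the gauge-corrected term ${\bf G}^*$ together with the $\mathfrak b_4$ constraint.

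The inner piece is handled in rescaled Fermi coordinates using \Cref{def: rescaling convention}. There ${\bf Q}$ equals $\int_\Gamma\bigl({\bf Q}_{\lambda_\eps}(\widehat{\bf\Psi}(q,\cdot))+|\nabla_q\widehat{\bf\Psi}|^2\bigr)$ plus errors of relative size $O(\eps r+\|h\|_{C^{2,\tau}})$. Applying \Cref{lem: normal weighted coercivity} fibrewise gives coercivity, and the moment terms are exactly $\mathfrak b_j$, which vanish, up to cutoff errors. This step requires $\rho_\eps$ to match ${\bf E}_{j,R}$ with $R=\delta/(4\eps)\ge R_*$. The tangential component $\widehat b$ is not covered by the normal coercivity lemma. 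Its form is $|\nabla\widehat b|^2+|[\Phi_\lambda,\widehat b]|^2$, and Hardy's inequality \eqref{eq: normal Hardy bound} gives coercivity for it directly.

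The main obstacle is the gluing of these estimates. The weight $(\eps+r_h)^{-2}$ in the longitudinal sector is only marginally controlled: in three normal dimensions Hardy's inequality controls it, but the inner–outer cross terms must not accumulate over the $|\log\eps|$ dyadic scales. This is why I choose the logarithmic cutoff. If the direct estimate leaves an $o(1)$ defect, I would close the argument by the contradiction sequence instead. Normalised blow-ups at points where mass concentrates converge to one of three limits: a product solution on $\R^{n-3}\times\R^3$, which vanishes by \Cref{lem: product Liouville}; a flat abelian limit in the intermediate annuli, excluded by Hardy's inequality; or an outer limit on $M\setminus\Gamma$, which is a harmonic longitudinal pair and vanishes since $b_1(M)=0$. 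This contradicts the unit normalisation via \eqref{eq: normal compact remainder}.
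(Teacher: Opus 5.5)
Your direct localisation argument is essentially the paper's proof. The ingredients match: an IMS splitting with a logarithmic cutoff, so that the gradient loss is $O(|\log\eps|^{-2})$ against the Hardy weights in \eqref{eq: compact energy norm}; on the inner piece, fibrewise \Cref{lem: normal weighted coercivity} with the four constraints, plus Hardy for the tangential components $\widehat b$; on the outer piece, absorption of the charged terms by the mass terms, with the neutral sector reduced to the abelian form and closed by $b_1(M)=0$ and $\lambda_\eps\eps^{-2}\ge1$. The step that does not work as you set it up is the placement of the transition region.

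With $\chi_{\mathrm{in}}$ varying logarithmically over $\eps^{1/2}<r_h<\delta$, it is still of size comparable to one at $r_h\sim\eps^{1/4}$ and beyond, out to $r_h\sim\delta/4$. There your own comparison error with $\int_\Gamma{\bf Q}_{\lambda_\eps}$, namely $O(\eps|y|+\|h\|_{C^{2,\tau}})=O(r_h+\|h\|_{C^{2,\tau}})$, is $O(\delta)$ and not $o(1)$. This error comes from the Fermi metric and frame defects measured against the Hardy weight. The fibrewise constant $c$ of \Cref{lem: normal weighted coercivity} therefore survives only if $\delta$ is taken small against $c$, which you do not arrange.

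The paper avoids this by putting the logarithmic window at shrinking scales: $\chi_{\mathrm{in}}=1$ for $r_h\le\frac12\eps^{7/8}$ and $\chi_{\mathrm{in}}=0$ for $r_h\ge\frac14\eps^{3/4}$. Then:
\begin{itemize}
\item the IMS loss is still $\lesssim|\log\eps|^{-2}\|{\bf\Psi}\|^2_{\mathcal H_{\eps,h,\theta}}$;
\item the Fermi and frame errors on the inner piece are $O(\eps^{3/4}+\|h\|_{C^2})$;
\item the moment defect lives in $|y|\ge\frac12\eps^{-1/8}$ and costs $O(\eps^{3/8})$ by \eqref{eq: normal moment tail};
\item on the outer support $\eps^2r_h^{-2}\le4\eps^{1/4}$, and the charged background terms are $O(\eps^{-N}e^{-c\eps^{-1/8}})$.
\end{itemize}
Any window $[\eps^{a},\eps^{b}]$ with $0<b<a<1$ works. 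Your choice $a=\frac12$ is fine, but you need $b>0$.

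Two smaller points. First, the pointwise comparison $\lambda_\eps\eps^{-3}\gtrsim\eps^{-1}(\eps+r)^{-2}$ holds only for $r\gtrsim\ell_\eps$, and $\ell_\eps$ can be as large as $\frac12\eps^{\tau'}\gg\eps^{1/2}$. What closes the scalar bound is the global codimension-three Hardy inequality $\|(\eps+r_h)^{-1}\psi\|_2^2\lesssim\|d\psi\|_2^2+\|\psi\|_2^2$ together with $\lambda_\eps\eps^{-2}\ge1$; your ``modulo constants'' remark presumably intends this. Second, the blow-up fallback is unnecessary, since $o(1)$ defects are simply absorbed. It is also not directly available: \Cref{lem: product Liouville} concerns bounded solutions of the product equation with pointwise normal decay, which a sequence with ${\bf Q}_k({\bf\Psi}_k)\to0$ does not supply. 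The paper uses that lemma only in the supremum estimate, \Cref{prop: compact supremum estimate}.
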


\begin{proof}
Write for brevity ${\bf Q}={\bf Q}_{(\Phi_{h,\theta},A_{h,\theta})}$ and $\mathcal H=\mathcal H_{\eps,h,\theta}$. The codimension-three Hardy inequality and \eqref{eq: Kato} give, for scalar or one-form fields $v$,
\begin{equation}\label{eq: compact Hardy bound}
 \|(\eps+r_h)^{-1}v\|_2^2\lesssim \|\nabla_{A_{h,\theta}}v\|_2^2+\|v\|_2^2.
\end{equation}
Choose smooth real cutoffs $\eta_\eps,\zeta_\eps$ with $\eta_\eps^2+\zeta_\eps^2=1$, $\eta_\eps=1$ for $r_h\leq\tfrac12\eps^{7/8}$ and $\eta_\eps=0$ for $r_h\geq\tfrac14\eps^{3/4}$. A logarithmic interpolation gives
\begin{equation}\label{eq: compact localization cutoffs}
 |d\eta_\eps|^2+|d\zeta_\eps|^2\leq\frac{C}{r_h^2|\log\eps|^2}.
\end{equation}
Using \eqref{eq: compact localization cutoffs} and \eqref{eq: compact energy norm}, expansion of the derivative terms in \eqref{eq: gauge decom Q} yields
\begin{equation}\label{eq: compact quadratic localization}
\begin{split}
 {\bf Q}(\eta_\eps{\bf\Psi})+{\bf Q}(\zeta_\eps{\bf\Psi})-{\bf Q}({\bf\Psi})&=\int_M(|d\eta_\eps|^2+|d\zeta_\eps|^2)
 (\eps^{-1}|\Psi|^2+\eps|B|^2)\,d\vol_g\\
 &\lesssim |\log\eps|^{-2}\|{\bf\Psi}\|_{\mathcal H}^2.
\end{split}
\end{equation}
We prove the following two claims, with $c>0$ independent of $\eps,h,\theta$.
\begin{itemize}
	\item Inner almost-coercivity:
	\begin{equation}\label{eq: global inner coercivity}
 {\bf Q}(\eta_\eps{\bf\Psi})\geq c\|\eta_\eps{\bf\Psi}\|_{\mathcal H}^2-o(1)\|{\bf\Psi}\|_{\mathcal H}^2.
\end{equation}
	\item Outer coercivity:
	\begin{equation}\label{eq: global outer coercivity}
 {\bf Q}(\zeta_\eps{\bf\Psi})\geq c\|\zeta_\eps{\bf\Psi}\|_{\mathcal H}^2.
\end{equation}
\end{itemize}
These claims close the proof: since $\eta_\eps^2+\zeta_\eps^2=1$, expansion of the derivatives in \eqref{eq: compact energy norm} gives
\begin{equation}\label{eq: compact energy localization}
 \|{\bf\Psi}\|_{\mathcal H}^2\leq\|\eta_\eps{\bf\Psi}\|_{\mathcal H}^2+\|\zeta_\eps{\bf\Psi}\|_{\mathcal H}^2.
\end{equation}
Adding \eqref{eq: global inner coercivity} and \eqref{eq: global outer coercivity} and using \eqref{eq: compact quadratic localization} and \eqref{eq: compact energy localization} yields
\begin{equation*}
 {\bf Q}({\bf\Psi})\geq\bigl(c-o(1)-C|\log\eps|^{-2}\bigr)\|{\bf\Psi}\|_{\mathcal H}^2
 \geq\frac c2\|{\bf\Psi}\|_{\mathcal H}^2
\end{equation*}
for sufficiently small $\eps$.

\subsubsection*{Inner almost-coercivity}
Put ${\bf V}=(V_\Phi,V_A)\coloneqq \eta_\eps{\bf\Psi}$ and use the rescaled fields $\widehat{\bf V},\widehat b$ of \Cref{def: rescaling convention}. All inner rescaled fields vanish for $|y|\geq\tfrac14\eps^{-1/4}$.

In the product metric, the change of fibre volume is $dz=\eps^3dy$. Denote the resulting quadratic form by
\begin{equation}\label{eq: compact product quadratic form}
\begin{split}
 {\bf Q}^{\mathrm{prod}}_\eps(\widehat{\bf V},\widehat b)
 &\coloneqq\int_\Gamma {\bf Q}_{\lambda_\eps}(\widehat{\bf V}(q,\cdot))\,d\vol_\Gamma\\
 &\quad+\int_{\Gamma\times\R^3}\sum_i
 \left(|\nabla_{A_{\lambda_\eps}}\widehat b_i|^2+|[\Phi_{\lambda_\eps},\widehat b_i]|^2\right)\,d\vol_\Gamma\,dy\\
 &\quad+\eps^2\int_{\Gamma\times\R^3}
 \left(|\nabla_\Gamma\widehat V_\Phi|^2+|\nabla_\Gamma\widehat V_N|^2+|\nabla_\Gamma\widehat b|^2\right)\,d\vol_\Gamma\,dy\\
 &\quad+\mathcal R_{\Gamma,\eps}(\widehat b),
\end{split}
\end{equation}
where the curvature contribution from the Hodge identity on $\Gamma$ is
\begin{equation*}
 \mathcal R_{\Gamma,\eps}(\widehat b)
 \coloneqq\eps^2\int_{\Gamma\times\R^3}
 \operatorname{Ric}_\Gamma(e_i,e_j)\langle\widehat b_i,\widehat b_j\rangle\,d\vol_\Gamma\,dy.
\end{equation*}
Here $\nabla_\Gamma$ uses the fixed normal trivialisation and the Levi--Civita connection on tangential covectors. The mixed derivative terms cancel by integration by parts: with $D_a=\partial_{y^a}+[A_{\lambda_\eps,a},\cdot]$, their two contributions have opposite signs and the same integral,
\begin{equation*}
 \sum_i\int_{\Gamma\times\R^3}\langle D_a\widehat b_i,\nabla_i\widehat V_a\rangle
 =\sum_i\int_{\Gamma\times\R^3}\langle D_a\widehat V_a,\nabla_i\widehat b_i\rangle.
\end{equation*}
Since $\operatorname{Ric}_\Gamma$ is bounded and the inner support has $|y|\leq\tfrac14\eps^{-1/4}$, \eqref{eq: normal Hardy bound} gives
\begin{equation}\label{eq: compact Ricci error bound}
 |\mathcal R_{\Gamma,\eps}(\widehat b)|\lesssim \eps^2\int|\widehat b|^2
 \lesssim \eps^{3/2}\int\frac{|\widehat b|^2}{|y|^2}
 \lesssim \eps^{3/2}\sum_i\int|\nabla_{A_{\lambda_\eps}}\widehat b_i|^2.
\end{equation}
The Fermi metric and frame errors are bounded by $C(\eps^{3/4}+\|h\|_{C^2})$. These include the graph-induced tangential connection coefficients: in flat coordinates they are $-\eps^{-1}\partial_i h^a(A_\lambda)_a(y)$, so their rescaled size is $O(\|Dh\|)$ even though their physical size need not be bounded. By \eqref{eq: approximation profile bounds}, \eqref{eq: approximation regular matching} and \eqref{eq: shifted phase approximation}, after removal of $g_\eps u_\theta$ and separation of these graph terms, the remaining additional connection coefficients have bounded size, normal derivatives bounded by $C(\eps+r_h)^{-1}$, and exponentially decaying components perpendicular to $\sigma$. The mixed Higgs terms use
\begin{equation*}
 \sup_{r_h\geq0}(\eps+r_h)^2\eps^{-1}e^{-cr_h/\eps}\lesssim \eps.
\end{equation*}
Cauchy--Schwarz and \eqref{eq: normal Hardy bound} therefore give
\begin{equation}\label{eq: compact product comparison}
 \bigl|{\bf Q}({\bf V})-{\bf Q}^{\mathrm{prod}}_\eps(\widehat{\bf V},\widehat b)\bigr|
 \leq C\bigl(\eps^{3/4}+\eps+\|h\|_{C^2}\bigr)\|{\bf V}\|_{\mathcal H}^2=o(1)\|{\bf V}\|_{\mathcal H}^2.
\end{equation}

The constraints \eqref{eq: fibre constraints} become exactly the weighted moments of $\widehat{\bf\Psi}$ in this transported frame. The fixed physical cutoff equals one on the inner support. Thus the moments of $\widehat{\bf V}$ are supported in $\{|y|\geq\tfrac12\eps^{-1/8}\}$. Equation \eqref{eq: normal moment tail} gives
\begin{equation}\label{eq: compact inner moment bound}
 \int_\Gamma\sum_{j=1}^4\left|\int_{\R^3}
 \frac{\langle\widehat{\bf V},{\bf Z}_j\rangle}{1+|y|^2}\,dy\right|^2\,d\vol_\Gamma
 \leq C\eps^{3/8}\|{\bf\Psi}\|_{\mathcal H}^2.
\end{equation}
Apply \Cref{lem: normal weighted coercivity} to $\widehat{\bf V}$ and \eqref{eq: normal Hardy bound} to each $\widehat b_i$. Combining \eqref{eq: compact product quadratic form}, \eqref{eq: compact Ricci error bound}, \eqref{eq: compact product comparison} and \eqref{eq: compact inner moment bound} proves \eqref{eq: global inner coercivity}.

\subsubsection*{Outer coercivity}
Now, put ${\bf V}=(V_\Phi,V_A)\coloneqq \zeta_\eps{\bf\Psi}$. On the support of ${\bf V}$, where $r_h\geq\tfrac12\eps^{7/8}$, define
\begin{equation*}
 \sigma_h\coloneqq\frac{\Phi_{h,\theta}}{|\Phi_{h,\theta}|},\qquad
 \psi\coloneqq\langle V_\Phi,\sigma_h\rangle,\qquad
 b\coloneqq\langle V_A,\sigma_h\rangle,
 \qquad {\bf V}^\perp\coloneqq{\bf V}-(\psi\sigma_h,b\sigma_h).
\end{equation*}
Extend the real scalar $\psi$ and real one-form $b$ by zero through the inner tube. Equations \eqref{eq: approximation profile bounds}, \eqref{eq: approximation horizontal cancellation}, \eqref{eq: FAtan} and \eqref{eq: shifted phase approximation} give, for a fixed $N>0$,
\begin{equation}\label{eq: compact outer background bounds}
 \sum_{k=0}^1|\nabla_{A_{h,\theta}}^k d_{A_{h,\theta}}\sigma_h|
 +|F_{A_{h,\theta}}^\perp|\lesssim \eps^{-N}e^{-cr_h/\eps},
 \qquad |F_{A_{h,\theta}}|\lesssim (1+r_h^{-2}).
\end{equation}
Here the curvature is split along $\R\sigma_h\oplus\sigma_h^\perp$, and covariant derivatives use $A_{h,\theta}$ and the Levi--Civita connection. The Higgs field equals $|\Phi_{h,\theta}|\sigma_h$, with $|\Phi_{h,\theta}|\geq c>0$ here. For ${\bf Q}({\bf V}^\perp)$, the positive terms include
\begin{equation}\label{eq: compact transverse mass terms}
 4\eps^{-3}|\Phi_{h,\theta}|^2|V_\Phi^\perp|^2
 +4\eps^{-1}|\Phi_{h,\theta}|^2|V_A^\perp|^2.
\end{equation}
The curvature and mixed Higgs terms are bounded in absolute value by
\begin{equation}\label{eq: compact transverse curvature bound}
 C(1+r_h^{-2})(\eps^{-1}|V_\Phi^\perp|^2+\eps|V_A^\perp|^2).
\end{equation}
Since $\eps^2r_h^{-2}\leq4\eps^{1/4}$, \eqref{eq: compact transverse curvature bound} is absorbed by \eqref{eq: compact transverse mass terms} for small $\eps$. The remaining perpendicular potential coefficient satisfies $|\lambda_\eps(|\Phi_{h,\theta}|^2-1)|\lesssim \eps/r_h\lesssim \eps^{1/8}$ and is absorbed as well.

Define the scalar and one-form expression
\begin{equation}\label{eq: compact longitudinal quadratic form}
\begin{split}
 {\bf Q}^{\parallel}_\eps(\psi,b)\coloneqq{}&
 \eps^{-1}\int_M\left(|d\psi|^2+
 \frac{\lambda_\eps}{\eps^2}(3|\Phi_{h,\theta}|^2-1)|\psi|^2\right)\,d\vol_g\\
 &+\eps\int_M(|db|^2+|d^*b|^2)\,d\vol_g.
\end{split}
\end{equation}
The errors in using \eqref{eq: compact longitudinal quadratic form} and separating ${\bf V}^\perp$ can be read from
\begin{equation}\label{eq: compact longitudinal derivative identities}
\begin{aligned}
 d_{A_{h,\theta}}(\psi\sigma_h)&=(d\psi)\sigma_h+\psi\,d_{A_{h,\theta}}\sigma_h,\\
 \nabla_{A_{h,\theta}}(b\sigma_h)&=(\nabla b)\sigma_h+b\otimes d_{A_{h,\theta}}\sigma_h,\\
 \langle\nabla_{A_{h,\theta}}{\bf V}^\perp,\sigma_h\rangle
 &=-\langle{\bf V}^\perp,d_{A_{h,\theta}}\sigma_h\rangle.
\end{aligned}
\end{equation}
The last identity is interpreted componentwise. All mixed zeroth-order terms contain either $F_{A_{h,\theta}}^\perp$ or $(d_{A_{h,\theta}}\Phi_{h,\theta})^\perp=|\Phi_{h,\theta}|d_{A_{h,\theta}}\sigma_h$. Equations \eqref{eq: compact outer background bounds} and \eqref{eq: compact longitudinal derivative identities}, with Cauchy--Schwarz, therefore give, after increasing $N$ to include the powers of $\eps$ in ${\bf Q}$,
\begin{equation}\label{eq: compact outer splitting error}
\begin{split}
 \left|{\bf Q}({\bf V})-{\bf Q}({\bf V}^\perp)-{\bf Q}^{\parallel}_\eps(\psi,b)\right|\lesssim \eps^{-N}e^{-c\eps^{-1/8}}\|{\bf V}\|_{\mathcal H}^2
 =o(1)\|{\bf V}\|_{\mathcal H}^2.
\end{split}
\end{equation}
Here $3|\Phi_{h,\theta}|^2-1\geq c>0$. By \eqref{eq: lambdaeps}, $\lambda_\eps/\eps^2\geq1$ for small $\eps$, so \eqref{eq: compact Hardy bound} gives
\begin{equation}\label{eq: compact scalar Hardy bound}
\begin{split}
 &\eps^{-1}\left(\|d\psi\|_2^2+\|(\eps+r_h)^{-1}\psi\|_2^2
 +\frac{\lambda_\eps}{\eps^2}\||\Phi_{h,\theta}|\psi\|_2^2\right)\lesssim \eps^{-1}\left(\|d\psi\|_2^2+\frac{\lambda_\eps}{\eps^2}\|\psi\|_2^2\right).
\end{split}
\end{equation}
For the one-form, $b_1(M)=0$ by \Cref{def: admissible amb}; the Hodge estimate and \eqref{eq: compact Hardy bound} give
\begin{equation}\label{eq: compact one form coercivity}
 \eps\left(\|\nabla b\|_2^2+\|(\eps+r_h)^{-1}b\|_2^2\right)
 \lesssim \eps\|b\|_{H^1}^2
 \lesssim \eps(\|db\|_2^2+\|d^*b\|_2^2).
\end{equation}
Equations \eqref{eq: compact longitudinal derivative identities} also compare the longitudinal terms in \eqref{eq: compact energy norm} with the left-hand sides of \eqref{eq: compact scalar Hardy bound} and \eqref{eq: compact one form coercivity}, with an error bounded by $C\eps^{-N}e^{-c\eps^{-1/8}}\|{\bf V}\|_{\mathcal H}^2$. Together with the absorption following \eqref{eq: compact transverse mass terms}--\eqref{eq: compact transverse curvature bound}, they imply
\begin{equation}\label{eq: compact outer norm comparison}
 \|{\bf V}\|_{\mathcal H}^2
 \leq C\left({\bf Q}({\bf V}^\perp)+{\bf Q}^{\parallel}_\eps(\psi,b)\right)
 +o(1)\|{\bf V}\|_{\mathcal H}^2.
\end{equation}
Combining \eqref{eq: compact outer norm comparison} with \eqref{eq: compact outer splitting error} and absorbing the $o(1)$ terms proves \eqref{eq: global outer coercivity}. The proof is concluded.
\end{proof}

We now establish \eqref{eq: uniform inverse estimate} with an extra supremum remainder. Set 
\begin{equation*}
	\|(\Psi,B)\|_{L^\infty(M)}\coloneqq \|\Psi\|_{L^\infty(M)}+\eps\|B\|_{L^\infty(M)}.
\end{equation*}

\begin{lemma}\label{lem: local estimates supremum remainder}
There exist $C,\eps_1>0$ such that the following holds for $\eps\in(0,\eps_1)$. Let ${\bf F}\in Y_{\eps,h,\theta}$ for some $h,\theta$ satisfying \eqref{eq: assumptions htheta}, and suppose that
\begin{equation*}
 ({\bf\Psi},\mathfrak b)\in\mathcal E_{\eps,h,\theta} \times C^{0,\tau}_\eps(\Gamma;\R^4),\qquad {\bf\Psi}=(\Psi,B),
\end{equation*}
satisfy
\begin{equation*}
 {\bf L}_{\eps,h,\theta}{\bf\Psi} ={\bf F}+\mathfrak B_{\eps,h,\theta}^*\mathfrak b.
\end{equation*}
Then
\begin{equation}\label{eq: local estimates supremum remainder}
\|{\bf\Psi}\|_{X_{\eps,h,\theta}} +\|\mathfrak b\|_{C^{0,\tau}_\eps(\Gamma;\R^4)} +\|\mathfrak B_{\eps,h,\theta}^*\mathfrak b\|_{Y_{\eps,h,\theta}}\lesssim \|{\bf F}\|_{Y_{\eps,h,\theta}} +\|{\bf \Psi}\|_{L^\infty(M)}.
\end{equation}
\end{lemma}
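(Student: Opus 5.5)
The plan is to prove \eqref{eq: local estimates supremum remainder} by combining three ingredients: control of the multiplier $\mathfrak b$ through fibrewise testing against the modes, scaled Schauder estimates on each dyadic region at its natural scale, and a separate treatment of the longitudinal components, where the source norm sums over scales. Throughout, $\|{\bf\Psi}\|_{L^\infty(M)}$ is an allowed remainder that absorbs every zeroth-order term we cannot otherwise control.

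\emph{Step 1: the multiplier.} We first bound $\mathfrak b$. For each $q\in\Gamma$, pair the equation with the lifted modes ${\bf Z}_{i,\eps,h,\theta}$, cut off by the fixed cutoff, over the fibre through $F_h(q,\cdot)$, and pass to the rescaled variables of \Cref{def: rescaling convention}. By \eqref{eq: explicit residual injection}, the coefficient matrix multiplying $\mathfrak b(q)$ is the weighted Gram matrix \eqref{eq: normal moment Gram matrix}, up to $o(1)$ errors, and it is uniformly invertible. On the left-hand side, integrating by parts moves ${\bf L}_{\eps,h,\theta}$ onto the modes. Since ${\bf L}_\lambda{\bf Z}_j=0$, what remains are metric, frame and tangential-derivative errors, which we bound by $o(1)\|{\bf\Psi}\|_{X}+C\|{\bf\Psi}\|_{L^\infty}$. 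The pairing with ${\bf F}$ is bounded by the summed longitudinal part of $\|{\bf F}\|_{Y}$, because the modes decay like $(1+|y|)^{-2}$ and the dyadic weights $r_j^2$ match the fibre integral $\int r^{-2}\,r^2\,dr$ over each annulus. Hölder control in $q$ at scale $\eps$ follows by differencing the same identity between nearby points. This gives
\[
\|\mathfrak b\|_{C^{0,\tau}_\eps}\lesssim\|{\bf F}\|_Y+\|{\bf\Psi}\|_{L^\infty}+o(1)\|{\bf\Psi}\|_X .
\]
The bound on $\|\mathfrak B^*\mathfrak b\|_Y$ then follows directly from \eqref{eq: explicit residual injection}, since the summed weights $\rho_\eps r_j^2\eps^{-2}$ of the modes are $O(1)$.

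\emph{Step 2: scaled Schauder estimates.} Next we treat the full solution norm. On the core $\Omega_{0,h}$ and on each $\Omega_{j,h}$, rescale by $r_j$ so that $A_{h,\theta}$ has bounded coefficients in a suitable gauge; the rescaled Higgs terms are $(r_j/\eps)^2[\Phi,[\Phi,\cdot]]$. Interior Schauder estimates on slightly enlarged regions then bound the $\sum_k r_j^k\|\nabla^k{\bf\Psi}\|$ part of the norm by $\|{\bf\Psi}\|_{L^\infty}$ plus $r_j^2$ times the local source. This local source is at most $\|{\bf F}\|_Y+\|\mathfrak B^*\mathfrak b\|_Y$, since a single term of a sum or a maximum is bounded by the whole.

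For the transverse weight $r_j^2\eps^{-2}\|{\bf\Psi}^\perp\|$, we project the equation onto $\sigma_h^\perp$. Away from the core this component satisfies $\Delta+4\eps^{-2}|\Phi|^2$ up to exponentially small coupling, as in \eqref{eq: compact outer background bounds}. A maximum-principle or Kato comparison at scale $r_j$ then gives
\[
\eps^{-2}\|{\bf\Psi}^\perp\|\lesssim\|{\bf F}^\perp\|+r_j^{-2}\|{\bf\Psi}\|_{L^\infty},
\]
with Hölder seminorms obtained by rescaled Schauder estimates.

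For the term $2\lambda\eps^{-2}r_j^2\|\Psi^\parallel\|$ we use the same argument for the longitudinal Higgs component, whose zeroth-order coefficient is $\lambda\eps^{-2}(3\varphi^2-1)>0$.

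\emph{Step 3: the main obstacle.} The hardest point is the longitudinal connection, and the longitudinal Higgs component when $\lambda$ is small: these satisfy essentially a Poisson equation, with no zeroth-order term. We do not need to invert this equation here, because $\|{\bf\Psi}\|_{L^\infty}$ is an allowed remainder. We therefore use only interior estimates on each annulus at its own scale, which cost $\|{\bf\Psi}\|_{L^\infty}+r_j^2\|{\bf F}^\parallel\|$ per annulus; the norm takes a maximum over $j$, so no logarithm from summing scales appears. The real difficulty is at the core, where the splitting degenerates. There we apply the Schauder estimate to the full operator at scale $\eps$, using smoothness of the rescaled monopole.

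Finally, the $o(1)\|{\bf\Psi}\|_X$ errors from Step 1, and the commutator terms between the projections and the covariant derivatives (which decay exponentially by \eqref{eq: compact outer background bounds}), are absorbed on the left-hand side for $\eps<\eps_1$.
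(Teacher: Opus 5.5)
\textbf{There is a genuine gap in Step~1.} The injection bound, the per-region Schauder estimates and the treatment of the transverse and $\lambda$-weighted longitudinal terms are essentially what the paper does. (In the injection bound you should also record the factor $t_j^\tau$, $t_j=r_j/\eps$, which converts the $\eps$-scale H\"older control of $\mathfrak b$ to scale $r_j$; the sum $\sum_j t_j^{-2}(1+t_j^\tau)$ still converges.)

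The gap shows up in the fact that you never use the hypothesis ${\bf\Psi}\in\mathcal E_{\eps,h,\theta}$. A fibrewise integration by parts only moves the normal operator ${\bf L}_{\lambda_\eps}$ onto the test. The tangential principal part, $\eps^2(\nabla^\top)^*\nabla^\top$ in rescaled variables, stays on $\widehat{\bf\Psi}$ and produces $\eps^2(\nabla^\top)^*\nabla^\top m_i$, where $m_i(q)=\int_{\R^3}\langle{\bf Z}_i,\widehat{\bf\Psi}(q,y)\rangle\,dy$.

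This is not an error term. In the product model, for a tangential eigenmode with $\eps^2\mu\simeq1$, it is generically of the same size as $G\mathfrak b$. Moreover, the constraint \eqref{eq: fibre constraints} fixes moments against ${\bf E}_i$, not ${\bf Z}_i$, so $m_i$ is unknown. Pointwise you could still interpolate $\eps^2|\nabla_q^2\Psi|$. However, the $C^{0,\tau}_\eps$ norm of $\mathfrak b$ is needed in Step~2 through $\|\mathfrak B^*_{\eps,h,\theta}\mathfrak b\|_{Y_{\eps,h,\theta}}$. That norm then contains $\eps^{2+\tau}[\nabla_q^2\Psi]_{\tau,\eps}$ on the core, paired against $|{\bf Z}_i|$. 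This is an order-one multiple of the top-order part of $\|{\bf\Psi}\|_{X_{\eps,h,\theta}}$. It is neither small nor reducible to $\|{\bf\Psi}\|_{L^\infty}$ by interpolation, so the loop between Steps~1 and~2 does not close.

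\textbf{The fix is to test against the constraint functions themselves.} Use ${\bf E}_i={\bf E}_{i,\delta/(4\eps)}$ from \eqref{eq: normal weighted tests} instead of the modes. Since ${\bf E}_i$ is independent of $q$ in the transported frame, differentiating $\int\langle{\bf E}_i,\widehat{\bf\Psi}\rangle\,dy=0$ twice in $q$ annihilates the pairing with $-\eps^2 g_\Gamma^{k\ell}(q)\partial_{q^k}\partial_{q^\ell}\widehat{\bf\Psi}$. What remains are only the metric differences $g_h^{ab}-\delta^{ab}$, $g_h^{ka}$, $g_h^{k\ell}-g_\Gamma^{k\ell}$ and first-order terms, which are genuinely $o(1)\|{\bf\Psi}\|_{X_{\eps,h,\theta}}$.

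The price is that ${\bf L}_{\lambda_\eps}{\bf E}_i\neq0$. But it is explicit, with $|{\bf L}_{\lambda_\eps}{\bf E}_i|\lesssim(1+|y|)^{-6}$, so it contributes only the lower-order quantity $\|\Psi\|_{\tau,\eps,M}+\eps\|B\|_{\tau,\eps,M}$. The interpolation $\|\Psi\|_{\tau,\eps,M}+\eps\|B\|_{\tau,\eps,M}\le a\|{\bf\Psi}\|_{X_{\eps,h,\theta}}+C_a\|{\bf\Psi}\|_{L^\infty(M)}$ then absorbs it; you need this step written out in any case, since your Step~1 implicitly requires H\"older norms of $\Psi$ at scale $\eps$, not just $L^\infty$. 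The coefficient matrix becomes $\int\langle{\bf E}_i,{\bf E}_j\rangle J_h^{-1}\,dy$, which is still uniformly positive. This is the same distinction between weighted tests and kernel modes that the paper stresses in \Cref{sec: reduced equation proof}. With this change and the interpolation made explicit, your Steps~2--3 close the argument as in the paper.
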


\begin{proof}
We prove the following three estimates:
\begin{align}
	\|\mathfrak B_{\eps,h,\theta}^*\mathfrak b\|_{Y_{\eps,h,\theta}}&\lesssim \|\mathfrak b\|_{C^{0,\tau}_\eps},\label{eq: residual injection estimate proof}\\
	\|\mathfrak b\|_{C^{0,\tau}_\eps} &\lesssim \|{\bf F}\|_{Y_{\eps,h,\theta}}+\|\Psi\|_{\tau,\eps,M}+\eps\|B\|_{\tau,\eps,M}+o(1)\|{\bf\Psi}\|_{X_{\eps,h,\theta}},\label{eq: coefficient local estimate proof}\\
	\|{\bf\Psi}\|_{X_{\eps,h,\theta}} &\lesssim \|{\bf F}\|_{Y_{\eps,h,\theta}} +\|\mathfrak b\|_{C^{0,\tau}_\eps} +\|{\bf\Psi}\|_{L^\infty(M)}.\label{eq: local Schauder estimate proof}
\end{align}
To see that they imply the lemma, interpolate, for every $a>0$,
\begin{equation}\label{eq: compact interpolation}
 \|\Psi\|_{\tau,\eps,M}+\eps\|B\|_{\tau,\eps,M}
 \leq a\|{\bf\Psi}\|_{X_{\eps,h,\theta}}
 +C_a\|{\bf\Psi}\|_{L^\infty(M)}.
\end{equation}
Insert \eqref{eq: compact interpolation} into \eqref{eq: coefficient local estimate proof}, then into
\eqref{eq: local Schauder estimate proof}. Choosing $a$ and $\eps$
small absorbs the solution norm; \eqref{eq: residual injection estimate proof}
then gives \eqref{eq: local estimates supremum remainder}.

\subsubsection*{Proof of \eqref{eq: residual injection estimate proof}}
Use the regions $\Omega_{j,h}$ and scales $r_j$ of \Cref{sec: norms}, and put
$t_j=r_j/\eps$. On $\Omega_{j,h}$, $1\leq j\leq J_\eps$, the weighted modes
in \eqref{eq: explicit residual injection} decay as $O(t_j^{-4})$. Multiplication
by the weights $r_j^2$ and $\eps r_j^2$ in \eqref{eq: source norm} gives
\begin{equation*}
r_j^2\left(\|(\mathfrak B_{\eps,h,\theta}^*\mathfrak b)_\Phi^\parallel\|_{\tau,r_j,\Omega_{j,h}} +\eps\|(\mathfrak B_{\eps,h,\theta}^*\mathfrak b)_A^\parallel\|_{\tau,r_j,\Omega_{j,h}}\right)\lesssim t_j^{-2}(1+t_j^\tau)\|\mathfrak b\|_{C^{0,\tau}_\eps}.
\end{equation*}
The factor $1+t_j^\tau$ accounts for estimating the coefficient $\mathfrak b$, controlled at H\"older scale $\eps$, in the source norm at scale $r_j=\eps t_j$. By \eqref{eq: lifted monopole modes} and \eqref{eq: approximation profile bounds}, the transverse components of ${\bf Z}_{i,\eps,h,\theta}$, $i=1,\ldots,4$, decay exponentially in $r_h/\eps$ and the Jacobian $J_h^{-1}$ in \eqref{eq: explicit residual injection} has bounded (scaled) norm. The core term is bounded by $C\|\mathfrak b\|_{C^{0,\tau}_\eps}$. On $\Omega_{J_\eps+1,h}$, the same calculation uses $r_{J_\eps+1}=\delta$ and $t_{J_\eps+1}=\delta/\eps$; derivatives of the fixed cutoff are bounded at scale $\delta$. Since $\tau<1$, summing $t_j^{-2}(1+t_j^\tau)$ proves \eqref{eq: residual injection estimate proof}.

\subsubsection*{Proof of \eqref{eq: coefficient local estimate proof}}
Use ${\bf E}_i={\bf E}_{i,\delta/(4\eps)}$ from \eqref{eq: normal weighted tests}, evaluated at $\lambda=\lambda_\eps$.
Apply \Cref{def: rescaling convention} to ${\bf\Psi}$, ${\bf F}$ and ${\bf L}_{\eps,h,\theta}{\bf\Psi}$. The constraint \eqref{eq: fibre constraints} becomes
\begin{equation}\label{eq: compact rescaled constraint}
 \int_{\R^3}\langle {\bf E}_i,\widehat{\bf\Psi}\rangle\,dy=0,\qquad q\in\Gamma.
\end{equation}
Testing \eqref{eq: constrained linear equation} in these rescaled coordinates against ${\bf E}_i$ gives
\begin{equation}\label{eq: actual multiplier identity}
\begin{split}
 \sum_{j=1}^4G_{\eps,h,ij}(q)\mathfrak b_j(q)
 &=\int_{\R^3}\langle{\bf L}_{\lambda_\eps}{\bf E}_i,\widehat{\bf\Psi}\rangle\,dy
 -\eps^2\int_{\R^3}\langle {\bf E}_i,\widehat{\bf F}\rangle\,dy
 +\mathcal E_i(q),\\
 G_{\eps,h,ij}(q)&\coloneqq\int_{\R^3}
 \frac{\langle {\bf E}_i(y),{\bf E}_j(y)\rangle}{J_h(q,\eps y)}\,dy,
\end{split}
\end{equation}
where the tested operator error is
\begin{equation*}
 \mathcal E_i(q)\coloneqq\eps^2\int_{\R^3}
 \langle {\bf E}_i,\widehat{{\bf L}_{\eps,h,\theta}{\bf\Psi}}\rangle\,dy
 -\int_{\R^3}\langle{\bf L}_{\lambda_\eps}{\bf E}_i,\widehat{\bf\Psi}\rangle\,dy.
\end{equation*}
The Jacobian factor $J_h^{-1}$ comes directly from \eqref{eq: explicit residual injection}. Since $J_h$ is uniformly bounded above and below and the four weighted modes are independent, their Gram matrix satisfies
\begin{equation}\label{eq: compact multiplier Gram positivity}
 c^TG_{\eps,h}(q)c=\int_{\R^3}
 \frac{|\sum_{i=1}^4c_i{\bf E}_i|^2}{J_h(q,\eps y)}\,dy\geq c_*|c|^2,
 \qquad c\in\R^4,
\end{equation}
with $c_*>0$ uniform in $q,\eps,h,\theta$. The scaled H\"older norms of this matrix and its inverse are uniformly bounded as well.

We estimate the terms in \eqref{eq: actual multiplier identity}. By \eqref{eq: linearised kernel}, ${\bf L}_{\lambda_\eps}{\bf Z}_i=0$; differentiating the weight in \eqref{eq: normal weighted tests} gives
\begin{equation}\label{eq: compact weighted test operator bound}
 |{\bf L}_{\lambda_\eps}{\bf E}_i(y)|\leq C(1+|y|)^{-6},
\end{equation}
including on the cutoff annulus. By \eqref{eq: compact weighted test operator bound}, the first integral is bounded in $C^{0,\tau}_\eps(\Gamma)$ by $C(\|\Psi\|_{\tau,\eps,M}+\eps\|B\|_{\tau,\eps,M})$. The decay $|{\bf E}_i(y)|\leq C(1+|y|)^{-4}$ and \eqref{eq: source norm} bound the second integral, including its factor $\eps^2$, by $C\|{\bf F}\|_{Y_{\eps,h,\theta}}$.

To estimate $\mathcal E$, use the formal self-adjointness of ${\bf L}_{\lambda_\eps}$ to write
\begin{equation*}
 \mathcal E_i(q)=\int_{\R^3}\left\langle {\bf E}_i,
 \eps^2\widehat{{\bf L}_{\eps,h,\theta}{\bf\Psi}}
 -{\bf L}_{\lambda_\eps}\widehat{\bf\Psi}\right\rangle\,dy.
\end{equation*}
In physical Fermi coordinates $(q,z)$, let $g_h^{ab},g_h^{ka},g_h^{k\ell}$ be the inverse metric coefficients of $F_h^*g$, evaluated below at $(q,\eps y)$. Summing normal indices $a,b=1,2,3$ and tangential indices $k,\ell=1,\ldots,n-3$, the operator difference is
\begin{equation}\label{eq: multiplier operator difference}
\begin{split}
 \eps^2\widehat{{\bf L}_{\eps,h,\theta}{\bf\Psi}}
 -{\bf L}_{\lambda_\eps}\widehat{\bf\Psi}
 ={}&-(g_h^{ab}-\delta^{ab})\partial_{y^a}\partial_{y^b}\widehat{\bf\Psi}
 -2\eps g_h^{ka}\partial_{q^k}\partial_{y^a}\widehat{\bf\Psi}\\
 &-\eps^2g_h^{k\ell}\partial_{q^k}\partial_{q^\ell}\widehat{\bf\Psi}
 +\mathcal R^{(1)}_\eps{\bf\Psi}.
\end{split}
\end{equation}
Here $\mathcal R^{(1)}_\eps$ is the part of this operator difference of order at most one: it contains the connection, frame, curvature and Higgs coefficient differences, including couplings to tangential one-form components.

In the tangential second-order term, subtract the inverse metric $g_\Gamma^{k\ell}(q)$ of $\Gamma$. Its pairing vanishes by differentiating \eqref{eq: compact rescaled constraint} twice in $q$, since ${\bf E}_i$ is independent of $q$. It remains to estimate the three metric differences $g_h^{ab}-\delta^{ab}$, $g_h^{ka}$ and $g_h^{k\ell}-g_\Gamma^{k\ell}$, and $\mathcal R^{(1)}_\eps$. Integrate normal derivatives onto the metric coefficients and ${\bf E}_i$; compact support removes boundary terms. The Fermi metric estimates in \Cref{sec: approximation errors}, together with \eqref{eq: normal weighted tests} and \eqref{eq: compact weighted test operator bound}, control these integrals by \eqref{eq: sol norm}. The scaled H\"older estimates introduce a factor $t^\tau$; by \eqref{eq: approximation regular matching} and \eqref{eq: approximation profile bounds}, the bounded connection corrections gain $\eps$ near the core, and their transverse components decay exponentially. Together these give
\begin{equation}\label{eq: compact multiplier error bound}
\begin{split}
 \|\mathcal E\|_{C^{0,\tau}_\eps}
 &\lesssim \left(\eps+\eps\int_1^{\delta/\eps}t^{-3+\tau}\,dt
 +\eps^{-\tau}\|h\|_{C^{2,\tau}}(1+|\log\eps|)\right)
 \|{\bf\Psi}\|_{X_{\eps,h,\theta}}\\
 &=o(1)\|{\bf\Psi}\|_{X_{\eps,h,\theta}}.
\end{split}
\end{equation}
Here \eqref{eq: assumptions htheta} gives
$\eps^{-\tau}\|h\|_{C^{2,\tau}}(1+|\log\eps|)\lesssim \eps^{1-\tau}(1+|\log\eps|)^3=o(1)$.
Using \eqref{eq: compact multiplier Gram positivity}, \eqref{eq: compact weighted test operator bound} and \eqref{eq: compact multiplier error bound} to invert $G_{\eps,h}$ in \eqref{eq: actual multiplier identity} proves \eqref{eq: coefficient local estimate proof}.

\subsubsection*{Proof of \eqref{eq: local Schauder estimate proof}}
Apply Schauder estimates at scale $r_0=\eps$ on $\Omega_{0,h}$ and scale
$r_j$ on each $\Omega_{j,h}$, using slightly larger overlapping regions.
After rescaling an annulus to unit size, the nonnegative zeroth-order
coefficients in the longitudinal connection, transverse, and longitudinal
Higgs equations are, respectively,
\begin{equation*}
 0,\qquad \mu^2\asymp r_j^2/\eps^2,\qquad \mu^2\asymp2\lambda_\eps r_j^2/\eps^2.
\end{equation*}
For $(-\sum_i\partial_i^2+\mu^2)v=f$ on a unit ball, the required estimate is
\begin{equation}\label{eq: parameter Schauder estimate}
 \|v\|_{C^{2,\tau}(B_{1/2})}+\mu^2\|v\|_{C^{0,\tau}(B_{1/2})} \lesssim \|f\|_{C^{0,\tau}(B_1)}+\|v\|_{L^\infty(B_1)},
\end{equation}
Applying \eqref{eq: parameter Schauder estimate} with frozen coefficients on sufficiently small rescaled balls and absorbing their variations by the standard Schauder perturbation argument yields the local estimate. Multiplying the source terms by $r_j^2$, with an additional factor $\eps$ for the connection component, recovers the weights in \eqref{eq: sol norm}. Taking maxima over the regions $\Omega_{j,h}$ and using \eqref{eq: source norm} and \eqref{eq: residual injection estimate proof} proves \eqref{eq: local Schauder estimate proof}.
\end{proof}

\begin{proposition}\label[proposition]{prop: compact supremum estimate}
Under the hypotheses of \Cref{prop: global constrained coercivity}, there exist $C,\eps_0>0$ such that the following holds for $0<\eps<\eps_0$. Let ${\bf F}\in Y_{\eps,h,\theta}$ and suppose that
\begin{equation*}
 ({\bf\Psi},\mathfrak b)\in\mathcal E_{\eps,h,\theta} \times C^{0,\tau}_\eps(\Gamma;\R^4),\qquad {\bf\Psi}=(\Psi,B),
\end{equation*}
satisfy
\begin{equation*}
 {\bf L}_{\eps,h,\theta}{\bf\Psi} ={\bf F}+\mathfrak B_{\eps,h,\theta}^*\mathfrak b.
\end{equation*}
Then
\begin{equation}\label{eq: compact supremum estimate}
 \|\Psi\|_{L^\infty(M)}+\eps\|B\|_{L^\infty(M)} \lesssim \|{\bf F}\|_{Y_{\eps,h,\theta}}.
\end{equation}
The constants are independent of $\eps,h,\theta$ in the stated ranges.
\end{proposition}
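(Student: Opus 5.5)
We argue by contradiction and blow-up, the standard route to a supremum bound in gluing constructions. Suppose there are sequences $\eps_k\to0$, parameters $(h_k,\theta_k)$ satisfying \eqref{eq: assumptions htheta}, and solutions $({\bf\Psi}_k,\mathfrak b_k)\in\mathcal E_{\eps_k,h_k,\theta_k}\times C^{0,\tau}_{\eps_k}$ with $\|{\bf\Psi}_k\|_{L^\infty(M)}=1$ and $\|{\bf F}_k\|_{Y}\to0$. By \Cref{lem: local estimates supremum remainder}, $\|{\bf\Psi}_k\|_{X}+\|\mathfrak b_k\|_{C^{0,\tau}_\eps}+\|\mathfrak B^*\mathfrak b_k\|_Y$ stays bounded. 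Choose points $x_k$ with $|\Psi_k(x_k)|+\eps_k|B_k(x_k)|\geq\frac12$, and set $r_k\coloneqq r_{h_k}(x_k)$. We split into cases by the ratio $r_k/\eps_k$.

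\emph{Case 1: $r_k/\eps_k$ bounded.} Rescale by $\mathscr F_{\eps_k,h_k}$ around the nearest point $q_k\in\Gamma$, using \Cref{def: rescaling convention}. Tangential coordinates are rescaled by $\eps_k$ as well. The $X$-bound gives uniform $C^{2,\tau}$ control on compacts, so a subsequence converges locally to a bounded pair on $\R^{n-3}\times\R^3$ with tangential component $b$. In the limit the pair solves $(-\Delta_x+{\bf L}_{\bar\lambda}){\bf\Psi}=\sum g_j(x){\bf E}_j$, where the $g_j$ are limits of the $\mathfrak b_k$ (bounded in $C^{0,\tau}_\eps$), and $b$ solves \eqref{eq: product Liouville tangential equation}. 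The orthogonality conditions pass to the limit via the tail bound \eqref{eq: normal moment tail}.

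Normal decay comes from the solution norm. For transverse parts, the weight $r_j^2/\eps^2$ gives $|\Psi^\perp|\lesssim(1+|y|)^{-2}$. The longitudinal part needs $(1+|y|)^{-\nu}$ with $\nu>\frac12$, and this is the main obstacle: the $X$-norm only gives boundedness of $\Psi^\parallel$ uniformly across scales. I would obtain the decay by representing the longitudinal component through the Poisson-type equation with the Green kernel. The summed longitudinal source norm in \eqref{eq: source norm} yields the bound $|\Psi^\parallel(x)|\lesssim\sum_j r_j^2\|F_j\|\cdot\min(1,r_j/r)$ plus a term coming from the $L^\infty$ value far away. A cleaner approach is to test with the fundamental solution and show that the limit longitudinal field is harmonic in the normal variables outside a ball and bounded. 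Then the argument of \Cref{lem: normal weighted coercivity} (the expansion \eqref{eq: normal longitudinal tails}) forces $O(r^{-1})$ decay after subtracting a constant. That constant must vanish because $\|{\bf F}_k\|_Y\to0$ and the outer analysis below kills constant longitudinal modes. \Cref{lem: product Liouville} then gives ${\bf\Psi}=0$, contradicting the normalisation at the limit of $x_k$.

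\emph{Case 2: $r_k/\eps_k\to\infty$.} Rescale at scale $r_k$ (or at a fixed scale if $r_k$ is bounded below). The transverse parts tend to zero because of the $\eps^{-2}$ weight in \eqref{eq: sol norm}. The longitudinal Higgs part also tends to zero, since $\lambda_\eps/\eps^2\to\infty$, via the last term of \eqref{eq: sol norm}. It remains to treat the longitudinal connection $b_k\sigma$ with $\eps_k|b_k|\geq\frac12$. In the limit, $\eps b$ solves $\Delta_1(\eps b)=0$ on the relevant limit space: either on $\R^n\setminus(\R^{n-3}\times\{0\})$ with bounded growth, where it extends across codimension three and is therefore constant by Liouville, or on the compact manifold $M$ if $r_k$ stays bounded below. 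In the compact manifold case, coclosedness from the gauge term together with $b_1(M)=0$ (\Cref{def: admissible amb}) forces the limit to be zero. The constant on $\R^n$ must also be excluded. I would do this with a global integral identity: test the equation against a cutoff of the constant one-form and use the summed longitudinal source norm, which bounds accumulated flux. Alternatively, use the global coercivity \Cref{prop: global constrained coercivity} to get $\|{\bf\Psi}_k\|_{\mathcal H}^2\lesssim\|{\bf F}_k\|_Y\|{\bf\Psi}_k\|_{L^\infty}$-type bounds, which rule out nonzero constants carrying mass at infinity. I expect this exclusion of constant longitudinal connection modes across scales to be the delicate point, since it is where the Poisson character and the logarithmic accumulation enter.
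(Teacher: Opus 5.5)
There is a genuine gap, and it is exactly the point you flag yourself. You never prove a uniform, quantitative bound on the longitudinal components $\langle\Psi,\sigma_h\rangle$, $\eps\langle B,\sigma_h\rangle$ away from the core, and neither of your cases closes without one.

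In Case 1, the decay hypothesis of \Cref{lem: product Liouville} cannot be extracted from the limit problem, because that problem has bounded, non-decaying solutions. For instance, $b=q_\lambda\sigma$ solves \eqref{eq: product Liouville tangential equation}, since $d_{A_\lambda}^*d_{A_\lambda}(q_\lambda\sigma)=0$ and $[\Phi_\lambda,\sigma]=0$, and it tends to $\sigma$. At $\lambda=0$, the fields ${\bf U}_{a,\vartheta}$ of \Cref{lem: noncompact product constructions}, after adjusting moments by kernel modes, do the same for the normal system. So even granting the expansion ``constant plus $O(|y|^{-1})$'', the constant is a genuine limit solution and can only be removed by pre-limit information on $M$. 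Deferring it to ``the outer analysis below'' is circular: Case 2 concerns different points and is itself left open.

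Case 2 also contains an error. The last term of \eqref{eq: sol norm} gives only $|\Psi^\parallel|\lesssim\eps^2/(\lambda_\eps r_h^2)$, which is small for $r_h\gg\eps/\sqrt{\lambda_\eps}$, not for $r_h\gg\eps$. Since $\lambda_\eps$ may tend to zero (for $\delta_\eps=\eps^{1+2\tau'}$ one has $\eps/\sqrt{\lambda_\eps}\sim\eps^{\tau'}$), the longitudinal Higgs part also blows up to a constant at intermediate scales.

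Your coercivity alternative is too weak. \Cref{prop: global constrained coercivity} together with $\mathfrak B{\bf\Psi}=0$ gives $\|{\bf\Psi}\|_{\mathcal H}^2\lesssim\langle{\bf F},{\bf\Psi}\rangle_{L^2_\eps(M)}\lesssim\eps^{-1}\|{\bf F}\|_{Y}\|{\bf\Psi}\|_{L^\infty}$. A longitudinal constant of size one on a ball of radius $r_k$ costs $\gtrsim\eps^{-1}r_k^{n-2}$ in $\|{\bf\Psi}\|_{\mathcal H}^2$. This yields only $r_k^{n-2}\lesssim\|{\bf F}_k\|_Y$, which rules out $r_k$ bounded below but not $\eps_k\ll r_k\to0$.

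The missing ingredient is a uniform estimate on the pre-limit solutions:
\[|\Psi^\parallel(x)|+\eps|B^\parallel(x)|\lesssim\|{\bf F}\|_{Y}+A^{-2}\|\mathfrak b\|_\infty+\Bigl(\frac{A\eps}{r_h(x)}+A\eps+e^{-cA}\Bigr)\|{\bf\Psi}\|_{X},\qquad r_h(x)>4A\eps.\]
To prove it, proceed as follows.
\begin{itemize}
\item Cut off the core with $\chi_A=1-\eta(r_h/(A\eps))$.
\item Project the equation onto $\sigma_h$. This gives $(\Delta_0+V_\eps)$ and $\Delta_1$ equations for the longitudinal coefficients, with exponentially small couplings.
\item Represent $\chi_A$ times these coefficients through the Green kernels of $M$. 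These kernels exist and are bounded by $C(1+d_M^{2-n})$ precisely because $b_1(M)=0$ and $V_\eps\geq1$. This is where compactness of $M$ enters, and it is what excludes your constants.
\end{itemize}
The terms are then controlled as follows.
\begin{itemize}
\item Integrating the kernel over a shell gives $\lesssim r_j^2$, so the \emph{summed} longitudinal source norm produces $\|{\bf F}\|_Y$ with no logarithmic loss.
\item The multiplier has size $\eps^2r_h^{-4}\|\mathfrak b\|_\infty$ and contributes $A^{-2}\|\mathfrak b\|_\infty$.
\item The commutator $[\Delta_k,\chi_A]$ has size $(A\eps)^{-2}\|{\bf\Psi}\|_X$ on normal volume $(A\eps)^3$, and contributes $A\eps/r_h$.
\end{itemize}
Combined with the transverse bound, this single estimate forces the supremum into the region $r_h\lesssim\eps$, replacing your Case 2 altogether. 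Choosing $A=|y|^{1/3}$ in the blow-up limit then gives longitudinal decay $(1+|y|)^{-2/3}$. That suffices for \Cref{lem: product Liouville}, since $2/3>1/2$.
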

\begin{proof}
We first prove two estimates outside the core. For $r_h\geq 2M_0\eps$, the transverse estimate is
\begin{equation}\label{eq: compact transverse tail proof}
 |\Psi^\perp(x)|+\eps|B^\perp(x)|\lesssim (1+r_h(x)/\eps)^{-2}\|{\bf\Psi}\|_{X_{\eps,h,\theta}}.
\end{equation}
For every sufficiently large fixed $A$ and $r_h(x)>4A\eps$, the longitudinal estimate is
\begin{equation}\label{eq: compact neutral tail proof}
|\Psi^\parallel(x)|+\eps|B^\parallel(x)| \lesssim \|{\bf F}\|_{Y_{\eps,h,\theta}}+A^{-2}\|\mathfrak b\|_\infty+\left(\frac{A\eps}{r_h(x)}+A\eps+e^{-cA}\right)\|{\bf\Psi}\|_{X_{\eps,h,\theta}}.
\end{equation}
Together with \eqref{eq: local estimates supremum remainder}, \eqref{eq: compact transverse tail proof} and \eqref{eq: compact neutral tail proof}
force any normalised counterexample to have a nonzero limit near the core.
We then use \Cref{lem: product Liouville} to exclude that limit.

\smallskip

Equation \eqref{eq: compact transverse tail proof} follows directly from the
second term in \eqref{eq: sol norm}.
Write $\sigma_h=\Phi_{h,\theta}/|\Phi_{h,\theta}|$ outside the core. Projection
onto $\R\sigma_h$ in \eqref{eq: model linearised operator} gives scalar and Hodge equations; the derivatives of
$\sigma_h$ contribute to the remainders $e_0,e_1$ below. With
${\bf F}=(F_\Phi,F_A)$, these equations are
\begin{equation}\label{eq: compact projected exterior equations}
\begin{split}
 (\Delta_0+V_\eps)\langle\Psi,\sigma_h\rangle &=\langle F_\Phi+(\mathfrak B_{\eps,h,\theta}^*\mathfrak b)_\Phi,\sigma_h\rangle+e_0,\\
 \Delta_1\langle B,\sigma_h\rangle &=\langle F_A+(\mathfrak B_{\eps,h,\theta}^*\mathfrak b)_A,\sigma_h\rangle+e_1,
\end{split}
\end{equation}
where $V_\eps=\lambda_\eps\eps^{-2}(3|\Phi_{h,\theta}|^2-1)$ satisfies
\begin{equation}\label{eq: compact projected exterior errors}
 V_\eps\geq c\lambda_\eps/\eps^2,\qquad |e_0|+\eps|e_1|\lesssim \eps^{-2}e^{-cr_h/\eps}\|{\bf\Psi}\|_{X_{\eps,h,\theta}}.
\end{equation}
Every coupling remaining after neutral projection contains a transverse background coefficient. Equation \eqref{eq: compact outer background bounds} absorbs the powers of $\eps^{-1/2}$ produced by the gluing cutoffs \eqref{eq: s_eps}.

Since $b_1(M)=0$ by \Cref{def: admissible amb}, the Hodge Green kernel is bounded by $C(1+d_M(x,z)^{2-n})$. Extend $V_\eps$ positively through the core. Since $V_\eps\geq1$ for small $\eps$, comparison with $\Delta_0+1$ gives the same uniform bound for the scalar Green kernel. Integration in the tangential variables yields
\begin{equation}\label{eq: compact Green shell bound}
 \sup_x\int_{\Omega_{j,h}}\bigl(1+d_M(x,z)^{2-n}\bigr)\,d\vol_g(z)\lesssim r_j^2.
\end{equation}
Summing \eqref{eq: compact Green shell bound} with \eqref{eq: source norm} gives a bound of $C\|{\bf F}\|_{Y_{\eps,h,\theta}}$. For the multiplier, \eqref{eq: explicit residual injection}, \eqref{eq: monopole translations}, \eqref{eq: monopole phase mode} and \eqref{eq: approximation profile bounds} give
\begin{equation*}
 |(\mathfrak B_{\eps,h,\theta}^*\mathfrak b)_\Phi|+\eps|(\mathfrak B_{\eps,h,\theta}^*\mathfrak b)_A| \lesssim \eps^2r_h^{-4}\|\mathfrak b\|_\infty\qquad(r_h\geq A\eps).
\end{equation*}
Summing its Green potential over shells with $r_j\geq A\eps$ gives
\begin{equation}\label{eq: compact multiplier Green bound}
 \sum_{r_j\geq A\eps}r_j^2\eps^2r_j^{-4}\|\mathfrak b\|_\infty\lesssim A^{-2}\|\mathfrak b\|_\infty.
\end{equation}

Put $\chi_A=1-\eta(r_h/(A\eps))$. For a scalar or one-form $v$, the
commutator is
\begin{equation}\label{eq: compact cutoff commutator}
 [\Delta_k,\chi_A]v=(\Delta_0\chi_A)v-2\nabla_{\nabla\chi_A}v,
 \qquad k=0,1.
\end{equation}
The expression in \eqref{eq: compact cutoff commutator} is supported where $A\eps\leq r_h\leq2A\eps$ and has size at most $C(A\eps)^{-2}\|{\bf\Psi}\|_{X_{\eps,h,\theta}}$, with the connection component multiplied by $\eps$. The supporting annulus has normal volume $O(A^3\eps^3)$. Combining \eqref{eq: compact projected exterior equations}, \eqref{eq: compact projected exterior errors}, \eqref{eq: compact Green shell bound}, \eqref{eq: compact multiplier Green bound} and \eqref{eq: compact cutoff commutator} therefore proves \eqref{eq: compact neutral tail proof}.

\smallskip

Suppose that no uniform estimate holds for all sufficiently small $\eps$. Then there are $\eps_k\downarrow0$, parameters $(h_k,\theta_k)$ satisfying \eqref{eq: assumptions htheta}, and solutions $({\bf\Psi}_k,\mathfrak b_k)$ with sources ${\bf F}_k$ satisfying \eqref{eq: constrained linear equation}. By homogeneity, we may normalise them so that, writing ${\bf\Psi}_k=(\Psi_k,B_k)$,
\begin{equation*}
 \|{\bf\Psi}_k\|_{X_{\eps_k,h_k,\theta_k}}=1,
 \qquad \|{\bf F}_k\|_{Y_{\eps_k,h_k,\theta_k}}\to 0.
\end{equation*}
Equation \eqref{eq: local estimates supremum remainder} bounds $\mathfrak b_k$ uniformly in $C^{0,\tau}_{\eps_k}$ and gives
\begin{equation*}
 \liminf_{k\to\infty}
 \bigl(\|\Psi_k\|_\infty+\eps_k\|B_k\|_\infty\bigr)>0.
\end{equation*}
Choose first $A$ large and then $r_{h_k}/\eps_k$ large in \eqref{eq: compact neutral tail proof}. Together with \eqref{eq: compact transverse tail proof}, this gives points $x_k\in M$ and constants $c,C>0$, independent of $k$, such that
\begin{equation}\label{eq: compact nonzero core points}
 r_{h_k}(x_k)\lesssim \eps_k,\qquad
 |\Psi_k(x_k)|+\eps_k|B_k(x_k)|\geq c.
\end{equation}
Let $q_k=\pi(\mathscr T_{h_k}^{-1}(x_k))\in\Gamma$. Choose orthonormal coordinates on $T_{q_k}\Gamma$ and define
\begin{equation*}
\begin{split}
 p_k(x)&\coloneqq\exp^\Gamma_{q_k}(\eps_k x),\\
 \mathscr P_k(x,y)&\coloneqq\mathscr F_{\eps_k,h_k}(p_k(x),y),\\
 U_k(x,y)&\coloneqq U_{\eps_k}(p_k(x),y).
\end{split}
\end{equation*}
Here $U_{\eps_k}$ is defined by \Cref{def: rescaling convention} with $\theta=\theta_k$. This blow-up also dilates the tangential variable and uses the full coordinate pullback of the one-form:
\begin{equation}\label{eq: compact blowup rescaling}
 \widehat\Psi_k=\Ad_{U_k}\mathscr P_k^*\Psi_k,\qquad
 \widehat B_k=\Ad_{U_k}\mathscr P_k^*B_k,\qquad
 \eps_k^{-2}\mathscr P_k^*g.
\end{equation}
Thus both normal and tangential one-form coefficients acquire the factor $\eps_k$. By \eqref{eq: lambdaeps}, after passing to a subsequence,
\begin{equation*}
 \lambda_{\eps_k}=\frac{2\eps_k^3}{\delta_{\eps_k}}
 \to \lambda_\infty\in[0,\lambda_*].
\end{equation*}
The limit need not be zero, since \eqref{eq: deltaeps} permits $\delta_\eps=C_*\eps^3$.
By \eqref{eq: compact nonzero core points} and \eqref{eq: compact blowup rescaling}, local compactness gives a nonzero bounded product limit $(\Psi_\infty,B_\infty)$ for ${\bf L}_{\lambda_\infty}$, with bounded residual coefficients obtained from $\mathfrak b_k\circ p_k$. The profiles and modes converge locally at this limiting coupling. The fibre constraints pass to the limit because the omitted weighted moments of bounded fields are $O(T^{-1})$ on $|y|>T$. In the limit of \eqref{eq: compact neutral tail proof}, choose $A=|y|^{1/3}$ for large $|y|$ to obtain
\begin{equation}\label{eq: compact limit neutral decay}
 |\Psi_\infty^\parallel(x,y)|+|B_\infty^\parallel(x,y)|
 \lesssim (1+|y|)^{-2/3}.
\end{equation}
The transverse components satisfy \eqref{eq: compact transverse tail proof}, and hence decay as $O((1+|y|)^{-2})$ in the limit. Together with \eqref{eq: compact limit neutral decay}, \Cref{lem: product Liouville}, also applied to the tangential one-form components, forces $(\Psi_\infty,B_\infty)$ to vanish, a contradiction.
\end{proof}

\begin{corollary}\label[corollary]{cor: local estimates supremum remainder}
Under the hypotheses of \Cref{prop: compact supremum estimate}, every such solution satisfies
\begin{equation}\label{eq: compact full a priori estimate}
 \|{\bf\Psi}\|_{X_{\eps,h,\theta}} +\|\mathfrak b\|_{C^{0,\tau}_\eps(\Gamma;\R^4)} +\|\mathfrak B_{\eps,h,\theta}^*\mathfrak b\|_{Y_{\eps,h,\theta}} \lesssim \|{\bf F}\|_{Y_{\eps,h,\theta}}.
\end{equation}
\end{corollary}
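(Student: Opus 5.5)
The corollary follows by combining \Cref{lem: local estimates supremum remainder} with \Cref{prop: compact supremum estimate}; no new analysis is needed. Fix $\eps<\min\{\eps_0,\eps_1\}$, parameters $(h,\theta)$ satisfying \eqref{eq: assumptions htheta}, and a solution $({\bf\Psi},\mathfrak b)\in\mathcal E_{\eps,h,\theta}\times C^{0,\tau}_\eps(\Gamma;\R^4)$ of \eqref{eq: constrained linear equation}.

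First apply \eqref{eq: local estimates supremum remainder}. It bounds the left-hand side of \eqref{eq: compact full a priori estimate} by a constant times
\begin{equation*}
\|{\bf F}\|_{Y_{\eps,h,\theta}}+\|\Psi\|_{L^\infty(M)}+\eps\|B\|_{L^\infty(M)}.
\end{equation*}
The only term left to control is the supremum remainder. Next apply \eqref{eq: compact supremum estimate} to the same pair. Its hypotheses are exactly those of the corollary, and it gives
\begin{equation*}
\|\Psi\|_{L^\infty(M)}+\eps\|B\|_{L^\infty(M)}\lesssim\|{\bf F}\|_{Y_{\eps,h,\theta}}.
\end{equation*}
Substituting this into the previous bound yields \eqref{eq: compact full a priori estimate}.

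The constants are independent of $\eps,h,\theta$ because both cited results assert uniformity over the stated parameter ranges.

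One point needs checking. The contradiction argument in \Cref{prop: compact supremum estimate} normalises by $\|{\bf\Psi}\|_{X}=1$, whereas the corollary is stated for arbitrary solutions. The supremum estimate is a genuine a priori bound for all solutions, not only normalised ones. This is because the equation is linear: any solution with $\|{\bf\Psi}\|_X\neq0$ can be rescaled to have $\|{\bf\Psi}\|_X=1$, and the case ${\bf\Psi}=0$ is trivial. So the two estimates may legitimately be chained for every solution. Beyond this homogeneity remark I see no obstacle.
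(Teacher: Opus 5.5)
Your proposal is correct and is exactly the paper's argument: the corollary is obtained by inserting the supremum bound \eqref{eq: compact supremum estimate} into the local estimate \eqref{eq: local estimates supremum remainder}. Your homogeneity remark is harmless but unnecessary, since \Cref{prop: compact supremum estimate} is already stated for every solution and the normalisation is internal to its contradiction proof.
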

\begin{proof}
Combine \eqref{eq: local estimates supremum remainder} with \eqref{eq: compact supremum estimate}.
\end{proof}

\begin{proof}[Proof of \Cref{prop: uniform inverse}  on compact $M$]
We establish three facts: the fibre constraint has a bounded right inverse,
the constrained variational problem has a unique weak solution, and this
solution has the regularity required in \Cref{prop: uniform inverse}.
The estimate \eqref{eq: compact full a priori estimate} then gives uniform
bounds; coercivity and surjectivity of the constraint give uniqueness of
both unknowns.

\medskip

For fixed $\eps$, the fibre map extends boundedly from
$\mathcal H_{\eps,h,\theta}$ to $H^1(\Gamma;\R^4)$ and admits
\begin{equation}\label{eq: fibre constraint right inverse}
 \mathscr J_{\eps,h,\theta}:H^1(\Gamma;\R^4)\to
 \mathcal H_{\eps,h,\theta},\qquad
 \mathfrak B_{\eps,h,\theta}\mathscr J_{\eps,h,\theta}=\mathrm{Id}.
\end{equation} Indeed, multiply the four transported modes by $\eta_R(z/\eps)$ with fixed large $R$ and invert their fibre moment matrix. This matrix is uniformly positive by the argument for \eqref{eq: normal moment Gram matrix} in the proof of \Cref{lem: normal weighted coercivity}. Differentiation in $q$ proves the asserted $H^1$ bounds, thus $\mathfrak B_{\eps,h,\theta}$ has a bounded right inverse.

\smallskip

\Cref{prop: global constrained coercivity} and Lax--Milgram give a unique constrained weak solution of
\begin{equation}\label{eq: compact variational equation}
 {\bf Q}_{(\Phi_{h,\theta},A_{h,\theta})}({\bf\Psi},{\bf V}) =\langle{\bf F},{\bf V}\rangle_{L^2_\eps(M)} \qquad\mathfrak B_{\eps,h,\theta}{\bf V}=0.
\end{equation}
The functional on the right is bounded for fixed $\eps$. Put $\mathcal R={\bf L}_{\eps,h,\theta}{\bf\Psi}-{\bf F}$ and define the distribution $\mathfrak b(a)=\mathcal R(\mathscr J_{\eps,h,\theta}a)$ on $H^1(\Gamma;\R^4)$. By \eqref{eq: fibre constraint right inverse}, ${\bf V}-\mathscr J_{\eps,h,\theta}\mathfrak B_{\eps,h,\theta}{\bf V}$ satisfies the constraint, so \eqref{eq: compact variational equation} gives
\begin{equation*}
 \mathcal R({\bf V})=\mathfrak b(\mathfrak B_{\eps,h,\theta}{\bf V}).
\end{equation*}
This is precisely \eqref{eq: constrained linear equation} with the adjoint convention \eqref{eq: dual residual injection}; surjectivity of the fibre map makes $\mathfrak b$ unique.
\Cref{prop: global constrained coercivity} and \Cref{lem: local estimates supremum remainder} give the required bounds.
\end{proof}

Lastly, we show that the invertibility theory of $\widehat{\bf L}_{h,\theta}$ can be recovered from the one of ${\bf L}_{h,\theta}$ via a small perturbation.
\begin{corollary}\label[corollary]{cor: compact restricted inverse}
Under the hypotheses of \Cref{prop: global constrained coercivity}, we have
\begin{equation}\label{eq: compact gauge projection bound}
 \left\|\eps^{-1}{\bf G}_{h,\theta}\Pi_h{\bf G}_{h,\theta}^*{\bf\Psi}\right\|_{Y_{\eps,h,\theta}} \lesssim \eps^2(1+|\log\eps|)\|{\bf\Psi}\|_{X_{\eps,h,\theta}}
\end{equation}
for every ${\bf\Psi}\in X_{\eps,h,\theta}$. Consequently, for all sufficiently small $\eps$ and every ${\bf F}\in Y_{\eps,h,\theta}$, there is a unique pair
\begin{equation*}
 ({\bf\Psi},\mathfrak b)\in\mathcal E_{\eps,h,\theta} \times C^{0,\tau}_\eps(\Gamma;\R^4)
\end{equation*}
satisfying
\begin{equation*}
 \widehat{\bf L}_{h,\theta}{\bf\Psi} ={\bf F}+\mathfrak B_{\eps,h,\theta}^*\mathfrak b,
\end{equation*}
and the uniform estimate \eqref{eq: uniform inverse estimate} holds for this pair.
\end{corollary}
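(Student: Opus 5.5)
The estimate \eqref{eq: compact gauge projection bound} exploits the rank-one structure of the subtracted term. By the explicit formula following \eqref{eq: restricted linearised operator}, with $\chi\equiv1$ in the compact case,
\begin{equation*}
 \eps^{-1}{\bf G}_{h,\theta}\Pi_h{\bf G}_{h,\theta}^*{\bf\Psi}
 =\frac{\langle{\bf\Psi},{\bf G}_{h,\theta}\xi_{\eps,h}\rangle_{L^2_\eps(M)}}{\eps\int_M|\xi_{\eps,h}|^2\,d\vol_g}\,{\bf G}_{h,\theta}\xi_{\eps,h}.
\end{equation*}
The bound therefore splits into two factors: the size of the scalar coefficient in terms of $\|{\bf\Psi}\|_{X}$, and the size of ${\bf G}_{h,\theta}\xi_{\eps,h}=(0,d_{A_{h,\theta}}\xi_{\eps,h})$ in $Y_{\eps,h,\theta}$. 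The first component vanishes because $[\Phi_{h,\theta},\xi_{\eps,h}]=0$.

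For the denominator, $|\xi_{\eps,h}|=q_\eps\to1$ off an $\eps$-neighbourhood of $\Gamma_h$, so $\int_M|\xi_{\eps,h}|^2\gtrsim1$ and the denominator is $\gtrsim\eps$.

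For the numerator, pair ${\bf\Psi}$ with $(0,d_A\xi)$ in $L^2_\eps$. This gives $\eps\int_M\langle B,d_{A_{h,\theta}}\xi_{\eps,h}\rangle$. We integrate by parts to $\eps\int_M\langle d^*_{A_{h,\theta}}B,\xi_{\eps,h}\rangle$, or estimate directly. Outside the core, $d_A\xi=q_\eps'\sigma\,dr+q_\eps\,d_A\sigma$ plus metric and parameter corrections. Lemma~\ref{lem: approximation profile bounds} gives $|q_\eps'|\lesssim\eps/r^2$ for $r\gtrsim\eps$, while $d_A\sigma$ is exponentially small away from the core; inside the core $|d_A\xi|\lesssim\eps^{-1}$. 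The $X$ norm bounds $\eps|B|\le\|{\bf\Psi}\|_X$. Integrating over the tube with normal volume $r^2dr$ then gives
\begin{equation*}
 \eps\int|B||d_A\xi|\lesssim\|{\bf\Psi}\|_X\Bigl(\eps^{-1}\eps^3+\int_\eps^\delta\frac{\eps}{r^2}r^2\,dr\Bigr)\lesssim\eps\|{\bf\Psi}\|_X .
\end{equation*}
Away from the tube $\xi=\sigma$, and there $d_A\sigma=0$ exactly, since the connection equals $A_D$ and \eqref{eq: approximation exterior cancellation} applies. The scalar coefficient is therefore $O(1)\|{\bf\Psi}\|_X$, after dividing by the $\eps$ in the denominator. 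We then gain one more power of $\eps$ from the $Y$ norm of ${\bf G}\xi$.

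For the $Y$ norm of $(0,d_A\xi)$: on the core, $r_0^2\eps\|d_A\xi\|\lesssim\eps^2$. On the annulus $\Omega_{j,h}$ the longitudinal part is $q_\eps'\sigma\,dr$ (plus the $\beta$- and $\theta$-induced terms, which are of the same or smaller order). Its weighted contribution is $r_j^2\eps\cdot\eps/r_j^2=\eps^2$, and summing over the $J_\eps=O(|\log\eps|)$ annuli gives $\eps^2(1+|\log\eps|)$. The transverse part $q_\eps d_A\sigma$ decays exponentially and costs nothing. On $\Omega_{J_\eps+1}$ only cutoff transitions of $\chi_\Gamma$ contribute, with $1-q_\lambda=O(\eps/r)$, giving $O(\eps^2)$. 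Combining the two factors yields \eqref{eq: compact gauge projection bound}. The main obstacle is the logarithmic sum: because the longitudinal source norm sums rather than maximises, one must check carefully that no term of $d_A\xi$ decays slower than $\eps/r^2$. In particular, the terms produced by $g_\eps$, $u_\theta$ and $\mathscr T_h$ must be handled with the horizontal cancellation \eqref{eq: approximation horizontal cancellation}, which gives $D_{X_i}\xi=0$; the identity $D_aD_a\xi=0$ is not needed here.

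The second assertion follows by a Neumann series. Let $\mathcal T$ be the constrained inverse of ${\bf L}_{h,\theta}$ from Proposition~\ref{prop: uniform inverse}, established in the compact case above. Write $\widehat{\bf L}={\bf L}-P$ with $P\coloneqq\eps^{-1}{\bf G}\Pi_h{\bf G}^*$, and solve ${\bf\Psi}=\mathcal T({\bf F}+P{\bf\Psi})$ in $\mathcal E_{\eps,h,\theta}$. By \eqref{eq: compact gauge projection bound} and \eqref{eq: uniform inverse estimate}, the map ${\bf\Psi}\mapsto\mathcal T P{\bf\Psi}$ has norm $\lesssim\eps^2|\log\eps|<1/2$, which gives existence, uniqueness and the bound for ${\bf\Psi}$. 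The multiplier $\mathfrak b$ is recovered as the multiplier component of the inverse applied to ${\bf F}+P{\bf\Psi}$, and it obeys the same estimate. Uniqueness of the pair follows by applying the same contraction to the difference of two solutions.
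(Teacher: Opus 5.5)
Your proposal is correct and follows the paper's own proof essentially step by step: you use the rank-one formula together with $[\Phi_{h,\theta},\xi_{\eps,h}]=0$, and the bounds $|d_{A_{h,\theta}}\xi_{\eps,h}|\lesssim\eps(\eps+r_h)^{-2}$ and $\int_M|d_{A_{h,\theta}}\xi_{\eps,h}|\,d\vol_g\lesssim\eps$ make the scalar coefficient $O(\|{\bf\Psi}\|_{X_{\eps,h,\theta}})$. The dyadic annular sum then gives $\|{\bf G}_{h,\theta}\xi_{\eps,h}\|_{Y_{\eps,h,\theta}}\lesssim\eps^2(1+|\log\eps|)$, and a Neumann series around the constrained inverse of ${\bf L}_{h,\theta}$ yields existence, uniqueness and the uniform estimate, exactly as in the paper.
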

\begin{proof}
It suffices to prove \eqref{eq: compact gauge projection bound}: composition
with the inverse in \Cref{prop: uniform inverse}, satisfying \eqref{eq: compact full a priori estimate}, then has norm
$O(\eps^2(1+|\log\eps|))<1$, so its Neumann series gives the restricted inverse.

\medskip

Since $[\Phi_{h,\theta},\xi_{\eps,h}]=0$ and $\chi=1$, \eqref{eq: phase projection} gives
\begin{equation}\label{eq: compact gauge rank one}
 \eps^{-1}{\bf G}_{h,\theta}\Pi_h{\bf G}_{h,\theta}^*{\bf\Psi} =\frac{\int_M\langle B,d_{A_{h,\theta}}\xi_{\eps,h}\rangle\,d\vol_g}{\int_M|\xi_{\eps,h}|^2\,d\vol_g}\,{\bf G}_{h,\theta}\xi_{\eps,h}.
\end{equation}
The denominator is bounded below. Equations \eqref{eq: approximation profile bounds}, \eqref{eq: q_eps} and \eqref{eq: shifted phase approximation} give
\begin{equation}\label{eq: compact phase generator bounds}
 |d_{A_{h,\theta}}\xi_{\eps,h}|\lesssim \frac{\eps}{(\eps+r_h)^2},\qquad \int_M|d_{A_{h,\theta}}\xi_{\eps,h}|\,d\vol_g\lesssim \eps,
\end{equation}
with the corresponding scaled H\"older bounds and exponentially small transverse tails. Hence
\begin{equation}\label{eq: compact gauge numerator}
 \left|\int_M\langle B,d_{A_{h,\theta}}\xi_{\eps,h}\rangle\,d\vol_g\right|\lesssim \eps\|B\|_\infty\lesssim \|{\bf\Psi}\|_{X_{\eps,h,\theta}}.
\end{equation}
It remains to establish \eqref{eq: compact phase source bound}. Since ${\bf G}_{h,\theta}\xi_{\eps,h}=(0,d_{A_{h,\theta}}\xi_{\eps,h})$, only the connection terms of \eqref{eq: source norm} contribute. On each annulus, multiply \eqref{eq: compact phase generator bounds} in scaled H\"older norm by its weight $\eps r_j^2$; for $1\leq j\leq J_\eps+1$, this gives
\begin{equation*}
 \eps r_j^2
 \|(d_{A_{h,\theta}}\xi_{\eps,h})^\parallel\|_{\tau,r_j,\Omega_{j,h}}
 \lesssim \eps^2.
\end{equation*}
The core term and the maximum of the transverse terms are also
$O(\eps^2)$. Summing the $J_\eps+1=O(1+|\log\eps|)$ longitudinal terms gives
\begin{equation}\label{eq: compact phase source bound}
 \|{\bf G}_{h,\theta}\xi_{\eps,h}\|_{Y_{\eps,h,\theta}}\lesssim \eps^2(1+|\log\eps|),
\end{equation}
which, together with \eqref{eq: compact gauge rank one} and \eqref{eq: compact gauge numerator}, proves \eqref{eq: compact gauge projection bound}.
\end{proof}

\Cref{cor: compact restricted inverse} finishes the proof of \Cref{prop: uniform inverse} in the compact case.

\subsection{The noncompact case}
Compact-case existence follows from global coercivity and Lax--Milgram. Here the longitudinal equations at infinity involve $\Delta_0$ and $\Delta_1$, with no assumed $L^2$ spectral gap. Since coercivity might not be uniform along the ends we use a different strategy, combining the weighted inverses of $\Delta_0$ and $\Delta_1$ from \Cref{def: admissible amb noncomp} with a product inverse near $\Gamma$, to obtain solutions with the required decay. The proof of \Cref{prop: uniform inverse} in this case proceeds as follows.
\begin{itemize}
 \item \Cref{lem: noncompact local remainder} combines weighted exterior estimates with the local estimates to control the full solution norm up to a supremum on a fixed compact set.
 \item \Cref{prop: noncompact a priori estimate} removes this compact supremum remainder by a blow-up argument and the product Liouville theorem, giving the uniform a priori estimate.
 \item \Cref{lem: noncompact product constructions} constructs the constrained product inverse, its normal tail bounds, and the smooth longitudinal extensions needed to continue exterior solutions through the core.
 \item \Cref{prop: noncompact ordinary inverse} glues the exterior and interior solutions, restores the fibre constraints, and corrects the small remaining error by a Neumann series to obtain the ordinary inverse.
 \item \Cref{cor: noncompact restricted inverse} treats the gauge projection as a small perturbation, obtaining the restricted inverse and completing the proof of \Cref{prop: uniform inverse}.
\end{itemize}

\smallskip

Choose fixed compact neighbourhoods $K_0\Subset K_1$ containing the tube and all parameter cutoffs.
\begin{lemma}\label{lem: noncompact local remainder}
Every solution $({\bf\Psi},\mathfrak b)$ of \eqref{eq: constrained linear equation}, with ${\bf\Psi}=(\Psi,B)\in\mathcal E_{\eps,h,\theta}$ and $\mathfrak b\in C^{0,\tau}_\eps(\Gamma;\R^4)$, satisfies
\begin{equation}\label{eq: noncompact local remainder}
\|{\bf\Psi}\|_{X_{\eps,h,\theta}}+\|\mathfrak b\|_{C^{0,\tau}_\eps}+\|\mathfrak B_{\eps,h,\theta}^*\mathfrak b\|_{Y_{\eps,h,\theta}}\lesssim  \|{\bf F}\|_{Y_{\eps,h,\theta}}+\|{\bf \Psi}\|_{L^\infty(K_1)}.
\end{equation}
\end{lemma}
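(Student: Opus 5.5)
The plan is to split $M$ into the fixed compact set $K_1$ and the exterior $M\setminus K_0$. On $K_1$ we rerun the local argument of \Cref{lem: local estimates supremum remainder}. On the exterior we replace the interpolation-plus-supremum step by the weighted inverses of \Cref{def: admissible amb noncomp}. On $K_1$ all regions $\Omega_{j,h}$, $j\le J_\eps$, and the fixed-scale part of $\Omega_{J_\eps+1,h}\cap K_1$ are treated verbatim. The residual bound \eqref{eq: residual injection estimate proof}, the multiplier identity \eqref{eq: coefficient local estimate proof} and the scaled Schauder estimate \eqref{eq: local Schauder estimate proof} use only the fibre structure inside the tube, $\lambda=0$, and the source norm \eqref{eq: source norm}. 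Hence they give
\begin{equation*}
\|{\bf\Psi}\|_{X(K_0)}+\|\mathfrak b\|_{C^{0,\tau}_\eps}+\|\mathfrak B^*\mathfrak b\|_{Y}\lesssim\|{\bf F}\|_{Y}+\|{\bf\Psi}\|_{L^\infty(K_1)}+o(1)\|{\bf\Psi}\|_{X},
\end{equation*}
after the interpolation \eqref{eq: compact interpolation} restricted to $K_1$. Here $\mathfrak B^*\mathfrak b$ is supported in the tube, so it does not enter the exterior.

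For the exterior, on $M\setminus K_0$ the background is exactly $((1-\eps H)\sigma,A_D)$ up to the fixed cutoff. We therefore have $d_{A_D}\sigma=0$, and $|\Phi|$ is bounded below. Write $\Psi=\psi\sigma+\Psi^\perp$ and $B=b\sigma+B^\perp$.

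\textbf{Transverse part.} The equation is $(\nabla^*\nabla+\eps^{-2}|\Phi|^2)$ plus lower-order curvature terms with weights. Curvature $F_{A_D}=\sigma f_1$ decays like $R^{-\nu-1}$ and is absorbed by $\eps^{-2}$. A weighted maximum-principle comparison with $R^{-\nu-2}$, together with Schauder estimates on $r_0$-balls by bounded geometry, bounds $\|\Psi^\perp\|_{C^{2,\tau}_{\nu+2}}+\eps^{-2}\|\Psi^\perp\|_{C^{0,\tau}_{\nu+2}}$, and likewise for $\eps B^\perp$. The bound is by the source plus $\|{\bf\Psi}\|_{L^\infty(\partial K_0)}$.

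\textbf{Longitudinal part.} Projection gives
\begin{equation*}
\Delta_0\psi=\langle F_\Phi,\sigma\rangle+e_0,\qquad \Delta_1 b=\langle F_A,\sigma\rangle+e_1,
\end{equation*}
where $e_0,e_1$ contain couplings through $dH$, $f_1$ and $[\cdot,\cdot]$ with the transverse part. Since $[\R\sigma,\R\sigma]=0$, only transverse components enter, and these are $O(\eps R^{-\nu-2})$ times the transverse norm. We cut off: set $\tilde\psi=(1-\chi)\psi$ with $\chi=1$ on $K_0$ and supported in $K_1$. Then $\Delta_0\tilde\psi$ equals the projected source plus a commutator supported in $K_1\setminus K_0$ and bounded by $\|{\bf\Psi}\|_{C^1(K_1)}$, which the local Schauder bound controls by the right-hand side. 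The unique weighted inverse of \Cref{def: admissible amb noncomp}, with $u\in C^{2,\tau}_\nu$ decaying, yields
\begin{equation*}
\|\tilde\psi\|_{C^{2,\tau}_\nu}\lesssim\|{\bf F}\|_{Y}+\|{\bf\Psi}\|_{L^\infty(K_1)}+\eps\|{\bf\Psi}\|_X,
\end{equation*}
and the same for $\eps\tilde b$ via $\Delta_1$. This uniqueness is applicable because ${\bf\Psi}\in X$ already has $C^{2,\tau}_\nu$ decay. The $\|\Delta_{A}\Psi^\parallel\|_{C^{0,\tau}_{\nu+2}}$ terms in \eqref{eq: noncompact exterior solution norm} are then read off from the equation.

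\textbf{Main obstacle.} The delicate step is the coupling between exterior longitudinal errors and the inner dyadic sum. The longitudinal Poisson equation near $\partial K_0$ couples back into the annuli through commutators. I would handle this by taking the overlap $K_1\setminus K_0$ at fixed distance $\delta$ from $\Gamma$, where all scales are $O(1)$ and the norms are equivalent up to constants independent of $\eps$. The remaining $o(1)\|{\bf\Psi}\|_X$ and $\eps\|{\bf\Psi}\|_X$ terms are then absorbed into the left-hand side for $\eps$ small, which proves \eqref{eq: noncompact local remainder}.
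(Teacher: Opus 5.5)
Your proposal is correct and follows essentially the paper's route. You cut off the longitudinal coefficients and apply the weighted inverses of $\Delta_0,\Delta_1$ from \Cref{def: admissible amb noncomp}, control the transverse part by a weighted maximum principle using the $c\eps^{-2}$ potential plus Schauder estimates, and reuse the local estimates of \Cref{lem: local estimates supremum remainder} on the compact set with interpolation, leaving only $\|{\bf\Psi}\|_{L^\infty(K_1)}$.

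One small simplification: on $M\setminus K_0$ one has $\Phi\propto\sigma$, $d_{A_D}\Phi=-\eps\,dH\,\sigma$, $F_{A_D}=\sigma f_1$ and $d_{A_D}\sigma=0$. Hence every coupling in \eqref{eq: model linearised operator} is a bracket with $\sigma$, your errors $e_0,e_1$ vanish identically, and the projected equations \eqref{eq: noncompact longitudinal exterior equations} are exact.
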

\begin{proof}
For the exterior correction $b$, write
\begin{equation}\label{eq: exterior connection}
 A_D\coloneqq A_\sigma+b\sigma\qquad\text{on }M\setminus\Gamma.
\end{equation}
Here $b$ is the real one-form of \Cref{lem: noncompact exterior correction}.
By \eqref{eq: noncompact shifted approximation} and \eqref{eq: noncompact exterior cancellation}, outside $K_0$,
\begin{equation*}
 (\Phi_{h,\theta},A_{h,\theta})=((1-\eps H)\sigma,A_D),\qquad d_{A_D}\sigma=0.
\end{equation*}
Write ${\bf F}=(F_\Phi,F_A)$ and choose a fixed cutoff $\chi$ vanishing near $K_0$ and equal to one outside $K_1$. By \eqref{eq: explicit residual injection} and \eqref{eq: model linearised operator}, the longitudinal coefficients satisfy
\begin{equation}\label{eq: noncompact longitudinal exterior equations}
 \Delta_0\langle\Psi,\sigma\rangle=\langle F_\Phi,\sigma\rangle,
 \qquad
 \Delta_1\langle B,\sigma\rangle=\langle F_A,\sigma\rangle
 \quad\hbox{on }M\setminus K_0.
\end{equation}
Apply the weighted inverses of \Cref{def: admissible amb noncomp} after multiplication by $\chi$. This bounds the longitudinal $C^{2,\tau}_\nu$ norms by the source norm and the $C^{1,\tau}$ norms on the cutoff region $K_1\setminus K_0$. Equation \eqref{eq: noncompact longitudinal exterior equations} also controls the Laplacian terms in \eqref{eq: noncompact exterior solution norm}.

For the transverse components, \eqref{eq: model linearised operator} gives the potential $4\eps^{-2}(1-\eps H)^2\geq c\eps^{-2}$. Multiply the scalar equation by $R^{\nu+2}\chi$ and the one-form equation by $\eps R^{\nu+2}\chi$. Since $d_{A_D}\Phi_{h,\theta}=-\eps\,dH\,\sigma$, the couplings are bounded in this scaling. Smooth $R$ to an equivalent weight with bounded logarithmic derivatives; for a scalar or one-form $v$,
\begin{equation}\label{eq: noncompact conjugated Laplacian}
\begin{split}
 R^{\nu+2}\nabla^*\nabla(R^{-\nu-2}v)
 ={}&\nabla^*\nabla v+2(\nu+2)\nabla_{\nabla\log R}v\\
 &-\bigl((\nu+2)\Delta_0\log R+(\nu+2)^2|d\log R|^2\bigr)v,
\end{split}
\end{equation}
where $\nabla$ is induced by $A_D$ and $g$, and $\Delta_0=d^*d$. The potential absorbs the bounded curvature, coupling and weight terms from \Cref{lem: noncompact exterior correction}, \Cref{def: admissible amb noncomp} and \eqref{eq: noncompact conjugated Laplacian}. The norm maximum principle applied to $R^{\nu+2}\chi\sqrt{|\Psi^\perp|^2+\eps^2|B^\perp|^2}$, followed by \eqref{eq: parameter Schauder estimate}, gives
\begin{equation*}
\begin{split}
 &\|\Psi^\perp\|_{C^{2,\tau}_{\nu+2}}+\eps\|B^\perp\|_{C^{2,\tau}_{\nu+2}}
 +\eps^{-2}\bigl(\|\Psi^\perp\|_{C^{0,\tau}_{\nu+2}}+\eps\|B^\perp\|_{C^{0,\tau}_{\nu+2}}\bigr)\\
 &\qquad\lesssim\|{\bf F}\|_{Y_{\eps,h,\theta}}
 +\|\Psi\|_{C^{1,\tau}(K_1\setminus K_0)}
 +\eps\|B\|_{C^{1,\tau}(K_1\setminus K_0)},
\end{split}
\end{equation*}
with the left-hand norms taken over $M\setminus K_1$. Combine with the longitudinal bounds and \eqref{eq: local estimates supremum remainder}; interpolation absorbs the $C^{1,\tau}$ terms supported in $K_1\setminus K_0$ and leaves the compact supremum in \eqref{eq: noncompact local remainder}.
\end{proof}

\begin{proposition}[The noncompact a priori estimate]\label[proposition]{prop: noncompact a priori estimate}
Every solution as in \Cref{lem: noncompact local remainder} satisfies \eqref{eq: uniform inverse estimate}.
\end{proposition}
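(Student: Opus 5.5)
The estimate \eqref{eq: noncompact local remainder} of \Cref{lem: noncompact local remainder} reduces the claim to the bound
\begin{equation*}
 \|{\bf\Psi}\|_{L^\infty(K_1)}\lesssim\|{\bf F}\|_{Y_{\eps,h,\theta}},
\end{equation*}
uniformly in $\eps,h,\theta$. I would prove this by contradiction, in the same way as \Cref{prop: compact supremum estimate}. Suppose it fails. Then there are $\eps_k\downarrow0$, parameters $(h_k,\theta_k)$ satisfying \eqref{eq: assumptions htheta}, and solutions normalised so that $\|{\bf\Psi}_k\|_{X}=1$ and $\|{\bf F}_k\|_{Y}\to0$. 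By \eqref{eq: noncompact local remainder} the multipliers $\mathfrak b_k$ stay bounded in $C^{0,\tau}_{\eps_k}$, and
\begin{equation*}
 \liminf_k\|{\bf\Psi}_k\|_{L^\infty(K_1)}>0.
\end{equation*}

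The first step is to push the mass of ${\bf\Psi}_k$ into the core. The transverse decay \eqref{eq: compact transverse tail proof} follows from the solution norm exactly as before. For the longitudinal tail estimate \eqref{eq: compact neutral tail proof}, I would rerun the Green-function argument, replacing the Hodge Green kernel on a compact manifold by the following two tools. Near $\Gamma$ I would use the local fundamental solution bound $d(x,z)^{2-n}$. Across the exterior I would use the weighted inverses of $\Delta_0,\Delta_1$ from \Cref{def: admissible amb noncomp}, applied to the cut-off projected equations
\begin{itemize}
 \item $\Delta_0\langle\Psi,\sigma_h\rangle=\cdots$,
 \item $\Delta_1\langle B,\sigma_h\rangle=\cdots$.
\end{itemize}
Since $\lambda=0$, the scalar equation is now also of Poisson type. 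Its Green kernel is the minimal positive Green function $G$, which gives the same shell bound \eqref{eq: compact Green shell bound}. The multiplier contribution and the cutoff commutator \eqref{eq: compact cutoff commutator} are treated verbatim. This yields \eqref{eq: compact neutral tail proof} for points in $K_1$ with $r_h\gg\eps$. The uniformly exponentially small errors $e_0,e_1$ persist because the background bound \eqref{eq: compact outer background bounds} only uses the local construction.

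Away from $\Gamma$ but inside $K_1$, the limit would be a bounded solution of the homogeneous longitudinal equations on the fixed manifold with a point singularity-free source. To exclude a nonzero limit there, I would pass to the limit and argue as follows. The longitudinal part tends to a harmonic function, respectively a harmonic coclosed one-form, decaying in $C^{2,\tau}_\nu$ by \eqref{eq: noncompact exterior solution norm}. The estimate \eqref{eq: compact neutral tail proof}, with $A$ large and then $k$ large, shows that it tends to zero near $\Gamma$. Uniqueness in \Cref{def: admissible amb noncomp} then forces it to vanish. This leaves points $x_k$ with $r_{h_k}(x_k)\lesssim\eps_k$ and $|{\bf\Psi}_k(x_k)|\geq c$.

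Finally, I would blow up at $q_k=\pi(\mathscr T_{h_k}^{-1}x_k)$ as in \eqref{eq: compact blowup rescaling}, now with $\lambda=0$. The result is a nonzero bounded product solution of \eqref{eq: product Liouville system} with bounded coefficients $g_j$ and preserved fibre constraints. Choosing $A=|y|^{1/3}$ in the limiting \eqref{eq: compact neutral tail proof} gives longitudinal decay of order $(1+|y|)^{-2/3}$, and the transverse components decay like $(1+|y|)^{-2}$. Since $2/3>1/2$, \Cref{lem: product Liouville} applies, together with its tangential variant, and forces the limit to vanish, a contradiction.

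I expect the main obstacle to be the longitudinal tail bound in the noncompact setting. The scalar operator has no potential, and $G$ is only controlled globally through the weighted theory, so it must be checked that the dyadic sum of shell contributions stays uniform and that the exterior weighted solution feeds back only through the compact remainder.
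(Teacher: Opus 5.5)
Your proposal is correct and follows the paper's proof essentially step by step. You reduce via \Cref{lem: noncompact local remainder} to a supremum on $K_1$, rerun the longitudinal tail estimate \eqref{eq: compact neutral tail proof} with the weighted inverses of $\Delta_0,\Delta_1$ (zero scalar potential) in place of the compact Green kernels, and then blow up at the core and invoke \Cref{lem: product Liouville} using the $(1+|y|)^{-2/3}$ decay; your intermediate step producing a harmonic limit away from $\Gamma$ is superfluous, since the pointwise tail bound already makes the longitudinal part small at every point of $K_1$ with $r_h\gg\eps$, including intermediate scales $\eps\ll r_h\ll 1$ that a fixed-compact-set limit would not detect.
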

\begin{proof}
We adapt the proof of \Cref{prop: compact supremum estimate}, using the weighted inverses in \Cref{def: admissible amb noncomp}. For sources in a fixed compact set, the kernels of the inverses of $\Delta_0$ and $\Delta_1$ satisfy
\begin{equation}\label{eq: noncompact Green kernel bound}
 |G_p(x,z)|\lesssim 1+d_M(x,z)^{2-n},\qquad p=0,1,
\end{equation}
on compact sets, and are bounded by $CR(x)^{-\nu}$ outside a larger compact set. Integration in the tangential variables therefore gives
\begin{equation}\label{eq: noncompact Green shell bound}
 \sup_x\int_{\Omega_{j,h}}|G_p(x,z)|\,d\vol_g(z)\lesssim r_j^2,
 \qquad 0\leq j\leq J_\eps.
\end{equation}
A source of size $t^{-2}$ supported in a tube of radius $t$ contributes at most $Ct$ outside a fixed neighbourhood.

Apply \eqref{eq: noncompact Green kernel bound} and \eqref{eq: noncompact Green shell bound} to sources supported in $K_1$; the remaining source is controlled by \Cref{def: admissible amb noncomp}. The cutoff argument proving \eqref{eq: compact neutral tail proof} therefore applies on $K_1$, with zero scalar potential. If the estimate failed, choose $\eps_k\downarrow0$ and normalised solutions ${\bf\Psi}_k$ with parameters $(h_k,\theta_k)$ and sources ${\bf F}_k$ such that
\begin{equation*}
 \|{\bf\Psi}_k\|_{X_{\eps_k,h_k,\theta_k}}=1,
 \qquad \|{\bf F}_k\|_{Y_{\eps_k,h_k,\theta_k}}\longrightarrow0.
\end{equation*}
Equation \eqref{eq: noncompact local remainder} forces a nonzero supremum on $K_1$, and \eqref{eq: compact neutral tail proof} places it in a fixed rescaled core. The rescaling \eqref{eq: compact blowup rescaling} and moment convergence in the proof of \Cref{prop: compact supremum estimate}, with \eqref{eq: compact limit neutral decay}, give a nonzero solution of the homogeneous product equation for ${\bf L}_0$, with vanishing moments and normal decay $O((1+|y|)^{-2/3})$. This contradicts \Cref{lem: product Liouville}. Hence \eqref{eq: uniform inverse estimate} holds for every solution, as required.
\end{proof}

We next give two constructions on $\R^{n-3}\times\R^3$. Write $(q,z)$ for the physical coordinates and $(x,y)$ for their rescaled counterparts. For pairs ${\bf\Psi}=(\Psi,B)$, ${\bf F}=(F_\Phi,F_A)$ and a coefficient $\mathfrak b\colon\R^{n-3}\to\R^4$, set
\begin{equation}\label{eq: product pair rescaling}
\begin{aligned}
 D_\eps(x,y)&=(\eps x,\eps y),
 &A_{0,\eps}&=(D_\eps^{-1})^*A_0,\\
 {\bf V}&=(D_\eps^*\Psi,D_\eps^*B),
 &{\bf f}&=(\eps^2D_\eps^*F_\Phi,\eps^2D_\eps^*F_A),\\
 c_i(x)&=\mathfrak b_i(\eps x),
 &\mathcal A_r&=\R^{n-3}\times\{r<|z|<4r\}.
\end{aligned}
\end{equation}
Here $A_0$ is pulled back from the normal factor. For any pair ${\bf V}=(V_\Phi,V_A)$, write
\begin{equation*}
 V_A=V_a^N\,dy^a+\sum_{j=1}^{n-3}V_j^T\,dx^j,
 \qquad {\bf V}_N=(V_\Phi,V_a^N\,dy^a),
\end{equation*}
and use the same notation for ${\bf f}$. Put ${\bf E}_i={\bf E}_{i,\delta/(4\eps)}$ in \eqref{eq: normal weighted tests}, evaluated at $\lambda=0$. The equation and constraints to be solved are
\begin{equation}\label{eq: constrained product system}
\begin{cases}
 (-\Delta_x+{\bf L}_0){\bf V}_N
 ={\bf f}_N+\displaystyle\sum_{i=1}^4c_i(x){\bf E}_i(y),\\[2mm]
 (-\Delta_x+\nabla_{A_0}^*\nabla_{A_0}-\ad_{\Phi_0}^2)V_j^T=f_j^T,
 &j=1,\ldots,n-3,\\[2mm]
 \displaystyle\int_{\R^3}\langle{\bf V}_N(x,y),{\bf E}_i(y)\rangle\,dy=0,
 &i=1,\ldots,4.
\end{cases}
\end{equation}

\begin{lemma}[Product inverse and longitudinal extensions]\label{lem: noncompact product constructions}
For every smooth compactly supported pair ${\bf F}$ on $\R^{n-3}\times\R^3$, there exist ${\bf\Psi}=(\Psi,B)$ and $\mathfrak b\colon\R^{n-3}\to\R^4$ whose rescalings \eqref{eq: product pair rescaling} solve \eqref{eq: constrained product system}, with
\begin{equation}\label{eq: product inverse bound}
 \|{\bf\Psi}\|_{X_{\eps,0,0}}+\|\mathfrak b\|_{C^{0,\tau}_\eps(\R^{n-3})}
 \lesssim \|{\bf F}\|_{Y_{\eps,0,0}}.
\end{equation}
If $\supp{\bf F}\subset\{|z|\leq2s\}$ and $s\geq\eps$, the same solution satisfies, for $r>4s$,
\begin{equation}\label{eq: product parametrix tail}
 \sum_{k=0}^2r^k\left(
 \|\nabla_{A_{0,\eps}}^k\Psi\|_{\tau,r,\mathcal A_r}
 +\eps\|\nabla_{A_{0,\eps}}^kB\|_{\tau,r,\mathcal A_r}\right)
 \lesssim (s/r)^{2/3}\|{\bf F}\|_{Y_{\eps,0,0}}.
\end{equation}
Moreover, for every $a\in\R$ and constant real one-form $\vartheta$ on $\R^n$, there is a smooth pair ${\bf U}_{a,\vartheta}(y)$, independent of $x$, solving \eqref{eq: constrained product system} with ${\bf f}=0$ and $c_i=0$ and satisfying
\begin{equation}\label{eq: product longitudinal extensions}
\begin{split}
 \sup_{y\in\R^3}|{\bf U}_{a,\vartheta}(y)|&\lesssim (|a|+|\vartheta|),\\
 \sum_{k=0}^2|y|^k\left|\nabla_{A_0}^k
 \bigl({\bf U}_{a,\vartheta}(y)-(a\sigma(y),\vartheta\sigma(y))\bigr)\right|
 &\lesssim \frac{(|a|+|\vartheta|)}{|y|},\qquad |y|\geq1.
\end{split}
\end{equation}
\end{lemma}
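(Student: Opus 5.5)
The plan has three parts: the product inverse, the tail bound \eqref{eq: product parametrix tail}, and the extensions ${\bf U}_{a,\vartheta}$.

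\emph{Product inverse.} I would first solve the constrained system \eqref{eq: constrained product system} variationally on the product, as in the compact case, and then upgrade the weak solution to the Hölder estimate \eqref{eq: product inverse bound}. The unknowns separate: normal pairs ${\bf V}_N$ and tangential components $V^T_j$.

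For the tangential equations, the operator $-\Delta_x+\nabla_{A_0}^*\nabla_{A_0}-\ad_{\Phi_0}^2$ is nonnegative. By the Hardy inequality \eqref{eq: normal Hardy bound} it is coercive in the norm $\int(|\nabla_x V|^2+|\nabla_{A_0}V|^2+|[\Phi_0,V]|^2)$. Since ${\bf F}$ is compactly supported, the source defines a bounded functional on this homogeneous space, using the Hardy weight and Sobolev in $\R^n$. Lax--Milgram then gives a solution.

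For the normal part, the quadratic form $\int|\nabla_x{\bf V}_N|^2+\int{\bf Q}_0({\bf V}_N(x,\cdot))\,dx$ is coercive on the constrained subspace. This follows from \Cref{lem: normal weighted coercivity} with $R=\delta/(4\eps)\geq R_*$, applied fibrewise. The Lagrange multipliers $c_i(x)$ are then recovered from a fibrewise right inverse of the moment map, exactly as in the proof of \Cref{prop: uniform inverse} on compact $M$, using the positive Gram matrix \eqref{eq: normal moment Gram matrix}.

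The estimate \eqref{eq: product inverse bound} would then follow by repeating the argument of \Cref{lem: local estimates supremum remainder} and \Cref{prop: compact supremum estimate} verbatim with flat metric and $h=\theta=0$:
\begin{itemize}
\item the local Schauder estimates give \eqref{eq: local Schauder estimate proof};
\item the multiplier identity \eqref{eq: actual multiplier identity} holds with $\mathcal E\equiv0$, so there is no error term;
\item the longitudinal tail uses the Euclidean Green kernel in place of \eqref{eq: compact Green shell bound}, with zero scalar potential;
\item a blow-up contradiction closes the argument, using \Cref{lem: product Liouville}, including its tangential statement for $V_j^T$.
\end{itemize}
Translation invariance in $x$ means the blow-up can always be recentred, so no compactness in $x$ is lost.

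\emph{Tail bound.} For \eqref{eq: product parametrix tail}, outside $|z|>2s$ the source vanishes and the multiplier terms are $O(\eps^2 r^{-4}|\mathfrak b|)$. The transverse components then decay exponentially at rate $r/\eps$, by comparison as in \eqref{eq: normal transverse comparison}.

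The longitudinal components satisfy flat $\Delta$ equations up to exponentially small couplings. Their source lies in the tube $|z|\leq 2s$ and has total weighted size $\lesssim\|{\bf F}\|_Y$. Convolving the Newtonian kernel of $\R^n$, integrated over $\R^{n-3}$, against this source gives decay like $s/r$ away from the tube. This is valid provided the source averages properly, and I expect it to be the delicate point, because a source mass of order $s^{-2}$ times tube volume only yields $O(s/r)$ if no logarithmic accumulation occurs. I would settle for the weaker exponent $2/3$ by interpolating between the sup bound $\lesssim\|{\bf F}\|_Y$ from \eqref{eq: product inverse bound} and a bound $(s/r)\|{\bf F}\|_Y(1+\log(r/s))$. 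The log loss is absorbed by the reduced exponent, and scaled Schauder estimates on $\mathcal A_r$ upgrade the pointwise decay to the stated norm.

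\emph{Longitudinal extensions.} For ${\bf U}_{a,\vartheta}$, I would take the $x$-independent ansatz ${\bf U}=(a\,q_0\sigma,\ \vartheta\, q_0\sigma)+{\bf W}$. Here $q_0$ is the profile from \eqref{eq: qlambda}, which gives the $1/|y|$ convergence to $(a\sigma,\vartheta\sigma)$.

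In the tangential components $\vartheta_j$ this is exact: $\nabla^*_{A_0}\nabla_{A_0}(q_0\sigma)=0$ and $[\Phi_0,\sigma]=0$. For the scalar and normal components, the error ${\bf L}_0(aq_0\sigma,\vartheta_a q_0\sigma\,dy^a)$ consists of the coupling terms $\ad^*F_{A_0}$ and $[d_{A_0}\Phi_0,\cdot]$. The longitudinal parts of these couplings are $O(|y|^{-3})$ after using $|d\varphi_0|\lesssim r^{-2}$, and they pair nontrivially. I would invert ${\bf L}_0$ on $\R^3$ against this error, modulo the four-dimensional kernel, via \Cref{lem: normal weighted coercivity} and \Cref{lem: monopole ker}. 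The error is $O(|y|^{-3})$ and lies in the dual of $\mathcal H_0$.

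The correction ${\bf W}$ then decays like $|y|^{-1}$ longitudinally and exponentially transversally. Adjusting by multiples of ${\bf Z}_i$ restores the moment constraints, and these modes are $O(|y|^{-2})$, so the decay is preserved. The bound in \eqref{eq: product longitudinal extensions} follows by linearity in $(a,\vartheta)$.

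The main obstacle is the tail estimate \eqref{eq: product parametrix tail}, specifically controlling the longitudinal Poisson tails uniformly in $\eps$.
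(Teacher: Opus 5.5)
Your treatment of the product inverse follows the paper's proof: fibrewise coercivity from \Cref{lem: normal weighted coercivity}, with Hardy for the tangential components; Lax--Milgram; recovery of $c$ through the Gram matrix; then the (here exact) multiplier identity, the Euclidean Green shell bound, and the blow-up with \Cref{lem: product Liouville}.

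You do omit one step. The contradiction argument needs the Lax--Milgram solution to have finite $X_{\eps,0,0}$ norm in the first place. The paper gets this, with support-dependent constants, from tangential difference quotients, normal elliptic regularity and Hardy.

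For the tail, the logarithm you fear does not occur. After integrating the Newtonian kernel in $x$, a normal shell of radius $r_j\le 2s$ contributes at most $C(r_j/r)$ times its share of the longitudinal source norm. That norm is already a sum over shells in \eqref{eq: source norm}, so you get $O(s/r)\|{\bf F}\|_{Y_{\eps,0,0}}$ directly, as the paper does.

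You must, however, also integrate the two longitudinal sources that are not supported in the tube. These are the multiplier $\sum_ic_i{\bf E}_i$ and the commutators from cutting off the longitudinal projection near the core. Both give $O(\eps/r)\le O(s/r)$, by \eqref{eq: product multiplier tail}. Note also that interpolation would not remove a logarithm; absorbing it into the exponent $2/3$ would.

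For the extensions you take a genuinely different route. The paper produces exact kernel elements from symmetries:
\begin{itemize}
\item the mass variation ${\bf V}^m$ for the scalar limit;
\item $(0,\Phi_0\,dx^j)$ for the tangential components, which matches your choice since $q_0=\varphi_0$;
\item gauge modes ${\bf G}_0\gamma_\alpha$ with $(\Delta_{A_0}-\ad_{\Phi_0}^2)\gamma_\alpha=0$ for the normal components, where ${\bf L}_0{\bf G}_0\gamma={\bf G}_0(\Delta_{A_0}-\ad_{\Phi_0}^2)\gamma$ makes kernel membership automatic.
\end{itemize}
You instead take one smooth ansatz $U_0=(a\Phi_0,\Phi_0\vartheta)$ and correct it by a single solve of ${\bf L}_0$. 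This works and is more uniform, but your error computation is wrong in a way that matters.

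The Laplacian terms vanish exactly by the BPS Higgs equation $d_{A_0}^*d_{A_0}\Phi_0=0$. Every other term of ${\bf L}_0U_0$ is a bracket with $\Phi_0$; for instance, the connection component of ${\bf L}_0(\Phi_0,0)$ is $2[\Phi_0,d_{A_0}\Phi_0]=2\varphi_0^2(1-\alpha_0)[\sigma,d\sigma]$. So the error has zero longitudinal part and is exponentially small. Your $d\varphi_0\,\sigma$ terms drop out because $[\sigma,\sigma]=0$. With only the $O(|y|^{-3})$ longitudinal bound you state, the correction would in general carry a $\log|y|/|y|$ tail, which violates \eqref{eq: product longitudinal extensions}.

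You also need $\langle{\bf L}_0U_0,{\bf Z}_i\rangle_2=0$ in order to solve with zero multiplier rather than merely modulo the kernel. This follows from Green's formula on $B_R$: $U_0$ is bounded with $|\nabla_{A_0}U_0|=O(|y|^{-2})$, while $|{\bf Z}_i|=O(|y|^{-2})$ and $|\nabla_{A_0}{\bf Z}_i|=O(|y|^{-3})$, so the boundary terms are $O(R^{-1})$.

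With these two points, the argument for \eqref{eq: normal longitudinal tails} gives the correction an exponentially decaying transverse part and a longitudinal part $c/|y|+O(|y|^{-2})$. Subtracting kernel modes then imposes the moments. The paper's symmetry-based construction needs no orthogonality check. Yours dispenses with the mass-variation and gauge-covariance identities, at the price of that check.
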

\begin{proof}
Use the rescaling \eqref{eq: product pair rescaling}. For the unit-scale operator in \eqref{eq: constrained product system}, \Cref{lem: normal weighted coercivity} gives
\begin{equation*}
\int_{\R^{n-3}}\left({\bf Q}_0({\bf V}_N)+\|\nabla_x{\bf V}_N\|_{L^2(\R^3)}^2\right)\,dx\geq c\int_{\R^{n-3}}\left(\|{\bf V}_N\|_{\mathcal H_0}^2+\|\nabla_x{\bf V}_N\|_{L^2(\R^3)}^2\right)\,dx.
\end{equation*}
Tangential one-form components satisfy \eqref{eq: normal Hardy bound}. For smooth compactly supported data, the source functional is bounded because
\begin{equation*}
 |\langle{\bf f},{\bf V}\rangle_{L^2}| \leq\||y|{\bf f}\|_{L^2}\,\||y|^{-1}{\bf V}\|_{L^2}.
\end{equation*}
Lax--Milgram therefore gives a weak solution directly on the whole product. The right inverse from \eqref{eq: normal moment Gram matrix} recovers its coefficient. Tangential difference quotients, \eqref{eq: actual multiplier identity}, and normal elliptic regularity give smoothness. Tangential Sobolev estimates and \eqref{eq: normal Hardy bound} first give finite solution and coefficient norms, with constants allowed to depend on the support of the data. Equation \eqref{eq: coefficient local estimate proof}, the Euclidean version of \eqref{eq: compact Green shell bound}, and the compactness argument of \Cref{prop: compact supremum estimate} then give bounds independent of that support. Rescaling gives the product inverse for the compactly supported sources used below.

\medskip

Next, we show \eqref{eq: product parametrix tail}. Write $z\in\R^3$ for the physical normal variable and suppose the product source is supported in $|z|\leq2s$, where $s\geq\eps$. As in \eqref{eq: compact Green shell bound}, integration of the Euclidean Green kernel in the tangential variables leaves the three-dimensional bound $C|z-z'|^{-1}$. Hence a normal shell of radius $r_j\leq2s$ contributes, at $r>4s$, at most $C(r_j/r)$ times its contribution to the source norm. The multiplier satisfies
\begin{equation}\label{eq: product multiplier tail}
 \int_{\R^3}\frac{\eps^2}{|z-z'|(\eps+|z'|)^4}\,dz'\leq C\frac{\eps}{|z|}\qquad(|z|\geq4\eps).
\end{equation}
The inner cutoff commutators and exponentially decaying charged couplings satisfy \eqref{eq: product multiplier tail}, multiplied by the solution norm. Equations \eqref{eq: product inverse bound} and \eqref{eq: product multiplier tail}, with $\eps\leq s$, therefore give an $O(s/r)$ supremum bound. Equation \eqref{eq: local Schauder estimate proof} controls its derivatives and H\"older seminorms; the multiplier contributes the smaller factor $(\eps/r)^{2-\tau}$. Thus \eqref{eq: product parametrix tail} follows. Equation \eqref{eq: product inverse bound} also bounds the coefficient in $C^{0,\tau}_\eps$.

\medskip

To prove \eqref{eq: product longitudinal extensions}, we replace $(a\sigma,\sigma\vartheta)$, which is generally singular at $y=0$, by a smooth solution of the homogeneous linearised equations with the same limit at infinity. The prescribed scalar coefficient $a$ will later be obtained by freezing the longitudinal scalar component of the exterior solution at a point of $\Gamma$.
Let 
\begin{equation*}
	{\bf V}^m\coloneqq\frac{d}{dm}\bigg\vert_{m=1}(m\varphi_0(m|y|)\sigma,\alpha_0(m|y|)A_\sigma).
\end{equation*}
Differentiating \eqref{eq:monopole-profiles} at $\lambda=0$, using \eqref{eq: noncompact mass variation}, gives $\widetilde{\bf L}_0{\bf V}^{m}=0$, and \eqref{eq:monopole-ansatz} gives ${\bf G}_0^*{\bf V}^{m}=0$. Hence \eqref{eq: gauge decom Q} gives ${\bf L}_0{\bf V}^{m}=0$, while \eqref{eq: BPS} gives ${\bf V}^{m}\to(\sigma,0)$. Thus $a{\bf V}^{m}$ provides the scalar limit. For tangential one-form coefficients, \eqref{eq:monopole-profiles} at $\lambda=0$ and \eqref{eq: qlambda} have the same ODE and boundary conditions, so $q_0=\varphi_0$. Consequently, $(0,\Phi_0\,dx^j)$ solves the tangential equation in \eqref{eq: constrained product system} and tends to $(0,\sigma\,dx^j)$.

For the normal one-form coefficients, construct smooth gauge parameters $\gamma_\alpha$, $\alpha=1,2,3$, satisfying
\begin{equation*}
 (\Delta_{A_0}-\ad_{\Phi_0}^2)\gamma_\alpha=0,
 \qquad \gamma_\alpha=y^\alpha\sigma+O(|y|^{-1}).
\end{equation*}
Cutting off $y^\alpha\sigma$ near zero gives a smooth field whose image under this operator decays exponentially outside a ball. Equation \eqref{eq: normal Hardy bound} and Lax--Milgram provide a correction in the homogeneous energy space for the form $\int_{\R^3}(|\nabla_{A_0}\gamma|^2+|[\Phi_0,\gamma]|^2)\,dy$. The argument leading to \eqref{eq: normal longitudinal tails} gives the $O(|y|^{-1})$ longitudinal correction, while the comparison in \eqref{eq: normal transverse comparison} gives exponential decay of its transverse component. Elliptic estimates yield the differentiated bounds. Gauge invariance from \Cref{sec: second variation} then gives
\begin{equation*}
\begin{split}
 {\bf L}_0({\bf G}_0\gamma_\alpha)
 &={\bf G}_0(\Delta_{A_0}-\ad_{\Phi_0}^2)\gamma_\alpha=0,\\
 {\bf G}_0\gamma_\alpha
 &=([\Phi_0,\gamma_\alpha],d_{A_0}\gamma_\alpha)
 \longrightarrow(0,\sigma\,dy^\alpha).
\end{split}
\end{equation*}
Combine these solutions with the coefficients prescribed by $a$ and $\vartheta$, then subtract normal kernel modes to impose the four zero moments against ${\bf E}_i$ from \eqref{eq: normal weighted tests}. The coefficients of this subtraction are determined by the matrix
\begin{equation*}
 \left(\int_{\R^3}\langle{\bf Z}_j,{\bf E}_i\rangle\,dy\right)_{i,j=1}^4,
\end{equation*}
which is the matrix in \eqref{eq: normal moment Gram matrix}, uniformly positive by the proof of \Cref{lem: normal weighted coercivity}. By \eqref{eq: linearised kernel}, \eqref{eq: monopole translations}, \eqref{eq: monopole phase mode} and \eqref{eq: approximation profile bounds}, the resulting pair ${\bf U}_{a,\vartheta}$ satisfies \eqref{eq: constrained product system} with ${\bf f}=0$ and $c_i=0$, together with \eqref{eq: product longitudinal extensions}.
\end{proof}

\begin{proposition}[The noncompact inverse]\label[proposition]{prop: noncompact ordinary inverse}
For every ${\bf F}\in Y_{\eps,h,\theta}$, there is a unique pair $({\bf\Psi},\mathfrak b)\in\mathcal E_{\eps,h,\theta}\times C^{0,\tau}_\eps(\Gamma;\R^4)$ solving \eqref{eq: constrained linear equation}, and this pair satisfies \eqref{eq: uniform inverse estimate}.
\end{proposition}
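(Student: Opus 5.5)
Uniqueness and the estimate \eqref{eq: uniform inverse estimate} already follow from \Cref{prop: noncompact a priori estimate}: the difference of two solutions solves the homogeneous problem, so it vanishes. It therefore remains to prove existence. I would build an approximate right inverse by gluing an interior product solution to an exterior weighted solution, and then close the argument with a Neumann series. By density and the uniform a priori bound, it suffices to treat ${\bf F}$ smooth and compactly supported, and then pass to the limit using \eqref{eq: uniform inverse estimate}.

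\emph{Splitting.} Fix a cutoff $\chi_{\inn}$ equal to one on $\{r_h<\delta/4\}$ and supported in the tube. Split ${\bf F}=\chi_{\inn}{\bf F}+(1-\chi_{\inn}){\bf F}$. For the interior part, I would work in Fermi coordinates with the gauge $g_\eps u_\theta$ removed, freezing the product structure locally in a partition of unity along $\Gamma$. On each patch, \Cref{lem: noncompact product constructions} gives $({\bf\Psi}_{\inn},\mathfrak b)$ with the bound \eqref{eq: product inverse bound}. It also gives the tail decay \eqref{eq: product parametrix tail} with $s\asymp\delta$ for the shells near the tube, and with smaller $s$ for the contributions from the inner shells. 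For the exterior part, on $M\setminus K_0$ the background is $((1-\eps H)\sigma,A_D)$. I would solve the longitudinal components with the weighted inverses $\Delta_0^{-1},\Delta_1^{-1}$ of \Cref{def: admissible amb noncomp}, applied to $\langle F_\Phi,\sigma\rangle$ and $\langle F_A,\sigma\rangle$. The transverse components I would solve with the massive operator $\nabla^*\nabla+4\eps^{-2}|\Phi|^2$, using the conjugated maximum principle from the proof of \Cref{lem: noncompact local remainder}.

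\emph{Extension through the core.} The exterior longitudinal solution $(\psi\sigma,b\sigma)$ is singular at $\Gamma$, because $\sigma$ is. I would replace it near $\Gamma$ by $\sum_q{\bf U}_{a(q),\vartheta(q)}$ from \eqref{eq: product longitudinal extensions}, where $a(q)=\psi|_\Gamma$ and $\vartheta(q)=b|_\Gamma$ are its traces. Since the longitudinal solution is $C^{2,\tau}$, the difference $\psi-a$ is $O(r)$. The matching error, the $O(1/|y|)$ discrepancy in \eqref{eq: product longitudinal extensions}, and the cutoff commutators at the scale $\sqrt\eps$ should then give an error in the $Y_{\eps,h,\theta}$ norm of order $o(1)\|{\bf F}\|_{Y}$. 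The resulting glued pair is then adjusted to satisfy the constraint $\mathfrak B{\bf\Psi}=0$. To do this I would subtract $\mathscr J_{\eps,h,\theta}\mathfrak B{\bf\Psi}$, built from the transported kernel modes as in \eqref{eq: fibre constraint right inverse}. The operator ${\bf L}$ applied to these modes is small by \eqref{eq: compact weighted test operator bound}, and the residual is absorbed into $\mathfrak B^*$ through the coefficient identity \eqref{eq: actual multiplier identity}.

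\emph{Closing.} Denote the resulting approximate inverse by $\mathcal P$. I then need $\|{\bf L}\mathcal P{\bf F}-{\bf F}-\mathfrak B^*\mathfrak b\|_Y\le o(1)\|{\bf F}\|_Y$. This has two sources: the deviation of the true metric, of the background, and of the parameters from the product model, which is bounded as in \eqref{eq: compact multiplier error bound}; and the gluing errors. A Neumann series then gives an exact solution. The main obstacle is the feedback between the two regions. The interior product solution has only the slow tail $(s/r)^{2/3}$ in its longitudinal components, and when this is cut off into the exterior it produces sources that must be summed over dyadic shells in the longitudinal $Y$ norm. Conversely, the exterior solution's trace enters the interior. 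To handle this I would not attempt a clean contraction. Instead I would iterate once: solve the exterior problem first, extend it using ${\bf U}_{a,\vartheta}$, and then use the product inverse only for the resulting compactly supported remainder together with the interior source. The dyadic sum $\sum_j(r_j/\delta)^{2/3}$ is geometric, so the feedback error carries a factor $\delta^{-2/3}$ times a small quantity coming from the $\sqrt\eps$ matching. If this is not small enough, the fallback is a compactness argument: using \Cref{prop: noncompact a priori estimate}, the map ${\bf F}\mapsto{\bf L}\mathcal P{\bf F}$ is Fredholm of index zero with trivial kernel on the constrained space, hence surjective.
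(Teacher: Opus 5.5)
Your overall architecture matches the paper's: uniqueness and the bound from \Cref{prop: noncompact a priori estimate}, then an approximate inverse assembled from exterior weighted solves, the extensions ${\bf U}_{a,\vartheta}$, the product inverse and a moment correction by $\mathscr J_{\eps,h,\theta}$, closed by a Neumann series. The gap is that you split the source at a \emph{fixed} radius, and then the remainder is not small, so the Neumann series does not close. With $\chi_{\inn}=1$ on $\{r_h<\delta/4\}$, the product inverse must absorb a source spread over the whole tube, and three errors are $O(1)$.
(i) Cutting the product solution off at the edge of the tube costs $O(\|{\bf F}\|_Y)$. Indeed, \eqref{eq: product parametrix tail} with $s\asymp r\asymp\delta$ gives no gain, and the commutator has weight $r_j^2\simeq\delta^2$ against $|\nabla^2\chi|\simeq\delta^{-2}$.
(ii) The Fermi coefficients differ from the product model by $O(r)$. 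This contributes $r_j$ on the $j$th longitudinal shell, hence $\sum_jr_j\simeq\delta$ after summation, which is fixed rather than $o(1)$. This is not what \eqref{eq: compact multiplier error bound} measures: that estimate tests against ${\bf E}_i=O(\langle y\rangle^{-4})$ and only sees the core.
(iii) Freezing on tangential patches of size $\ell$ costs $\ell$ per shell, i.e.\ $\ell|\log\eps|$ in total. The tangential cutoffs cost $\sum_{r_j\lesssim\delta}(r_j/\ell+r_j^2/\ell^2)\simeq\delta/\ell$, and no $\ell$ makes both small.

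Your remedy does not reach these errors. The sum $\sum_{r_j\leq\delta}(r_j/\delta)^{2/3}$ is dominated by its top terms, which are $O(1)$. The $\sqrt\eps$ matching only controls the passage from the exterior solution into the core, not the interior source on $\sqrt\eps\lesssim r_h\lesssim\delta$, and further iteration would still leave (ii) and (iii). The Fredholm fallback is also unjustified. \eqref{eq: uniform inverse estimate} gives injectivity and closed range, but ${\bf L}\mathcal P-I$ contains second-order coefficient errors and is not compact. You also give no argument for index zero of the constrained system on weighted spaces over a noncompact manifold.

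The missing idea is to let the splitting scale shrink with $\eps$. Outside that shrinking tube, the global weighted Poisson inverses should carry the longitudinal source, since there the longitudinal equations are exact Laplacians up to exponentially small couplings.

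In the paper, ${\bf F}^{\inn}=\eta(r_h/\sqrt\eps){\bf F}$. The longitudinal part of ${\bf F}^{\out}$ is solved on all of $M$ by the inverses of $\Delta_0,\Delta_1$. The summed longitudinal source norm together with \eqref{eq: noncompact Green shell bound} gives $|\psi^{\out}|+\eps|b^{\out}|\lesssim\|{\bf F}\|_Y$. The solution is harmonic on $r_h<\sqrt\eps$ and is replaced by ${\bf U}_{a,\vartheta}$ with a cutoff at $r_h\simeq\eps^{3/4}$; there the freezing of the traces and the $O(1/|y|)$ discrepancy each cost $\eps^{1/4}$. The transverse part is a Dirichlet problem on $r_h>\sqrt\eps/2$ and is exponentially small near that boundary.

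Only ${\bf F}^{\inn}$ goes to the product inverse, on tangential patches of size $\eps^{1/8}$, with the solution cut off at $r_h\simeq\eps^{1/4}$. Then \eqref{eq: product parametrix tail} with $s=\sqrt\eps$ gives $\eps^{1/6}$, and the tangential cutoffs give $\eps^{1/8}$. Metric freezing costs $\eps^{1/8}$ per shell, and the remaining Fermi and graph errors cost $\eps^{1/4}+\|h\|_{C^{2,\tau}}$, all up to logarithms.

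Two smaller points. First, the moment defect cannot be absorbed into $\mathfrak B^*$, whose range consists of $\rho_\eps{\bf Z}_j$ times functions on $\Gamma$. You need the defect $\mathfrak m$ itself, together with two scaled tangential derivatives, to be small. This follows from the zero product moments and the tails removed at $|y|\simeq\eps^{-1/4}$ and $\eps^{-3/4}$, via \eqref{eq: noncompact moment correction bound}. Second, smooth compactly supported data are not norm-dense in H\"older spaces. The final reduction should instead use approximation on compact sets with uniform bounds and lower semicontinuity, as the paper does.
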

\begin{proof}
We first take smooth compactly supported data and smooth parameters. We construct a uniformly bounded map
\begin{equation*}
 P_\eps\colon Y_{\eps,h,\theta}\longrightarrow
 \mathcal E_{\eps,h,\theta}\times C^{0,\tau}_\eps(\Gamma;\R^4),
 \qquad P_\eps{\bf F}=({\bf\Psi},\mathfrak b),
\end{equation*}
such that
\begin{equation}\label{eq: noncompact approximate inverse}
\begin{split}
 {\bf L}_{\eps,h,\theta}{\bf\Psi}
 -\mathfrak B_{\eps,h,\theta}^*\mathfrak b
 &={\bf F}+\mathcal R_\eps{\bf F},\\
 \|\mathcal R_\eps\|_{Y_{\eps,h,\theta}\to Y_{\eps,h,\theta}}
 &\lesssim (1+|\log\eps|)^N
 (\eps^{1/8}+\|h\|_{C^{2,\tau}})=o(1),
\end{split}
\end{equation}
for a fixed $N$. We construct $P_\eps$ in three steps: solve for the exterior source and extend through the core; solve for the interior source using the product inverse; restore the fibre constraints. We then remove $\mathcal R_\eps$ by a Neumann series. To record the transports explicitly, write
\begin{equation*}
 \mathscr S_\eps{\bf V}\coloneqq
 (\widehat V_\Phi,\widehat V_N+\widehat b),
\end{equation*}
using all the coefficients of \Cref{def: rescaling convention}; the normal and tangential covectors are identified with the corresponding constant covectors of the product model. Throughout the proof, cutoff errors are computed from
\begin{equation}\label{eq: noncompact cutoff commutator}
 [{\bf L}_{\eps,h,\theta},\chi]{\bf V}
 =(\Delta_0\chi){\bf V}-2\nabla_{A_{h,\theta},\nabla\chi}{\bf V},
\end{equation}
where $[{\bf L}_{\eps,h,\theta},\chi]{\bf V}={\bf L}_{\eps,h,\theta}(\chi{\bf V})-\chi{\bf L}_{\eps,h,\theta}{\bf V}$. On a cutoff annulus of radius $r_j$, the factor $r_j^2$ in \eqref{eq: source norm} compensates for the two derivatives of $\chi$; the remaining terms are controlled by the zeroth and first derivative terms of \eqref{eq: sol norm}.

\subsubsection*{The exterior source}
Decompose the source as
\begin{equation*}
 {\bf F}^{\inn}=\eta\left(\frac{r_h}{\sqrt\eps}\right){\bf F}, \qquad {\bf F}^{\out}=\left(1-\eta\left(\frac{r_h}{\sqrt\eps}\right)\right){\bf F}.
\end{equation*}
Both parts have source norm bounded by $C\|{\bf F}\|_{Y_{\eps,h,\theta}}$. Write ${\bf F}^{\out}=(F_\Phi^{\out},F_A^{\out})$. Write $\sigma_h=\Phi_{h,\theta}/|\Phi_{h,\theta}|$ on $M\setminus\Gamma_h$. For the longitudinal part, use the inverses in \Cref{def: admissible amb noncomp} to solve
\begin{equation*}
 \Delta_0\psi^{\out}=\langle F_\Phi^{\out},\sigma_h\rangle,
 \qquad \Delta_1 b^{\out}=\langle F_A^{\out},\sigma_h\rangle.
\end{equation*}
The right-hand sides vanish near $\Gamma_h$ and are extended by zero there. Equations \eqref{eq: noncompact Green kernel bound}, \eqref{eq: noncompact Green shell bound} and \eqref{eq: source norm} bound $|\psi^{\out}|+\eps|b^{\out}|$ by $C\|{\bf F}\|_{Y_{\eps,h,\theta}}$. Since these coefficients are harmonic for $r_h<\eps^{1/2}$, interior estimates give, near $\Gamma_h$,
\begin{equation}\label{eq: noncompact exterior derivative bounds}
 |\nabla^k\psi^{\out}|+\eps|\nabla^k b^{\out}|
 \lesssim \eps^{-k/2}\|{\bf F}\|_{Y_{\eps,h,\theta}},
 \qquad k=0,1,2,
\end{equation}
with the corresponding scaled H\"older bounds.

The pair $(\psi^{\out}\sigma_h,b^{\out}\sigma_h)$ need not extend smoothly through the core. Freeze its real coefficients on $\Gamma_h$:
\begin{equation*}
 a(q)\coloneqq\psi^{\out}(F_h(q,0)),\qquad
 \vartheta(q)\coloneqq\eps b^{\out}|_{F_h(q,0)}.
\end{equation*}
Here $\vartheta(q)$ is expressed in the transported orthonormal frame and identified with a constant one-form on the product. Define the smooth extension ${\bf U}^{\circ}$ by
\begin{equation*}
 (\mathscr S_\eps{\bf U}^{\circ})(q,y)
 \coloneqq{\bf U}_{a(q),\vartheta(q)}(y),
\end{equation*}
where the right-hand side is supplied by \Cref{lem: noncompact product constructions}. We join it to $(\psi^{\out}\sigma_h,b^{\out}\sigma_h)$ using $\eta\left(\frac{r_h}{\eps^{3/4}}\right)$; the cutoff error is
\begin{equation*}
 \left[{\bf L}_{\eps,h,\theta},\eta\left(\frac{r_h}{\eps^{3/4}}\right)\right]
 \left({\bf U}^{\circ}-(\psi^{\out}\sigma_h,b^{\out}\sigma_h)\right).
\end{equation*}

On the matching annulus, freezing the exterior coefficients and replacing their constant values by ${\bf U}_{a,\vartheta}$ give, respectively, the factors
\begin{equation*}
 \frac{\eps^{3/4}}{\eps^{1/2}}=\eps^{1/4},
 \qquad \frac{\eps}{\eps^{3/4}}=\eps^{1/4}.
\end{equation*}
The first follows from \eqref{eq: noncompact exterior derivative bounds}, and the second from \eqref{eq: product longitudinal extensions}; both multiply $C\|{\bf F}\|_{Y_{\eps,h,\theta}}$ and hold with the required scaled derivatives. Equation \eqref{eq: noncompact cutoff commutator} therefore gives the same bound for the matching error in the source norm. Equation \eqref{eq: constrained product system} cancels the leading normal terms in the core; tangential second derivatives of the frozen coefficients contribute the smaller factor $\eps^{3/2}\eps^{-1}=\eps^{1/2}$. The metric and background errors satisfy \eqref{eq: noncompact parametrix coefficient errors} below.

For the transverse part of ${\bf F}^{\out}$, take the Dirichlet inverse on $r_h>\eps^{1/2}/2$, and denote its solution by ${\bf V}^{\out,\perp}$. The positive zeroth-order coefficient, bounded below by $c\eps^{-2}$, gives existence on bounded domains and then by exhaustion, with the weighted bounds from the proof of \Cref{lem: noncompact local remainder} and \eqref{eq: parameter Schauder estimate}. The source vanishes for $r_h<\eps^{1/2}$, so the solution and its scaled derivatives are exponentially small in a region separated from the source by a fixed multiple of $\eps^{1/2}$. Choose a smooth cutoff $\chi_{\out}$ with
\begin{equation*}
\begin{gathered}
 \chi_{\out}=0\quad\text{for }r_h\leq\frac{\sqrt\eps}{2},\qquad
 \chi_{\out}=1\quad\text{for }r_h\geq\frac{3\sqrt\eps}{4},\\
 |\nabla^k\chi_{\out}|\lesssim\eps^{-k/2}\quad(k=1,2).
\end{gathered}
\end{equation*}
Extending $\chi_{\out}{\bf V}^{\out,\perp}$ by zero gives the global outer solution
\begin{equation}\label{eq: noncompact global outer solution}
\begin{split}
 {\bf\Psi}^{\out}
 ={}&\eta\left(\frac{r_h}{\eps^{3/4}}\right){\bf U}^{\circ}
 +\left(1-\eta\left(\frac{r_h}{\eps^{3/4}}\right)\right)
 (\psi^{\out}\sigma_h,b^{\out}\sigma_h)\\
 &+\chi_{\out}{\bf V}^{\out,\perp}.
\end{split}
\end{equation}
The transverse cutoff error is $[{\bf L}_{\eps,h,\theta},\chi_{\out}]{\bf V}^{\out,\perp}$, which is exponentially small by \eqref{eq: noncompact cutoff commutator}.

\subsubsection*{The interior source and the matching error}
Choose a partition of unity $(\chi_\ell)$ on tangential patches of radius $\eps^{1/8}$, with bounded overlap, and larger cutoffs $\widetilde\chi_\ell$. Extend them constantly in the normal variable and set
\begin{equation*}
 \sum_\ell\chi_\ell=1,\qquad
 \widetilde\chi_\ell=1\text{ on }\supp\chi_\ell,\qquad
 {\bf F}_\ell\coloneqq\chi_\ell{\bf F}^{\inn},\qquad
 \sum_\ell{\bf F}_\ell={\bf F}^{\inn}.
\end{equation*}
Their derivatives of order $k$ are bounded by $C\eps^{-k/8}$. At the centre $q_\ell$, put
\begin{equation*}
 p_\ell(x)\coloneqq\exp_{q_\ell}^\Gamma(\eps x),\qquad
 (\mathscr S_\ell{\bf V})(x,y)\coloneqq
 (\mathscr S_\eps{\bf V})(p_\ell(x),y).
\end{equation*}
Freeze the tangential metric at $q_\ell$. Apply \Cref{lem: noncompact product constructions} with source $\eps^2\mathscr S_\ell{\bf F}_\ell$, and denote the resulting product solution and coefficient by $({\bf U}_\ell,c_\ell)$. Their physical counterparts and the assembled interior field are
\begin{equation}\label{eq: noncompact interior assembly}
\begin{aligned}
 {\bf V}_\ell&\coloneqq\mathscr S_\ell^{-1}{\bf U}_\ell,
 &\mathfrak b_\ell(p_\ell(x))&\coloneqq c_\ell(x),\\
 \eta_{\inn}&\coloneqq\eta\left(\frac{r_h}{\eps^{1/4}}\right),
 &{\bf\Psi}^{\inn}&\coloneqq\eta_{\inn}\sum_\ell\widetilde\chi_\ell{\bf V}_\ell.
\end{aligned}
\end{equation}
Each summand is extended by zero off its patch. Since $\eta_{\inn}\widetilde\chi_\ell=1$ on $\supp{\bf F}_\ell$ for small $\eps$, the source terms in \eqref{eq: noncompact interior assembly} sum to ${\bf F}^{\inn}$. The remaining errors come from the cutoffs and the difference between the frozen product operator and the actual operator.

The source is supported in $r_h\leq2\eps^{1/2}$, whereas the radial cutoff varies at $r_h\simeq\eps^{1/4}$. Apply \eqref{eq: product parametrix tail} with $s=\eps^{1/2}$ and $r\simeq\eps^{1/4}$, then use \eqref{eq: noncompact cutoff commutator}. The radial cutoff error in the source norm is bounded by $C(\eps^{1/2}/\eps^{1/4})^{2/3}\|{\bf F}\|_{Y_{\eps,h,\theta}}\lesssim \eps^{1/6}\|{\bf F}\|_{Y_{\eps,h,\theta}}$.

For tangential cutoffs, the first term in \eqref{eq: noncompact cutoff commutator} contributes $r_j^2\eps^{-1/4}$ on $\Omega_{j,h}$, and the second contributes $r_j\eps^{-1/8}$. Thus their total source norm is bounded by $\|{\bf F}\|_{Y_{\eps,h,\theta}}$ times
\begin{equation}\label{eq: noncompact tangential cutoff error}
 C\sum_{r_j\leq C\eps^{1/4}}\left(\frac{r_j}{\eps^{1/8}}+\frac{r_j^2}{\eps^{1/4}}\right)\lesssim \eps^{1/8}.
\end{equation}
On each patch, the tangential metric differs from its value at the centre by $O(\eps^{1/8})$. Multiplication by $r_j^2$ bounds its contribution on each shell by $C\eps^{1/8}$ times the solution norm; summing introduces a factor of $C(1+|\log\eps|)$. For both the interior solution and the exterior core extension, the remaining Fermi metric, background and graph errors are estimated as in \eqref{eq: coefficient local estimate proof}. Their source norms are bounded by
\begin{equation}\label{eq: noncompact parametrix coefficient errors}
 C\bigl(\eps^{1/4}+\eps+\|h\|_{C^{2,\tau}}\bigr)(1+|\log\eps|)^N\|{\bf F}\|_{Y_{\eps,h,\theta}}
\end{equation}
for a fixed $N$. Only two derivatives of the parameters enter.

\subsubsection*{The fibre constraints and conclusion}
Using \eqref{eq: noncompact global outer solution} and \eqref{eq: noncompact interior assembly}, define the field before imposing the constraints and its residual coefficient by
\begin{equation}\label{eq: noncompact preliminary assembly}
 {\bf\Psi}^{\circ}\coloneqq{\bf\Psi}^{\out}+{\bf\Psi}^{\inn},\qquad
 \mathfrak b\coloneqq\sum_\ell\widetilde\chi_\ell\mathfrak b_\ell.
\end{equation}
The product solutions have zero model moments, but cutting them off and transporting them need not preserve the actual moments. Record their defect as the vector $\mathfrak{m}=(\mathfrak{m}_1,\ldots,\mathfrak{m}_4)^T$, with
\begin{equation}\label{eq: noncompact moment defect}
 \mathfrak{m}_j(q)\coloneqq\mathfrak b_j[{\bf\Psi}^{\circ}](q)
 =\int_{\R^3}\langle\widehat{{\bf\Psi}^{\circ}}(q,y),{\bf E}_j(y)\rangle\,dy,
 \qquad j=1,\ldots,4,
\end{equation}
where ${\bf E}_j={\bf E}_{j,\delta/(4\eps)}$ and the equality follows from \eqref{eq: fibre constraints}. Indeed, the prefactor $\eps^{-2}$ in \eqref{eq: fibre constraints}, the factor $\eps^{-1}$ in the weighted pairing and the normal volume factor $\eps^3$ cancel. Use the right inverse \eqref{eq: fibre constraint right inverse} to set
\begin{equation}\label{eq: noncompact moment correction}
 {\bf\Psi}\coloneqq{\bf\Psi}^{\circ}
 -\mathscr J_{\eps,h,\theta}(\mathfrak{m}),\qquad
 \mathfrak B_{\eps,h,\theta}{\bf\Psi}=0.
\end{equation}
To estimate this subtraction, compare \eqref{eq: noncompact moment defect} with the zero product moments in \eqref{eq: constrained product system}. The outer and inner radial cutoffs remove tails starting, respectively, at
\begin{equation*}
 |y|\simeq\frac{\eps^{3/4}}{\eps}=\eps^{-1/4},\qquad
 |y|\simeq\frac{\eps^{1/4}}{\eps}=\eps^{-3/4}.
\end{equation*}
These tails are small by \eqref{eq: normal moment tail}, \eqref{eq: product parametrix tail} and the mode bounds \eqref{eq: monopole translations}, \eqref{eq: monopole phase mode}. The remaining differences are the coefficient errors in \eqref{eq: noncompact parametrix coefficient errors}. The same estimates apply after tangential differentiation: the frozen outer coefficients satisfy \eqref{eq: noncompact exterior derivative bounds}, so their second derivatives contribute $\eps^2\eps^{-1}=\eps$ in the rescaled operator; the interior cutoffs satisfy \eqref{eq: noncompact tangential cutoff error}.

The explicit construction of $\mathscr J$ following \eqref{eq: fibre constraint right inverse} uses four cut-off modes and the inverse of their uniformly positive moment matrix. Thus $\mathscr J(\mathfrak{m})$ has rescaled coefficients of size $\mathfrak{m}$, and applying the operator uses at most two scaled tangential derivatives of $\mathfrak{m}$:
\begin{equation}\label{eq: noncompact moment correction bound}
\begin{split}
 &\|\mathscr J_{\eps,h,\theta}(\mathfrak{m})\|_{X_{\eps,h,\theta}}
 +\|{\bf L}_{\eps,h,\theta}\mathscr J_{\eps,h,\theta}(\mathfrak{m})\|_{Y_{\eps,h,\theta}}\lesssim\sum_{k=0}^2\eps^k\|\nabla_\Gamma^k \mathfrak{m}\|_{C^{0,\tau}_\eps}.
\end{split}
\end{equation}
The preceding moment estimates bound the right-hand side by the same small factors as the cutoff and coefficient errors.

Equations \eqref{eq: noncompact preliminary assembly}, \eqref{eq: noncompact moment correction}, \eqref{eq: product inverse bound} and \eqref{eq: product longitudinal extensions} now give a uniformly bounded map $P_\eps{\bf F}=({\bf\Psi},\mathfrak b)$. Its error has the exact decomposition
\begin{equation}\label{eq: noncompact residual decomposition}
\begin{split}
 \mathcal R_\eps{\bf F}
 &=({\bf L}_{\eps,h,\theta}{\bf\Psi}^{\out}-{\bf F}^{\out})\\
 &+\sum_\ell\left({\bf L}_{\eps,h,\theta}
 (\eta_{\inn}\widetilde\chi_\ell{\bf V}_\ell)-{\bf F}_\ell
 -\mathfrak B_{\eps,h,\theta}^*(\widetilde\chi_\ell\mathfrak b_\ell)\right)\\
 &-{\bf L}_{\eps,h,\theta}\mathscr J_{\eps,h,\theta}(\mathfrak{m}).
\end{split}
\end{equation}
The three lines are the outer matching error, the localised product errors (including the cutoff of the multiplier tails), and the moment-correction error. The outer matching gives $\eps^{1/4}$ by \eqref{eq: noncompact exterior derivative bounds} and \eqref{eq: product longitudinal extensions}; the inner radial cutoff gives $\eps^{1/6}$ by \eqref{eq: product parametrix tail}; tangential cutoffs give $\eps^{1/8}$ by \eqref{eq: noncompact tangential cutoff error}. Adding \eqref{eq: noncompact parametrix coefficient errors} and \eqref{eq: noncompact moment correction bound} in \eqref{eq: noncompact residual decomposition} yields
\begin{equation*}
\begin{split}
 \|\mathcal R_\eps\|_{Y_{\eps,h,\theta}\to Y_{\eps,h,\theta}} &\lesssim (1+|\log\eps|)^N\left(\eps^{1/4}+\eps^{1/6}+\eps^{1/8}+\eps+\|h\|_{C^{2,\tau}}\right)\\
 &\lesssim (1+|\log\eps|)^N\left(\eps^{1/8}+\|h\|_{C^{2,\tau}}\right).
\end{split}
\end{equation*}
The inverses in \Cref{def: admissible amb noncomp} and \Cref{lem: noncompact local remainder} control the weighted exterior norm \eqref{eq: noncompact exterior solution norm}. This proves \eqref{eq: noncompact approximate inverse}. Choose $\eps$ small enough that $\|\mathcal R_\eps\|<1/2$ and define
\begin{equation*}
 ({\bf\Psi}_{\bf F},\mathfrak b_{\bf F})
 \coloneqq P_\eps(I+\mathcal R_\eps)^{-1}{\bf F},\qquad
 (I+\mathcal R_\eps)^{-1}=\sum_{k=0}^{\infty}(-\mathcal R_\eps)^k.
\end{equation*}
By \eqref{eq: noncompact approximate inverse}, this pair solves \eqref{eq: constrained linear equation} exactly; by \eqref{eq: noncompact moment correction}, it satisfies the fibre constraints. The a priori estimate of \Cref{prop: noncompact a priori estimate} gives uniqueness and the uniform bound. Smooth approximation on compact subsets and lower semicontinuity give the result for the stated data and parameters, as in the compact proof of \Cref{prop: uniform inverse}.
\end{proof}

\begin{corollary}[The restricted noncompact inverse]\label[corollary]{cor: noncompact restricted inverse}
For every ${\bf\Psi}\in X_{\eps,h,\theta}$,
\begin{equation}\label{eq: noncompact restricted perturbation}
 \|\eps^{-1}{\bf G}_{h,\theta}\Pi_h{\bf G}_{h,\theta}^*{\bf\Psi}\|_{Y_{\eps,h,\theta}}
 \lesssim \eps(1+|\log\eps|)\|{\bf\Psi}\|_{X_{\eps,h,\theta}}.
\end{equation}
Consequently, $\widehat{\bf L}_{h,\theta}$ admits the constrained inverse and uniform estimate asserted in \Cref{prop: uniform inverse}.
\end{corollary}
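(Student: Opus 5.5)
The plan mirrors the compact case in \Cref{cor: compact restricted inverse}: first prove the perturbation bound \eqref{eq: noncompact restricted perturbation}, then obtain the inverse for $\widehat{\bf L}_{h,\theta}$ by a Neumann series. Since $[\Phi_{h,\theta},\xi_{\eps,h}]=0$, the Higgs part of ${\bf G}_{h,\theta}\xi_{\eps,h}$ vanishes. Integrating by parts against the cutoff $\chi$ in \eqref{eq: phase projection} gives the rank-one formula
\begin{equation*}
 \eps^{-1}{\bf G}_{h,\theta}\Pi_h{\bf G}_{h,\theta}^*{\bf\Psi}
 =\frac{\int_M\langle B,d_{A_{h,\theta}}(\chi\xi_{\eps,h})\rangle\,d\vol_g}{\int_M\chi|\xi_{\eps,h}|^2\,d\vol_g}\,(0,d_{A_{h,\theta}}\xi_{\eps,h}).
\end{equation*}
The denominator is bounded below uniformly, because $\chi$ is a fixed compactly supported function equal to one near the tube, and $|\xi_{\eps,h}|=|q_\eps|\to1$ away from a shrinking core.

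For the numerator, I will split $d_{A_{h,\theta}}(\chi\xi_{\eps,h})=\chi\, d_{A_{h,\theta}}\xi_{\eps,h}+d\chi\,\xi_{\eps,h}$. The first term is handled exactly as in \eqref{eq: compact phase generator bounds}: it has $L^1$ norm $O(\eps)$, so pairing with $B$ gives $\lesssim \eps\|B\|_\infty\lesssim\|{\bf\Psi}\|_X$. The second term is supported on the fixed region $\supp d\chi$, away from $\Gamma$, where the norm only controls the weighted exterior quantities \eqref{eq: noncompact exterior solution norm}. There I will use $\eps\|B\|_{L^\infty(\supp d\chi)}\lesssim\|{\bf\Psi}\|_X$, which gives a bound $\lesssim\eps^{-1}\|{\bf\Psi}\|_X$. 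This bound is too weak by a factor $\eps^{-1}$ compared with the compact case, and it is the main obstacle. To recover the loss, I will note that on $\supp d\chi$ the generator $\xi_{\eps,h}$ equals $\sigma$ and is parallel: $d_{A_D}\sigma=0$ there, and $\chi$ is chosen with the gluing regions inside $\{\chi=1\}$. So this term is $\int\langle B^\parallel,\sigma\rangle\cdot d\chi$. Integrating by parts once more turns it into $-\int\chi\, d^*\langle B,\sigma\rangle$ plus the already-controlled core contribution. The factor $d^*\langle B,\sigma\rangle$ is bounded by the $C^{1}$ part of the solution norm, again with only the weight $\eps$, so I still expect a total numerator bound of $\lesssim\eps^{-1}\|{\bf\Psi}\|_X$ in the worst case. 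This has to be compensated on the other side of the estimate.

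The compensation comes from the source norm. Repeating the annular computation of \eqref{eq: compact phase source bound} gives $\|(0,d_{A_{h,\theta}}\xi_{\eps,h})\|_{Y}\lesssim\eps^2(1+|\log\eps|)$ on the dyadic shells. On the exterior region, $d_{A_{h,\theta}}\xi_{\eps,h}$ vanishes identically outside the tube, since $q_\eps=1$ there and $d_{A_D}\sigma=0$, so the weighted norm \eqref{eq: noncompact exterior source norm} contributes nothing. Multiplying the numerator bound $\eps^{-1}\|{\bf\Psi}\|_X$ by $\eps^2(1+|\log\eps|)$ gives \eqref{eq: noncompact restricted perturbation}. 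If the sharper numerator bound $\|{\bf\Psi}\|_X$ holds, the estimate improves to $\eps^2(1+|\log\eps|)$, but the weaker rate suffices.

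Finally, compose this perturbation with the constrained inverse of \Cref{prop: noncompact ordinary inverse}. That inverse is uniformly bounded, so the composition has operator norm $O(\eps(1+|\log\eps|))<1$ for small $\eps$. The Neumann series then produces a unique $({\bf\Psi},\mathfrak b)\in\mathcal E_{\eps,h,\theta}\times C^{0,\tau}_\eps$ solving $\widehat{\bf L}_{h,\theta}{\bf\Psi}={\bf F}+\mathfrak B^*\mathfrak b$, with estimate \eqref{eq: uniform inverse estimate}. The constraint is preserved because each term of the series lies in the range of the constrained inverse, so the fibre constraints hold throughout.
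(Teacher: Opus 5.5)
Your proposal is correct and follows the paper's proof. The paper's numerator bound $|\langle{\bf\Psi},{\bf G}_{h,\theta}(\chi\xi_{\eps,h})\rangle_{L^2_\eps(M)}|\lesssim\|{\bf\Psi}\|_{X_{\eps,h,\theta}}$ is exactly your coefficient bound $\eps^{-1}\|{\bf\Psi}\|_{X_{\eps,h,\theta}}$, since the $L^2_\eps$ pairing carries the extra factor $\eps$. Combined with $\|{\bf G}_{h,\theta}\xi_{\eps,h}\|_{Y_{\eps,h,\theta}}\lesssim\eps^2(1+|\log\eps|)$ and the Neumann series, this gives the result, so the integration-by-parts detour is unnecessary: the $\eps^{-1}$ loss from $d\chi$ is precisely why the stated rate is $\eps$ rather than $\eps^2$.

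One small correction: $d_{A_{h,\theta}}\xi_{\eps,h}$ does not vanish on all of $\Omega_{J_\eps+1,h}=\{r_h>\delta/4\}$. It is $O(\eps)$ on the collar $\delta/4<r_h<\delta$ inside the tube, but the weights are bounded there, so this adds only $O(\eps^2)$ to the exterior source norm.
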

\begin{proof}
The proof is the same as for \Cref{cor: compact restricted inverse}, except that $d\chi$ now contributes on a fixed compact set. By \eqref{eq: compact phase generator bounds}, \eqref{eq: noncompact shifted approximation} and \eqref{eq: noncompact exterior source norm}, the phase generator satisfies
\begin{equation}\label{eq: noncompact phase generator bounds}
\begin{split}
 \|{\bf G}_{h,\theta}\xi_{\eps,h}\|_{Y_{\eps,h,\theta}}&\lesssim \eps^2(1+|\log\eps|),\\
 \int_{\supp\chi}|d_{A_{h,\theta}}\xi_{\eps,h}|\,d\vol_g&\lesssim \eps.
\end{split}
\end{equation}
The denominator $\int_M\chi|\xi_{\eps,h}|^2\,d\vol_g$ is bounded below. Since $[\Phi_{h,\theta},\xi_{\eps,h}]=0$ and $d\chi$ has fixed compact support,
\begin{equation}\label{eq: noncompact gauge projection numerator bound}
 |\langle{\bf\Psi},{\bf G}_{h,\theta}(\chi\xi_{\eps,h})\rangle_{L^2_\eps(M)}|\lesssim \|{\bf\Psi}\|_{X_{\eps,h,\theta}}.
\end{equation}
Equations \eqref{eq: noncompact phase generator bounds} and \eqref{eq: noncompact gauge projection numerator bound} prove \eqref{eq: noncompact restricted perturbation} by \eqref{eq: phase projection}. The Neumann-series argument of \Cref{cor: compact restricted inverse} now applies to \Cref{prop: noncompact ordinary inverse} and gives the restricted inverse.
\end{proof}
Together, \Cref{prop: noncompact ordinary inverse} and \Cref{cor: noncompact restricted inverse} complete the proof of \Cref{prop: uniform inverse} on noncompact $M$.

\section{The reduced equation}\label{sec: reduced equation proof}
This section establishes \Cref{prop: reduced equation} by identifying the reduced operator and estimating its remainder and parameter dependence. We proceed as follows.
\begin{itemize}
 \item Lemmas~\ref{lem: reduced adjoint tests} and~\ref{lem: reduced coefficient operators} construct the adjoint tests and coefficient operators, with the H\"older bounds needed to recover the residual coefficients.
 \item \Cref{lem: reduced matching} gives the tangential matching estimates needed to project the approximation error.
 \item Lemmas~\ref{lem: reduced source size} and~\ref{lem: reduced phase source size} identify the Jacobi and Laplace terms and bound the translation and phase remainders of the approximation.
 \item \Cref{lem: reduced complementary size} bounds the additional remainder produced by the nonlinear correction.
 \item Lemmas~\ref{lem: reduced source differences} and~\ref{lem: reduced complementary differences} control both remainders under changes of $(h,\theta)$, yielding the Lipschitz bounds for the reduced contraction.
\end{itemize}

\medskip

We take $\lambda=\lambda_\eps$ in the compact case and $\lambda=0$ in the noncompact case. Write $\beta^{\mathrm{tan}}=\beta_jdq^j$ for the tangential trace of $\beta$.

\subsection{The coefficient of the product equation}
Our goal is to express the four coefficients in \eqref{eq: complementary nonlinear system} in terms of the source, so that the vanishing of the translation and mean-zero phase coefficients gives the reduced equation; the remaining constant phase coefficient is removed by \Cref{lem: constant phase residual}. We first recover them for a reference problem over $\Gamma$.

\medskip

At this point a distinction arises from the Allen--Cahn or abelian Higgs constructions \cite{Pacard-Ritore2003,BadranDelPino23,BadranDelPino24}. In those settings, the normal translation modes decay exponentially, allowing the reduced coefficients to be extracted by projection onto the kernel, with exponentially small errors from the normal cutoffs. Here the monopole modes have algebraic tails, and our global correction satisfies the weighted fibre constraints \eqref{eq: fibre constraints}. Orthogonality to the weighted tests does not imply orthogonality to the kernel modes themselves, so direct projection leaves tangential derivatives of unknown moments of the correction. We therefore recover the coefficients through adjoint tests adapted to the weighted constraints and the tangential spectrum, see \eqref{eq: reduced resolvent tests}. 

\medskip

Use the fixed normal trivialisation to identify $N\Gamma$ with $\Gamma\times\R^3$, with variables $(q,y)$. Let $\mathcal P_t$ be the transport of normal pairs induced by normal parallel transport along a curve $\gamma$ in $\Gamma$, rotating the normal variable, normal covectors and Lie algebra components simultaneously. For $\gamma(0)=q$ and $\dot\gamma(0)=X$, define
\begin{equation*}
 (\nabla^\top_X{\bf V})(q,\cdot)
 \coloneqq\left.\frac{d}{dt}\right|_{t=0}
 \mathcal P_t^{-1}{\bf V}(\gamma(t),\cdot).
\end{equation*}
Use ${\bf E}_a={\bf E}_{a,\delta/(4\eps)}$ from \eqref{eq: normal weighted tests}, evaluated at $\lambda$. In particular, for $s=s^a\nu_a\in C^\infty(N\Gamma)$ and $\phi\in C^\infty(\Gamma)$, equivariance gives
\begin{equation}\label{eq: reduced test equivariance}
 \nabla^\top_X\left(s^a{\bf E}_a\right)
 =(\nabla^\perp_Xs)^a{\bf E}_a,
 \qquad \nabla^\top_X(\phi{\bf E}_4)=X(\phi){\bf E}_4.
\end{equation}
Define
\begin{equation}\label{eq: reduced product operators}
 \Delta_\Gamma^\perp=(\nabla^\perp)^*\nabla^\perp,
 \qquad J_\Gamma=-\Delta_\Gamma^\perp+V_\Gamma,
 \qquad {\bf L}^{\mathrm{prod}}_{\eps,\lambda}
 ={\bf L}_\lambda+\eps^2(\nabla^\top)^*\nabla^\top,
\end{equation}
where $V_\Gamma$ is the zeroth-order part of the Jacobi operator. Write $\langle\cdot,\cdot\rangle_2$ for the normal $L^2$ pairing. For a normal source pair ${\bf f}(q,y)$, the reference system is
\begin{equation}\label{eq: reduced reference system}
\begin{cases}
 {\bf L}^{\mathrm{prod}}_{\eps,\lambda}{\bf V}
 =-{\bf f}+\displaystyle\sum_{a=1}^4c_a(q){\bf E}_a(y),\\[2mm]
 \langle{\bf E}_a,{\bf V}(q,\cdot)\rangle_2=0,
 \qquad a=1,\ldots,4.
\end{cases}
\end{equation}
The unknown coefficients form a section $c=(c^{\mathrm{tr}},c_4)$ of $N\Gamma\oplus\R$, with $c^{\mathrm{tr}}=c_a\nu_a$.

To recover $c$, take an eigensection $s$ of $\Delta_\Gamma^\perp$ or an eigenfunction $s$ of $\Delta_\Gamma$, with eigenvalue $\mu$. Regard it as a section of $N\Gamma\oplus\R$ by setting its other components to zero, and put $t=\eps^2\mu$. Rotation equivariance of ${\bf L}_\lambda$ shows that it commutes with normal parallel transport. Thus, for $t>0$, the tests
\begin{equation}\label{eq: reduced resolvent tests}
 {\bf u}_a(t)=({\bf L}_\lambda+t)^{-1}{\bf E}_a,
 \qquad {\bf u}_{s,t}(q,y)=\sum_{a=1}^4s^a(q){\bf u}_a(t,y)
\end{equation}
satisfy ${\bf L}^{\mathrm{prod}}_{\eps,\lambda}{\bf u}_{s,t}=\sum_{a=1}^4s^a{\bf E}_a$ by \eqref{eq: reduced test equivariance} and \eqref{eq: reduced product operators}. This explains the shifted normal operator ${\bf L}_\lambda+t$. Testing \eqref{eq: reduced reference system} against \eqref{eq: reduced resolvent tests}, self-adjointness and the zero fibre moments give
\begin{equation}\label{eq: reduced testing identity}
0=\langle{\bf u}_{s,t}, {\bf L}^{\mathrm{prod}}_{\eps,\lambda}{\bf V}\rangle_{L^2(\Gamma\times\R^3)}=-\langle{\bf u}_{s,t},{\bf f}\rangle_{L^2(\Gamma\times\R^3)} +\int_\Gamma\sum_{a,b=1}^4s^a c_b \langle{\bf u}_a(t),{\bf E}_b\rangle_2\,d\mathrm{vol}_\Gamma.
\end{equation}
Rotational symmetry makes the normal pairing matrix diagonal, with equal first three entries. Normalising these entries therefore recovers the spectral coefficients of $c$. Set $T_t=t({\bf L}_\lambda+t)^{-1}$, whose factor $t$ removes the $t^{-1}$ growth on the kernel, and define
\begin{equation}\label{eq: reduced adjoint symbols}
Q_a(t)=\langle{\bf E}_a,T_t{\bf E}_a\rangle_2,\qquad
{\bf J}_{a,t}=\frac{T_t{\bf E}_a}{Q_a(t)},\qquad
F_a(t)=\frac{\langle{\bf E}_a,{\bf Z}_a\rangle_2}{Q_a(t)}.
\end{equation}
Here $Q_a$ ensures $\langle{\bf E}_a,{\bf J}_{a,t}\rangle_2=1$. Dividing \eqref{eq: reduced testing identity} by the corresponding diagonal entry gives
\begin{equation}\label{eq: reduced spectral recovery}
 \langle c,s\rangle_{L^2(\Gamma)}
 =\int_\Gamma\sum_{a=1}^4s^a(q)
 \langle{\bf J}_{a,\eps^2\mu},{\bf f}(q,\cdot)\rangle_2\,d\mathrm{vol}_\Gamma.
\end{equation}
By \eqref{eq: linearised kernel}, $T_t{\bf Z}_a={\bf Z}_a$, so the function $F_a(t)$ gives the coefficient produced by a source with normal profile ${\bf Z}_a$. Write $F_{\mathrm{tr}}=F_1=F_2=F_3$ and $F_{\mathrm{ph}}=F_4$. For $\mu=0$, testing with $\sum_a s^a{\bf Z}_a$ gives \eqref{eq: reduced spectral recovery} with
\begin{equation}\label{eq: reduced zero spectral tests}
 {\bf J}_{a,0}=\frac{{\bf Z}_a}{\langle{\bf E}_a,{\bf Z}_a\rangle_2},
 \qquad F_a(0)=\frac{\|{\bf Z}_a\|_2^2}{\langle{\bf E}_a,{\bf Z}_a\rangle_2}.
\end{equation}

To recover the whole section $c$, choose a real $L^2$-orthonormal eigenbasis $(s_k)_{k\geq0}$ of $\Delta_\Gamma^\perp\oplus\Delta_\Gamma$, with each $s_k$ belonging to one of the two summands and having eigenvalue $\mu_k$. Then \eqref{eq: reduced spectral recovery} gives
\begin{equation*}
\begin{split}
 c&=\sum_{k\geq0}c_k s_k\quad\text{in }L^2(\Gamma;N\Gamma\oplus\R),\\
 c_k&=\int_\Gamma\sum_{a=1}^4s_k^a(q)
 \langle{\bf J}_{a,\eps^2\mu_k},{\bf f}(q,\cdot)\rangle_2\,d\mathrm{vol}_\Gamma(q).
\end{split}
\end{equation*}
Thus the full coefficient is reconstructed using the tests at every spectral value $t=\eps^2\mu_k$, including $t=0$ when $\mu_k=0$.

 This reconstruction is exact for \eqref{eq: reduced reference system}; applying it to the geometric equation requires including the difference of the operators and coefficient injections in the source.

\smallskip

In what follows, we write $\langle y\rangle\coloneqq (1+|y|^2)^{1/2}$.

\begin{lemma}[Normal adjoint tests]\label{lem: reduced adjoint tests}
For $t>0$, the functions $Q_i(t)$ and $F_i(t)$ are bounded above and below by positive constants, uniformly in $t,\eps,\lambda$. For every fixed integer $j\geq0$, the adjoint tests satisfy
\begin{equation}\label{eq: reduced adjoint estimates}
|(t\partial_t)^j{\bf J}_{i,t}(y)|\lesssim _j\langle y\rangle^{-2},\qquad \|(t\partial_t)^j({\bf J}_{i,t}-{\bf J}_{i,0})\|_2 \lesssim _j\min(t^{1/4},1).
\end{equation}
The logarithmic derivatives $(t\partial_t)^jQ_i^{-1}$ and $(t\partial_t)^jF_i^{\pm1}$ are uniformly bounded as well.
\end{lemma}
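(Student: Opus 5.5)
The plan is to decompose each test ${\bf E}_i$ into its kernel part and its part orthogonal to the kernel, and to treat the two parts separately. Let $P$ be the $L^2(\R^3)$ orthogonal projection onto $\mathrm{span}\{{\bf Z}_1,\dots,{\bf Z}_4\}$. By rotational symmetry this span is orthogonal, and by \Cref{lem: monopole ker} it is the full $L^2$ kernel of ${\bf L}_\lambda$. Write ${\bf E}_i=P{\bf E}_i+{\bf E}_i^\circ$. Since ${\bf L}_\lambda$ is self-adjoint and nonnegative, we get
\begin{equation*}
T_t{\bf E}_i=P{\bf E}_i+t({\bf L}_\lambda+t)^{-1}{\bf E}_i^\circ .
\end{equation*}
The first term is independent of $t$. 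Up to normalisation it is ${\bf J}_{i,0}$, because symmetry forces $P{\bf E}_i=\langle{\bf E}_i,{\bf Z}_i\rangle_2\|{\bf Z}_i\|_2^{-2}{\bf Z}_i$. Hence $Q_i(t)=\langle{\bf E}_i,P{\bf E}_i\rangle_2+\langle{\bf E}_i^\circ,T_t{\bf E}_i^\circ\rangle_2$, and both summands are nonnegative.

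This gives the lower bound $Q_i(t)\ge\langle{\bf E}_i,{\bf Z}_i\rangle_2^2/\|{\bf Z}_i\|_2^2$. That quantity is positive uniformly in $\eps$ and $\lambda$ by the Gram matrix lower bound \eqref{eq: normal moment Gram matrix} and the local convergence of the modes as $\lambda\to0$. For the upper bound, $\|T_t\|\le1$ gives $Q_i\le\|{\bf E}_i\|_2^2\lesssim1$. The same argument bounds $F_i$ above and below, since $\langle{\bf E}_i,{\bf Z}_i\rangle_2$ is bounded and positive.

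For the logarithmic derivatives, use $t\partial_tT_t=T_t(I-T_t)=t{\bf L}_\lambda({\bf L}_\lambda+t)^{-2}$. Iterating, every $(t\partial_t)^jT_t$ is a bounded spectral multiplier $m_j(\ell/t)$ with $|m_j|\lesssim\min(\ell/t,t/\ell)$, and it vanishes on the kernel. Consequently $(t\partial_t)^jQ_i$ is bounded, and the bounds on $(t\partial_t)^jQ_i^{-1}$ and $(t\partial_t)^jF_i^{\pm1}$ follow from the quotient rule together with the lower bound on $Q_i$.

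\emph{The $L^2$ estimate.} We have ${\bf J}_{i,t}-{\bf J}_{i,0}=T_t{\bf E}_i^\circ/Q_i+(Q_i^{-1}-Q_i(0)^{-1})P{\bf E}_i$. Both terms are controlled by
\begin{equation*}
\|T_t{\bf E}_i^\circ\|_2^2=\int\Big(\frac{t}{\ell+t}\Big)^2d\mu_{{\bf E}_i^\circ}(\ell),
\end{equation*}
where $\mu_{{\bf E}_i^\circ}$ is the spectral measure of ${\bf E}_i^\circ$. This is $\lesssim t^{1/2}$ provided the spectral measure satisfies $\mu_{{\bf E}_i^\circ}([0,\ell])\lesssim\ell^{1/2}$.

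I expect this low-energy spectral bound to be the main obstacle, because ${\bf L}_\lambda$ has essential spectrum reaching $0$ from the massless longitudinal sector. I would prove it by writing $\|T_t{\bf E}^\circ\|_2^2\le t\langle{\bf E}^\circ,({\bf L}_\lambda+t)^{-1}{\bf E}^\circ\rangle_2$ and bounding the resolvent pairing by $t^{-1/2}$. To get that bound, split into transverse and longitudinal components as in \eqref{eq: normal transverse equation}. The transverse part has a mass gap and is harmless. The longitudinal part behaves like $(-\Delta+t)^{-1}$ on $\R^3$. Its kernel $e^{-\sqrt t|y-y'|}/|y-y'|$, paired against sources of size $\langle y\rangle^{-4}$, gives $O(1)$ uniformly. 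This is even better than $t^{-1/2}$, so the $t^{1/4}$ rate has room to spare.

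To make this rigorous I would use the coercivity of \Cref{lem: normal weighted coercivity} on the orthogonal complement of the kernel, combined with a comparison argument like the one around \eqref{eq: product resolvent comparison}. The same multiplier calculus, with the extra factor $\min(\ell/t,t/\ell)$, handles $(t\partial_t)^j$ of the difference. For $t\ge1$ the bound $\min(t^{1/4},1)$ is trivial.

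\emph{Pointwise decay.} I would bound $|T_t{\bf E}_i|$ by barriers. Set $u=t({\bf L}_\lambda+t)^{-1}{\bf E}_i$. Kato's inequality applied to the longitudinal component gives $(-\Delta+t)|u^\parallel|\lesssim t\langle y\rangle^{-4}+(\text{exponentially small})$. The transverse component carries the mass $4\varphi_\lambda^2$. Comparing with Newtonian potentials, the longitudinal bound is $t\langle y\rangle^{-4}*|y|^{-1}e^{-\sqrt t|y|}\lesssim\min(t\langle y\rangle^{-1},\langle y\rangle^{-2})\lesssim\langle y\rangle^{-2}$, uniformly in $t$. Adding $P{\bf E}_i\propto{\bf Z}_i=O(\langle y\rangle^{-2})$ gives the decay of ${\bf J}_{i,t}$. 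For $(t\partial_t)^j$, differentiate the resolvent identity as in \eqref{eq: product resolvent frequency derivative}: each derivative has the form $t({\bf L}_\lambda+t)^{-1}$ applied to a source already bounded by $\langle y\rangle^{-2}$ times lower derivatives, and the same barrier argument iterates. Interior elliptic estimates upgrade this to the stated pointwise bounds.
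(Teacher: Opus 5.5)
Your bounds for $Q_i$, $F_i$ and their logarithmic derivatives are correct and match the paper's: the lower bound $\langle{\bf E}_i,{\bf Z}_i\rangle_2^2/\|{\bf Z}_i\|_2^2$, the upper bound $\|{\bf E}_i\|_2^2$, and $t\partial_tT_t=T_t(I-T_t)$, i.e.\ $(t\partial_t)^jT_t=p_j(T_t)$.

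\textbf{The gap.} It lies in the two analytic estimates and starts with the decay of ${\bf E}_i^\circ={\bf E}_i-P{\bf E}_i$. Since $P{\bf E}_i$ is a nonzero multiple of ${\bf Z}_i$, ${\bf E}_i^\circ$ has an $\langle y\rangle^{-2}$ tail, not $\langle y\rangle^{-4}$. With the correct tail, $\langle{\bf E}_i^\circ,({\bf L}_\lambda+t)^{-1}{\bf E}_i^\circ\rangle_2$ is of exact order $t^{-1/2}$, already in your free longitudinal model. Indeed, an $l=1$ source $\sim r^{-2}$ has $|\hat f(\xi)|^2\sim|\xi|^{-2}$ near $0$, and $\int_0^1(\rho^2+t)^{-1}\,d\rho\simeq t^{-1/2}$. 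So $t^{1/4}$ is sharp; there is no room to spare.

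More importantly, the rigorous route you name does not work. That route is coercivity on the complement of the kernel plus Hardy, $\langle{\bf E}_i^\circ,u\rangle_2\le\||y|{\bf E}_i^\circ\|_2\,\||y|^{-1}u\|_2$, and it fails because $\langle y\rangle{\bf E}_i^\circ\notin L^2(\R^3)$.

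The missing idea is the paper's truncation. Cut $g={\bf E}_i^\circ$ off at radius $R$ and restore kernel-orthogonality with fixed compactly supported tests. Then $\|g-g_R\|_2\lesssim R^{-1/2}$ and $\|\langle y\rangle g_R\|_2\lesssim R^{1/2}$. Coercivity gives $\|T_tg\|_2\lesssim\sqrt{tR}+R^{-1/2}$, and $R=t^{-1/2}$ yields $t^{1/4}$.

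\textbf{The pointwise step.} It fails for a related reason. Kato's inequality for the longitudinal part, with only exponentially small coupling, holds only outside a core ball. Near $y=0$ the field $\sigma$ is singular and the two sectors are not decoupled. The comparison must therefore run on an exterior domain, with data on a fixed sphere.
\begin{itemize}
\item If $u=T_t{\bf E}_i$, then $|u|$ is of order one on that sphere, because it contains $P{\bf E}_i$. The $(-\Delta+t)$-extension of such data decays only like $|y|^{-1}e^{-\sqrt t|y|}$, which is not $O(\langle y\rangle^{-2})$ on $1\ll|y|\ll t^{-1/2}$. Taking absolute values has discarded the $l=1$ structure responsible for the true $r^{-2}$ decay.
\item If you subtract $P{\bf E}_i$ first and use only $\|T_t{\bf E}_i^\circ\|_2\lesssim t^{1/4}$ on the sphere, you still get a boundary term $t^{1/4}|y|^{-1}$. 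This is too large once $|y|\gg t^{-1/4}$.
\end{itemize}

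The paper closes this by feeding the $L^2$ estimate into the scaled mean-value inequality on $B_{|y|/4}(y)$. For $2\le|y|\le A/\sqrt t$, the bound $\|{\bf u}_t\|_{L^2(B_{|y|/4}(y))}\lesssim|y|^{-1/2}+t^{1/4}$ gives $|{\bf u}_t(y)|\lesssim|y|^{-2}+|y|^{-3/2}t^{1/4}+t\lesssim_A|y|^{-2}$. Only on $|y|\ge A/\sqrt t$, where $t$ dominates the potential $-C\langle y\rangle^{-2}$, is the barrier $B|y|^{-2}$ used.

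So both of your analytic steps need the truncated $L^2$ bound. Once you have it, your iteration for $(t\partial_t)^j$ through the iterates $T_t^m{\bf E}_i$ goes through as in the paper.
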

\begin{proof}
The nonnegativity established in the proof of \Cref{lem: normal weighted coercivity}, with \eqref{eq: linearised kernel} and \eqref{eq: normal moment Gram matrix}, gives
\begin{equation}\label{eq: reduced adjoint symbol bounds}
0<c\leq\frac{\langle {\bf E}_i,{\bf Z}_i\rangle_2^2}{\|{\bf Z}_i\|_2^2}\leq Q_i(t)\leq\|{\bf E}_i\|_2^2\leq C.
\end{equation}
Equations \eqref{eq: reduced adjoint symbols} and \eqref{eq: reduced adjoint symbol bounds} show that $Q_i$ and $F_i$ are bounded above and below by positive constants independent of $t,\eps$, and $\lambda$ in the stated range.

To prove \eqref{eq: reduced adjoint estimates}, apply \Cref{lem: normal weighted coercivity} as follows. Subtract the kernel projection from ${\bf E}_i$, obtaining $g\perp\ker{\bf L}_\lambda$ with $|g|\lesssim \langle y\rangle^{-2}$. Truncate at radius $R$ and restore orthogonality using four fixed compactly supported tests whose pairings with the modes form an invertible matrix. The resulting $g_R$ satisfies
\begin{equation}\label{eq: reduced truncated forcing bounds}
\|g-g_R\|_2\lesssim R^{-1/2},\qquad \|\langle y\rangle g_R\|_2\lesssim  R^{1/2}.
\end{equation}
Testing the resolvent equation and using \eqref{eq: reduced truncated forcing bounds} and \Cref{lem: normal weighted coercivity} gives
\begin{equation}\label{eq: reduced resolvent L2 bound}
\|T_tg\|_2\lesssim \sqrt{tR}+R^{-1/2}.
\end{equation}
Taking $R=t^{-1/2}$ in \eqref{eq: reduced resolvent L2 bound} proves the second estimate in \eqref{eq: reduced adjoint estimates} for $j=0$.

\smallskip

For the pointwise estimate, set ${\bf u}_t=T_t{\bf E}_i$ and ${\bf k}_i={\bf E}_i-g$, the kernel component of ${\bf E}_i$. Write ${\bf L}_\lambda=\nabla_{A_\lambda}^*\nabla_{A_\lambda}+\mathcal V_\lambda$, where the normal Weitzenb\"ock potential satisfies $\langle\mathcal V_\lambda{\bf u},{\bf u}\rangle\geq-C\langle y\rangle^{-2}|{\bf u}|^2$. Wherever ${\bf u}_t\neq0$,
\begin{equation*}
\Delta_0|{\bf u}_t| \leq\frac{\langle\nabla_{A_\lambda}^*\nabla_{A_\lambda}{\bf u}_t,{\bf u}_t\rangle}{|{\bf u}_t|}\leq t|{\bf E}_i|-(t-C\langle y\rangle^{-2})|{\bf u}_t|.
\end{equation*}
The first inequality follows from \eqref{eq: Kato}. Regularising the norm by $(|{\bf u}_t|^2+\eta^2)^{1/2}$ and letting $\eta\downarrow0$ gives the distributional inequality on all of $\R^3$:
\begin{equation}\label{eq: reduced resolvent norm inequality}
 (\Delta_0+t-C\langle y\rangle^{-2})|{\bf u}_t|
 \leq t|{\bf E}_i|.
\end{equation}
Let $r\coloneqq |y|\geq2$. This fixed threshold separates the core from the region where we use radial decay: on $B_{r/4}(y)$, we have $\langle y'\rangle\simeq r$; the ball $|y|<2$ is treated by the interior estimate below. Since ${\bf u}_t={\bf k}_i+T_tg$, \eqref{eq: monopole translations}, \eqref{eq: monopole phase mode}, \eqref{eq: normal profile bounds} and \eqref{eq: reduced resolvent L2 bound} give $\|{\bf u}_t\|_{L^2(B_{r/4}(y))}\lesssim  r^{-1/2}+t^{1/4}$. Dropping the nonnegative term $t|{\bf u}_t|$ in \eqref{eq: reduced resolvent norm inequality}, the scaled local mean-value estimate therefore yields
\begin{equation}\label{eq: reduced local adjoint bound}
\begin{split}
 |{\bf u}_t(y)|
 &\lesssim \left(r^{-3/2}\|{\bf u}_t\|_{L^2(B_{r/4}(y))}
 +tr^2\|{\bf E}_i\|_{L^\infty(B_{r/4}(y))}\right)\\
 &\lesssim \left(r^{-2}+r^{-3/2}t^{1/4}+t\right)
 \lesssim _A r^{-2},\qquad 2\leq r\leq A/\sqrt t.
\end{split}
\end{equation}
The last inequality uses $r\sqrt t\leq A$ and $|{\bf E}_i|\leq C\langle y\rangle^{-2}$.

On $r\geq\max(2,A/\sqrt t)$, choose $A^2\geq2(C+2)$. Then
\begin{equation}\label{eq: reduced adjoint barrier}
 (\Delta_0+t-Cr^{-2})r^{-2}
 =tr^{-2}-(C+2)r^{-4}\geq\tfrac12tr^{-2}.
\end{equation}
By \eqref{eq: reduced adjoint barrier} and \eqref{eq: reduced local adjoint bound}, a sufficiently large multiple $Br^{-2}$ dominates the source $t|{\bf E}_i|$ and the boundary values.  On the fixed core, bounded $t$ is handled by the local estimate and $\|{\bf u}_t\|_2\leq\|{\bf E}_i\|_2$. For sufficiently large $t$, the global barrier $\langle y\rangle^{-2}$ satisfies $(\Delta_0+t-C\langle y\rangle^{-2})\langle y\rangle^{-2}\geq(t/2)\langle y\rangle^{-2}$, giving the same bound by comparison.

For the spectral derivatives, set ${\bf u}_{m,t}=T_t^m{\bf E}_i$ for integers $m\geq0$. Since $T_t$ is an $L^2$ contraction fixing the kernel,
\begin{equation}\label{eq: reduced iterated resolvent bounds}
 ( {\bf L}_\lambda+t){\bf u}_{m,t}=t{\bf u}_{m-1,t},
 \qquad \|{\bf u}_{m,t}-{\bf k}_i\|_2\leq\|T_tg\|_2,
 \qquad m\geq1.
\end{equation}
Equations \eqref{eq: reduced local adjoint bound}, \eqref{eq: reduced adjoint barrier} and \eqref{eq: reduced iterated resolvent bounds} apply inductively, giving $|{\bf u}_{m,t}(y)|\leq C_m\langle y\rangle^{-2}$. Finally,
\begin{equation}\label{eq: reduced resolvent polynomial identities}
\begin{split}
 (t\partial_t)^jT_t&=p_j(T_t),\\
 p_0(x)&=x,\qquad p_{j+1}(x)=x(1-x)p_j'(x).
\end{split}
\end{equation}
Each $p_j$ is divisible by $x$, and $p_j(1)=0$ for $j\geq1$. Thus \eqref{eq: reduced iterated resolvent bounds} and \eqref{eq: reduced resolvent polynomial identities} give the pointwise estimates, while $\|p_j(T_t)g\|_2\lesssim _j\|T_tg\|_2$ gives the $L^2$ estimates. Using \eqref{eq: reduced adjoint symbols} and \eqref{eq: reduced adjoint symbol bounds}, the product and quotient rules yield \eqref{eq: reduced adjoint estimates} and the asserted bounds for $Q_i^{-1}$ and $F_i^{\pm1}$.
\end{proof}

The reconstruction defines an operator $\mathcal C{\bf f}=(\mathcal C_{\mathrm{tr}}{\bf f},\mathcal C_4{\bf f})$ taking the normal source pair ${\bf f}(q,y)$ to the coefficient section $c(q)$ in the reference system. More explicitly, choose real $L^2$-orthonormal eigenbases $(s_k)$ of $\Delta_\Gamma^\perp$ and $(\varphi_\ell)$ of $\Delta_\Gamma$, with eigenvalues $\mu_k$ and $\kappa_\ell$, respectively. Writing $s_k=s_k^a\nu_a$, \eqref{eq: reduced spectral recovery} gives
\begin{equation}\label{eq: reduced coefficient reconstruction}
\begin{split}
 (\mathcal C_{\mathrm{tr}}{\bf f})(q)
 &=\sum_{k\geq0}s_k(q)\int_\Gamma s_k^a(q')
 \langle{\bf J}_{a,\eps^2\mu_k},{\bf f}(q',\cdot)\rangle_2\,d\mathrm{vol}_\Gamma(q'),\\
 (\mathcal C_4{\bf f})(q)
 &=\sum_{\ell\geq0}\varphi_\ell(q)\int_\Gamma\varphi_\ell(q')
 \langle{\bf J}_{4,\eps^2\kappa_\ell},{\bf f}(q',\cdot)\rangle_2\,d\mathrm{vol}_\Gamma(q').
\end{split}
\end{equation}
Thus $c^{\mathrm{tr}}=\mathcal C_{\mathrm{tr}}{\bf f}$ is a section of $N\Gamma$, and $c_4=\mathcal C_4{\bf f}$ is a real function on $\Gamma$. In the fixed normal frame, define the three scalar operators $\mathcal C_i$, $i=1,2,3$, by $\mathcal C_{\mathrm{tr}}{\bf f}=\sum_{i=1}^3(\mathcal C_i{\bf f})\nu_i$.

The spectral multipliers below act directly on coefficient fields: for $g=\sum_{k\geq0}g_ks_k$ and $\psi=\sum_{\ell\geq0}\psi_\ell\varphi_\ell$, define
\begin{equation}\label{eq: reduced spectral multipliers}
\begin{split}
 F_{\mathrm{tr}}(\eps^2\Delta_\Gamma^\perp)^{\pm1}g
 &=\sum_{k\geq0}F_{\mathrm{tr}}(\eps^2\mu_k)^{\pm1}g_ks_k,\\
 F_{\mathrm{ph}}(\eps^2\Delta_\Gamma)^{\pm1}\psi
 &=\sum_{\ell\geq0}F_{\mathrm{ph}}(\eps^2\kappa_\ell)^{\pm1}\psi_\ell\varphi_\ell.
\end{split}
\end{equation}
Equations \eqref{eq: reduced coefficient reconstruction} and \eqref{eq: reduced spectral multipliers} give
\begin{equation}\label{eq: reduced kernel source coefficients}
 \mathcal C\left(g^a{\bf Z}_a+\psi{\bf Z}_4\right)
 =\left(F_{\mathrm{tr}}(\eps^2\Delta_\Gamma^\perp)g,
 F_{\mathrm{ph}}(\eps^2\Delta_\Gamma)\psi\right),
 \qquad g=g^a\nu_a,
\end{equation}
because $\langle{\bf J}_{a,t},{\bf Z}_b\rangle_2=\delta_{ab}F_a(t)$. For $i=1,\ldots,4$, the comparison operator $\mathcal C_i(0){\bf f}(q)=\langle{\bf J}_{i,0},{\bf f}(q,\cdot)\rangle_2$ instead uses the zero-parameter test at every $q$; it retains all tangential modes. The next lemma proves the H\"older bounds for these operators. Rotation equivariance makes $\mathcal C_{\mathrm{tr}}$ independent of the chosen frame.

\begin{lemma}[Coefficient operators]\label{lem: reduced coefficient operators}
The operators $F_{\mathrm{tr}}(\eps^2\Delta_\Gamma^\perp)^{\pm1}$ and $F_{\mathrm{ph}}(\eps^2\Delta_\Gamma)^{\pm1}$ are uniformly bounded on $C^{0,\tau}$. The product coefficient operators satisfy
\begin{subequations}\label{eq: reduced coefficient estimates}
\begin{align}
\|\mathcal C_i f\|_{C^{0,\tau}} &\lesssim \int_{\R^3}\langle y\rangle^{-2} \|f(\cdot,y)\|_{C^{0,\tau}}\,dy,
\label{eq: reduced coefficient weighted bound}\\
\|\mathcal C_i f\|_{C^{0,\tau}} &\lesssim \|f\|_{C^{0,\tau}(\Gamma;L^2_y)},
\label{eq: reduced coefficient L2 bound}\\
\|(\mathcal C_i-\mathcal C_i(0))f\|_{C^{0,\tau}} &\lesssim \sqrt\eps\,\|f\|_{C^{\tau+1/2}(\Gamma;L^2_y)}.
\label{eq: reduced coefficient comparison bound}
\end{align}
\end{subequations}
\end{lemma}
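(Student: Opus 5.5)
The plan is to treat the spectral multipliers and the coefficient operators as smooth functions of $\eps^2\Delta_\Gamma^\perp$ and $\eps^2\Delta_\Gamma$, and then apply a symbol-type functional calculus on the closed manifold $\Gamma$. By \Cref{lem: reduced adjoint tests}, the scalar functions $F_{\mathrm{tr}}^{\pm1}$ and $F_{\mathrm{ph}}^{\pm1}$ are bounded on $[0,\infty)$. The same lemma shows that all their logarithmic derivatives $(t\partial_t)^jF^{\pm1}$ are bounded. These are exactly the Mikhlin--H\"ormander symbol conditions, uniform under the dilation $t=\eps^2\mu$.

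\emph{The multipliers.} I would write $m(\eps^2\Delta)$ via the Helffer--Sj\"ostrand formula or a dyadic Littlewood--Paley decomposition $m=\sum_k m\,\phi(2^{-k}\cdot)$. Standard heat-kernel and finite-propagation-speed estimates on $\Gamma$ then show that each dyadic piece $m(\eps^2\Delta)\phi(2^{-k}\eps^2\Delta)$ has kernel bounded by $C_N\ell_k^{-(n-3)}(1+d/\ell_k)^{-N}$, at scale $\ell_k=\eps2^{k/2}$. The piece has vanishing moments except at the lowest frequency. Summing the pieces gives a Calder\'on--Zygmund kernel that is uniform in $\eps$, so the operator is bounded on $C^{0,\tau}$ uniformly in $\eps$. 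For the bundle case we use the same argument with $\Delta_\Gamma^\perp$, whose heat kernel on $N\Gamma$ satisfies the same Gaussian bounds.

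\emph{Bounds \eqref{eq: reduced coefficient weighted bound} and \eqref{eq: reduced coefficient L2 bound}.} I would rewrite $\mathcal C_i f$ as an operator-valued multiplier applied to $f$. The map $t\mapsto {\bf J}_{i,t}$ is a symbol taking values in weighted functions of $y$, and $\mathcal C_i f=\int \langle {\bf J}_{i,\eps^2\Delta},f(\cdot,y)\rangle\,dy$. By \eqref{eq: reduced adjoint estimates}, this symbol satisfies $|(t\partial_t)^j{\bf J}_{i,t}(y)|\lesssim\langle y\rangle^{-2}$ pointwise in $y$. Applying the scalar Mikhlin result for each fixed $y$ and integrating in $y$ yields \eqref{eq: reduced coefficient weighted bound}. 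For \eqref{eq: reduced coefficient L2 bound} I would use instead the $L^2_y$-valued symbol bound $\|(t\partial_t)^j{\bf J}_{i,t}\|_2\lesssim1$, which follows since $\langle y\rangle^{-2}\in L^2(\R^3)$. The vector-valued Calder\'on--Zygmund theory for Hilbert-valued kernels then gives the claim.

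\emph{The comparison \eqref{eq: reduced coefficient comparison bound}.} This is the step I expect to be the main obstacle. Here the difference symbol ${\bf J}_{i,t}-{\bf J}_{i,0}$ has $L^2_y$ size $\min(t^{1/4},1)$, with the same bound on its logarithmic derivatives. I would factor
\[
m(t)=\frac{{\bf J}_{i,t}-{\bf J}_{i,0}}{t^{1/4}}\cdot t^{1/4}.
\]
The first factor is a bounded $L^2_y$-valued symbol, but it may lose the symbol condition where $t$ is large. There I would instead write it as $(\mathbf J_{i,t}-\mathbf J_{i,0})(1+t)^{-1/4}$ times $(1+t)^{1/4}$-type weights, so that every factor still has bounded logarithmic derivatives. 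The remaining factor is $(\eps^2\Delta)^{1/4}=\sqrt\eps\,\Delta^{1/4}$, an operator of order $1/2$. By the Schauder theory for fractional powers it maps $C^{\tau+1/2}$ into $C^{0,\tau}$, which produces the factor $\sqrt\eps$. The delicate points are the zero mode, which is harmless since there the difference symbol vanishes at $t=0$, and the uniformity of the Hilbert-valued kernel estimates. I would address both by the same dyadic decomposition used for the multipliers.
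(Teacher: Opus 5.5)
Your proposal is correct and is essentially the paper's argument: a dyadic decomposition in the spectrum of $P=\Delta_\Gamma^\perp$ or $\Delta_\Gamma$, with all symbol control supplied by the logarithmic-derivative bounds of \Cref{lem: reduced adjoint tests}. The paper uses the characterisation $\|v\|_{C^{0,\alpha}}\simeq\sup_k2^{k\alpha}\|\mathsf B_kv\|_{L^\infty}$ with $\mathsf B_k=\chi_k(\sqrt P)$ instead of summing the pieces into a Calder\'on--Zygmund kernel, and your factorisation through $(\eps^2P)^{1/4}=\sqrt\eps\,P^{1/4}$ simply repackages its block bound $\min(\eps^{1/2}2^{k/2},1)\lesssim\sqrt\eps\,2^{k/2}$ against the $C^{\tau+1/2}$ weight. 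No separate large-$t$ treatment is needed, because by \eqref{eq: reduced adjoint estimates} the factor $t^{-1/4}({\bf J}_{i,t}-{\bf J}_{i,0})$ and its logarithmic derivatives are already $O(\min(1,t^{-1/4}))$ in $L^2_y$ for all $t>0$; and if you used $(1+t)^{1/4}$ in place of $t^{1/4}$ at low frequencies, you would lose the factor $\sqrt\eps$.
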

\begin{proof}
For $P=\Delta_\Gamma^\perp$ or $P=\Delta_\Gamma$, choose a smooth partition $1=\sum_{k\geq0}\chi_k(\rho)$ on $[0,\infty)$, with $\chi_0$ supported in $[0,2]$ and $\chi_k(\rho)=\chi(2^{-k}\rho)$ for $k\geq1$, where $\chi$ is supported in $[1/2,2]$. Define the spectral blocks $\mathsf B_k^P=\chi_k(\sqrt P)$. For $0<\alpha<1$, the H\"older norm is equivalent to
\begin{equation}\label{eq: reduced Holder blocks}
 \|v\|_{C^{0,\alpha}}\simeq
 \sup_{k\geq0}2^{k\alpha}\|\mathsf B_k^P v\|_{L^\infty}.
\end{equation}
Indeed, localisation at spatial scale $2^{-k}$ bounds the $k$th block of a H\"older field by $C2^{-k\alpha}\|v\|_{C^{0,\alpha}}$. Conversely, summing the increments of these blocks below and above the scale $2^k d_\Gamma(q,q')=1$ gives a bound by $Cd_\Gamma(q,q')^\alpha$ times the supremum in \eqref{eq: reduced Holder blocks}; the low-frequency block includes the zero eigenspace.

Integration by parts in the local symbols bounds the $L^1$ norms of the localised spectral kernels by finitely many logarithmic derivatives of their symbols. For $F_{\mathrm{tr}}^{\pm1}$ and $F_{\mathrm{ph}}^{\pm1}$, these are bounded by \Cref{lem: reduced adjoint tests}. For $\mathcal C_i$ in \eqref{eq: reduced coefficient reconstruction}, the pointwise and $L^2_y$ bounds in \eqref{eq: reduced adjoint estimates} give kernel bounds weighted by $\langle y\rangle^{-2}$ and uniform bounds as operators from $L^2_y$, respectively. Integration in $y$, or Cauchy--Schwarz in $y$, followed by \eqref{eq: reduced Holder blocks} with $\alpha=\tau$, proves \eqref{eq: reduced coefficient weighted bound}, \eqref{eq: reduced coefficient L2 bound} and the multiplier bounds.

For \eqref{eq: reduced coefficient comparison bound}, use the second bound in \eqref{eq: reduced adjoint estimates}, which controls the symbol of $\mathcal C_i-\mathcal C_i(0)$ and its required logarithmic derivatives on $\sqrt\mu\simeq2^k$ by
\begin{equation}\label{eq: reduced coefficient block difference}
C\min(\eps^{1/2}2^{k/2},1)\lesssim \sqrt\eps\,2^{k/2}.
\end{equation}
Multiplying \eqref{eq: reduced coefficient block difference} by the output weight $2^{k\tau}$ gives $C\sqrt\eps\,2^{k(\tau+1/2)}$, the input weight in \eqref{eq: reduced Holder blocks} with $\alpha=\tau+1/2<1$. Taking the supremum proves \eqref{eq: reduced coefficient comparison bound}. The bounded low-frequency range contains finitely many eigenvalues and satisfies the same $O(\sqrt\eps)$ bound; the difference vanishes at the zero eigenvalue by \eqref{eq: reduced zero spectral tests}.
\end{proof}
For a source ${\bf F}=(F_\Phi,F_A)$, take its rescaled normal forcing $f$ from \Cref{def: rescaling convention}. Multiply these coefficients by a fixed physical cutoff equal to one near the core and define $\mathfrak b_i^{\,\mathrm{prod}}[{\bf F}]=\mathcal C_i f$. Tangential connection components enter the operator comparison below.

\subsection{Decomposition of the reduced remainder}
Define the two remainders of the known source by
\begin{equation}\label{eq: reduced source decomposition}
\begin{split}
\eps^{-1}F_{\mathrm{tr}}(\eps^2\Delta_\Gamma^\perp)^{-1} \mathfrak b^{\,\mathrm{prod}}_{\mathrm{tr}}[{\bf S}_\eps(\Phi_{h,\theta},A_{h,\theta})] &=J_\Gamma h+r_{\mathrm{tr}}^{\,0},\\
\eps^{-2}F_{\mathrm{ph}}(\eps^2\Delta_\Gamma)^{-1} \mathfrak b^{\,\mathrm{prod}}_4[{\bf S}_\eps(\Phi_{h,\theta},A_{h,\theta})] &=\Delta_\Gamma\theta-d_\Gamma^*\beta^{\mathrm{tan}}+r_{\mathrm{ph}}^{\,0},
\end{split}
\end{equation}
and define the contributions of the correction by
\begin{equation}\label{eq: reduced complementary decomposition}
\begin{split}
r_{\mathrm{tr}}^{\,1}
 &=\eps^{-1}F_{\mathrm{tr}}(\eps^2\Delta_\Gamma^\perp)^{-1}
 \bigl(\mathfrak b_{\eps,h,\theta}^{\mathrm{tr}}
 -\mathfrak b_{\mathrm{tr}}^{\,\mathrm{prod}}[{\bf S}_\eps(\Phi_{h,\theta},A_{h,\theta})]\bigr),\\
r_{\mathrm{ph}}^{\,1}
 &=\eps^{-2}F_{\mathrm{ph}}(\eps^2\Delta_\Gamma)^{-1}
 \bigl(\mathfrak b_{4,\eps,h,\theta}
 -\mathfrak b_4^{\,\mathrm{prod}}[{\bf S}_\eps(\Phi_{h,\theta},A_{h,\theta})]\bigr).
\end{split}
\end{equation}
Define the reduced remainder by
\begin{equation*}
\begin{split}
\mathcal R_\eps^{\mathrm{tr}}(h,\theta) &:=J_\Gamma h-\eps^{-1}F_{\mathrm{tr}}(\eps^2\Delta_\Gamma^\perp)^{-1} \mathfrak b_{\eps,h,\theta}^{\mathrm{tr}},\\
\mathcal R_\eps^{\mathrm{ph}}(h,\theta) &:=\Delta_\Gamma\theta-\eps^{-2}F_{\mathrm{ph}}(\eps^2\Delta_\Gamma)^{-1} \mathfrak b_{4,\eps,h,\theta}.
\end{split}
\end{equation*}
Then $\mathcal R_\eps^{\mathrm{tr}}=-r_{\mathrm{tr}}^{\,0}-r_{\mathrm{tr}}^{\,1}$ and
$\mathcal R_\eps^{\mathrm{ph}}=d_\Gamma^*\beta^{\mathrm{tan}}-r_{\mathrm{ph}}^{\,0}-r_{\mathrm{ph}}^{\,1}$. Moreover,
\begin{equation}\label{eq: reduced exact coefficients}
\begin{split}
\mathfrak b_{\eps,h,\theta}^{\mathrm{tr}} &=\eps F_{\mathrm{tr}}(\eps^2\Delta_\Gamma^\perp) (J_\Gamma h-\mathcal R_\eps^{\mathrm{tr}}),\\
\mathfrak b_{4,\eps,h,\theta} &=\eps^2F_{\mathrm{ph}}(\eps^2\Delta_\Gamma) (\Delta_\Gamma\theta-\mathcal R_\eps^{\mathrm{ph}}).
\end{split}
\end{equation}
Since the multipliers are invertible and $F_{\mathrm{ph}}(\eps^2\Delta_\Gamma)$ preserves constants, \eqref{eq: b=0} is equivalent to $\mathcal J(h,\theta)=\Pi_0\mathcal R_\eps(h,\theta)$. It remains to estimate the two remainders and their parameter differences. We estimate the translation and phase components separately, then conclude the proof of \Cref{prop: reduced equation}. 

\subsection{Projection of the approximation}
All calculations take place after pulling the known approximation back by $\mathscr T_h$ and undoing its gauges. Recall that its graphical translation dependence lies in $g_h=\mathscr T_h^*g$, and its phase dependence is the replacement $A_\Gamma\mapsto A_\Gamma-\xi_\eps d\bar\theta$, in both the compact and noncompact approximations. 

Use the fixed horizontal frame $X_j=\partial_{q^j}-\omega_{jb}^az^b\partial_{z^a}$, where $\nabla^\perp_{\partial_{q^j}}\nu_b=\omega_{jb}^a\nu_a$. In this frame, the normal inverse-metric coefficients, mixed inverse-metric coefficients and density $J_h$ from \Cref{sec: projections linear theory} satisfy
\begin{equation}\label{eq: reduced graph metric bounds}
\begin{split}
 |g_h^{ab}-\delta^{ab}|+|J_h-1|&\lesssim r^2+r\|h\|_{C^{2,\tau}}+\|h\|_{C^{2,\tau}}^2,\\
 |g_h^{ja}|&\lesssim r^2+\|h\|_{C^{2,\tau}}.
\end{split}
\end{equation}
These bounds also hold in tangential H\"older norm, with one tangential derivative. 

\begin{lemma}[Tangential matching estimates]\label{lem: reduced matching}
For every fixed integer $k\geq0$,
\begin{equation}\label{eq: reduced conormal matching}
\sum_{\ell=0}^2r^\ell \|\partial_q^k\nabla^\ell(b_r-\beta)\|_{C^{0,\tau}(\Gamma)} \lesssim _k r(1+|\log r|)^{N_k}.
\end{equation}
The same estimate holds for $H-(2r)^{-1}-H_\Gamma$ in the noncompact case. For $k=0,1$, one may take $N_k=1$.
\end{lemma}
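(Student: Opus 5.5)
We bootstrap from \Cref{lem: approximation regular matching}, which is the case $k=0$ in scaled H\"older norm on annuli, to tangential derivatives. The source satisfies $\Delta_1 b_r=R$ with $R=-d_g^*f_0-\Delta_1 b_s$, and in the proof of \Cref{lem: exterior correction} we recorded $|\partial_q^k R|\lesssim_k r^{-1}$. Since $\Gamma$ is compact and the Fermi coefficients are smooth, the same holds in scaled H\"older form, $\rho\|\partial_q^kR\|_{\tau,\rho,B_{\rho/2}(x)}\lesssim_k 1$ with $\rho=r(x)$, and the mass bound $\int_{B_t(z)}|\partial_q^kR|\lesssim_k t^{n-1}$ follows from the same pointwise bound. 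The plan is to differentiate the equation tangentially in Fermi coordinates, where $\partial_{q^j}$ commutes with $\Delta_1$ up to a second-order operator with coefficients smooth in $(q,y)$. This gives
\begin{equation*}
\Delta_1(\partial_q^k b_r)=\partial_q^kR+\sum_{m<k}\mathcal L_{k,m}\,\partial_q^m b_r,
\end{equation*}
with $\mathcal L_{k,m}$ of order at most two. The inductive hypothesis then controls the commutator terms.

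We argue by induction on $k$. Assume \eqref{eq: reduced conormal matching} holds for all $m<k$. The commutator terms $\mathcal L_{k,m}\partial_q^m b_r$ are then $O(r^{-1}(1+|\log r|)^{N_m})$ at scale $r$, using the $\ell\le2$ scaled derivative bounds. We also know from \eqref{eq: exterior tangential regularity} that $\partial_q^kb_r$ is H\"older continuous and that $\partial_q^k\beta$ is its trace. We then rerun the Green-kernel argument of \Cref{lem: approximation regular matching}, with two changes. First, the source now carries logarithmic factors, so the dyadic sum over $j$ with $2^jd<t_0$ produces one extra power of $|\log d|$; this gives $|\partial_q^kb_r(x)-\partial_q^k\beta(p)|\lesssim r(1+|\log r|)^{N_k}$ with $N_k\le N_{k-1}+1$. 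Second, for $k=0,1$ the commutator terms are either absent or involve only $b_r$ itself, whose pointwise bound $|b_r|\lesssim1$ carries no logarithm, so $N_k=1$ there. Scaled Schauder estimates on balls $B_{\rho/2}(x)$ then upgrade the pointwise bound to the $\ell=0,1,2$ derivative bounds. Here we use that $\partial_q^k\beta$ extended in Fermi coordinates has smooth Laplacian. Finally, taking the $C^{0,\tau}(\Gamma)$ norm in $q$ at fixed $r$ is handled by applying the same argument to $\partial_q^{k+1}$, or by interpolating between $k$ and $k+1$, which costs only a change of $N_k$.

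For $H-(2r)^{-1}-H_\Gamma$ in the noncompact case, the same scheme applies. We use $\Delta_g(H-\chi_\Gamma/(2r))=O(r^{-1})$ together with its tangential derivatives, as noted before \eqref{eq: noncompact mass variation}. The exterior contributions are handled by the scalar Green function of \Cref{def: admissible amb noncomp}, whose kernel is bounded by $1+d^{2-n}$ on compacts by \eqref{eq: noncompact Green kernel bound}; far sources contribute a smooth term.

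We expect the main obstacle to be justifying $\partial_q^k R=O(r^{-1})$ with the cancellation preserved. Differentiating the expansions of $d_g^*f_0$ and $\Delta_1b_s$ must not destroy the exact cancellation of their $r^{-2}$ parts, which comes from the degree-one spherical harmonic structure. We will check this by noting that both leading terms are $\mp 2r^{-2}\langle\rho_j,\sigma\rangle dq^j$ with $\rho$ smooth in $q$. Their $q$-derivatives are therefore again of the same form and still cancel, while all $O(r^{-1})$ remainders have coefficients smooth in $q$.
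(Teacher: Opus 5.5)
Your route differs from the paper's. You differentiate the equation and treat $[\partial_q^k,\Delta_1]b_r$ as an extra source. The paper instead differentiates the local Green representation and transfers the tangential derivatives onto the integration variable. There the source at every level is just $\partial_q^kR=O(r^{-1})$, free of logarithms, and the single logarithm comes from the dyadic integration of the kernel.

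Your induction does produce some $N_k$, with $N_k\le N_{k-1}+1$. The refinement $N_0=N_1=1$, however, is not justified.
\begin{itemize}
\item The commutator $[\partial_q,\Delta_1]$ is a second-order operator. So the extra source at $k=1$ is $\mathcal L_{1,0}b_r$, which contains $\nabla^2 b_r$, not merely $b_r$, and the pointwise bound $|b_r|\lesssim1$ is irrelevant.
\item The only bound you have recorded for it is $|\nabla^2(b_r-\beta)|\lesssim r^{-1}(1+|\log r|)$ from the $k=0$ case. Fed into your own dyadic bookkeeping, this gives $N_1=2$.
\item The defect propagates to $k=0$, because you get the unscaled $C^{0,\tau}(\Gamma)$ norm at level $k$ by interpolating with the sup bound at level $k+1$. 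Thus $N_0=1$ in that norm needs $N_1=1$ in sup, and $N_1=1$ in that norm needs $N_2=1$. Your scheme gives neither.
\end{itemize}

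To repair this within your approach, use the Fermi structure of the commutator instead of the crude $\ell\le2$ bound.
\begin{itemize}
\item For every $q$ one has $g^{ab}(q,0)=\delta^{ab}$ with vanishing first normal derivatives, and $g^{ia}(q,0)=0$. Hence $\partial_q^{m}g^{ab}=O(r^2)$ and $\partial_q^{m}g^{ia}=O(r)$ for $m\ge1$.
\item By your inductive $\ell\le2$ bounds, the normal--normal and mixed parts of $[\partial_q^k,\Delta_1]b_r$ are therefore $O(r(1+|\log r|)^N)$ and $O((1+|\log r|)^N)$.
\item The tangential--tangential part $(\partial_q^{m}g^{ij})\partial_{q^i}\partial_{q^j}\partial_q^{k-m}b_r$ has an $O(1)$ coefficient. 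It acts only on pure tangential derivatives, which are bounded and H\"older by \eqref{eq: exterior tangential regularity}; note that this input was itself obtained in the paper by the transfer device. Lower-order terms are handled the same way.
\end{itemize}
The whole commutator is then $O((1+|\log r|)^N)$, and its mass on $B_t$ is $O(t^n(1+|\log t|)^N)$. In the dyadic sum of \Cref{lem: approximation regular matching} it contributes only $O(d)$, so the logarithm comes solely from $\partial_q^kR$. This gives $N_k=1$ for every $k$ in the scaled sup norms, and your interpolation to $C^{0,\tau}(\Gamma)$ then loses nothing. Alternatively, adopt the paper's device of moving tangential derivatives, and tangential difference quotients for the H\"older part, onto the source.
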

\begin{proof}
By \eqref{eq: exterior regular source}, \eqref{eq: exterior tangential regularity} and \eqref{eq: bound R}, the source of $\Delta_1b_r$ is $O(r^{-1})$, also after tangential differentiation. The scalar source $\Delta_0(H-(2r)^{-1})$ has the same bound by minimality, as in the proof of \eqref{eq: noncompact approximation matching}. Subtracting the smooth traces adds only bounded terms. Differentiate the local Green representation tangentially, transferring derivatives in the integration variable by parts; the resulting kernels retain the bounds used in \Cref{lem: approximation regular matching}. The argument for \eqref{eq: approximation regular matching} gives the bound $r(1+|\log r|)^{N_k}$. Scaled Schauder estimates give the two normal derivatives. Thus the logarithm comes from integration of the Green kernel, rather than from an additional singularity of the original source.
\end{proof}

\begin{lemma}[The translation remainder]\label{lem: reduced source size}
There is a rate $\omega_\eps\to 0$, independent of the parameters in \eqref{eq: assumptions htheta}, such that
\begin{equation}\label{eq: reduced source bounds}
\|r_{\mathrm{tr}}^{\,0}\|_{C^{0,\tau}}\lesssim \eps|\log\eps|^2+\omega_\eps\|h\|_{C^{2,\tau}}.
\end{equation}
\end{lemma}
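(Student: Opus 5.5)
We will compute $\mathfrak b^{\,\mathrm{prod}}_{\mathrm{tr}}[{\bf S}_\eps(\Phi_{h,\theta},A_{h,\theta})]=\mathcal C_{\mathrm{tr}}f$ directly. Here $f$ is the rescaled normal forcing of the approximation error in \Cref{def: rescaling convention}. The first step is to expand the error in the transported Fermi frame, after pulling back by $\mathscr T_h$ and removing the gauges $g_\eps u_\theta$. Up to higher order, its normal part splits as
\begin{equation*}
 f=\eps\bigl(\Delta_\Gamma^\perp h-V_\Gamma h\bigr)^a{\bf Z}_a\cdot(\text{sign})+\eps^2\,{\bf f}_{\mathrm{geo}}+{\bf f}_{\mathrm{rem}}.
\end{equation*}
The first term comes from the graph metric coefficients \eqref{eq: reduced graph metric bounds}. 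The $h$-linear part of the mixed coefficients $g_h^{ja}$, the normal coefficients and the second fundamental form acting on the translated monopole produce, by the standard computation as in Allen--Cahn or abelian Higgs, the combination $-\eps(J_\Gamma h)^a{\bf Z}_a$. The tangential Laplacian of $h$ enters through $\eps^2(\nabla^\top)^*\nabla^\top$ applied to $\eps^{-1}h^a\partial_a(\Phi_\lambda,A_\lambda)$. The Ricci and $|A_\Gamma|^2$ terms enter through the $O(r)$ and $O(r^2)$ Fermi metric expansion. Minimality of $\Gamma$ is used here: the $O(r)$ mean curvature term, which would give an $\eps^0$ contribution along ${\bf Z}_a$, vanishes. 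Applying \eqref{eq: reduced kernel source coefficients} to the kernel part gives exactly $\eps F_{\mathrm{tr}}(\eps^2\Delta_\Gamma^\perp)J_\Gamma h$. After multiplying by $\eps^{-1}F_{\mathrm{tr}}^{-1}$, this produces the $J_\Gamma h$ in \eqref{eq: reduced source decomposition}, so $r_{\mathrm{tr}}^{\,0}$ is $\eps^{-1}F^{-1}_{\mathrm{tr}}\mathcal C_{\mathrm{tr}}$ applied to everything else.

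The second step bounds the remaining terms with \Cref{lem: reduced coefficient operators}. We use \eqref{eq: reduced coefficient weighted bound} when a term is pointwise controlled by $\langle y\rangle^{-2}$ times a H\"older coefficient, and \eqref{eq: reduced coefficient L2 bound} otherwise. Since $F_{\mathrm{tr}}^{-1}$ is bounded on $C^{0,\tau}$, every remainder must be shown to have size $\eps^2|\log\eps|^2+\eps\,\omega_\eps\|h\|$ after testing against ${\bf J}_{a,t}$. The remainder has several sources.
\begin{itemize}
\item Core profile errors of order $\eps^2$ from the $O(r^2)$ metric terms, in \eqref{eq: approximation core bound} and \eqref{eq: approximation inner shells}. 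Integrating the $\langle y\rangle^{-2}$-weighted pairing out to $|y|\sim\delta/\eps$ gives at most a $|\log\eps|$.
\item Odd-parity terms. The $O(r)$ terms that are odd in $y$ pair with ${\bf Z}_a$, which is odd, so we exploit parity. By rotation equivariance, the only surviving $O(r)$ contributions are the connection pieces $\rho$, $\beta_jdq^j$ and the gauge $g_\eps$. These are longitudinal or phase-type and pair to zero with translation tests. Their projections go into the phase equation.
\item Matching and gluing errors at $r\simeq\sqrt\eps$. These are controlled by \Cref{lem: reduced matching} and \eqref{eq: approximation gluing shells}. Since ${\bf J}_{a,t}=O(\langle y\rangle^{-2})$, their pairing contributes $\eps\cdot\eps^{1/2}|\log\eps|$ at worst, which is acceptable.
\item The quadratic part in $h$. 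By \eqref{eq: reduced graph metric bounds} this is $O(\|h\|^2+r\|h\|)$, which produces $\eps\|h\|(\|h\|/\eps+\eps|\log\eps|)$. With $\|h\|\lesssim\eps|\log\eps|^2$, this is $\omega_\eps\|h\|$ with $\omega_\eps=\|h\|/\eps$-type rates. To make the rate parameter-independent, we bound $\|h\|^2/\eps\le K|\log\eps|^2\|h\|\cdot\eps$, giving $\omega_\eps\lesssim\eps|\log\eps|^2$.
\item The $\theta$-dependent terms. These come from \eqref{eq: approximation phase curvature} and are of phase type. Their translation projection vanishes at leading order by parity, and what is left is $O(\eps^2)$.
\end{itemize}

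The third step handles the spectral dependence. The Jacobi term was extracted using the exact multiplier identity \eqref{eq: reduced kernel source coefficients}, so no comparison loss occurs there. For the remainder we only need uniform boundedness of $\mathcal C_i$, and \eqref{eq: reduced coefficient comparison bound} is not needed. The ingredient $\eps^2(\nabla^\top)^*\nabla^\top$ acting on $h$ requires $h\in C^{2,\tau}$, which is available.

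The main obstacle is the first step. The exact identification of the $\eps$-order kernel projection as $-\eps J_\Gamma h$ must hold within the slowly decaying algebraic tails of the monopole modes. Two points need checking: that the tails of the $O(r)$ metric terms paired with $|{\bf J}_{a,t}|\lesssim\langle y\rangle^{-2}$ do not generate an $\eps|\log\eps|$ error proportional to $h$ without a smallness factor, and that the second fundamental form terms combine with the correct normalisation $F_{\mathrm{tr}}$. I would first verify this identification in the flat product model with the explicit translated BPS field, and treat the curvature corrections perturbatively.
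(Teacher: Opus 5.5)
Your Step~1 rests on the claim that the whole Jacobi term appears in the rescaled source as a pointwise multiple $\eps(J_\Gamma h)^a{\bf Z}_a$. Then \eqref{eq: reduced kernel source coefficients} would apply exactly at every spectral value, and \eqref{eq: reduced coefficient comparison bound} could be dropped. This is where the argument fails. In the paper's splitting \eqref{eq: translation source splitting}, only $\eps(-\Delta_\Gamma^\perp h)^a{\bf Z}_a$ is treated as an exact kernel term, and for a reason: other $h$-linear geometric terms of the same size $\eps\|h\|_{C^{2,\tau}}$ are not kernel multiples.

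Take the cross term $r\|h\|_{C^{2,\tau}}$ in the normal inverse metric of \eqref{eq: reduced graph metric bounds}. On each rescaled fibre it is a normal metric perturbation linear in $y$, which is the Lie derivative of the flat metric along a quadratic vector field. Its source is therefore, up to gauge, the linearised operator applied to a non-kernel field. Such a term is $L^2$-orthogonal to ${\bf Z}_a$, i.e.\ to ${\bf J}_{a,0}$. It is nonetheless nonzero: already its abelian tail, the first-order change of $d_g^*f_0$ on the Dirac field, does not vanish. Being nonzero and orthogonal to ${\bf Z}_a$, it is not a multiple of ${\bf Z}_a$. Nothing makes it orthogonal to ${\bf J}_{a,t}$ for $t=\eps^2\mu_k>0$.

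With only uniform bounds on $\mathcal C_i$, such terms contribute $O(\|h\|_{C^{2,\tau}})$ to $r^{\,0}_{\mathrm{tr}}$ with no small factor, which destroys the contraction. Your fallback does not repair this. Checking the flat product model and treating curvature perturbatively cannot work, because in the flat model $J_\Gamma=-\Delta_\Gamma^\perp$. The curvature terms are exactly the zeroth-order part of $J_\Gamma$, at the same order $\eps\|h\|_{C^{2,\tau}}$, and must be identified exactly.

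The paper separates the two issues.
\begin{itemize}
\item \emph{Zero-parameter coefficient.} It obtains $\mathfrak t^0_\eps$ from the variational identity \eqref{eq: Jacobi from error}. Pairing ${\bf S}$ with $\eps{\bf D}_{V_{h,k}}$ gives minus the derivative of $E_\eps$ along $\mathscr T_{h+tk}$. This is compared with $\mu_\lambda D\mathscr A(h)[k]$ through the frozen graph energy, with the normalisation $\mu_\lambda=\|{\bf Z}_a\|_2^2$ taken from the dilation identity \eqref{eq: model translation normalization}. This yields $V_\Gamma h$ with no pointwise identification.
\item \emph{Passage to $t>0$.} It bounds $\mathfrak t_\eps-\mathfrak t^0_\eps$ by \eqref{eq: reduced coefficient comparison bound}, gaining $\sqrt\eps$ on all terms with at most one tangential derivative of the parameters. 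It uses \eqref{eq: reduced coefficient L2 bound} only for second-derivative terms, whose coefficients carry an extra factor $r+\|h\|_{C^{2,\tau}}$.
\end{itemize}

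Separately, your quadratic-in-$h$ bullet contains an arithmetic slip. One has $\|h\|^2/\eps\le K|\log\eps|^2\|h\|$, not $K\eps|\log\eps|^2\|h\|$, so the rate you obtain is $O(|\log\eps|^2)$ rather than $o(1)$. The naive bound $\|h\|^2(\eps+r)^{-3}$ in \eqref{eq: translation test error bounds} really does lose this factor of $\eps$. It is recovered because the leading $O(\|h\|^2)$ defect is the frozen normal metric $g_h^\perp(q)$, which is constant in $y$. Its source is therefore even under the monopole parity and pairs to zero with the odd tests ${\bf J}_{a,t}$. The paper captures this at the energy level through $g_h^\perp-I=O(\|h\|^2)$ and $d\mathcal E_\lambda(I)=0$, leaving only $\|h\|^2$ in the normalised remainder $R_\eps$.
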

\begin{proof}
Let ${\bf S}=(S_\Phi,S_A)={\bf S}_\eps(\Phi_{h,\theta},A_{h,\theta})$, and let ${\bf f}_{h,\theta}$ be its rescaled normal source in \Cref{def: rescaling convention}, with a radial cutoff equal to one on $r\leq\delta/2$ and supported in $r<\delta$. Set $\mathcal C_{\mathrm{tr}}(0){\bf f}=(\mathcal C_a(0){\bf f})\nu_a$ and define
\begin{equation}\label{eq: translation coefficients}
 \mathfrak t_\eps^0=\frac{\mathcal C_{\mathrm{tr}}(0){\bf f}_{h,\theta}}{\eps F_{\mathrm{tr}}(0)},
 \qquad
 \mathfrak t_\eps=\eps^{-1}F_{\mathrm{tr}}(\eps^2\Delta_\Gamma^\perp)^{-1}\mathcal C_{\mathrm{tr}}{\bf f}_{h,\theta}.
\end{equation}
We prove the two estimates
\begin{align}
 \|\mathfrak t_\eps^0-J_\Gamma h\|_{C^{0,\tau}}
 &\lesssim \eps|\log\eps|^2+\omega_\eps\|h\|_{C^{2,\tau}},\label{eq: translation zero parameter claim}\\
 \|\mathfrak t_\eps-\mathfrak t_\eps^0\|_{C^{0,\tau}}
 &\lesssim  \eps|\log\eps|^2+\omega_\eps\|h\|_{C^{2,\tau}}.\label{eq: translation spectral comparison claim}
\end{align}
Adding \eqref{eq: translation zero parameter claim} and \eqref{eq: translation spectral comparison claim} proves \eqref{eq: reduced source bounds}, since $r_{\mathrm{tr}}^{\,0}=\mathfrak t_\eps-J_\Gamma h$ by \eqref{eq: reduced source decomposition}. Throughout, $\lambda=\lambda_\eps$ in the compact case and $\lambda=0$ in the noncompact case; $N$ denotes a fixed nonnegative integer accounting for logarithmic factors.

\smallskip

We begin with \eqref{eq: translation zero parameter claim}. For a vector field $V$, define ${\bf D}_V\coloneqq(d_A\Phi(V),\iota_VF_A)$ at $(\Phi,A)=(\Phi_{h,\theta},A_{h,\theta})$. We record the identity
\begin{equation}\label{eq: model translation normalization}
 \mu_\lambda\coloneqq E_\lambda(\Phi_\lambda,A_\lambda)
 =\|{\bf Z}_a\|_2^2,\qquad a=1,2,3.
\end{equation}
Indeed, rotational symmetry and \eqref{eq: monopole translations} give
\begin{equation}\label{eq: translation mode energy}
 3\|{\bf Z}_a\|_2^2
 =\sum_{b=1}^3\|{\bf Z}_b\|_2^2
 =\int_{\R^3}\left(|d_{A_\lambda}\Phi_\lambda|^2+2|F_{A_\lambda}|^2\right)dy.
\end{equation}
For $\mathfrak d_t(y)=y/t$, set $(\Phi_{\lambda,t},A_{\lambda,t})=(\mathfrak d_t^*\Phi_\lambda,\mathfrak d_t^*A_\lambda)$. Stationarity under dilations yields
\begin{equation}\label{eq: model dilation identity}
 0=\left.\frac{d}{dt}\right|_{t=1}E_\lambda(\Phi_{\lambda,t},A_{\lambda,t})
 =\frac12\int_{\R^3}\left(|d_{A_\lambda}\Phi_\lambda|^2-|F_{A_\lambda}|^2+\frac{3\lambda}{2}(1-|\Phi_\lambda|^2)^2\right)dy.
\end{equation}
Combining \eqref{eq: translation mode energy} and \eqref{eq: model dilation identity} with \eqref{eq: Elambda} proves \eqref{eq: model translation normalization}, also at $\lambda=0$.

\medskip

For a normal field $k$ on $\Gamma$, let $V_{h,k}=(\partial_t\mathscr T_{h+tk}|_{t=0})\circ\mathscr T_h^{-1}$. We claim that
\begin{equation}\label{eq: Jacobi from error}
 \frac{\langle{\bf S},\eps{\bf D}_{V_{h,k}}\rangle_{L^2_\eps}}{\eps\mu_\lambda}
 =\int_\Gamma\langle J_\Gamma h+R_\eps,k\rangle\,d\mathrm{vol}_\Gamma,
\end{equation}
where $J_\Gamma$ is the Jacobi operator and
\begin{equation}\label{eq: bound R_eps}
 \|R_\eps\|_{C^{0,\tau}}
 \lesssim \eps(1+|\log\eps|)+\|h\|_{C^{2,\tau}}^2+\|h\|_{C^{2,\tau}}^3.
\end{equation}
First freeze the metric at $z=0$. Define the normal metric by
\begin{equation*}
 g_h^\perp(q)\coloneqq\bigl((g_h^{ab}(q,0))_{a,b=1}^3\bigr)^{-1};
\end{equation*}
this is the metric on the normal coordinate vectors after their orthogonal projection onto $(T\Gamma_h)^\perp$. Let $\mathcal E_\lambda(\widetilde g)$ denote the energy of the fixed radial profile in a constant normal metric $\widetilde g$, and let $j_h\,d\mathrm{vol}_\Gamma$ be the graph volume pulled back by $q\mapsto F_h(q,0)$. Integration in the normal variables gives 
\begin{equation}\label{eq: frozen graph energy}
 \mathcal F_\lambda(h)=\int_\Gamma j_h\mathcal E_\lambda(g_h^\perp)\,d\mathrm{vol}_\Gamma,
 \qquad \mathscr{A}(h)=\operatorname{Vol}(\Gamma_h)=\int_\Gamma j_h\,d\mathrm{vol}_\Gamma.
\end{equation}
By \eqref{eq: reduced graph metric bounds}, $g_h^\perp-I=O(\|h\|_{C^{2,\tau}}^2)$. Rotational symmetry and \eqref{eq: model dilation identity} give $d\mathcal E_\lambda(I)=0$, while \eqref{eq: model translation normalization} gives $\mathcal E_\lambda(I)=\mu_\lambda$. Differentiating \eqref{eq: frozen graph energy} therefore yields
\begin{equation}\label{eq: frozen graph energy variation}
\begin{split}
 D\mathcal F_\lambda(h)[k]-\mu_\lambda D\mathscr{A}(h)[k]
 =\int_\Gamma\Bigl[&\bigl(\mathcal E_\lambda(g_h^\perp)-\mu_\lambda\bigr)Dj_h[k]\\
 &+j_h\,d\mathcal E_\lambda(g_h^\perp)[Dg_h^\perp[k]]\Bigr]d\mathrm{vol}_\Gamma.
\end{split}
\end{equation}
Here $\mathcal E_\lambda(g_h^\perp)-\mu_\lambda=O(\|h\|_{C^{2,\tau}}^4)$ and $Dg_h^\perp[k]=O(\|h\|_{C^{2,\tau}}\|k\|_{C^1})$. After integrating derivatives of $k$ by parts, \eqref{eq: frozen graph energy variation} contributes $O(\|h\|_{C^{2,\tau}}^3)$ in $C^{0,\tau}$ to $R_\eps$.

To see \eqref{eq: Jacobi from error}, differentiate the inverse pullbacks in \eqref{eq: shifted phase approximation} and \eqref{eq: noncompact shifted approximation}. The field variation is $-{\bf D}_{V_{h,k}}$ modulo an infinitesimal gauge transformation; hence gauge invariance and \eqref{eq:euler-lagrange} give
\begin{equation}\label{eq: graph energy variation}
 \langle{\bf S},{\bf D}_{V_{h,k}}\rangle_{L^2_\eps}
 =-\left.\frac{d}{dt}\right|_{t=0}E_\eps(\Phi_{h+tk,\theta},A_{h+tk,\theta}).
\end{equation}
Since $\Gamma$ is minimal, the area variation satisfies
\begin{equation}\label{eq: graph area variation}
\begin{split}
 D\mathscr{A}(0)&=0,\\
 D^2\mathscr{A}(0)[h,k]&=-\int_\Gamma\langle J_\Gamma h,k\rangle\,d\mathrm{vol}_\Gamma,\\
 D\mathscr{A}(h)[k]&=-\int_\Gamma\langle J_\Gamma h+\mathcal Q(h),k\rangle\,d\mathrm{vol}_\Gamma,
 \qquad \|\mathcal Q(h)\|_{C^{0,\tau}}\lesssim\|h\|_{C^{2,\tau}}^2.
\end{split}
\end{equation}
By \eqref{eq: frozen graph energy variation}, the leading energy variation is $\mu_\lambda D\mathscr{A}(h)[k]$. Thus \eqref{eq: graph energy variation} and \eqref{eq: graph area variation}, normalised by \eqref{eq: model translation normalization}, give \eqref{eq: Jacobi from error}; $R_\eps$ consists of $\mathcal Q(h)$ and the errors in this comparison of first variations.

To finish \eqref{eq: bound R_eps}, vary the pulled-back metric before estimating the errors and integrate tangential derivatives of $k$ by parts on $\Gamma$. The cubic contribution was bounded in \eqref{eq: frozen graph energy variation}, and the quadratic contribution is given by \eqref{eq: graph area variation}.

Let
\begin{equation*}
 e_\lambda(y)\coloneqq\frac12\left(|d_{A_\lambda}\Phi_\lambda|^2+|F_{A_\lambda}|^2+\frac\lambda2(1-|\Phi_\lambda|^2)^2\right)
\end{equation*}
be the model energy density in \eqref{eq: Elambda}. The remaining metric variation and the curvature correction from the improved approximation are controlled, respectively, by
\begin{equation}\label{eq: translation energy error integrals}
\begin{split}
 \int_{|z|<\delta}|z|\eps^{-3}e_\lambda(z/\eps)dz
 =\eps\int_{|y|<\delta/\eps}|y|e_\lambda(y)dy&
 \lesssim\eps(1+|\log\eps|),\\
 \eps\int_0^\delta\frac{r^2}{(\eps+r)^3}\,dr
 &\lesssim\eps(1+|\log\eps|).
\end{split}
\end{equation}
The first bound in \eqref{eq: translation energy error integrals} follows from \eqref{eq: approximation profile bounds}; the potential contributes $O(\eps)$ for $\lambda>0$ and is absent at $\lambda=0$. For the second, pair the model curvature $O((\eps+r)^{-2})$ with the added curvature $O((\eps+r)^{-1})$, using the connection estimates in \Cref{sec: approximation errors}. The matching estimates \eqref{eq: reduced conormal matching} and \eqref{eq: noncompact approximation matching} give the same bounds in tangential $C^{0,\tau}$ after the integration by parts, including the matching annulus and fixed cutoffs. Equation \eqref{eq: noncompact mass normal cancellation} gives the same bound for \eqref{eq: noncompact mass correction}. Together these bounds give \eqref{eq: bound R_eps}.

\smallskip

Now we compare with the actual translation tests.
On $r<\sqrt\eps$, put $W_{a,h}=(F_h)_*\partial_{z^a}$, with $F_h$ from \Cref{sec: projections linear theory}, so that $V_{h,k}=k^aW_{a,h}$.
The two tests being compared are the prescribed mode ${\bf Z}_{a,\eps,h,\theta}$ from that section and the displacement pair $\eps{\bf D}_{W_{a,h}}$. Define their connection-component difference by
\begin{equation}\label{eq: translation test difference}
 \mathcal B_a\coloneqq ({\bf Z}_{a,\eps,h,\theta})_A-\eps\iota_{W_{a,h}}F_{A_{h,\theta}}.
\end{equation}
Equations \eqref{eq: approximation horizontal cancellation}, \eqref{eq: approximation inner connection bound}, \eqref{eq: approximation parameter shells} and \eqref{eq: reduced graph metric bounds}, applied to \eqref{eq: translation test difference}, give
\begin{equation}\label{eq: translation test error bounds}
\begin{split}
 \|\mathcal B_a(\cdot,z)\|_{C^{0,\tau}(\Gamma)}&\lesssim \frac{\eps(r^2+\|h\|_{C^{2,\tau}}+\eps)}{(\eps+r)^2},\\
 \|S_A(\cdot,z)\|_{C^{0,\tau}(\Gamma)}&\lesssim \left(\frac1{\eps+r}+\frac{\eps+\|h\|_{C^{2,\tau}}}{(\eps+r)^2}+\frac{\|h\|_{C^{2,\tau}}^2}{(\eps+r)^3}\right),
\end{split}
\end{equation}
where $r=|z|<\sqrt\eps$.
Using \eqref{eq: assumptions htheta} in \eqref{eq: translation test error bounds}, the contribution to $\mathfrak t_\eps^0$ in \eqref{eq: translation coefficients} is bounded by
\begin{equation}\label{eq: translation test pairing bound}
 \left\|\int_{|z|<\sqrt\eps}\langle S_A,\mathcal B_a\rangle\,dz\right\|_{C^{0,\tau}(\Gamma)}
 \lesssim \eps^2(1+|\log\eps|)+\eps(1+|\log\eps|)^4\|h\|_{C^{2,\tau}}.
\end{equation}

The Higgs components of ${\bf Z}_{a,\eps,h,\theta}$ and $\eps{\bf D}_{W_{a,h}}$ agree in the compact case; in the noncompact case their difference is $O(\eps)$ in the $C^{0,\tau}(\Gamma)$ norm, uniformly for $r<\sqrt\eps$, by \eqref{eq: noncompact mass correction}.

The Jacobian volume error is controlled by $|J_h-1|\lesssim r^2+\|h\|_{C^{2,\tau}}^2$, giving
\begin{equation}\label{eq: translation Jacobian bound}
 \left\|\frac1{\eps\mu_\lambda}\int_{|z|<\sqrt\eps}(J_h-1)
 \langle{\bf S},\eps{\bf D}_{W_{a,h}}\rangle_\eps\,dz\right\|_{C^{0,\tau}(\Gamma)}
 \lesssim
 \begin{cases}
  \eps^{1+2\tau'},&M\text{ compact},\\
  \eps^2(1+|\log\eps|)^N,&M\text{ noncompact}.
 \end{cases}
\end{equation}
Here $\tau<\tau'<1/2$ is as in \eqref{eq: deltaeps}, and $N$ is a fixed nonnegative integer. At $h=0$, the Higgs density contribution is $O(\eps\ell_\eps^2)=O(\delta_\eps)$ in the compact case and $O(\eps^2(1+|\log\eps|)^N)$ in the noncompact case. The connection contribution and the terms containing $h$ satisfy the latter bound by \eqref{eq: assumptions htheta}, after increasing $N$. In the compact case, \eqref{eq: deltaeps} and $1+2\tau'<2$ therefore bound their sum by $\eps^{1+2\tau'}$. For $h\ne 0$ satisfying \eqref{eq: assumptions htheta}, the terms including $h$ contribute to lower orders. 

\smallskip

For $r>\sqrt\eps$, project each Lie algebra component of a pair onto $\R\sigma$, denoting this projection by a superscript $\parallel$. The outer contribution parallel to $\sigma$ is the normalised moment
\begin{equation*}
 \mathfrak{m}_a^\parallel(q)=\frac1{\eps\mu_\lambda}
 \int_{\eps^{-1/2}<|y|<\delta/\eps}
 \langle{\bf f}_{h,\theta}^\parallel(q,y),{\bf Z}_a^\parallel(y)\rangle\,dy,
 \qquad a=1,2,3.
\end{equation*}
The matching bound \eqref{eq: reduced conormal matching} gives
\begin{equation}\label{eq: translation outer moment bound}
 \|\mathfrak{m}_a^\parallel\|_{C^{0,\tau}}
 \lesssim \eps(1+|\log\eps|)^2+\eps.
\end{equation}
The perpendicular components of the test on this region are exponentially small in $\eps^{-1/2}$. Combining \eqref{eq: translation test pairing bound}, \eqref{eq: translation Jacobian bound} and \eqref{eq: translation outer moment bound} with \eqref{eq: Jacobi from error} and \eqref{eq: assumptions htheta} proves \eqref{eq: translation zero parameter claim}.

\smallskip

To prove \eqref{eq: translation spectral comparison claim}, set
\begin{equation*}
 \mathcal D_\eps=\eps^{-1}\left[F_{\mathrm{tr}}(\eps^2\Delta_\Gamma^\perp)^{-1}\mathcal C_{\mathrm{tr}}-F_{\mathrm{tr}}(0)^{-1}\mathcal C_{\mathrm{tr}}(0)\right].
\end{equation*}
The claim follows from
\begin{equation}\label{eq: translation spectral remainder bound}
 \|\mathcal D_\eps{\bf f}_{h,\theta}\|_{C^{0,\tau}}
 \lesssim \eps\sqrt{\ell_\eps}+\eps^{5/4}(1+|\log\eps|)^N,
\end{equation}
with the first term omitted in the noncompact case.

We prove \eqref{eq: translation spectral remainder bound} as follows. The normalised tests ${\bf J}_{a,t}/F_{\mathrm{tr}}(t)$ satisfy \eqref{eq: reduced adjoint estimates}, by the product rule and $F_a(t)-F_a(0)=\langle{\bf J}_{a,t}-{\bf J}_{a,0},{\bf Z}_a\rangle_2$. Thus \Cref{lem: reduced coefficient operators} gives
\begin{equation}\label{eq: normalized translation comparison}
 \|\mathcal D_\eps{\bf f}\|_{C^{0,\tau}}
 \lesssim\eps^{-1/2}\|{\bf f}\|_{C^{\tau+1/2}(\Gamma;L^2_y)}.
\end{equation}
After subtracting the full principal profiles, collect in ${\bf f}^{\mathrm{odd}}$ the terms whose integrands in the pairings with the translation tests are odd in $y$, and write
\begin{equation}\label{eq: translation source splitting}
\begin{split}
 {\bf f}_{h,\theta}
 &=\eps(-\Delta_\Gamma^\perp h)^a{\bf Z}_a
   +\eps^2(\Delta_\Gamma\theta-d_\Gamma^*\beta^{\mathrm{tan}}){\bf Z}_4\\
 &\quad+{\bf f}^{\mathrm{odd}}+{\bf f}^{(1)}+{\bf f}^{(2)},
\end{split}
\end{equation}
where
\begin{itemize}
 \item ${\bf f}^{(1)}$ contains the remaining terms with at most first tangential derivatives of $h,\theta$, retaining graph-dependent terms only on $r<2\sqrt\eps$;
 \item ${\bf f}^{(2)}$ contains the remaining second-derivative terms, the principal cutoff tail and the graph-dependent source on $r\geq2\sqrt\eps$.
\end{itemize}
The first three terms in \eqref{eq: translation source splitting} are annihilated by $\mathcal D_\eps$, by \eqref{eq: reduced spectral recovery}, \eqref{eq: reduced kernel source coefficients} and parity. Using \eqref{eq: assumptions htheta} to absorb the terms containing $h$, and increasing $N$, the required estimates reduce to
\begin{equation}\label{eq: translation split source bounds}
\begin{split}
 \|{\bf f}^{(1)}\|_{C^{\tau+1/2}(\Gamma;L^2_y)}
 &\lesssim\eps^{3/2}\sqrt{\ell_\eps}+\eps^{7/4}(1+|\log\eps|)^N,\\
 \|\mathcal D_\eps{\bf f}^{(2)}\|_{C^{0,\tau}}
 &\lesssim\eps^{5/4}(1+|\log\eps|)^N.
\end{split}
\end{equation}
The first estimate in \eqref{eq: translation split source bounds} follows from \eqref{eq: approximation profile bounds}, \eqref{eq: approximation parameter shells} and \eqref{eq: reduced conormal matching}, using the rescaling of \Cref{def: rescaling convention}. Integrating the compact scalar factor $e^{-cr/\ell_\eps}$ normally gives the term containing $\sqrt{\ell_\eps}$. In the noncompact case this term is absent; the linear normal error of \eqref{eq: noncompact mass correction} cancels by \eqref{eq: noncompact mass normal cancellation}, and its remaining terms obey \eqref{eq: noncompact mass derivative bound}, \eqref{eq: noncompact phase profile defect} and \eqref{eq: noncompact approximation matching}.

For the second estimate in \eqref{eq: translation split source bounds}, use the direct $L^2_y$ bound \eqref{eq: reduced coefficient L2 bound}. The second-derivative coefficients have the extra factor $r+\|h\|_{C^{2,\tau}}$.

Equation \eqref{eq: approximation profile bounds} gives $\|(\eps|y|){\bf Z}_i\|_{L^2(|y|<\delta/\eps)}\lesssim\sqrt\eps$; hence only $C^{0,\tau}$ control of those derivatives is needed. The fixed physical cutoff tail has $L^2_y$ norm $O(\sqrt\eps)$, while integration of the outer bounds \eqref{eq: reduced conormal matching} and \eqref{eq: approximation parameter shells} over $r>\sqrt\eps$, followed by \eqref{eq: assumptions htheta}, gives the stated bound. Applying \eqref{eq: normalized translation comparison} to ${\bf f}^{(1)}$ and adding the estimate for ${\bf f}^{(2)}$ proves the claim.
\end{proof}

\begin{lemma}[The phase remainder]\label{lem: reduced phase source size}
After potentially increasing the rate $\omega_\eps\to 0$ in \Cref{lem: reduced source size}, we have
\begin{equation}\label{eq: reduced phase source bounds}
\|r_{\mathrm{ph}}^{\,0}\|_{C^{0,\tau}}\leq\omega_\eps.
\end{equation}
\end{lemma}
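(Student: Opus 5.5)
The plan mirrors the proof of \Cref{lem: reduced source size}, with the phase test ${\bf Z}_4={\bf W}$ in place of the translations and the scaling $\eps^{-2}$ in place of $\eps^{-1}$. Let ${\bf f}_{h,\theta}$ be the rescaled, cut-off normal source of ${\bf S}={\bf S}_\eps(\Phi_{h,\theta},A_{h,\theta})$. Define
\begin{equation*}
\mathfrak p_\eps^0=\frac{\mathcal C_4(0){\bf f}_{h,\theta}}{\eps^2F_{\mathrm{ph}}(0)},\qquad
\mathfrak p_\eps=\eps^{-2}F_{\mathrm{ph}}(\eps^2\Delta_\Gamma)^{-1}\mathcal C_4{\bf f}_{h,\theta}.
\end{equation*}
Then $r^{\,0}_{\mathrm{ph}}=\mathfrak p_\eps-\Delta_\Gamma\theta+d_\Gamma^*\beta^{\mathrm{tan}}$. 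I would prove two bounds: $\|\mathfrak p_\eps^0-(\Delta_\Gamma\theta-d^*_\Gamma\beta^{\mathrm{tan}})\|_{C^{0,\tau}}=o(1)$ and $\|\mathfrak p_\eps-\mathfrak p^0_\eps\|_{C^{0,\tau}}=o(1)$.

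For the first bound, I would identify the leading term variationally, as in \eqref{eq: Jacobi from error}, but using a gauge variation instead of a graph variation. By \eqref{eq: shifted phase approximation}, $\partial_t$ of the approximation in the direction $\theta+t\phi$ is $\bar\phi\,d_{A}\xi_\eps$. Hence $\langle{\bf S},(0,\bar\phi\,d_A\xi_\eps)\rangle_{L^2_\eps}$ equals the $\phi$-derivative of $E_\eps$. Since $(0,d_A(\bar\phi\xi_\eps))$ is an exact gauge direction and pairs to zero with ${\bf S}$, this becomes $-\langle{\bf S},(0,\xi_\eps d\bar\phi)\rangle_{L^2_\eps}$, which is a tangential pairing. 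I would then expand the energy to second order in tangential derivatives of $\theta$. The effective connection along $\Gamma$ is $(\beta_j-\partial_j\theta)\xi_\eps\,dq^j$ at leading order, coming from the tangential trace lift of $b$ combined with $-\xi_\eps d\bar\theta$. Its energy gives $\eps\|\xi\|^2$-type coefficients times $\int_\Gamma|\beta^{\mathrm{tan}}-d\theta|^2$. Differentiating produces $\eps^2\,\mathrm{const}\cdot(\Delta_\Gamma\theta-d^*_\Gamma\beta^{\mathrm{tan}})$. The constant should match $\|{\bf Z}_4\|_2^2=\|d_{A_\lambda}(q_\lambda\sigma)\|_2^2$, which also appears in $F_{\mathrm{ph}}(0)$ through \eqref{eq: reduced zero spectral tests}, so the normalisation works out to one. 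The remaining contributions would be bounded as in \eqref{eq: translation energy error integrals} and \eqref{eq: translation test pairing bound}:
\begin{itemize}
\item the metric errors $r^2$, the graph terms $\|h\|_{C^{2,\tau}}$ via \eqref{eq: reduced graph metric bounds}, the matching errors via \Cref{lem: reduced matching}, and the profile tails via \Cref{lem: approximation profile bounds};
\item the comparison between ${\bf Z}_{4,\eps,h,\theta}$ and the actual phase direction $\eps\,d_{A_{h,\theta}}\xi_{\eps,h}$;
\item by parity, the transverse part of the error paired against the even phase test.
\end{itemize}
All of these are $O(\eps^2\cdot o(1))$ before dividing by $\eps^2$.

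For the second bound, I would use the splitting \eqref{eq: translation source splitting}. The principal term $\eps^2(\Delta_\Gamma\theta-d^*_\Gamma\beta^{\mathrm{tan}}){\bf Z}_4$ is reproduced exactly by both operators through \eqref{eq: reduced kernel source coefficients}, and odd terms vanish. By \eqref{eq: reduced coefficient comparison bound}, the rest costs $\eps^{-2}\sqrt\eps\|{\bf f}^{(1)}\|_{C^{\tau+1/2}(\Gamma;L^2_y)}$ plus the direct $L^2_y$ bound \eqref{eq: reduced coefficient L2 bound} for ${\bf f}^{(2)}$.

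The main obstacle is the extra power of $\eps$: the bounds in \eqref{eq: translation split source bounds} of size $\eps^{3/2}\sqrt{\ell_\eps}+\eps^{7/4}$ give only $O(\sqrt{\ell_\eps}+\eps^{1/4})$ after the $\eps^{-2}\sqrt\eps$ loss. That is still $o(1)$, and $\ell_\eps\le\eps^{\tau'}$ makes $\sqrt{\ell_\eps}\to0$. This is why the lemma claims only $\omega_\eps$ rather than a power.

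The delicate points are twofold. First, the $\theta$-dependent terms with $\|\theta\|_{C^{2,\tau}}\le K$ are not small. I would try to show that every such term either lies in the principal ${\bf Z}_4$ profile or carries an extra factor of $r$ or $\eps$ after the cancellation \eqref{eq: approximation normal cancellation}. This needs the curvature identity \eqref{eq: approximation phase curvature} and the fact that the tangential derivatives of $\theta$ enter only through the exact gauge structure. Second, the ${\bf Z}_4$ test has an $r^{-2}$ tail. The longitudinal outer moment, the analogue of \eqref{eq: translation outer moment bound}, therefore has to be controlled using $d_A\xi_\eps=O(\eps/(\eps+r)^2)$ together with the summed longitudinal source norm. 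I expect this to give $O(\eps^2|\log\eps|^N)$ before normalisation, hence $o(1)$ after it. I would set $\omega_\eps$ to the maximum of the resulting rates.
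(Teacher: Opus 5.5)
Your identification of the principal term is sound. By gauge invariance, $\langle{\bf S},(0,\bar\phi\,d_A\xi_\eps)\rangle_{L^2_\eps}$ is the $\theta$-variation of $E_\eps$. Its leading part is $\eps^2\|{\bf Z}_4\|_2^2\int_\Gamma(\Delta_\Gamma\theta-d_\Gamma^*\beta^{\mathrm{tan}})\phi$, and $F_{\mathrm{ph}}(0)=\|{\bf Z}_4\|_2^2/\langle{\bf E}_4,{\bf Z}_4\rangle_2$ makes the normalisation exactly one.

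The gap is in the remainder. The bounds you import from \Cref{lem: reduced source size}, namely \eqref{eq: translation energy error integrals}, \eqref{eq: translation test pairing bound} and \eqref{eq: translation outer moment bound}, control the \emph{unnormalised} pairing only to $O(\eps^2|\log\eps|^N+\eps|\log\eps|^N\|h\|_{C^{2,\tau}})$. That suffices after dividing by $\eps$, which is all the translation equation needs. After dividing by $\eps^2$, however, it leaves $O(|\log\eps|^N)$. So your assertion that all these contributions are $O(\eps^2\cdot o(1))$ does not follow. Your last paragraph contains the same slip explicitly: a moment of size $O(\eps^2|\log\eps|^N)$ before normalisation is $O(|\log\eps|^N)$ after it, not $o(1)$. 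Indeed, on $\{r>\sqrt\eps\}$ the summed longitudinal source norm only yields $\sqrt\eps\,a_\eps\geq\eps^2(1+|\log\eps|)$.

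What is missing is a set of exact cancellations specific to the phase test; without them, individual terms really are of the same order as the principal term.

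\textbf{(i) Normal curvature through the graph coefficient.} The normal curvature $F_{ab}\sim(\eps+r)^{-2}$ couples to the test through the mixed graph coefficient $g_h^{ja}\sim\|h\|_{C^{2,\tau}}$. This contributes $\eps\|h\|_{C^{2,\tau}}$ to the pairing, against a principal term of size $\eps^2$, so $K|\log\eps|^2$ after normalisation. It drops out only because $F_{ba}=\epsilon_{bae}B_e$, while $\langle B_e,({\bf J}_{4,t})_a\rangle$ is symmetric in $(e,a)$ by the radial form \eqref{eq: phase radial test form}. This is the contraction identity \eqref{eq: phase normal contractions}, which your proposal does not identify.

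\textbf{(ii) The exterior correction on $\{r>\sqrt\eps\}$.} If you estimate $b_r-\beta$ by \Cref{lem: reduced matching}, you get $(db_r)_{ri}=O(1+|\log r|)$ against $q_\eps'\lesssim\eps r^{-2}$. That bounds this part of the $\theta$-variation only by $O(\eps^2)$, i.e. $O(1)$ after normalisation. The contribution vanishes only because $f_0+db$ is exactly coclosed, so the radial flux $r^2\int_{\Sp^2}(f_0+db)_{r\,\cdot}$ has zero tangential divergence. The paper uses exactly this to kill the reference current in \eqref{eq: phase outer current}. Only the $h$- and $\theta$-dependent parts then survive, and these are integrated in \eqref{eq: phase outer current integral}.

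\textbf{Contrast with the paper's route.} The paper never splits $\mathcal C_4$ into $\mathcal C_4(0)$ plus a spectral comparison. Instead it proves the $t$-uniform bounds \eqref{eq: phase radial test bounds} on the longitudinal and transverse coefficients of ${\bf J}_{4,t}$. These come from the fact that the radial resolvent preserves positivity, because its off-diagonal coefficients are nonpositive. With these bounds, the above cancellations are applied at every spectral parameter simultaneously. In particular, this is what makes the outer longitudinal test $\eps^{-1}A_t(r/\eps)\sigma\,dr$, with $|\eps^{-1}A_t|\lesssim\eps r^{-2}$, tractable.
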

\begin{proof}
Let ${\bf f}$ be the rescaled known source in \Cref{def: rescaling convention}.
In the compact case set ${\bf f}^{\circ}={\bf f}$. In the noncompact case, let ${\bf f}^{\circ}$ be the source obtained by removing the cutoff mass correction \eqref{eq: noncompact mass correction} from the approximation before applying the graph and gauge maps, with all other fields unchanged. Set $\chi_\eps(y)=\eta(\sqrt\eps\,|y|)$ and define
\begin{equation}\label{eq: phase source splitting}
\begin{split}
 R_{\mathrm{in}}&\coloneqq\eps^{-2}F_{\mathrm{ph}}(\eps^2\Delta_\Gamma)^{-1}\mathcal C_4\bigl[\chi_\eps({\bf f}^{\circ}-\eps^2(\Delta_\Gamma\theta-d_\Gamma^*\beta^{\mathrm{tan}}){\bf Z}_4)\bigr],\\
 R_{\mathrm{out}}&\coloneqq\eps^{-2}F_{\mathrm{ph}}(\eps^2\Delta_\Gamma)^{-1}\mathcal C_4\bigl[(1-\chi_\eps)({\bf f}^{\circ}-\eps^2(\Delta_\Gamma\theta-d_\Gamma^*\beta^{\mathrm{tan}}){\bf Z}_4)\bigr],\\
 R_{\mathrm m}&\coloneqq\eps^{-2}F_{\mathrm{ph}}(\eps^2\Delta_\Gamma)^{-1}\mathcal C_4({\bf f}-{\bf f}^{\circ}).
\end{split}
\end{equation}
By \eqref{eq: reduced spectral recovery} and \eqref{eq: phase source splitting},
$r_{\mathrm{ph}}^{\,0}=R_{\mathrm{in}}+R_{\mathrm{out}}+R_{\mathrm m}$ exactly. We claim that the following bounds hold. 
\begin{subequations}
\begin{align}
 \|R_{\mathrm{in}}\|_{C^{0,\tau}}
 &\lesssim \eps^{1/2-\tau}(1+|\log\eps|)^N,\label{eq: phase source inner bound}\\
 \|R_{\mathrm{out}}\|_{C^{0,\tau}}
 &\lesssim \eps^{1/2-\tau}(1+|\log\eps|)^N,\label{eq: phase source outer bound}\\
 \|R_{\mathrm m}\|_{C^{0,\tau}}
 &\lesssim \eps^{1-\tau}(1+|\log\eps|)^N.\label{eq: phase source mass bound}
\end{align}
\end{subequations}

First, we note that rotation equivariance and parity give real radial functions $A_t,B_t$, the coefficients of the connection component of ${\bf J}_{4,t}$ parallel and perpendicular to $\sigma$, respectively:
\begin{equation}\label{eq: phase radial test form}
 ({\bf J}_{4,t})_\Phi=0,\qquad
 ({\bf J}_{4,t})_a=A_t(|y|)\frac{y^a}{|y|}\sigma
       +B_t(|y|)\left(T_a-\frac{y^a}{|y|}\sigma\right),\quad a=1,2,3.
\end{equation}
For every fixed integer $j\geq0$, these coefficients satisfy
\begin{equation}\label{eq: phase radial test bounds}
|(t\partial_t)^jA_t(|y|)|\lesssim _jq_\lambda'(|y|),\qquad |(t\partial_t)^jB_t(|y|)|\lesssim _j\frac{(1-\alpha_\lambda(|y|))q_\lambda(|y|)}{|y|}.
\end{equation}
To prove \eqref{eq: phase radial test bounds}, write
$B_a=a(r)\frac{y^a}{r}\sigma+b(r)(T_a-\frac{y^a}{r}\sigma)$.
Direct substitution in \eqref{eq: model linearised operator} shows that
${\bf L}_\lambda(0,B)$ has zero Higgs component and that its connection
coefficients are
\begin{equation}\label{eq: phase radial resolvent operator}
\begin{split}
 (\mathcal L_\lambda(a,b))_1
 &=-a''-\frac2r a'
 +\frac{2\bigl(1+(1-\alpha_\lambda)^2\bigr)}{r^2}a
 -\frac{4(1-\alpha_\lambda+r\alpha_\lambda')}{r^2}b,\\
 (\mathcal L_\lambda(a,b))_2
 &=-b''-\frac2r b'
 +\left(4\varphi_\lambda^2+
 \frac{3(1-\alpha_\lambda)^2-1}{r^2}\right)b
 -\frac{2(1-\alpha_\lambda+r\alpha_\lambda')}{r^2}a.
\end{split}
\end{equation}
The regular boundary conditions are $a'(0)=b'(0)=0$ and
$a-b=O(r^2)$; at infinity we use
$\int_0^\infty(a^2+2b^2)r^2\,dr<\infty$.
The radial operator is self-adjoint for this weighted pairing and
nonnegative, as established in the proof of
\Cref{lem: normal weighted coercivity}.

Its off-diagonal coefficients are nonpositive.
Indeed, the radial minimising profile satisfies $0<1-\alpha_\lambda<1$
by truncation and the profile equation.
After decreasing $\lambda_*$, one has $\alpha_\lambda'>0$:
on fixed compact intervals this follows from convergence to
\eqref{eq: BPS}, including the uniform expansion at zero;
for large $r$, \eqref{eq: approximation profile bounds} gives
$\varphi_\lambda\geq\frac12$, and
\eqref{eq:monopole-profiles} gives
$(1-\alpha_\lambda)''>0$.
Since $1-\alpha_\lambda$ tends to zero, its derivative is negative there.
Testing a radial resolvent equation with the componentwise negative
parts, and using nonnegativity of the quadratic form, therefore shows
that $(\mathcal L_\lambda+t)^{-1}$ preserves componentwise positivity
for $t>0$.

By \eqref{eq: qlambda},
$q_\lambda>0$ and $(r^2q_\lambda')'=2(1-\alpha_\lambda)^2q_\lambda>0$.
The positive phase coefficients
\[
 \left(q_\lambda',\frac{(1-\alpha_\lambda)q_\lambda}{r}\right)
\]
lie in the kernel of \eqref{eq: phase radial resolvent operator},
by \eqref{eq:monopole-profiles} and \eqref{eq: qlambda}.
Since ${\bf E}_4=\eta_{\delta/(4\eps)}(1+r^2)^{-1}{\bf Z}_4$,
positivity and $(\mathcal L_\lambda+t){\bf Z}_4=t{\bf Z}_4$
in radial coefficients give, for every integer $m\geq0$,
\[
 0\leq (T_t^m{\bf E}_4)_{\rm rad}
 \leq\left(q_\lambda',\frac{(1-\alpha_\lambda)q_\lambda}{r}\right)
\]
componentwise, by induction.
Here the subscript denotes the two radial coefficients above.
Finally, \eqref{eq: reduced resolvent polynomial identities} expresses
$(t\partial_t)^jT_t$ as a fixed linear combination of these iterates.
Dividing by $Q_4(t)$ and using its logarithmic derivative bounds in
\Cref{lem: reduced adjoint tests} proves
\eqref{eq: phase radial test bounds}.

\smallskip\noindent We prove \eqref{eq: phase source inner bound}.
For ${\bf f}^{\circ}$, use the horizontal frame and covariant derivatives from \eqref{eq: approximation horizontal cancellation}, with the shell estimates \eqref{eq: approximation inner shells}. By \eqref{eq: approximation phase curvature}, the phase replaces $\beta^{\mathrm{tan}}$ by $\beta^{\mathrm{tan}}-d_\Gamma\theta$. In physical normal coordinates $z=\eps y$, for $|z|<\sqrt\eps$, this gives
\begin{equation}\label{eq: phase principal tangential current}
 -g_\Gamma^{ij}(\nabla_A)_iF_{ja}
 =(\Delta_\Gamma\theta-d_\Gamma^*\beta^{\mathrm{tan}})D_a\xi_\eps
 +|\beta^{\mathrm{tan}}-d_\Gamma\theta|^2[\xi_\eps,D_a\xi_\eps],
\end{equation}
Here $\nabla_A$ uses the product metric. Since $D_a\xi_\eps=\eps^{-1}({\bf Z}_4)_a(z/\eps)$, the connection rescaling in \Cref{def: rescaling convention} gives the principal source
\begin{equation*}
 \eps^2(\Delta_\Gamma\theta-d_\Gamma^*\beta^{\mathrm{tan}}){\bf Z}_4.
\end{equation*}
The second term in \eqref{eq: phase principal tangential current} pairs to zero, since
\begin{equation}\label{eq: phase commutator cancellation}
 \langle[\xi_\eps,D_a\xi_\eps],({\bf J}_{4,t})_a\rangle=0:
\end{equation}
the bracket is perpendicular to both $\sigma$ and $T_a-\frac{y^a}{|y|}\sigma$.

The remaining normal curvature terms also cancel before radial integration. Indeed, $F_{ab}$ is a linear combination of $\epsilon_{abd}T_d$ and $\epsilon_{abd}\frac{y^d}{|y|}\sigma$, whereas $(d_A{\bf J}_{4,t})_{ab}$ is a radial multiple of $y^aT_b-y^bT_a$. For the normal inverse-metric coefficients $g_h^{ab}$ in \eqref{eq: reduced graph metric bounds},
\begin{equation}\label{eq: phase normal contractions}
 g_h^{ac}g_h^{bd}\langle F_{ab},(d_A{\bf J}_{4,t})_{cd}\rangle=0,
 \qquad \langle F_{ba},({\bf J}_{4,t})_a\rangle=0.
\end{equation}
The two contractions in the first identity are multiples of
\begin{equation*}
g_h^{ac}y^c\epsilon_{abd}g_h^{bd}=0,\quad g_h^{ac}y^cg_h^{be}y^e\epsilon_{abd}y^d=0.
\end{equation*}
Both vanish by symmetry. The second identity follows by the same antisymmetric contraction. It removes the term containing $D^2h$ with its mixed metric coefficient frozen at $z=0$; the remaining frozen shear terms have odd integrands on each normal sphere. For each $a$, both $[\Phi,D_a\Phi]$ and $[\xi_\eps,D_a\xi_\eps]$ are scalar multiples of $[\sigma,T_a]$, which is perpendicular to the two Lie algebra directions in $({\bf J}_{4,t})_a$. Hence
\begin{equation*}
 \langle[\Phi,D_a\Phi],({\bf J}_{4,t})_a\rangle=0,\qquad a=1,2,3.
\end{equation*}

After the cancellations in \eqref{eq: phase commutator cancellation} and \eqref{eq: phase normal contractions}, the $h$-dependent residual estimated in \eqref{eq: approximation parameter shells} is controlled by the following radial integrals, using \eqref{eq: assumptions htheta}:
\begin{subequations}\label{eq: phase inner graph integrals}
\begin{align}
 I_1&=\int_0^{2\sqrt\eps}
 \frac{r^2(r\|h\|_{C^{2,\tau}}+\|h\|_{C^{2,\tau}}^2)}{(\eps+r)^4}e^{-cr/\eps}\,dr
 \lesssim \eps(1+|\log\eps|)^4,\label{eq: phase inner transverse integral}\\
 I_2&=\int_0^{2\sqrt\eps}
 \frac{r^2(r^2\|h\|_{C^{2,\tau}}+r\|h\|_{C^{2,\tau}}^2+\|h\|_{C^{2,\tau}}^3)}{(\eps+r)^4}\,dr\lesssim  \eps(1+|\log\eps|)^2.\label{eq: phase inner current integral}
\end{align}
\end{subequations}
The first integral bounds the terms containing the exponentially decreasing component $B_t$ after the cancellation. The second bounds the tangential current formed by contracting normal curvature with the product of a normal metric defect and a mixed metric coefficient. These products have size $C(r^2\|h\|_{C^{2,\tau}}+r\|h\|_{C^{2,\tau}}^2+\|h\|_{C^{2,\tau}}^3)$ in $C^{1,\tau}(\Gamma)$. The factors $(\eps+r)^{-4}$ follow from
$|F_{ab}|\leq C(\eps+r)^{-2}$ and
$|\eps^{-1}{\bf J}_{4,t}(z/\eps)|\leq C\eps(\eps+r)^{-2}$, including the normalisation $\eps^{-1}$ of the physical connection pairing. 

To complete the estimate of $R_{\mathrm{in}}$, we now bound the reference contribution at $h=0$; the general dependence on $h$ is controlled by \eqref{eq: phase inner graph integrals}. Pairing the metric error and the tangential-curvature error with the phase test gives, respectively,
\begin{equation}\label{eq: phase reference integrals}
 C\int_0^{2\sqrt\eps}\frac{r^3}{(\eps+r)^3}\,dr\lesssim \sqrt\eps,
 \qquad C\eps\int_0^{2\sqrt\eps}\frac{r^2}{(\eps+r)^3}\,dr\lesssim \eps (1+|\log\eps|).
\end{equation}
Derivatives of $B_t$ are integrated onto the charged curvature profile; the boundary term vanishes at zero and is exponentially small where $d\chi_\eps\neq0$. On the overlap $\sqrt\eps\leq r\leq2\sqrt\eps$, the calculation leading to \eqref{eq: phase outer current integral} gives the same bound after increasing $N$, since $\|h\|_{C^{2,\tau}}/\sqrt\eps\leq K \sqrt\eps (1+|\log\eps|)^2$. The density and transported-covector errors use \eqref{eq: reduced graph metric bounds} and \eqref{eq: phase normal contractions}. Their additional normalised bound is $C_K(1+|\log\eps|)^N(\eps^{1-\tau/2}+\eps^{1-\tau})$, absorbed in \eqref{eq: phase source inner bound}. Passing from scale-$r$ H\"older bounds to the ordinary norm costs at most $\eps^{-\tau}$. Finally, \Cref{lem: reduced coefficient operators} applies \eqref{eq: phase inner graph integrals} and \eqref{eq: phase reference integrals} uniformly in the tangential spectral parameter, proving \eqref{eq: phase source inner bound}.

\smallskip\noindent We prove \eqref{eq: phase source outer bound}.
By \eqref{eq: phase radial test bounds}, the physical test parallel to $\sigma$ is $\eps^{-1}A_t(r/\eps)\sigma\,dr$, with
\begin{equation}\label{eq: phase outer radial test}
 \eps^{-1}|(t\partial_t)^jA_t(r/\eps)|\lesssim_j\eps r^{-2},\qquad r\geq\sqrt\eps.
\end{equation}
Its exterior derivative against a tangential test function contains no radial derivative of $A_t$, since $dr\wedge dr=0$. Use $g_h,J_h$ from \eqref{eq: reduced graph metric bounds}, and raise curvature indices with $g_h$. Subtracting the principal mixed curvature in \eqref{eq: approximation horizontal cancellation}, with the phase change \eqref{eq: approximation phase curvature}, gives the real one-form
\begin{equation}\label{eq: phase outer current}
(j_r^{\mathrm{err}})_i =(g_\Gamma)_{ij}r^2\int_{\Sp^2}J_h\langle F^{jr},\sigma\rangle\,d\mathrm{vol}_{\Sp^2}+4\pi r^2\eps^{-1}q_\lambda'(r/\eps)(\beta^{\mathrm{tan}}-d_\Gamma\theta)_i.
\end{equation}
Include the radial cutoffs in \eqref{eq: phase source splitting}, extending the geometric term by zero beyond the tube. Angular derivatives integrate to zero on $\Sp^2$. Integration by parts on $\Gamma$ with \eqref{eq: phase outer current} therefore expresses the contribution parallel to $\sigma$, up to the exponentially decaying terms, as
\begin{equation}\label{eq: phase outer projected source}
 \eps^{-2}F_{\mathrm{ph}}(\eps^2\Delta_\Gamma)^{-1}
 \int_{\sqrt\eps}^{\infty}
 A_{\eps^2\Delta_\Gamma}(r/\eps)\bigl(d_\Gamma^*j_r^{\mathrm{err}}\bigr)\,dr.
\end{equation}
Here $A_{\eps^2\Delta_\Gamma}(r/\eps)$ multiplies a Laplacian eigenfunction of eigenvalue $\kappa$ by $A_{\eps^2\kappa}(r/\eps)$, as in \eqref{eq: reduced spectral recovery}.

We separate the transition annulus from the exterior region. On
$\sqrt\eps\leq r\leq2\sqrt\eps$, the longitudinal connection difference
$\mathcal D_\eps$ in \eqref{eq: approximation gluing difference} satisfies,
by \eqref{eq: reduced conormal matching},
\begin{equation*}
 \|\mathcal D_\eps^\parallel\|_{C^{2,\tau}(\Gamma)}
 +r\|\nabla\mathcal D_\eps^\parallel\|_{C^{1,\tau}(\Gamma)}
 \lesssim r(1+|\log r|)^N+\frac{\eps}{r}.
\end{equation*}
Here the norms are uniform on the normal sphere; the transverse difference
and its derivatives are exponentially small. Since
$|\nabla\kappa_\eps|\lesssim r^{-1}$ on this annulus, the curvature terms
$d\kappa_\eps\wedge\mathcal D_\eps$ and
$\kappa_\eps d_{A_D}\mathcal D_\eps$ contribute at most
$C(r^2(1+|\log r|)^N+\eps)$ to the current and its tangential derivative.
The graph contribution is bounded by $C\|h\|_{C^{2,\tau}}(1+|\log r|)^N$.
Consequently, \eqref{eq: assumptions htheta} gives
\begin{equation*}
 \|d_\Gamma^*j_r^{\mathrm{err}}\|_{C^{0,\tau}(\Gamma)}
 \lesssim r^2(1+|\log\eps|)^N,
 \qquad \sqrt\eps\leq r\leq2\sqrt\eps.
\end{equation*}

For $r>2\sqrt\eps$, the curvature after undoing the phase gauge is
\begin{equation*}
 F_A=\sigma\bigl(f_0+db-dq_\eps\wedge d\bar\theta\bigr)
       +O\bigl(\eps^{-N}e^{-cr/\eps}\bigr),
\end{equation*}
with the same bounds for the derivatives used below. Write $g_0=g$ and
$J_0=J_h|_{h=0}$. The coclosedness of $f_0+db$, proved in
\eqref{eq: approximation exterior cancellation} and
\eqref{eq: noncompact exterior cancellation}, implies
\begin{equation*}
 d_\Gamma^*\left[
 (g_\Gamma)_{ij}r^2\int_{\Sp^2}
 J_0(f_0+db)^{jr}_{g_0}\,d\mathrm{vol}_{\Sp^2}\,dq^i
 \right]=0.
\end{equation*}
Indeed, the radial component of the divergence consists of a tangential
and an angular divergence, since $(f_0+db)^{rr}=0$; the latter integrates
to zero on the sphere. Thus only the change from $g_0,J_0$ to $g_h,J_h$
contributes to the geometric part of $d_\Gamma^*j_r^{\mathrm{err}}$.
The metric coefficients have $C^{1,\tau}(\Gamma)$ difference bounded by
$C\|h\|_{C^{2,\tau}}$, while \eqref{eq: curvature identity} and
\eqref{eq: reduced conormal matching} give
\begin{equation*}
 \|f_0+db\|_{C^{1,\tau}(\Gamma)}
 \lesssim r^{-2}(1+|\log r|)^N.
\end{equation*}
The factor $r^2$ in \eqref{eq: phase outer current} therefore bounds this
geometric contribution by
$C\|h\|_{C^{2,\tau}}(1+|\log r|)^N$.

The phase and reference contributions are bounded separately by $C\eps$.
In particular, the fixed cutoff in \eqref{eq: q_eps} gives
\begin{equation*}
 \partial_r q_\eps
 =\chi_\Gamma\eps^{-1}q_\lambda'(r/\eps)
  +(\partial_r\chi_\Gamma)(q_\lambda(r/\eps)-1),
 \qquad |\partial_r q_\eps|\lesssim\eps r^{-2}.
\end{equation*}
Moreover, $dq_\eps\wedge d\bar\theta
=\chi_\Gamma(\partial_rq_\eps)dr\wedge d_\Gamma\theta$;
the term containing $\partial_r\chi_\Gamma$ in $d\bar\theta$ vanishes
because $dr\wedge dr=0$. These formulas also control the fixed collar.
The reference term has size
$r^2\eps^{-1}|q_\lambda'(r/\eps)|\lesssim\eps$ by
\eqref{eq: approximation profile bounds}. We obtain
\begin{equation*}
 \|d_\Gamma^*j_r^{\mathrm{err}}\|_{C^{0,\tau}(\Gamma)}
 \lesssim
 (\eps+\|h\|_{C^{2,\tau}})(1+|\log\eps|)^N
 +\eps^{-N}e^{-cr/\eps},
 \qquad 2\sqrt\eps<r<\delta.
\end{equation*}
Beyond the tube only the subtracted reference term remains, with the
same $C\eps$ bound. Integrating the transition and exterior bounds gives
\begin{equation}\label{eq: phase outer current integral}
\begin{split}
 \int_{\sqrt\eps}^{\infty}r^{-2}
 \|d_\Gamma^*j_r^{\mathrm{err}}\|_{C^{0,\tau}(\Gamma)}\,dr
 &\lesssim(1+|\log\eps|)^N
 \left(\sqrt\eps+
 \frac{\eps+\|h\|_{C^{2,\tau}}}{\sqrt\eps}\right)\\
 &\lesssim\sqrt\eps(1+|\log\eps|)^{N+2}.
\end{split}
\end{equation}
Only $h,Dh,d\theta$ enter the current, so its tangential divergence uses
at most two parameter derivatives. Equations
\eqref{eq: phase outer radial test}, \eqref{eq: phase outer projected source}
and \eqref{eq: phase outer current integral}, together with
\Cref{lem: reduced coefficient operators} and the H\"older conversion
factor $\eps^{-\tau}$, prove \eqref{eq: phase source outer bound}.
Differentiating these same current terms before estimating them gives
\begin{equation*}
 \int_{\sqrt\eps}^{\infty}r^{-2}
 \|D_h(d_\Gamma^*j_r^{\mathrm{err}})[k]\|_{C^{0,\tau}(\Gamma)}\,dr
 \lesssim\eps^{-1/2}(1+|\log\eps|)^N\|k\|_{C^{2,\tau}},
\end{equation*}
since the differentiated coefficients involve only $k,Dk,D^2k$.

\smallskip\noindent We prove \eqref{eq: phase source mass bound}.
Equation \eqref{eq: noncompact mass normal cancellation} gives the linear normal cancellation for \eqref{eq: noncompact mass correction}. Its remaining normal curvature contribution pairs to zero by \eqref{eq: phase normal contractions}, and its scalar component pairs with $({\bf J}_{4,t})_\Phi=0$ by \eqref{eq: phase radial test form}.

The phase generator remains $\xi_\eps$, and the change in its covariant derivative is exactly
\begin{equation*}
 d_{A_\Gamma^0+C_\eps^A}\xi_\eps-d_{A_\Gamma^0}\xi_\eps
 =[C_\eps^A,\xi_\eps].
\end{equation*}
By \eqref{eq: noncompact mass correction}, its contribution to the rescaled principal source is $O(\eps^3(1+|y|)^Ne^{-c|y|})$, hence $O(\eps)$ after division by $\eps^2$. Terms with second tangential derivatives of the mass correction pair to zero by parity. The remaining tangential derivatives and the trace-lift defect satisfy \eqref{eq: noncompact mass derivative bound} and \eqref{eq: noncompact phase profile defect}; the latter contributes to the normal coefficient through the errors in \eqref{eq: reduced graph metric bounds}. Their normalised bound is $C(\eps+\|h\|_{C^{2,\tau}})$.

On the matching annulus, \eqref{eq: noncompact approximation matching} controls the scalar difference $\eps(H-(2r)^{-1}-H_\Gamma)\sigma$, which commutes with the connection parallel to $\sigma$; the connection mass correction is exponentially small. Equations \eqref{eq: noncompact exterior cancellation} and \eqref{eq: noncompact outer shells}, with the same H\"older conversion used for \eqref{eq: phase source outer bound}, followed by \eqref{eq: assumptions htheta}, give
\begin{equation*}
 \|R_{\mathrm m}\|_{C^{0,\tau}}
 \lesssim\eps^{-\tau}(1+|\log\eps|)^N(\eps+\|h\|_{C^{2,\tau}})
 \lesssim\eps^{1-\tau}(1+|\log\eps|)^{N+2},
\end{equation*}
which proves \eqref{eq: phase source mass bound} after increasing $N$.
\end{proof}

\subsection{Contribution of the correction}
\begin{lemma}\label{lem: reduced complementary size}
For the solution of \eqref{eq: complementary nonlinear system}, the remainders in \eqref{eq: reduced complementary decomposition} satisfy
\begin{equation}\label{eq: reduced complementary bounds}
\frac{\|r_{\mathrm{tr}}^{\,1}\|_{C^{0,\tau}}}{\eps|\log\eps|^2}
+\|r_{\mathrm{ph}}^{\,1}\|_{C^{0,\tau}}
\lesssim (1+|\log\eps|)^N
\bigl(a_\eps\eps^{-1-\tau}+a_\eps^2\eps^{-2-\tau}\bigr),
\end{equation}
where $a_\eps$ is the approximation error rate defined in \eqref{eq: error rate for contraction}.
\end{lemma}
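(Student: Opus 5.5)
The plan is to express the difference $\mathfrak b_{\eps,h,\theta}-\mathfrak b^{\,\mathrm{prod}}[{\bf S}_\eps(\Phi_{h,\theta},A_{h,\theta})]$ through the reference reconstruction of \Cref{lem: reduced coefficient operators}, applied to the full right-hand side of \eqref{eq: complementary nonlinear system}. Rewrite that system in rescaled Fermi variables via \Cref{def: rescaling convention}, cut off by a fixed physical cutoff equal to one near the core. The rescaled correction $\widehat{\bf\Psi}$ then solves the reference system \eqref{eq: reduced reference system} with coefficient $c=\mathfrak b_{\eps,h,\theta}$ (up to the Jacobian factor in \eqref{eq: explicit residual injection}). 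Its source is
\begin{equation*}
 {\bf f}_{\mathrm{tot}}={\bf f}_{\bf S}+\eps^2\widehat{\mathcal N({\bf\Psi})}+\bigl(\eps^2\widehat{{\bf L}_{\eps,h,\theta}{\bf\Psi}}-{\bf L}^{\mathrm{prod}}_{\eps,\lambda}\widehat{\bf\Psi}\bigr)-\eps^2\widehat{\eps^{-1}{\bf G}\Pi_h{\bf G}^*{\bf\Psi}}+{\bf f}_{\mathrm{cut}}.
\end{equation*}
Here ${\bf f}_{\mathrm{cut}}$ is the commutator of ${\bf L}^{\mathrm{prod}}$ with the fixed cutoff, and there is a further term from $J_h^{-1}-1$ multiplying the modes. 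By exactness of \eqref{eq: reduced spectral recovery}, $\mathfrak b_{\eps,h,\theta}-\mathfrak b^{\,\mathrm{prod}}[{\bf S}]=\mathcal C({\bf f}_{\mathrm{tot}}-{\bf f}_{\bf S})$. The multipliers $F_{\mathrm{tr}}^{-1}$ and $F_{\mathrm{ph}}^{-1}$ are bounded on $C^{0,\tau}$, so it suffices to bound $\|\mathcal C_i(\cdot)\|_{C^{0,\tau}}$ of each remaining term. Multiplying by $\eps^{-1}/(\eps|\log\eps|^2)$ or $\eps^{-2}$ then gives the stated form.

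Each term is handled with \eqref{eq: reduced coefficient weighted bound}, which costs the weight $\langle y\rangle^{-2}$ integrated against the tangential $C^{0,\tau}$ norm. Passing from the $\eps$-scale H\"older control in $X_{\eps,h,\theta}$ and $Y_{\eps,h,\theta}$ to the ordinary $C^{0,\tau}(\Gamma)$ norm costs $\eps^{-\tau}$.

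\emph{Nonlinear term.} On the annulus $\Omega_j$, the $Y$-norm gives $\eps^2|\widehat{\mathcal N}|\lesssim (\eps/r_j)^2\,\|\mathcal N\|_Y$. Integrating against $\langle y\rangle^{-2}$ over dyadic shells gives $\sum_j t_j^{-1}\lesssim 1$. With \Cref{lem: quadratic estimate} and \eqref{eq: complementary solution bound}, this yields $\eps^{-\tau}a_\eps^2$.

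\emph{Operator difference.} This is exactly the quantity estimated in \eqref{eq: compact multiplier error bound}, giving $o(1)\cdot\eps^{-\tau}\|{\bf\Psi}\|_X\lesssim \eps^{-\tau}a_\eps$ times logarithms. That estimate must now be performed uniformly in the spectral parameter $t$, using the pointwise bound $|(t\partial_t)^j{\bf J}_{i,t}|\lesssim\langle y\rangle^{-2}$ of \Cref{lem: reduced adjoint tests} in place of ${\bf E}_i$. In particular, the tangential second-order term no longer vanishes by the moment constraint, since ${\bf J}_{i,t}\neq{\bf E}_i$. It is instead integrated by parts onto the spectral test. The test satisfies ${\bf L}^{\mathrm{prod}}$-adjoint identities: $\eps^2(\nabla^\top)^*\nabla^\top$ acting on ${\bf u}_{s,t}$ is just $t\,{\bf u}_{s,t}$. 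This is absorbed into $T_t$.

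\emph{Remaining terms.} The gauge projection term contributes $\eps^{2-\tau}(1+|\log\eps|)a_\eps$ by \eqref{eq: compact gauge projection bound}. The cutoff commutator is supported at $|y|\simeq\delta/\eps$, where the tests are $O(\eps^2)$. The Jacobian defect is controlled by \eqref{eq: reduced graph metric bounds} and the mode decay, giving $O((\eps+\|h\|)\|\mathfrak b\|)$.

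Collecting these, $\|\mathfrak b-\mathfrak b^{\mathrm{prod}}[{\bf S}]\|_{C^{0,\tau}}\lesssim(1+|\log\eps|)^N\eps^{-\tau}(a_\eps+a_\eps^2)$, up to the powers of $\eps$ that the weights carry. Dividing by $\eps\cdot\eps|\log\eps|^2$ for translations, and by $\eps^2$ for the phase, gives the right-hand side of \eqref{eq: reduced complementary bounds}. The factor $\eps^{-1-\tau}$ is the worse of the two normalisations once the logarithms are absorbed in $N$.

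The main obstacle is the operator-difference term. The metric and connection defects have to be paired with the $t$-dependent adjoint tests rather than with the compactly weighted ${\bf E}_i$. The mixed term $2\eps g_h^{ka}\partial_{q^k}\partial_{y^a}$ must also be controlled with only $C^{2,\tau}$ control of ${\bf\Psi}$ in $q$. I would first try integrating the $q$-derivative onto the spectral multiplier, using the logarithmic-derivative bounds $(t\partial_t)^j$ from \Cref{lem: reduced adjoint tests} in the Littlewood--Paley argument of \Cref{lem: reduced coefficient operators}. This should keep the loss at $\eps^{-\tau}$ times a power of $|\log\eps|$.
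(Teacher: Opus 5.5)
The architecture matches the paper's. You rescale with a fixed cutoff and read the correction equation as the reference system \eqref{eq: reduced reference system}. Exactness of the spectral recovery then writes $\mathfrak b-\mathfrak b^{\,\mathrm{prod}}[{\bf S}_\eps]$ as $\mathcal C$ applied to the nonlinear, operator-difference, cutoff, gauge and Jacobian sources, and each piece is estimated with \Cref{lem: reduced coefficient operators}. Your nonlinear bound $\eps^{-\tau}a_\eps^2$ is fine, and so are the cutoff, gauge and Jacobian bounds. (In the noncompact case \eqref{eq: noncompact restricted perturbation} gives only $\eps(1+|\log\eps|)$, which still suffices.)

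The gap is the operator-difference term, which is exactly the one responsible for $a_\eps\eps^{-1-\tau}$ in the statement. You bound its coefficient contribution by $\eps^{-\tau}a_\eps$. Both normalisations in \eqref{eq: reduced complementary bounds} divide the coefficient by $\eps^2$, up to $|\log\eps|^2$. So your bound yields $a_\eps\eps^{-2-\tau}$, not $a_\eps\eps^{-1-\tau}$. This diverges for every rate in \eqref{eq: error rate for contraction}: it is $\eps^{-1/2-\tau}|\log\eps|$ in the noncompact case, and at least $\eps^{-\tau}$ in the compact case since $\delta_\eps\geq C_*\eps^3$. The reduced contraction would then fail. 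Your closing remark that $\eps^{-1-\tau}$ is ``the worse of the two normalisations'' does not fix this. What must be shown is that this term contributes $O(a_\eps\eps^{1-\tau})$ to the coefficient, a full power of $\eps$ better than you claim.

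Citing \eqref{eq: compact multiplier error bound} cannot supply this. It concerns a different quantity, with reference operator ${\bf L}_{\lambda_\eps}$ and test ${\bf E}_i$.

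The missing idea is that every coefficient of $\eps^2\widehat{{\bf L}_{\eps,h,\theta}{\bf\Psi}}-{\bf L}^{\mathrm{prod}}_{\eps,\lambda}\widehat{\bf\Psi}$ gains a factor $r+\|h\|_{C^{2,\tau}}$ over its natural size.
\begin{itemize}
\item The second-order defects $g_h^{ab}-\delta^{ab}$, $g_h^{ka}$ and $g_h^{k\ell}-g_\Gamma^{k\ell}$ are $O(r+\|h\|_{C^{2,\tau}})$ by \eqref{eq: reduced graph metric bounds}.
\item The lower-order connection, frame, curvature and Higgs differences are smaller than their natural sizes $r^{-1}$ and $r^{-2}$ by the same factor, because the added connection pieces are bounded.
\end{itemize}
With $\|{\bf\Psi}\|_X\lesssim a_\eps$, this gives the scaled bound $a_\eps(r_j+\|h\|_{C^{2,\tau}})r_j^{-2}$ on $\Omega_j$. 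Multiply by $\eps^2$, pay $r_j^{-\tau}$ to pass to $C^{0,\tau}(\Gamma)$, and use \eqref{eq: assumptions htheta}. The result is $\|{\bf e}_{\mathrm{lin}}(\cdot,y)\|_{C^{0,\tau}}\lesssim a_\eps\eps^{1-\tau}(1+|\log\eps|)^N\langle y\rangle^{-1-\tau}$. This is integrable in \eqref{eq: reduced coefficient weighted bound}: the normal volume cancels the test weight, leaving $\int_1^\infty R^{-1-\tau}\,dR$.

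The obstacle you single out is not a real one. ${\bf L}^{\mathrm{prod}}_{\eps,\lambda}$ already contains $\eps^2(\nabla^\top)^*\nabla^\top$, so only $g_h^{k\ell}-g_\Gamma^{k\ell}$ enters. The $X$-norm controls all second covariant derivatives at scale $r_j$, mixed ones included, so ${\bf e}_{\mathrm{lin}}$ is bounded pointwise. Uniformity in $t$ is already built into \eqref{eq: reduced coefficient weighted bound}. No integration by parts onto the spectral tests and no Littlewood--Paley refinement is needed.
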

\begin{proof}
Write ${\bf\Psi}=(\Psi,B)={\bf\Psi}_{h,\theta}$, $\mathfrak b=\mathfrak b_{\eps,h,\theta}$. Equation \eqref{eq: complementary solution bound} gives
\begin{equation*}
 \|{\bf\Psi}\|_{X_{\eps,h,\theta}}\lesssim a_\eps,
 \qquad \|\mathfrak b\|_{C^{0,\tau}}
 \lesssim \eps^{-\tau}a_\eps.
\end{equation*}
Use $\lambda=\lambda_\eps$ in the compact case and $\lambda=0$ in the noncompact case.

We compare \eqref{eq: complementary nonlinear system} with \eqref{eq: reduced reference system} using \Cref{def: rescaling convention}. Choose the fixed physical cutoff $\zeta$ equal to one for $r\leq\delta/2$ and zero for $r\geq3\delta/4$, and write $\zeta_\eps(y)=\zeta(\eps|y|)$. Define the normal sources
\begin{equation}\label{eq: reduced complementary source terms}
\begin{split}
 {\bf f}_{\mathrm{err}}&=\eps^2\zeta_\eps\widehat{{\bf S}_\eps(\Phi_{h,\theta},A_{h,\theta})},\\
 {\bf f}_{\mathrm{nl}}&=\eps^2\zeta_\eps\widehat{\mathcal N_{\eps,h,\theta}({\bf\Psi})},\\
 {\bf e}_{\mathrm{lin}}
 &=\zeta_\eps\bigl(\eps^2\widehat{{\bf L}_{\eps,h,\theta}{\bf\Psi}}
       -{\bf L}^{\mathrm{prod}}_{\eps,\lambda}\widehat{\bf\Psi}\bigr),\\
 {\bf e}_{\mathrm{cut}}
 &=-[{\bf L}^{\mathrm{prod}}_{\eps,\lambda},\zeta_\eps]\widehat{\bf\Psi},\\
 {\bf e}_{\mathrm{g}}
 &=-\eps\zeta_\eps\widehat{{\bf G}_{h,\theta}\Pi_h{\bf G}_{h,\theta}^*{\bf\Psi}},\\
 {\bf e}_{\mathrm{jac}}
 &=\bigl(J_h(q,\eps y)^{-1}-1\bigr)
       \sum_{i=1}^4\mathfrak b_i(q){\bf E}_i(y).
\end{split}
\end{equation}
The subscripts $\mathrm{err}$ and $\mathrm{nl}$ denote the approximation error and the nonlinear term; $\mathrm{lin}$ denotes the linear operator difference, $\mathrm{cut}$ the cutoff commutator\footnote{For a linear operator $L$ and a scalar function $\zeta$, we write $[L,\zeta]v=L(\zeta v)-\zeta Lv$.} on the outer collar, $\mathrm g$ the gauge term, and $\mathrm{jac}$ the error from the Jacobian factor in \eqref{eq: explicit residual injection}. The linear source uses the full physical correction, including the tangential components of $B$. All cut-off sources are extended by zero to $\Gamma\times\R^3$.

We claim the coefficient identity
\begin{equation}\label{eq: reduced complementary coefficient identity}
 \mathfrak b-\mathfrak b^{\,\mathrm{prod}}
       [{\bf S}_\eps(\Phi_{h,\theta},A_{h,\theta})]
 =\mathcal C\bigl({\bf f}_{\mathrm{nl}}+{\bf e}_{\mathrm{lin}}
       +{\bf e}_{\mathrm{cut}}+{\bf e}_{\mathrm{g}}
       -{\bf e}_{\mathrm{jac}}\bigr)
\end{equation}
and the following estimates in which $\mathcal C$ is the four-component source operator of \Cref{lem: reduced coefficient operators}:
\begin{subequations}\label{eq: reduced complementary constituent bounds}
\begin{align}
 \|\mathcal C{\bf e}_{\mathrm{lin}}\|_{C^{0,\tau}}
 &\lesssim a_\eps\eps^{1-\tau}(1+|\log\eps|)^N,
 \label{eq: reduced correction linear bound}\\
 \|\mathcal C{\bf e}_{\mathrm{cut}}\|_{C^{0,\tau}}
       +\|\mathcal C{\bf e}_{\mathrm{g}}\|_{C^{0,\tau}}
 &\lesssim \eps a_\eps,
 \label{eq: reduced correction collar gauge bound}\\
 \|\mathcal C{\bf f}_{\mathrm{nl}}\|_{C^{0,\tau}}
 &\lesssim \eps^{-\tau}a_\eps^2,
 \label{eq: reduced correction nonlinear bound}\\
 \|\mathcal C{\bf e}_{\mathrm{jac}}\|_{C^{0,\tau}}
 &\lesssim a_\eps\eps^{2-\tau}(1+|\log\eps|)^N.
 \label{eq: reduced correction injection bound}
\end{align}
\end{subequations}
By \eqref{eq: reduced complementary decomposition}, the translation remainder divides this coefficient by $\eps$, whereas the phase remainder divides it by $\eps^2$. The inverse multipliers are uniformly bounded by \Cref{lem: reduced coefficient operators}; thus, factoring in the further $\eps|\log\eps|^2$ normalisation in the translation term, \eqref{eq: reduced complementary coefficient identity} and \eqref{eq: reduced complementary constituent bounds} imply \eqref{eq: reduced complementary bounds}.

\smallskip\noindent
We prove \eqref{eq: reduced complementary coefficient identity}.
The transported vectors $\nu_{a,h}$ are orthonormal, since both transports in their definition preserve the metric. Thus \eqref{eq: explicit residual injection} gives the exact identity
\begin{equation}\label{eq: reduced rescaled injection}
 \eps^2\widehat{\mathfrak B_{\eps,h,\theta}^*\mathfrak b}
 =J_h(q,\eps y)^{-1}\sum_{i=1}^4\mathfrak b_i(q){\bf E}_i(y).
\end{equation}
Since $\zeta_\eps=1$ on the support of every ${\bf E}_i$, the actual constraint \eqref{eq: fibre constraints} gives
\begin{equation}\label{eq: reduced cutoff constraint}
 \langle{\bf E}_i,\zeta_\eps\widehat{\bf\Psi}\rangle_2
 =\langle{\bf E}_i,\widehat{\bf\Psi}\rangle_2=0,
 \qquad i=1,\ldots,4.
\end{equation}
Substituting \eqref{eq: restricted linearised operator} and \eqref{eq: reduced rescaled injection} in \eqref{eq: complementary nonlinear system}, using \eqref{eq: reduced complementary source terms}, yields
\begin{equation}\label{eq: reduced correction product equation}
 {\bf L}^{\mathrm{prod}}_{\eps,\lambda}(\zeta_\eps\widehat{\bf\Psi})
 =-{\bf f}_{\mathrm{err}}-{\bf f}_{\mathrm{nl}}-{\bf e}_{\mathrm{lin}}
       -{\bf e}_{\mathrm{cut}}-{\bf e}_{\mathrm{g}}
       +{\bf e}_{\mathrm{jac}}+\sum_{i=1}^4\mathfrak b_i{\bf E}_i.
\end{equation}
Applying \eqref{eq: reduced spectral recovery} to \eqref{eq: reduced correction product equation} with \eqref{eq: reduced cutoff constraint} proves \eqref{eq: reduced complementary coefficient identity}.

\smallskip

Next, we prove \eqref{eq: reduced correction linear bound}. Use the physical annuli $\Omega_j$ and radii $r_j$ from \Cref{sec: norms}, and their rescaled radii $t_j=r_j/\eps$ as in the proof of \eqref{eq: residual injection estimate proof}. The second-order metric coefficients in \eqref{eq: reduced complementary source terms} are $g_h^{ab}-\delta^{ab}$, $2g_h^{ia}$ and $g_h^{ij}-g_\Gamma^{ij}$, as in \eqref{eq: multiplier operator difference}; the normal connection is retained in \eqref{eq: reduced product operators}. Their scaled norms are bounded by $C(r_j+\|h\|_{C^{2,\tau}})$, while their differentiated coefficients and the remaining first-order connection coefficients are bounded by $C(1+\|h\|_{C^{2,\tau}}/r_j)$. In \eqref{eq: model linearised operator}, the remaining terms are the differences of
\begin{equation*}
\begin{gathered}
 \ad_B^*F_A,\qquad 2\ad_B^*d_A\Phi,\qquad -2\eps^{-2}[d_A\Phi,\Psi],\\
 -\eps^{-2}[\Phi,[\Phi,\Psi]],\qquad -\eps^{-2}[\Phi,[\Phi,B]],\\
 \frac{2\eps}{\delta_\eps}\bigl((|\Phi|^2-1)\Psi+2\langle\Phi,\Psi\rangle\Phi\bigr),
\end{gathered}
\end{equation*}
evaluated at the approximation and at the rescaled model; the last term is absent in the noncompact case. These give
\begin{equation}\label{eq: reduced complementary local linear bound}
\left\|\left(\widehat{{\bf L}_{\eps,h,\theta}{\bf\Psi}} -\eps^{-2}{\bf L}^{\mathrm{prod}}_{\eps,\lambda}\widehat{\bf\Psi}\right)(q,z/\eps)\right\|_{\tau,r_j,\Omega_j}\lesssim a_\eps(1+|\log\eps|)^N\frac{r_j+\|h\|_{C^{2,\tau}}}{r_j^2}.
\end{equation}
Equation \eqref{eq: reduced complementary local linear bound} also holds on annuli meeting the matching region. By \eqref{eq: s_eps}, there,
\begin{equation*}
 r_j\simeq\sqrt\eps,\qquad
 \|\nabla^k\kappa_\eps\|_{\tau,r_j,\Omega_j}
 \lesssim\eps^{-k/2}\simeq r_j^{-k},\qquad k=1,2.
\end{equation*}
Thus the weights $r_j^k$ absorb derivatives of the cutoff, while \eqref{eq: approximation regular matching} and \eqref{eq: reduced conormal matching} bound the differentiated connection. Compact scalar cutoff errors obey \eqref{eq: approximation profile bounds}; the noncompact scalar matching terms obey \eqref{eq: reduced conormal matching}. On the fixed outer collar all scales are bounded below. Thus the regions in \eqref{eq: reduced complementary local linear bound} exhaust the support of $\zeta$, in both cases.

Passing from the scaled tangential seminorm to $C^{0,\tau}(\Gamma)$ costs $Cr_j^{-\tau}$. Thus \eqref{eq: assumptions htheta} and \eqref{eq: reduced complementary local linear bound} give, on $|y|\simeq t_j\geq1$,
\begin{equation}\label{eq: reduced correction rescaled linear bound}
 \|{\bf e}_{\mathrm{lin}}(\cdot,y)\|_{C^{0,\tau}(\Gamma)}
 \lesssim a_\eps\eps^{1-\tau}(1+|\log\eps|)^Nt_j^{-1-\tau}.
\end{equation}
Applying \eqref{eq: reduced coefficient weighted bound} to \eqref{eq: reduced correction rescaled linear bound} uses $\int_1^\infty R^{-1-\tau}\,dR<\infty$; the factor $R^2$ in normal volume cancels the weight $R^{-2}$ of the test. The core has the same bound with $t_j=1$. This proves \eqref{eq: reduced correction linear bound}.

\smallskip\noindent
Next, we show \eqref{eq: reduced correction collar gauge bound}.
The radial cutoff is parallel under $\nabla^\top$, and hence \eqref{eq: reduced product operators} gives
\begin{equation}\label{eq: reduced correction cutoff commutator}
 [{\bf L}^{\mathrm{prod}}_{\eps,\lambda},\zeta_\eps]\widehat{\bf\Psi}
 =(\Delta_y\zeta_\eps)\widehat{\bf\Psi}
       -2(\partial_{y^a}\zeta_\eps)D_a\widehat{\bf\Psi}.
\end{equation}
Here $D_a$ is defined by \eqref{eq: model normal covariant derivative} and acts componentwise on normal pairs. On $\delta/2<r<3\delta/4$, \eqref{eq: sol norm} and \eqref{eq: complementary solution bound} bound the $C^{0,\tau}(\Gamma)$ size of \eqref{eq: reduced correction cutoff commutator} by $Ca_\eps\eps^2$. Its pairing with $\langle y\rangle^{-2}$ over $|y|\simeq\eps^{-1}$ is therefore $O(\eps a_\eps)$.

For the gauge term, \eqref{eq: phase projection} gives
\begin{equation}\label{eq: reduced correction gauge projection}
 \Pi_h{\bf G}_{h,\theta}^*{\bf\Psi}
 =\frac{\eps\int_M\langle B,\chi d_{A_{h,\theta}}\xi_{\eps,h}+\xi_{\eps,h}d\chi\rangle\,d\vol_g}
 {\int_M\chi|\xi_{\eps,h}|^2\,d\vol_g}\,\xi_{\eps,h}.
\end{equation}
Estimates \eqref{eq: compact phase generator bounds} and \eqref{eq: noncompact gauge projection numerator bound}, together with $\eps\|B\|_\infty\lesssim\|{\bf\Psi}\|_{X_{\eps,h,\theta}}\lesssim a_\eps$, bound the scalar factor by $C\eps a_\eps$ in the compact case and $Ca_\eps$ in the noncompact case, respectively. Since ${\bf G}_{h,\theta}\xi_{\eps,h}=(0,d_{A_{h,\theta}}\xi_{\eps,h})$ and
\begin{equation*}
 \|\widehat{{\bf G}_{h,\theta}\xi_{\eps,h}}(\cdot,y)
       \|_{C^{0,\tau}(\Gamma)}\lesssim \langle y\rangle^{-2}
 \quad\hbox{on }\supp\zeta_\eps,
\end{equation*}
Equation \eqref{eq: reduced coefficient weighted bound}, applied to \eqref{eq: reduced correction gauge projection}, yields $\|\mathcal C{\bf e}_{\mathrm g}\|_{C^{0,\tau}}\lesssim \eps a_\eps$. Together with the bound from \eqref{eq: reduced correction cutoff commutator}, this proves \eqref{eq: reduced correction collar gauge bound}.

\smallskip\noindent
We turn our attention to \eqref{eq: reduced correction nonlinear bound}.
Given any ${\bf F}$, \eqref{eq: source norm} gives on the rescaled annulus $|y|\simeq t_j$ the bound
\begin{equation}\label{eq: reduced correction source L2 rescaling}
 \|\eps^2\zeta_\eps\widehat{\bf F}
       \|_{C^{0,\tau}(\Gamma;L^2_y(|y|\simeq t_j))}
 \lesssim \eps^{-\tau}t_j^{-1/2-\tau}
       \|{\bf F}\|_{Y_{\eps,h,\theta}}.
\end{equation}
Indeed, \Cref{def: rescaling convention} and \eqref{eq: source norm} give $t_j^{-2}$, normal volume gives $t_j^{3/2}$, and the tangential seminorm gives $\eps^{-\tau}t_j^{-\tau}$. The geometric series in $t_j$ converges, including the core and fixed collar. Equations \eqref{eq: reduced correction source L2 rescaling} and \eqref{eq: reduced coefficient L2 bound}, with \Cref{lem: quadratic estimate}, therefore imply \eqref{eq: reduced correction nonlinear bound}.

\smallskip\noindent
We prove \eqref{eq: reduced correction injection bound}.
On $\supp\rho_\eps$ the shift is $z\mapsto z+h(q)$. Its coordinate Jacobian has determinant one, and hence $J_h(q,z)=J_0(q,z+h(q))$. By \eqref{eq: reduced graph metric bounds},
\begin{equation}\label{eq: reduced correction Jacobian bound}
 \|J_h(\cdot,z)^{-1}-1\|_{C^{0,\tau}(\Gamma)}
 \lesssim (|z|^2+\|h\|_{C^{2,\tau}}^2).
\end{equation}
By \eqref{eq: normal weighted tests}, \eqref{eq: monopole translations}, \eqref{eq: monopole phase mode} and \eqref{eq: approximation profile bounds}, $|{\bf E}_i(y)|\lesssim \langle y\rangle^{-4}$, so both $\|{\bf E}_i\|_2$ and $\||y|^2{\bf E}_i\|_2$ are bounded uniformly. Equations \eqref{eq: reduced correction Jacobian bound} and \eqref{eq: complementary solution bound} give
\begin{equation*}
 \|{\bf e}_{\mathrm{jac}}\|_{C^{0,\tau}(\Gamma;L^2_y)}
 \lesssim (\eps^2+\|h\|_{C^{2,\tau}}^2)\|\mathfrak b\|_{C^{0,\tau}}
 \lesssim a_\eps\eps^{2-\tau}(1+|\log\eps|)^N.
\end{equation*}
Estimate \eqref{eq: reduced coefficient L2 bound} proves \eqref{eq: reduced correction injection bound}, completing the proof of \eqref{eq: reduced complementary bounds}. In the noncompact case all coefficient sources were cut off in a fixed tube; \eqref{eq: noncompact exterior solution norm} supplies the collar bounds, and $\chi$ has compact support in \eqref{eq: reduced correction gauge projection}. No integration at infinity has been used.

Finally, \eqref{eq: error rate for contraction}, together with \eqref{eq: deltaeps} in the compact case, gives
\begin{equation*}
 a_\eps\eps^{-1-\tau}
 \lesssim \bigl(\eps^{\tau'-\tau}
       +\eps^{1/2-\tau}(1+|\log\eps|)\bigr).
\end{equation*}
In the noncompact case, the first term on the right is absent, directly by \eqref{eq: error rate for contraction}. Every fixed logarithmic multiple tends to zero because $\tau<\tau'<1/2$. Also $a_\eps^2\eps^{-2-\tau} =(a_\eps/\eps)(a_\eps\eps^{-1-\tau})$ and $a_\eps/\eps\to0$. Thus the right-hand side of \eqref{eq: reduced complementary bounds} tends to zero in both cases.
\end{proof}

\subsection{Parameter differences}
\begin{lemma}[Differences of the approximation remainder]\label{lem: reduced source differences}
For any two parameter pairs $(h_1,\theta_1)$ and $(h_2,\theta_2)$ satisfying \eqref{eq: assumptions htheta},
\begin{subequations}\label{eq: reduced source differences}
\begin{equation}\label{eq: reduced translation source difference}
\|r_{\mathrm{tr}}^{\,0}(h_1,\theta_1)-r_{\mathrm{tr}}^{\,0}(h_2,\theta_2)\|_{C^{0,\tau}}\lesssim  \omega_\eps\left(\|h_1-h_2\|_{C^{2,\tau}}+\eps|\log\eps|^2\|\theta_1-\theta_2\|_{C^{2,\tau}}\right),
\end{equation}
\begin{equation}\label{eq: reduced phase source difference}
\|r_{\mathrm{ph}}^{\,0}(h_1,\theta_1)-r_{\mathrm{ph}}^{\,0}(h_2,\theta_2)\|_{C^{0,\tau}}\lesssim  \omega_\eps\left(\frac{\|h_1-h_2\|_{C^{2,\tau}}}{\eps|\log\eps|^2}+\|\theta_1-\theta_2\|_{C^{2,\tau}}\right).
\end{equation}
\end{subequations}
\end{lemma}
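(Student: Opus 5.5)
The plan is to differentiate the two remainders along the segment $(h_s,\theta_s)=(h_2,\theta_2)+s(h_1-h_2,\theta_1-\theta_2)$, $s\in[0,1]$, which stays in the range \eqref{eq: assumptions htheta} by convexity. By the fundamental theorem of calculus it suffices to bound $D_hr^{\,0}[k]$ and $D_\theta r^{\,0}[\vartheta]$ uniformly at every admissible $(h,\theta)$. For the translation remainder the target is
\begin{equation*}
\|D_hr_{\mathrm{tr}}^{\,0}[k]\|_{C^{0,\tau}}\lesssim\omega_\eps\|k\|_{C^{2,\tau}},\qquad \|D_\theta r_{\mathrm{tr}}^{\,0}[\vartheta]\|_{C^{0,\tau}}\lesssim\omega_\eps\eps|\log\eps|^2\|\vartheta\|_{C^{2,\tau}}.
\end{equation*}
The phase remainder should satisfy the analogous bounds with weights $\eps^{-1}|\log\eps|^{-2}$ and $1$. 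The spectral multipliers, the coefficient operators $\mathcal C$ and the tests ${\bf J}_{a,t}$ do not depend on $(h,\theta)$; \Cref{lem: reduced coefficient operators} applies to them unchanged. The source ${\bf f}_{h,\theta}$, the frame transport, the gauges and the Jacobian $J_h$ depend smoothly on the parameters. So differentiation commutes with the decompositions \eqref{eq: translation source splitting} and \eqref{eq: phase source splitting}, and only the source needs to be differentiated.

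For the translation part I would repeat the proof of \Cref{lem: reduced source size} for the differentiated source. In \eqref{eq: Jacobi from error} the principal term $J_\Gamma h$ is linear, so it cancels exactly in $r_{\mathrm{tr}}^{\,0}$. The error $R_\eps$ of \eqref{eq: bound R_eps} differentiates to $O(\eps|\log\eps|+\|h\|_{C^{2,\tau}}+\|h\|^2_{C^{2,\tau}})\|k\|_{C^{2,\tau}}$, since $\mathcal Q(h)$ is quadratic and the frozen-energy term in \eqref{eq: frozen graph energy variation} is cubic. The test comparison \eqref{eq: translation test error bounds}, the Jacobian bound \eqref{eq: translation Jacobian bound} and the outer moment \eqref{eq: translation outer moment bound} carry one factor of $h$ or of $r$. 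Their $h$-derivatives therefore gain $r+\|h\|$ from \eqref{eq: reduced graph metric bounds} and produce $\eps(1+|\log\eps|)^N\|k\|$. The spectral comparison $\mathcal D_\eps$ is linear in ${\bf f}$, so applying \eqref{eq: translation split source bounds} to $D_h{\bf f}^{(1)}$ and $D_h{\bf f}^{(2)}$ gives $\eps^{1/4}(1+|\log\eps|)^N\|k\|$. The $\theta$-derivative of the translation source enters only through the phase replacement $\beta^{\mathrm{tan}}\mapsto\beta^{\mathrm{tan}}-d_\Gamma\theta$ and \eqref{eq: approximation phase curvature}. Its principal part is a multiple of ${\bf Z}_4$, which is annihilated by the translation coefficients. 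The remainder is odd in $y$ or carries the factor $\eps^2$, which yields $\eps^2(1+|\log\eps|)^N\|\vartheta\|_{C^{2,\tau}}$ after division by $\eps$. This is $\omega_\eps\eps|\log\eps|^2\|\vartheta\|$.

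For the phase part I would go through $R_{\mathrm{in}},R_{\mathrm{out}},R_{\mathrm m}$ in the same way. The principal term $\Delta_\Gamma\theta-d^*_\Gamma\beta^{\mathrm{tan}}$ is affine in $\theta$ and is subtracted exactly. The quadratic term $|\beta^{\mathrm{tan}}-d\theta|^2[\xi,D\xi]$ pairs to zero by \eqref{eq: phase commutator cancellation}, and so does its $\theta$-derivative. The remaining $\theta$-dependence comes through metric errors, and its derivative costs $\eps^{1/2-\tau}(1+|\log\eps|)^N\|\vartheta\|$ by \eqref{eq: phase reference integrals} and \eqref{eq: phase outer current integral}. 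For $h$, the derivative version of \eqref{eq: phase outer current integral} recorded at the end of the proof of \Cref{lem: reduced phase source size} gives $\eps^{-1/2-\tau}(1+|\log\eps|)^N\|k\|$. Differentiating \eqref{eq: phase inner graph integrals} in $h$ removes one factor of $\|h\|$ and leaves $\eps^{-\tau}(1+|\log\eps|)^N\|k\|$. After the normalisation by $\eps|\log\eps|^2$ both are $O(\eps^{1/2-\tau}|\log\eps|^N)\,\|k\|/(\eps|\log\eps|^2)$, which fits $\omega_\eps$ provided $\omega_\eps$ is enlarged to absorb these powers.

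I expect the main obstacle to be the $h$-derivative of the phase remainder. It carries a large normalisation $\eps^{-2}$ against a source that depends on $h$ only through metric coefficients. Closing it requires the exact cancellations \eqref{eq: phase normal contractions} to survive differentiation. I would verify that they hold for every normal metric $g_h^{ab}$, since they are symmetry identities valid for arbitrary symmetric coefficients, and hence also hold for $D_hg_h^{ab}$. Otherwise an $O(\eps^{-1}\|k\|)$ term would appear, and \eqref{eq: reduced phase source difference} would fail.
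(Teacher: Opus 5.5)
Your proposal is correct, and on the $h$-side it is the paper's argument. You differentiate $g_h$ and $J_h$ along the segment via \eqref{eq: graph metric parameter derivatives}, rerun the inner test comparison and the spectral comparison, and for the phase remainder apply \eqref{eq: phase normal contractions} graph by graph together with the differentiated current bound, which gives \eqref{eq: phase graph source difference}. Your polarisation remark is a valid way of saying that these contractions survive differentiation.

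You differ from the paper in the $\theta$-direction. The paper never differentiates the source in $\theta$ at general $h$. After the phase gauge is undone, the residual, and hence each $r^{\,0}$, is a polynomial of degree at most two in $\theta$, because $[\xi_\eps,\Phi]=0$ and $[\xi_\eps d\bar\theta\wedge\xi_\eps d\bar\theta]=0$. Therefore $D_\theta p(\theta)[\vartheta]=\tfrac12\bigl(p(\theta+\vartheta)-p(\theta-\vartheta)\bigr)$. For the phase remainder, this turns \Cref{lem: reduced phase source size}, applied on $\|\theta\|_{C^{2,\tau}}\leq K+1$, directly into the $\theta$-Lipschitz bound, with no recomputation. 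For the translation remainder, it reduces $D_\theta r^{\,0}_{\mathrm{tr}}(h,\theta)-D_\theta r^{\,0}_{\mathrm{tr}}(0,\theta)$ to the already proved $h$-difference estimate at $\theta\pm\vartheta$, as in \eqref{eq: translation phase polynomial identity}. Only the crude bound \eqref{eq: reference translation phase derivative} at $h=0$, of size $\eps$, is then needed, with no use of ${\bf Z}_4$-orthogonality or parity; this is the source of the term $|\log\eps|^{-2}$ in $\omega_\eps$.

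Your direct differentiation at arbitrary $(h,\theta)$ does not rely on the polynomial structure and may give a sharper bound. The price is that you must re-verify every cancellation for $D_\theta{\bf f}$, including the $h$-dependent cross terms. For example, the frozen mixed-metric term $g_h^{bk}(D_bD_a\xi_\eps)\,\partial_k\vartheta$ in the normal connection equation contributes naively of order $\|h\|_{C^{2,\tau}}\lesssim\eps|\log\eps|^2$ to $D_\theta r^{\,0}_{\mathrm{tr}}$. That is exactly at the threshold, and it disappears only because $D_bD_a\xi_\eps$ is odd in $y$ while the translation tests are even. So your clause ``odd in $y$'' has to be checked term by term rather than asserted.

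Finally, your power count in that step is off by one factor of $\eps$. A differentiated source of rescaled size $\eps^2\langle y\rangle^{-2}$ gives a coefficient of order $\eps^2$, hence $\eps$, not $\eps^2$, after division by $\eps$. This is harmless, since $\eps=|\log\eps|^{-2}\cdot\eps|\log\eps|^2$ already fits the required bound.
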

\begin{proof}
We show independently the translation and the phase contributions of the two estimates.

\smallskip

We begin with the translation contribution in \eqref{eq: reduced translation source difference}, thus fixing $\theta$. For $h_t=h_2+t(h_1-h_2)$, $0\leq t\leq1$, use
\begin{equation*}
g_{h_1}^{ab}-g_{h_2}^{ab}=\int_0^1\partial_t g_{h_t}^{ab}\,dt,\qquad J_{h_1}-J_{h_2}=\int_0^1\partial_t J_{h_t}\,dt,
\end{equation*}
and similarly for the mixed indices $g_{h_1}^{ja}-g_{h_2}^{ja}$.
Differentiating the smooth Fermi expansions underlying \eqref{eq: reduced graph metric bounds} gives
\begin{equation}\label{eq: graph metric parameter derivatives}
\begin{aligned}
 \|\partial_t g_{h_t}^{ab}\|_{C^{0,\tau}(\Gamma)}
 +\|\partial_t J_{h_t}\|_{C^{0,\tau}(\Gamma)}
 &\lesssim\bigl(r+\|h_t\|_{C^{2,\tau}}\bigr)\|h_1-h_2\|_{C^{2,\tau}},\\
 \|\partial_t g_{h_t}^{ja}\|_{C^{0,\tau}(\Gamma)}
 &\lesssim\|h_1-h_2\|_{C^{2,\tau}}.
\end{aligned}
\end{equation}
Integrating \eqref{eq: graph metric parameter derivatives} and using \eqref{eq: assumptions htheta}, the differences of the normal metric and density defects are bounded in $C^{0,\tau}(\Gamma)$ by $C(r+\eps|\log\eps|^2)\|h_1-h_2\|_{C^{2,\tau}}$, whereas the mixed defects have bound $C\|h_1-h_2\|_{C^{2,\tau}}$. The same estimates hold with one tangential derivative for coefficients depending only on $h,Dh$. Terms containing $D^2h$ are subtracted in $C^{0,\tau}(\Gamma)$: keep the difference of the second derivatives in one term, and the difference of their coefficients in the other.

For the graph contribution to \eqref{eq: reduced translation source difference}, subtract \eqref{eq: Jacobi from error} for the two graphs and compare with the prescribed tests as in the proof of \eqref{eq: translation zero parameter claim}. The resulting radial integrals satisfy
\begin{equation}\label{eq: graph difference radial integrals}
\int_0^{\sqrt\eps}\frac{r^2}{(\eps+r)^3}\,dr\lesssim (1+|\log\eps|),\quad \int_0^{\sqrt\eps}\frac{r^2}{(\eps+r)^4}\,dr\lesssim \eps^{-1},\quad \int_0^{\sqrt\eps}\frac{r^2}{(\eps+r)^5}\,dr\lesssim \eps^{-2}.
\end{equation}
Equations \eqref{eq: assumptions htheta} and \eqref{eq: graph difference radial integrals} give an inner bound of $C\eps(1+|\log\eps|)^N\|h_1-h_2\|_{C^{2,\tau}}$. The scalar density term is no larger, using \eqref{eq: approximation profile bounds} in the compact case and \eqref{eq: reduced conormal matching} in the noncompact case.

Estimate \eqref{eq: reduced coefficient comparison bound} gives a factor $\sqrt\eps$ for terms involving only first derivatives of the parameters. For second-derivative terms, their coefficient is $O(r+\eps|\log\eps|^2)$, and the direct $L^2_y$ estimate \eqref{eq: reduced coefficient L2 bound} gives the same bound, up to logarithms. On $r\geq \sqrt\eps$, the source difference, with its connection component multiplied by $\eps$, has bound $C\eps(1+|\log\eps|)^Nr^{-2}\|h_1-h_2\|_{C^{2,\tau}}$.
Its normalised coefficient is therefore at most
\begin{equation}\label{eq: translation graph outer difference}
C\eps^{1/4}(1+|\log\eps|)^N\|h_1-h_2\|_{C^{2,\tau}}.
\end{equation}

\smallskip

For the translation contribution to \eqref{eq: reduced phase source difference}, apply \eqref{eq: phase normal contractions} to each graph and differentiate along $h_t$ the metric coefficients estimated in \eqref{eq: phase inner transverse integral} and \eqref{eq: phase inner current integral}. Their derivatives are bounded, respectively, by
\begin{equation*}
\begin{gathered}
 C\bigl(r+\|h_t\|_{C^{2,\tau}}\bigr)\|h_1-h_2\|_{C^{2,\tau}},\\
 C\bigl(r^2+r\|h_t\|_{C^{2,\tau}}+\|h_t\|_{C^{2,\tau}}^2\bigr)\|h_1-h_2\|_{C^{2,\tau}}.
\end{gathered}
\end{equation*}
Integrate in $t$ and then against the respective radial weights $r^2e^{-cr/\eps}(\eps+r)^{-4}$ and $r^2(\eps+r)^{-4}$ in \eqref{eq: phase inner transverse integral} and \eqref{eq: phase inner current integral}. Using \eqref{eq: assumptions htheta} gives $C(1+|\log\eps|)^N\|h_1-h_2\|_{C^{2,\tau}}$. The conversion to the prescribed covectors has the same bound because the frozen $D^2h$ term vanishes by \eqref{eq: phase normal contractions}. The current estimate \eqref{eq: phase outer current integral} contributes $C(1+|\log\eps|)^N\eps^{-1/2}\|h_1-h_2\|_{C^{2,\tau}}$. Including the conversion from scaled to ordinary H\"older norms gives the graph variation of the phase remainder:
\begin{equation}\label{eq: phase graph source difference}
 \|r_{\mathrm{ph}}^{\,0}(h_1,\theta)-r_{\mathrm{ph}}^{\,0}(h_2,\theta)\|_{C^{0,\tau}}
 \lesssim\eps^{-1/2-\tau}(1+|\log\eps|)^N\|h_1-h_2\|_{C^{2,\tau}}.
\end{equation}
Multiplication of \eqref{eq: phase graph source difference} by $\eps|\log\eps|^2$ makes this coefficient tend to zero, as required for the $h$-difference in \eqref{eq: reduced phase source difference}.

\smallskip

We now vary $\theta$ at fixed $h$. First we estimate the change in $r_{\mathrm{tr}}^{\,0}$ to complete \eqref{eq: reduced translation source difference}; then we estimate the change in $r_{\mathrm{ph}}^{\,0}$ to complete \eqref{eq: reduced phase source difference}. We retain the dependence on the radius $K$. By \eqref{eq: shifted phase approximation} and \eqref{eq: noncompact shifted approximation}, after undoing the phase gauge, the connection is $A_\Gamma-\xi_\eps d\bar\theta$. Since $[\xi_\eps,\Phi_\Gamma]=0$ and $[\xi_\eps d\bar\theta\wedge\xi_\eps d\bar\theta]=0$, the known residual is a polynomial of degree at most two in $\theta$. Equations \eqref{eq: reduced spectral recovery} and \eqref{eq: reduced source decomposition} preserve this property.

For the translation remainder, we first estimate $D_\theta r_{\mathrm{tr}}^{\,0}(0,\theta)$ and then use the graph estimate to control its change with $h$. Let $(S_\Phi(h,\theta),S_A(h,\theta))$ denote the components of the known residual after undoing both gauges. We differentiate these components with respect to $\theta$ in a direction $\vartheta\in C^{2,\tau}(\Gamma)$. Equations \eqref{eq: approximation horizontal cancellation} and \eqref{eq: approximation profile bounds} give, in the inner horizontal frame,
\begin{equation*}
\begin{gathered}
 [\xi_\eps,D_j\Phi]=0,\qquad D_j\xi_\eps=0,\\
 \|D_a\xi_\eps(\cdot,z)\|_{C^{0,\tau}(\Gamma)}\lesssim\frac{\eps}{(\eps+r)^2},\qquad
 \|D_bD_a\xi_\eps(\cdot,z)\|_{C^{0,\tau}(\Gamma)}\lesssim\frac{\eps}{(\eps+r)^3}.
\end{gathered}
\end{equation*}
Differentiating \eqref{eq: approximation phase curvature}, the normal connection equation contains $D_a\xi_\eps$ times at most two tangential derivatives of $\vartheta$, and brackets of $\xi_\eps$ with $D_a\xi_\eps$ times $(\beta^{\mathrm{tan}}-d\theta)$ and $d\vartheta$. The identity $D_aD_a\xi_\eps=0$ from \eqref{eq: qlambda} cancels the term $(\partial_j\vartheta)D_aD_a\xi_\eps$. The remaining second normal derivatives are multiplied by the $O(r^2)$ metric defects; their first derivatives are $O(r)$. The scalar variation contains only mixed metric coefficients and the exponentially decaying brackets $[\xi_\eps,D_a\Phi]$.

On the matching annulus and in the exterior, the neutral part of $d_A\xi_\eps$ is $dq_\eps\,\sigma$. It commutes with the neutral matching connection, so differentiating in phase introduces no derivative of its matching cutoff. Charged terms, including cutoff derivatives, are exponentially small. At the fixed outer cutoff, $dq_\eps\wedge d\chi_\Gamma=0$. In the noncompact case the normal harmonic defect is $O(\eps^{-1}(1+r/\eps)^Ne^{-cr/\eps})$, 
by \eqref{eq: noncompact phase profile defect}, and obeys the same bound below. Thus
\begin{equation}\label{eq: reference phase derivative source}
\|D_\theta S_\Phi(0,\theta)[\vartheta](\cdot,z)\|_{C^{0,\tau}} +\eps\|D_\theta S_A(0,\theta)[\vartheta](\cdot,z)\|_{C^{0,\tau}}\lesssim \bigl(1+\|\theta\|_{C^{2,\tau}}\bigr) \frac{\eps^2\|\vartheta\|_{C^{2,\tau}}}{(\eps+r)^2}.
\end{equation}
Apply \eqref{eq: reduced coefficient weighted bound}, with the rescaling of \Cref{def: rescaling convention}. The radial integral is
\begin{equation*}
 \eps^2\int_0^\delta\frac{r^2}{(\eps+r)^4}\,dr\lesssim \eps.
\end{equation*}
Consequently,
\begin{equation}\label{eq: reference translation phase derivative}
 \|D_\theta r_{\mathrm{tr}}^{\,0}(0,\theta)[\vartheta]\|_{C^{0,\tau}}
 \lesssim \eps(1+\|\theta\|_{C^{2,\tau}})\|\vartheta\|_{C^{2,\tau}}.
\end{equation}

The implicit constants in \eqref{eq: reference phase derivative source} and \eqref{eq: reference translation phase derivative} are independent of $K$.

Now we include $h$. Apply the bounds from \eqref{eq: graph difference radial integrals} and \eqref{eq: translation graph outer difference} to the quadratic polynomial
$r_{\mathrm{tr}}^{\,0}(h,\theta+t\vartheta)-r_{\mathrm{tr}}^{\,0}(0,\theta+t\vartheta)$,
with $\|\vartheta\|_{C^{2,\tau}}\leq1$. Since its degree is at most two,
\begin{equation}\label{eq: translation phase polynomial identity}
\begin{split}
 D_\theta r_{\mathrm{tr}}^{\,0}(h,\theta)[\vartheta]
 -D_\theta r_{\mathrm{tr}}^{\,0}(0,\theta)[\vartheta]
 &=\tfrac12\bigl(r_{\mathrm{tr}}^{\,0}(h,\theta+\vartheta)-r_{\mathrm{tr}}^{\,0}(0,\theta+\vartheta)\bigr)\\
 &-\tfrac12\bigl(r_{\mathrm{tr}}^{\,0}(h,\theta-\vartheta)-r_{\mathrm{tr}}^{\,0}(0,\theta-\vartheta)\bigr).
\end{split}
\end{equation}
Since
\begin{equation*}
 \|\theta\pm\vartheta\|_{C^{2,\tau}}
 \leq\|\theta\|_{C^{2,\tau}}+\|\vartheta\|_{C^{2,\tau}}
 \leq K+1,
\end{equation*}
the bounds from \eqref{eq: graph difference radial integrals} and \eqref{eq: translation graph outer difference} apply at $\theta+\vartheta$ and $\theta-\vartheta$ and give
\begin{equation}\label{eq: translation phase graph derivative}
 \|D_\theta r_{\mathrm{tr}}^{\,0}(h,\theta)
       -D_\theta r_{\mathrm{tr}}^{\,0}(0,\theta)\|_{C^{2,\tau}\to C^{0,\tau}}
 \lesssim \eps^{1/4}(1+|\log\eps|)^N\|h\|_{C^{2,\tau}}.
\end{equation}
Combining \eqref{eq: translation phase graph derivative} with \eqref{eq: reference translation phase derivative} gives
\begin{equation}\label{eq: small translation phase coefficient}
 \frac{\|D_\theta r_{\mathrm{tr}}^{\,0}(h,\theta)\|_{C^{2,\tau}\to C^{0,\tau}}}
 {\eps|\log\eps|^2}
 \lesssim |\log\eps|^{-2}+\eps^{1/4}(1+|\log\eps|)^N
 \to 0
\end{equation}
for every fixed $K$. Set
\begin{equation}\label{eq: phase parameter segment}
 \theta_s\coloneqq\theta_2+s(\theta_1-\theta_2),\qquad
 0\leq s\leq1,\qquad \|\theta_s\|_{C^{2,\tau}}\leq K.
\end{equation}
Then
\begin{equation*}
 r_{\mathrm{tr}}^{\,0}(h,\theta_1)-r_{\mathrm{tr}}^{\,0}(h,\theta_2)
 =\int_0^1D_\theta r_{\mathrm{tr}}^{\,0}(h,\theta_s)[\theta_1-\theta_2]\,ds.
\end{equation*}
Estimate \eqref{eq: small translation phase coefficient}, with the bounds from \eqref{eq: graph difference radial integrals} and \eqref{eq: translation graph outer difference}, therefore proves \eqref{eq: reduced translation source difference}.

\smallskip

For the phase remainder in \eqref{eq: reduced phase source difference}, apply the quadratic-polynomial identity used in \eqref{eq: translation phase polynomial identity} to $t\mapsto r_{\mathrm{ph}}^{\,0}(h,\theta+t\vartheta)$ and use \eqref{eq: reduced phase source bounds} for $\|\theta\|_{C^{2,\tau}}\leq K+1$. With $\theta_s$ from \eqref{eq: phase parameter segment},
\begin{equation*}
 r_{\mathrm{ph}}^{\,0}(h,\theta_1)-r_{\mathrm{ph}}^{\,0}(h,\theta_2)
 =\int_0^1D_\theta r_{\mathrm{ph}}^{\,0}(h,\theta_s)[\theta_1-\theta_2]\,ds.
\end{equation*}
Equation \eqref{eq: reduced phase source bounds} gives
\begin{equation}\label{eq: phase phase source difference}
 \|r_{\mathrm{ph}}^{\,0}(h,\theta_1)-r_{\mathrm{ph}}^{\,0}(h,\theta_2)\|_{C^{0,\tau}}
 \lesssim \eps^{1/2-\tau}(1+|\log\eps |)^N\|\theta_1-\theta_2\|_{C^{2,\tau}}.
\end{equation}
Adding \eqref{eq: phase graph source difference} and \eqref{eq: phase phase source difference} establishes \eqref{eq: reduced phase source difference}. We may take $\omega_\eps\lesssim  \bigl(|\log\eps|^{-2}+(1+|\log\eps|)^N(\eps^{1/4}+\eps^{1/2-\tau})\bigr)\to0$ after increasing the fixed integer $N$.
\end{proof}

\begin{lemma}[Differences of the correction contribution]\label{lem: reduced complementary differences}
For any two parameter pairs $(h_1,\theta_1)$ and $(h_2,\theta_2)$ satisfying \eqref{eq: assumptions htheta},
\begin{equation}\label{eq: reduced complementary differences}
\begin{split}
\frac{\|r_{\mathrm{tr}}^{\,1}(h_1,\theta_1)-r_{\mathrm{tr}}^{\,1}(h_2,\theta_2)\|_{C^{0,\tau}}}{\eps|\log\eps|^2}&+\|r_{\mathrm{ph}}^{\,1}(h_1,\theta_1)-r_{\mathrm{ph}}^{\,1}(h_2,\theta_2)\|_{C^{0,\tau}}\\
&  \lesssim a_\eps\eps^{-1-\tau}(1+|\log\eps|)^N d_\eps\bigl((h_1,\theta_1),(h_2,\theta_2)\bigr),
\end{split}
\end{equation}
\end{lemma}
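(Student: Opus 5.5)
The estimate follows by differentiating the coefficient identity \eqref{eq: reduced complementary coefficient identity} along parameter segments. I would first transport everything to a fixed reference space, then prove Lipschitz dependence of the correction ${\bf\Psi}_{h,\theta}$ and multiplier $\mathfrak b_{\eps,h,\theta}$ on the parameters, and finally feed these into each constituent of \eqref{eq: reduced complementary source terms}.

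\emph{Reference frame.} The norms $X_{\eps,h,\theta}$ and $Y_{\eps,h,\theta}$ vary with the parameters, so I pull everything back by $\mathscr T_h$ and undo the gauges $g_\eps u_\theta$. This places all corrections in the fixed spaces $X_{\eps,0,0}$, $Y_{\eps,0,0}$, which are uniformly equivalent under \eqref{eq: assumptions htheta}. In this frame, parameter dependence enters only through three things: the metric $g_h=\mathscr T_h^*g$, the connection shift $-\xi_\eps d\bar\theta$, and the transported modes in $\mathfrak B$. Two inputs control these:
\begin{itemize}
\item the metric-derivative bounds \eqref{eq: graph metric parameter derivatives};
\item the phase-derivative bound \eqref{eq: reference phase derivative source}, used with $\|\vartheta\|\le1$.
\end{itemize}
From these, the operators satisfy
\[
\|\widehat{\bf L}_{h_1,\theta_1}-\widehat{\bf L}_{h_2,\theta_2}\|_{X\to Y}+\|\mathfrak B_1-\mathfrak B_2\|+\|\mathfrak B_1^*-\mathfrak B_2^*\|\lesssim \eps^{-1}(1+|\log\eps|)^N\bigl(\|h_1-h_2\|_{C^{2,\tau}}+\eps\|\theta_1-\theta_2\|_{C^{2,\tau}}\bigr).
\]
Here $\eps^{-1}$ is the loss from a $C^{2,\tau}$ graph change acting on the core scale. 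By \Cref{lem: reduced source differences}-type arguments, the approximation errors satisfy
\[
\|{\bf S}_1-{\bf S}_2\|_Y\lesssim a_\eps\eps^{-1}(1+|\log\eps|)^N\,\eps|\log\eps|^2\,d_\eps .
\]
The same bound holds for the nonlinear term by \Cref{lem: quadratic estimate}.

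\emph{Lipschitz bound for the correction.} Subtract the two copies of \eqref{eq: complementary nonlinear system} written in the reference frame. Then apply the uniform inverse of \Cref{prop: uniform inverse}, which is legitimate since the difference satisfies a constrained equation up to $(\mathfrak B_1-\mathfrak B_2){\bf\Psi}_2$. Absorb the Lipschitz part of $\mathcal N$ using \Cref{lem: quadratic estimate}, since $R\lesssim a_\eps$. Using $\|{\bf\Psi}_j\|_X\lesssim a_\eps$, this gives
\[
\|{\bf\Psi}_1-{\bf\Psi}_2\|_X+\|\mathfrak b_1-\mathfrak b_2\|_{C^{0,\tau}_\eps}\lesssim a_\eps(1+|\log\eps|)^N\,|\log\eps|^2\,d_\eps .
\]

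\emph{Coefficient differences.} I then take the difference of \eqref{eq: reduced complementary coefficient identity} term by term, and separately the difference of the multipliers $F(\eps^2\Delta)^{-1}$. The multipliers are parameter independent, since $\Gamma$ and its spectral data are fixed. Each constituent ${\bf e}_{\mathrm{lin}}$, ${\bf e}_{\mathrm{cut}}$, ${\bf e}_{\mathrm g}$, ${\bf e}_{\mathrm{jac}}$, ${\bf f}_{\mathrm{nl}}$ is bilinear in a coefficient (depending on $h,\theta$) and in $({\bf\Psi},\mathfrak b)$. I therefore split each difference into two pieces:
\begin{itemize}
\item the coefficient difference times $\|{\bf\Psi}\|\lesssim a_\eps$, estimated exactly as in the proof of \Cref{lem: reduced complementary size} with \eqref{eq: graph metric parameter derivatives} in place of \eqref{eq: reduced graph metric bounds};
\item the old coefficient times the correction difference, estimated by \eqref{eq: reduced complementary constituent bounds} applied to ${\bf\Psi}_1-{\bf\Psi}_2$.
\end{itemize}
The operators $\mathcal C$ are bounded via \eqref{eq: reduced coefficient weighted bound} and \eqref{eq: reduced coefficient L2 bound}. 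The H\"older conversion costs $\eps^{-\tau}$. Dividing by $\eps$ (translation) or $\eps^2$ (phase) and by the normalisations of $d_\eps$ then yields \eqref{eq: reduced complementary differences}.

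\emph{Main obstacle.} The delicate step is the translation-dependence of the rescaled linear difference ${\bf e}_{\mathrm{lin}}$. Its $h$-derivative contains mixed coefficients $g^{ja}$ differentiated in $h$, and these carry no factor $r$. Naively this costs $\eps^{-1}\|h_1-h_2\|$ on the core. I would handle it by pairing with $\langle y\rangle^{-2}$ tests as in \eqref{eq: reduced correction rescaled linear bound}, so that the normal integral converges. The resulting $\eps^{-1}$ is matched by the $\eps|\log\eps|^2$ normalisation of $h$ in $d_\eps$. This is why the final rate is $a_\eps\eps^{-1-\tau}$, and it still tends to zero by the closing computation of \Cref{lem: reduced complementary size}.
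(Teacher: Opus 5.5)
Your overall architecture matches the paper's proof: transport to a fixed frame, a Lipschitz bound for the correction and the multiplier via the uniform inverse with the quadratic term absorbed, then a bilinear splitting of the five sources in \eqref{eq: reduced complementary coefficient identity}. The gap is in the reference-frame step, on which everything else rests; it fails as you set it up.

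Pulling the corrections back by the diffeomorphism $\mathscr T_h$ composes the one-form component with $d\mathscr T_h$, whose entries contain $Dh$. For $h\in C^{2,\tau}$ the pulled-back one-form is then only $C^{1,\tau}$ tangentially: its second derivatives contain $D^3h$. So it does not lie in $X_{\eps,0,0}$ with uniformly equivalent norms. For the same reason the fibre constraints do not match. In your frame they test the pulled-back form against $(d\mathscr T_h)^{-1}\nu_{b,h}$, so $\mathfrak B_1\neq\mathfrak B_2$, and ${\bf\Psi}_1-{\bf\Psi}_2$ carries the defect $(\mathfrak B_1-\mathfrak B_2){\bf\Psi}_2$. \Cref{prop: uniform inverse} only inverts on $\ker\mathfrak B$. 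The available tool for removing a defect is the right inverse $\mathscr J_{\eps,h,\theta}$, and it is bounded into $X$ only by two $\eps$-scaled tangential derivatives of the defect, see \eqref{eq: noncompact moment correction bound}. Here those derivatives contain $D^3(h_1-h_2)$, which $d_\eps$ does not control. So your sentence ``which is legitimate since the difference satisfies a constrained equation up to $(\mathfrak B_1-\mathfrak B_2){\bf\Psi}_2$'' is exactly the step that does not close.

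The missing idea is an identification adapted to the constraints. Transport the Higgs component by composition with $\mathscr T_h$, and the one-form component by Levi--Civita parallel transport $P_h$ along the same short geodesics used to define $\nu_{a,h}$, then apply $\Ad_{u_\theta}$. This is the paper's map $\mathcal I_{h,\theta}$.
\begin{itemize}
\item $P_h(x)$ depends on $h$ only pointwise, so $\mathcal I_{h,\theta}$ preserves $C^{2,\tau}$ regularity and the transported norms are uniformly equivalent to $X_{\eps,0,0}$, $Y_{\eps,0,0}$.
\item Since $\nu_{b,h}=P_h\nu_{b,0}$, one gets $\mathfrak B_{\eps,h,\theta}\mathcal I_{h,\theta}^{-1}=\mathfrak B_{\eps,0,0}$ exactly, as in \eqref{eq: transported fibre constraint}.
\end{itemize}
Both transported corrections then lie in the same $\mathcal E_{\eps,0,0}$. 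The two systems \eqref{eq: complementary nonlinear system} can be subtracted there with no constraint defect, and the rest of your plan goes through.

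Two smaller points:
\begin{itemize}
\item In this frame the conjugated operators and injections differ by $(1+|\log\eps|)^N(\|h_1-h_2\|_{C^{2,\tau}}+\|\theta_1-\theta_2\|_{C^{2,\tau}})$, with no $\eps^{-1}$ loss. Your weaker bound would still close after the $d_\eps$ normalisation.
\item The dependence of $\mathcal N_{\eps,h,\theta}$ on $(h,\theta)$ at a fixed correction is not covered by \Cref{lem: quadratic estimate}, which is Lipschitz in ${\bf\Psi}$ at fixed parameters. It needs a separate expansion of \eqref{eq: nonlinear remainder definition}.
\end{itemize}
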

\begin{proof}
The corrections are defined on $M$, but their fibre constraints \eqref{eq: fibre constraints} depend on $(h,\theta)$. To subtract their equations, we first identify these constraints with the one at $(0,0)$. Use $\mathscr T_h$ and $u_\theta$ from the construction of \eqref{eq: shifted phase approximation}. For $x\in\Tub_\delta(\Gamma)$, let $P_h(x)\colon T_xM\to T_{\mathscr T_h(x)}M$ be Levi--Civita parallel transport along the short geodesic used to define $\nu_{a,h}$ in \Cref{sec: projections linear theory}. For ${\bf\Psi}=(\Psi,B)$, set
\begin{equation*}
\begin{aligned}
 (\mathcal I_{h,\theta}{\bf\Psi})_\Phi(x)
 &=\Ad_{u_\theta(x)}\Psi(\mathscr T_h(x)),\\
 (\mathcal I_{h,\theta}{\bf\Psi})_A(x)(v)
 &=\Ad_{u_\theta(x)}\bigl[B_{\mathscr T_h(x)}(P_h(x)v)\bigr],
 \qquad v\in T_xM.
\end{aligned}
\end{equation*}
We retain the common gauge $g_\eps$. Since $\mathscr T_h=\mathrm{id}$ and $u_\theta=1$ outside the tube, $\mathcal I_{h,\theta}$ extends by the identity there. Parallel transport and the adjoint action preserve the pairing in \eqref{eq: fibre constraints}; \eqref{eq: lifted monopole modes} and $g_\eps u_\theta=u_\theta g_\eps$ therefore give
\begin{equation}\label{eq: transported fibre constraint}
 \mathcal I_{0,0}=\mathrm{id},\qquad
 \mathfrak B_{\eps,h,\theta}\mathcal I_{h,\theta}^{-1}
 =\mathfrak B_{\eps,0,0}.
\end{equation}
Thus both transported corrections lie in $\mathcal E_{\eps,0,0}$, and their equations can be subtracted on this fixed space.
The transported norms are uniformly equivalent to $X_{\eps,0,0}$ and $Y_{\eps,0,0}$. For $i=1,2$, denote the sources in \eqref{eq: reduced complementary source terms} at $(h_i,\theta_i)$ by ${\bf f}_{\mathrm{nl},i}$ and ${\bf e}_{\mathrm{lin},i},{\bf e}_{\mathrm{cut},i},{\bf e}_{\mathrm g,i},{\bf e}_{\mathrm{jac},i}$.

We first establish the correction estimate
\begin{equation}\label{eq: reduced response difference}
\begin{split}
&\|\mathcal I_{h_1,\theta_1}{\bf\Psi}_{h_1,\theta_1}-\mathcal I_{h_2,\theta_2}{\bf\Psi}_{h_2,\theta_2}\|_{X_{\eps,0,0}}+\|\mathfrak b_{\eps,h_1,\theta_1}-\mathfrak b_{\eps,h_2,\theta_2}\|_{C^{0,\tau}_\eps}\\
&\qquad\lesssim (1+|\log\eps|)^N\bigl(\eps|\log\eps|^2\|h_1-h_2\|_{C^{2,\tau}}+a_\eps\|\theta_1-\theta_2\|_{C^{2,\tau}}\bigr)\\
&\qquad\lesssim a_\eps(1+|\log\eps|)^N d_\eps\bigl((h_1,\theta_1),(h_2,\theta_2)\bigr).
\end{split}
\end{equation}
We then establish the five coefficient estimates
\begin{subequations}\label{eq: correction source difference bounds}
\begin{align}
 \|\mathcal C({\bf e}_{\mathrm{lin},1}-{\bf e}_{\mathrm{lin},2})\|_{C^{0,\tau}}
 &\lesssim a_\eps\eps^{1-\tau}(1+|\log\eps|)^N
 d_\eps\bigl((h_1,\theta_1),(h_2,\theta_2)\bigr),\label{eq: correction linear difference bound}\\
 \|\mathcal C({\bf e}_{\mathrm{cut},1}-{\bf e}_{\mathrm{cut},2})\|_{C^{0,\tau}}
 &\lesssim a_\eps\eps(1+|\log\eps|)^N
 d_\eps\bigl((h_1,\theta_1),(h_2,\theta_2)\bigr),\label{eq: correction cutoff difference bound}\\
 \|\mathcal C({\bf e}_{\mathrm g,1}-{\bf e}_{\mathrm g,2})\|_{C^{0,\tau}}
 &\lesssim a_\eps\eps(1+|\log\eps|)^N
 d_\eps\bigl((h_1,\theta_1),(h_2,\theta_2)\bigr),\label{eq: correction gauge difference bound}\\
 \|\mathcal C({\bf f}_{\mathrm{nl},1}-{\bf f}_{\mathrm{nl},2})\|_{C^{0,\tau}}
 &\lesssim a_\eps^2\eps^{-\tau}(1+|\log\eps|)^N
 d_\eps\bigl((h_1,\theta_1),(h_2,\theta_2)\bigr),\label{eq: correction nonlinear difference bound}\\
 \|\mathcal C({\bf e}_{\mathrm{jac},1}-{\bf e}_{\mathrm{jac},2})\|_{C^{0,\tau}}
 &\lesssim a_\eps\eps^{2-\tau}(1+|\log\eps|)^N
 d_\eps\bigl((h_1,\theta_1),(h_2,\theta_2)\bigr).\label{eq: correction Jacobian difference bound}
\end{align}
\end{subequations}
Since $a_\eps/\eps\to0$ by \eqref{eq: error rate for contraction}, \eqref{eq: correction cutoff difference bound}--\eqref{eq: correction Jacobian difference bound} are absorbed by \eqref{eq: correction linear difference bound}. Subtract \eqref{eq: reduced complementary coefficient identity} and use \eqref{eq: reduced complementary decomposition}: division by $\eps^2|\log\eps|^2$ for the normalised translation remainder and by $\eps^2$ for phase gives \eqref{eq: reduced complementary differences}, using the multiplier bounds in \Cref{lem: reduced coefficient operators}.

\smallskip\noindent
First, we prove \eqref{eq: reduced response difference}. The transported linear operators and residual injections satisfy
\begin{equation}\label{eq: correction transported operator difference}
\begin{split}
 &\|\mathcal I_{h_1,\theta_1}\widehat{\bf L}_{\eps,h_1,\theta_1}\mathcal I_{h_1,\theta_1}^{-1}
 -\mathcal I_{h_2,\theta_2}\widehat{\bf L}_{\eps,h_2,\theta_2}\mathcal I_{h_2,\theta_2}^{-1}\|_{X_{\eps,0,0}\to Y_{\eps,0,0}}\\
 &\quad+\|\mathcal I_{h_1,\theta_1}\mathfrak B_{\eps,h_1,\theta_1}^*
 -\mathcal I_{h_2,\theta_2}\mathfrak B_{\eps,h_2,\theta_2}^*\|_{C^{0,\tau}_\eps\to Y_{\eps,0,0}}\\
 &\qquad\lesssim(1+|\log\eps|)^N
 \bigl(\|h_1-h_2\|_{C^{2,\tau}}+\|\theta_1-\theta_2\|_{C^{2,\tau}}\bigr).
\end{split}
\end{equation}
Use \eqref{eq: graph metric parameter derivatives} in \eqref{eq: model linearised operator}. The transported Higgs field is independent of the parameters by \eqref{eq: shifted phase approximation} and \eqref{eq: noncompact shifted approximation}, so its commutator mass cancels. The density and gauge projection terms are given by \eqref{eq: explicit residual injection} and \eqref{eq: restricted linearised operator}.

The difference of the transported known sources satisfies
\begin{equation}\label{eq: correction known source difference}
\begin{split}
&\|\mathcal I_{h_1,\theta_1}{\bf S}_\eps(\Phi_{h_1,\theta_1},A_{h_1,\theta_1})-\mathcal I_{h_2,\theta_2}{\bf S}_\eps(\Phi_{h_2,\theta_2},A_{h_2,\theta_2})\|_{Y_{\eps,0,0}}\\
&\qquad\lesssim (1+|\log\eps|)^N\bigl(\eps|\log\eps|^2\|h_1-h_2\|_{C^{2,\tau}}+\eps^2|\log\eps|^4\|\theta_1-\theta_2\|_{C^{2,\tau}}\bigr).
\end{split}
\end{equation}
To prove this estimate, differentiate the residual itself in the fixed pullback and phase gauge of \eqref{eq: approximation phase curvature}. Write $(\Phi,A)=(\Phi_\Gamma^{\mathrm c},A_\Gamma^{\mathrm c}-\xi_\eps d\bar\theta)$ in the compact case, and use the corresponding noncompact pair otherwise; the metric is $g_h=\mathscr T_h^*g$. For a graph variation $k$, the coordinate formulas
\begin{equation*}
\begin{aligned}
 d_A^{*,g_h}\omega
 &=-|g_h|^{-1/2}D_\mu\bigl(|g_h|^{1/2}g_h^{\mu\nu}\omega_\nu\bigr),\\
 (d_A^{*,g_h}F)_\alpha
 &=-g_{h,\alpha\beta}|g_h|^{-1/2}
 D_\mu\bigl(|g_h|^{1/2}g_h^{\mu\nu}g_h^{\beta\kappa}F_{\nu\kappa}\bigr)
\end{aligned}
\end{equation*}
show that only the metric and its first derivatives vary. Thus only $k,Dk,D^2k$ occur; the apparent third derivatives from differentiating the pulled-back connection cancel already in its curvature. For a phase variation $\vartheta$, \eqref{eq: approximation phase curvature} gives directly
\begin{equation*}
\begin{aligned}
 D_\theta(d_A\Phi)[\vartheta]&=0,\qquad
 D_\theta F_A[\vartheta]=-d_A\xi_\eps\wedge d\bar\vartheta,\\
 D_\theta E_\Phi[\vartheta]
 &=-\ad_{\xi_\eps d\bar\vartheta}^{*,g_h}d_A\Phi,\\
 D_\theta E_A[\vartheta]
 &=-d_A^{*,g_h}(d_A\xi_\eps\wedge d\bar\vartheta)
   -\ad_{\xi_\eps d\bar\vartheta}^{*,g_h}F_A.
\end{aligned}
\end{equation*}
The leading normal contraction in the last line is
$(D_aD_a\xi_\eps)d_\Gamma\vartheta$; it vanishes by \eqref{eq: approximation normal cancellation}, with the noncompact defect controlled by \eqref{eq: noncompact phase profile defect}. Applying \eqref{eq: graph metric parameter derivatives}, \eqref{eq: approximation profile bounds}, \eqref{eq: approximation horizontal cancellation} and \eqref{eq: FAtan} to these differentiated formulas gives, for $0\leq j\leq J_\eps$,
\begin{equation*}
\begin{aligned}
 &r_j^2\bigl(\|D_hE_{\Phi,h,\theta}[k]\|_{\tau,r_j}
 +\eps\|D_hE_{A,h,\theta}[k]\|_{\tau,r_j}\bigr)
 \\
 &\quad\lesssim(1+|\log\eps|)^N\eps
 \left(1+\frac{\|h\|_{C^{2,\tau}}}{\eps+r_j}\right)\|k\|_{C^{2,\tau}},\\
 &r_j^2\bigl(\|D_\theta E_{\Phi,h,\theta}[\vartheta]\|_{\tau,r_j}
 +\eps\|D_\theta E_{A,h,\theta}[\vartheta]\|_{\tau,r_j}\bigr)
 \\
 &\quad\lesssim(1+|\log\eps|)^N
 \bigl(\eps^2+\eps\|h\|_{C^{2,\tau}}+\|h\|_{C^{2,\tau}}^2\bigr)
 \|\vartheta\|_{C^{2,\tau}}.
\end{aligned}
\end{equation*}
The same bounds hold on the fixed cutoff region; the exterior parameter derivative vanishes. The conversion from coordinate pullback to $\mathcal I_{h,\theta}$ adds only the derivative of the frame map times the known residual, and satisfies these bounds as well. Sum the longitudinal contributions, take the maximum of the transverse ones, and use \eqref{eq: approximation dyadic sums} and \eqref{eq: assumptions htheta}. Integration along the segment between $(h_1,\theta_1)$ and $(h_2,\theta_2)$ proves \eqref{eq: correction known source difference}, after increasing $N$.

For a fixed transported correction ${\bf V}$ with $\|{\bf V}\|_{X_{\eps,0,0}}\lesssim a_\eps$, expansion of \eqref{eq: nonlinear remainder definition} gives
\begin{equation}\label{eq: correction fixed nonlinear difference}
\begin{split}
 &\|\mathcal I_{h_1,\theta_1}\mathcal N_{\eps,h_1,\theta_1}(\mathcal I_{h_1,\theta_1}^{-1}{\bf V})
 -\mathcal I_{h_2,\theta_2}\mathcal N_{\eps,h_2,\theta_2}(\mathcal I_{h_2,\theta_2}^{-1}{\bf V})\|_{Y_{\eps,0,0}}\\
 &\qquad\lesssim a_\eps^2(1+|\log\eps|)^N
 \bigl(\|h_1-h_2\|_{C^{2,\tau}}+\|\theta_1-\theta_2\|_{C^{2,\tau}}\bigr).
\end{split}
\end{equation}
Using \eqref{eq: transported fibre constraint}, subtract \eqref{eq: complementary nonlinear system} on $\mathcal E_{\eps,0,0}$. Apply \Cref{prop: uniform inverse} with \eqref{eq: correction transported operator difference}, \eqref{eq: correction known source difference} and \eqref{eq: correction fixed nonlinear difference}; \Cref{lem: quadratic estimate} absorbs the correction difference with coefficient $Ca_\eps<1/2$. This proves \eqref{eq: reduced response difference}. The same argument applies to the noncompact exterior, where $\mathcal I_{h,\theta}$ is fixed.

\smallskip\noindent
We next establish the linear estimate \eqref{eq: correction linear difference bound}. Subtract the definition of ${\bf e}_{\mathrm{lin}}$ in \eqref{eq: reduced complementary source terms}, separating the variation of the correction from that of the operator. Use \eqref{eq: reduced response difference}, the coefficient derivatives \eqref{eq: graph metric parameter derivatives}, and the shell estimate \eqref{eq: reduced complementary local linear bound}. The weighted coefficient bound \eqref{eq: reduced coefficient weighted bound} gives \eqref{eq: correction linear difference bound}, exactly as for \eqref{eq: reduced correction linear bound}.

\smallskip\noindent
For the cutoff estimate \eqref{eq: correction cutoff difference bound}, the cutoff is independent of the parameters, so use \eqref{eq: reduced response difference} in \eqref{eq: reduced correction cutoff commutator} and \eqref{eq: reduced correction collar gauge bound}. The gauge estimate \eqref{eq: correction gauge difference bound} follows from \eqref{eq: reduced response difference} and \eqref{eq: reduced correction collar gauge bound} after subtracting \eqref{eq: reduced correction gauge projection}. To obtain the nonlinear estimate \eqref{eq: correction nonlinear difference bound}, combine \Cref{lem: quadratic estimate} with \eqref{eq: reduced response difference}, \eqref{eq: reduced correction source L2 rescaling} and \eqref{eq: reduced coefficient L2 bound}.

\smallskip\noindent
Lastly, we prove the Jacobian estimate \eqref{eq: correction Jacobian difference bound}. Integrating \eqref{eq: graph metric parameter derivatives} along $h_t$ gives
\begin{equation}\label{eq: correction density difference}
 \|J_{h_1}(\cdot,z)^{-1}-J_{h_2}(\cdot,z)^{-1}\|_{C^{0,\tau}(\Gamma)}
 \lesssim\bigl(r+\eps|\log\eps|^2\bigr)\|h_1-h_2\|_{C^{2,\tau}}.
\end{equation}
In \eqref{eq: reduced complementary source terms}, subtract the density factors and the coefficients $\mathfrak b$ separately. Use \eqref{eq: correction density difference}, \eqref{eq: reduced response difference} and $|{\bf E}_i(y)|\lesssim\langle y\rangle^{-4}$ in \eqref{eq: reduced coefficient L2 bound}, as for \eqref{eq: reduced correction injection bound}. This proves \eqref{eq: correction Jacobian difference bound} and completes the proof.
\end{proof}

\subsection{Conclusion of the reduction}
\begin{proof}[Proof of \Cref{prop: reduced equation}]
The identities \eqref{eq: reduced exact coefficients} and the multiplier bounds in \Cref{lem: reduced coefficient operators} give the equivalence with \eqref{eq: reduced Jacobi Laplacian system}. Equations \eqref{eq: reduced source bounds}, \eqref{eq: reduced phase source bounds} and \eqref{eq: reduced complementary bounds} give \eqref{eq: reduced size estimates}, while \eqref{eq: reduced source differences} and \eqref{eq: reduced complementary differences} give \eqref{eq: reduced Lipschitz estimates}. The scalar rate $\upsilon_\eps$ in \eqref{eq: reduced Lipschitz estimates} can be chosen so that
\begin{equation*}
\upsilon_\eps\lesssim  \left[|\log\eps|^{-2}+(1+|\log\eps|)^N\bigl(\eps^{1/4}+\eps^{1/2-\tau}+a_\eps\eps^{-1-\tau}\bigr)\right]\longrightarrow0.
\end{equation*}
Both normalised coefficients in the translation estimate therefore tend to zero. The phase forcing $d_\Gamma^*\beta^{\mathrm{tan}}$ is independent of the parameters and cancels in their differences.
\end{proof}

\section{Technical proofs}\label{sec: technical}
Here we prove \Cref{lem: quadratic estimate}, which gives the quadratic bound used in the correction contraction, and \Cref{lem: constant phase residual}, which removes the remaining constant phase coefficient and gauge residual.
\subsection{The quadratic estimate}\label{sec: quadratic estimate proof}
\begin{proof}[Proof of \Cref{lem: quadratic estimate}]
Fix $(h,\theta)$ satisfying \eqref{eq: assumptions htheta}, and let ${\bf\Psi}_i=(\Psi_i,B_i)$, $i=1,2$, and $R\leq1$ be as in the statement. Put $\lambda=\lambda_\eps$ in the compact case and $\lambda=0$ in the noncompact case, and write
\begin{equation*}
 (N_\Phi,N_A)\coloneqq\mathcal N_{\eps,h,\theta}({\bf\Psi}_1)
 -\mathcal N_{\eps,h,\theta}({\bf\Psi}_2).
\end{equation*}
We estimate separately the core, the two annular components, and the noncompact exterior, using the scales $r_j$ and regions $\Omega_{j,h}$ from \Cref{sec: norms}. On the core, we claim that
\begin{equation}\label{eq: quadratic core estimate}
 r_0^2\bigl(\|N_\Phi\|_{\tau,r_0,\Omega_{0,h}}
 +\eps\|N_A\|_{\tau,r_0,\Omega_{0,h}}\bigr)
 \lesssim R\|{\bf\Psi}_1-{\bf\Psi}_2\|_{X_{\eps,h,\theta}}.
\end{equation}
For $1\leq j\leq J_\eps+1$ in the compact case, and $1\leq j\leq J_\eps$ in the noncompact case, we claim
\begin{subequations}\label{eq: quadratic annular estimates}
\begin{equation}\label{eq: quadratic longitudinal estimate}
\begin{split}
 &r_j^2\bigl(\|N_\Phi^\parallel\|_{\tau,r_j,\Omega_{j,h}}
 +\eps\|N_A^\parallel\|_{\tau,r_j,\Omega_{j,h}}\bigr)\\
 &\qquad\lesssim R\|{\bf\Psi}_1-{\bf\Psi}_2\|_{X_{\eps,h,\theta}}
 \left(\frac{\eps^2}{r_j^2}
 +\frac{\lambda r_j^2/\eps^2}{(1+\lambda r_j^2/\eps^2)^2}\right).
\end{split}
\end{equation}
\begin{equation}\label{eq: quadratic transverse estimate}
 r_j^2\bigl(\|N_\Phi^\perp\|_{\tau,r_j,\Omega_{j,h}}
 +\eps\|N_A^\perp\|_{\tau,r_j,\Omega_{j,h}}\bigr)
 \lesssim R\|{\bf\Psi}_1-{\bf\Psi}_2\|_{X_{\eps,h,\theta}}.
\end{equation}
\end{subequations}
Finally, in the noncompact case,
\begin{equation}\label{eq: quadratic exterior estimate}
 \|(N_\Phi,N_A)\|_{Y^\nu_{\eps,h,\theta}(\Omega_{J_\eps+1,h})}
 \lesssim R\|{\bf\Psi}_1-{\bf\Psi}_2\|_{X_{\eps,h,\theta}}.
\end{equation}
The left-hand sides are precisely the core, longitudinal, transverse and exterior contributions to the source norm in \Cref{sec: norms}. Sum \eqref{eq: quadratic longitudinal estimate} over $j$, and take the maximum in \eqref{eq: quadratic transverse estimate}. The longitudinal sum is bounded because
\begin{equation*}
 \sum_j\frac{\eps^2}{r_j^2}
 +\sum_j\frac{\lambda r_j^2/\eps^2}{(1+\lambda r_j^2/\eps^2)^2}\lesssim 1.
\end{equation*}
The first sum is geometric. For the second, split at $\lambda r_j^2/\eps^2=1$; the summands grow geometrically before this scale and decay geometrically afterwards. Together with \eqref{eq: quadratic core estimate} and, in the noncompact case, \eqref{eq: quadratic exterior estimate}, this proves the lemma once the four estimates are established.

\smallskip

We first prove \eqref{eq: quadratic core estimate}. Since $r_0=\eps$, \eqref{eq: sol norm} gives, for any ${\bf\Psi}=(\Psi,B)$ of norm at most $R$,
\begin{equation*}
 \sum_{k=0}^2\eps^k\bigl(\|\nabla_{A_{h,\theta}}^k\Psi\|_{\tau,\eps,\Omega_{0,h}}
 +\eps\|\nabla_{A_{h,\theta}}^kB\|_{\tau,\eps,\Omega_{0,h}}\bigr)\lesssim  R.
\end{equation*}
The quadratic remainder consists of products of a correction with its first derivative, double commutators, the Higgs potential, and the gauge term in its definition. Since $[\Phi_{h,\theta},\xi_{\eps,h}]=0$, integration by parts gives
\begin{equation}\label{eq: quadratic projection coefficient}
 \Pi_h{\bf G}_{h,\theta}^*{\bf\Psi}
 =\frac{\eps\int_M\langle B,d_{A_{h,\theta}}(\chi\xi_{\eps,h})\rangle\,d\vol_g}
 {\int_M\chi|\xi_{\eps,h}|^2\,d\vol_g}\,\xi_{\eps,h}.
\end{equation}
The denominator is uniformly positive. The profile bounds and the fixed support of $d\chi$ bound the quotient by $C\|{\bf\Psi}\|_{X_{\eps,h,\theta}}$. Using the definition of the gauge function,
\begin{equation*}
\begin{gathered}
 \gamma_{\bf\Psi}=\eps d_{A_{h,\theta}}^*B-\eps^{-1}[\Phi_{h,\theta},\Psi]
 -\Pi_h{\bf G}_{h,\theta}^*{\bf\Psi},\\
 \|\gamma_{\bf\Psi}\|_{\tau,\eps,\Omega_{0,h}}\lesssim R/\eps.
\end{gathered}
\end{equation*}
Consequently, every quadratic term contributes at most $CR^2$ after multiplication by the source weights $\eps^2$ and $\eps^3$. Cubic terms satisfy the same bound because $R\leq1$. For differences, use bilinearity, for instance
\begin{equation*}
 [B_1,\Psi_1]-[B_2,\Psi_2]
 =[B_1-B_2,\Psi_1]+[B_2,\Psi_1-\Psi_2],
\end{equation*}
and expand cubic terms in the same way. One factor is bounded by $\|{\bf\Psi}_1-{\bf\Psi}_2\|_{X_{\eps,h,\theta}}$ and the others by $R$, proving \eqref{eq: quadratic core estimate}.

\smallskip

Next, we prove \eqref{eq: quadratic longitudinal estimate} and \eqref{eq: quadratic transverse estimate}. All norms with scale $r_j$ are taken on $\Omega_{j,h}$. The solution norm and interpolation give
\begin{subequations}\label{eq: quadratic annular field bounds}
\begin{align}
 \|\Psi\|_{\tau,r_j}+\eps\|B\|_{\tau,r_j}&\lesssim R,
 \label{eq: quadratic annular zeroth bound}\\
 \|\Psi^\perp\|_{\tau,r_j}+\eps\|B^\perp\|_{\tau,r_j}&\lesssim R\eps^2r_j^{-2},
 \label{eq: quadratic annular transverse zeroth bound}\\
 \|\nabla_{A_{h,\theta}}\Psi\|_{\tau,r_j}
 +\eps\|\nabla_{A_{h,\theta}}B\|_{\tau,r_j}&\lesssim Rr_j^{-1},
 \label{eq: quadratic annular derivative bound}\\
 \|\nabla_{A_{h,\theta}}\Psi^\perp\|_{\tau,r_j}
 +\eps\|\nabla_{A_{h,\theta}}B^\perp\|_{\tau,r_j}&\lesssim R\eps r_j^{-2}.
 \label{eq: quadratic annular transverse derivative bound}
\end{align}
\end{subequations}
To obtain \eqref{eq: quadratic annular transverse derivative bound}, interpolate \eqref{eq: quadratic annular transverse zeroth bound} with the second-derivative bound in \eqref{eq: sol norm}. The splitting is the one defined in \Cref{sec: norms}, with $\sigma_h=\Phi_{h,\theta}/|\Phi_{h,\theta}|$. Its derivatives satisfy
\begin{equation}\label{eq: quadratic splitting derivative bounds}
 \|\nabla_{A_{h,\theta}}\sigma_h\|_{\tau,r_j}\lesssim \eps r_j^{-2},\qquad
 \|\nabla_{A_{h,\theta}}^2\sigma_h\|_{\tau,r_j}\lesssim \eps r_j^{-3}.
\end{equation}
The final term in \eqref{eq: sol norm} also gives
\begin{equation}\label{eq: quadratic longitudinal Higgs bound}
 \|\Psi^\parallel\|_{\tau,r_j}\lesssim\frac{R}{1+\lambda r_j^2/\eps^2}.
\end{equation}
Together with \eqref{eq: quadratic projection coefficient}, these estimates imply
\begin{equation*}
 \|\gamma_{\bf\Psi}^\parallel\|_{\tau,r_j}\lesssim R/r_j,\qquad
 \|\gamma_{\bf\Psi}^\perp\|_{\tau,r_j}\lesssim R\eps/r_j^2.
\end{equation*}
Commutators with the projected term in \eqref{eq: quadratic projection coefficient} are transverse because $\xi_{\eps,h}$ is longitudinal.
The identities
\begin{equation*}
 [\R\sigma_h,\R\sigma_h]=0,\qquad
 [\R\sigma_h,\sigma_h^\perp]\subset\sigma_h^\perp
\end{equation*}
show that the longitudinal part of a commutator uses the transverse parts of both inputs. In a local orthonormal frame $(e_i)_{i=1}^n$, for example,
\begin{equation*}
 r_j^2\left\|\sum_{i=1}^n
 [B^\perp(e_i),(\nabla_{A_{h,\theta},e_i}\Psi)^\perp]\right\|_{\tau,r_j}
 \lesssim r_j^2\left(\frac{R\eps}{r_j^2}\right)^2
 =\frac{R^2\eps^2}{r_j^2}.
\end{equation*}
Here the derivative of $\Psi^\parallel$ contributes at most $CR\eps r_j^{-2}$ to its transverse part by \eqref{eq: quadratic splitting derivative bounds}. The other longitudinal commutators have the same bound after including the connection weight $\eps$. For the transverse part of a commutator, we use \eqref{eq: quadratic annular zeroth bound} and \eqref{eq: quadratic annular derivative bound} for one input, and \eqref{eq: quadratic annular transverse zeroth bound} or \eqref{eq: quadratic annular transverse derivative bound} for the other. After multiplication by $r_j^2$ and the connection weight $\eps$, its bound is $CR^2$.

The potential remainder is
\begin{equation*}
 \frac{\lambda}{\eps^2}\left(
 2\langle\Phi_{h,\theta},\Psi\rangle\Psi
 +|\Psi|^2\Phi_{h,\theta}+|\Psi|^2\Psi\right).
\end{equation*}
Its purely longitudinal terms satisfy \eqref{eq: quadratic longitudinal Higgs bound} twice, giving the second summand on the right of \eqref{eq: quadratic longitudinal estimate}. The longitudinal potential terms containing transverse corrections satisfy the first summand; the transverse potential terms have bound $CR^2$ after multiplication by $r_j^2$, using \eqref{eq: quadratic annular transverse zeroth bound} and the fixed upper bound for $\lambda$. Cubic terms are controlled because $R\leq1$. Expanding differences as in the core proves \eqref{eq: quadratic longitudinal estimate} and \eqref{eq: quadratic transverse estimate}.

\smallskip

Lastly, we show \eqref{eq: quadratic exterior estimate}. On $\Omega_{J_\eps+1,h}$ we have $d_{A_D}\sigma=0$, with $A_D$ as in \eqref{eq: exterior connection}. All weighted norms below are taken on this region. The exterior solution norm and interpolation give
\begin{equation*}
\begin{split}
 \|\Psi^\parallel\|_{C^{1,\tau}_\nu}
 +\eps\|B^\parallel\|_{C^{1,\tau}_\nu}&\lesssim R,\\
 \|\Psi^\perp\|_{C^{0,\tau}_{\nu+2}}
 +\eps\|B^\perp\|_{C^{0,\tau}_{\nu+2}}&\lesssim R\eps^2,\\
 \|\nabla_{A_D}\Psi^\perp\|_{C^{0,\tau}_{\nu+2}}
 +\eps\|\nabla_{A_D}B^\perp\|_{C^{0,\tau}_{\nu+2}}&\lesssim R\eps.
\end{split}
\end{equation*}
There is no potential, and every nonlinear commutator contains a transverse correction. Products of corrections therefore have decay weight at least $2\nu+2\geq\nu+2$, and their powers of $\eps$ give precisely the exterior source weights $1$ and $\eps$. Terms containing the projection coefficient in \eqref{eq: quadratic projection coefficient} contain one transverse correction and satisfy the same estimate. Expanding the differences proves \eqref{eq: quadratic exterior estimate}.
\end{proof}

\subsection{Gauge compatibility}\label{sec: technical gauge}

The two errors in \eqref{eq: remaining phase residual} are eliminated by two gauge tests. Gauge invariance makes the residual orthogonal to both
\begin{equation*}
	{\bf G}_{(\Phi_\eps,A_\eps)}\gamma_{\bf\Psi}\quad\text{and}\quad {\bf G}_{(\Phi_\eps,A_\eps)}\xi_{\eps,h}.
\end{equation*} 
The first test controls the gauge error by $|c_\eps|$; the second detects the constant phase residual and, using the small mixed pairing, forces $c_\eps=0$. Returning to the first test then removes the gauge error and recovers the original equation.

\begin{proof}[Proof of \Cref{lem: constant phase residual}]
Write $(\Phi_\eps,A_\eps)=(\Phi_{h,\theta}+\Psi,A_{h,\theta}+B)$. We first treat the noncompact case. The unprojected gauge function
\begin{equation*}
\beta\coloneqq{\bf G}_{h,\theta}^*{\bf\Psi}
\end{equation*}
satisfies, by the definition of $\gamma_{\bf\Psi}$ and \eqref{eq: quadratic projection coefficient},
\begin{equation}\label{eq: gauge phase normalization proof}
\begin{split}
\gamma_{\bf\Psi}&=\beta-\Pi_h\beta,\\
\|\Pi_h\beta\|_{L^\infty}&\lesssim\|{\bf\Psi}\|_{X_{\eps,h,\theta}},\\
\int_M\chi\langle\gamma_{\bf\Psi},\xi_{\eps,h}\rangle\,d\vol_g&=0.
\end{split}
\end{equation}
The required claims are the gauge identities
\begin{subequations}\label{eq: gauge test identities proof}
\begin{equation}\label{eq: gauge test integrability proof}
 {\bf G}_{(\Phi_\eps,A_\eps)}\gamma_{\bf\Psi},\,
 {\bf G}_{(\Phi_\eps,A_\eps)}\xi_{\eps,h}\in L^2_\eps,
\end{equation}
\begin{equation}\label{eq: gauge test vanishing proof}
\begin{aligned}
 \langle{\bf S}_\eps(\Phi_\eps,A_\eps),{\bf G}_{(\Phi_\eps,A_\eps)}\gamma_{\bf\Psi}\rangle_{L^2_\eps}&=0,\\
 \langle{\bf S}_\eps(\Phi_\eps,A_\eps),{\bf G}_{(\Phi_\eps,A_\eps)}\xi_{\eps,h}\rangle_{L^2_\eps}&=0,
\end{aligned}
\end{equation}
\end{subequations}
the estimate
\begin{equation}\label{eq: gauge phase pairing proof}
|\langle{\bf G}_{(\Phi_\eps,A_\eps)}v, {\bf G}_{(\Phi_\eps,A_\eps)}\xi_{\eps,h}\rangle_{L^2_\eps}|\lesssim\eps\bigl(\sqrt\eps+\|{\bf\Psi}\|_{X_{\eps,h,\theta}}\bigr) \|{\bf G}_{(\Phi_\eps,A_\eps)}v\|_{L^2_\eps},
\end{equation}
for gauge functions $v$ satisfying $\|{\bf G}_{(\Phi_\eps,A_\eps)}v\|_{L^2_\eps}<\infty$ and
$\int_M\chi\langle v,\xi_{\eps,h}\rangle\,d\vol_g=0$, and the residual bounds
\begin{subequations}\label{eq: gauge residual pairing proof}
\begin{equation}\label{eq: gauge residual injection norm proof}
 \|\mathfrak B_{\eps,h,\theta}^*(0,0,0,1)\|_{L^2_\eps}\lesssim\eps^{-1},
\end{equation}
\begin{equation}\label{eq: gauge residual positive pairing proof}
 \langle\mathfrak B_{\eps,h,\theta}^*(0,0,0,1),
 {\bf G}_{(\Phi_\eps,A_\eps)}\xi_{\eps,h}\rangle_{L^2_\eps}\geq c_0>0.
\end{equation}
\end{subequations}
Indeed, testing the residual equation in \Cref{lem: constant phase residual}
against the two fields in \eqref{eq: gauge test identities proof} gives
\begin{equation*}
\begin{split}
\|{\bf G}_{(\Phi_\eps,A_\eps)}\gamma_{\bf\Psi}\|_{L^2_\eps}&\lesssim|c_\eps|,\\
c_0|c_\eps|&\leq\eps^{-1}
 |\langle{\bf G}_{(\Phi_\eps,A_\eps)}\gamma_{\bf\Psi},
 {\bf G}_{(\Phi_\eps,A_\eps)}\xi_{\eps,h}\rangle_{L^2_\eps}|\\
&\lesssim\bigl(\sqrt\eps+\|{\bf\Psi}\|_{X_{\eps,h,\theta}}\bigr)|c_\eps|.
\end{split}
\end{equation*}
Choose $m_0$ and then $\eps$ small enough to absorb the last term. Then
$c_\eps=0$ and ${\bf G}_{(\Phi_\eps,A_\eps)}\gamma_{\bf\Psi}=0$, which gives
\eqref{eq: final eq}. We prove these claims below.

\smallskip

We start by proving \eqref{eq: gauge test integrability proof} and \eqref{eq: gauge test vanishing proof}.
Choose a sufficiently large smooth connected compact domain $K$ containing $\supp\chi$ and all shifted gluing regions, with $K\setminus\Gamma$ connected. Outside $K$, the background is $((1-\eps H)\sigma,A_D)$, with $A_D$ as in \eqref{eq: exterior connection} and $\Delta_gH=0$ by \eqref{eq: noncompact scalar potential}. The exterior correction in \Cref{lem: noncompact exterior correction} gives
\begin{equation*}
\begin{gathered}
 d_{A_D}\sigma=0,\qquad F_{A_D}=\sigma f_1,\qquad df_1=d_g^*f_1=0,\\
 \sum_{j=0}^2|\nabla^j f_1|\lesssim1\qquad\text{on }M\setminus K.
\end{gathered}
\end{equation*}
There $\xi_{\eps,h}=\sigma$ and $\mathfrak B_{\eps,h,\theta}^*(0,0,0,1)=0$. Write ${\bf\Psi}^\perp=(\Psi^\perp,B^\perp)$ and $|{\bf\Psi}^\perp|_\eps^2=|\Psi^\perp|^2+\eps^2|B^\perp|^2$, using the induced norms for derivatives and coefficients below. The term $\eps^{-1}{\bf G}_{(\Phi_\eps,A_\eps)}{\bf G}_{h,\theta}^*{\bf\Psi}$ cancels the mixed second derivatives, and every perpendicular nonlinear term contains ${\bf\Psi}^\perp$ or $\nabla_{A_D}{\bf\Psi}^\perp$. The projection contributes the term
\begin{equation*}
 -\eps^{-1}([\Psi,\Pi_h\beta],[B,\Pi_h\beta]).
\end{equation*}
In a local orthonormal frame $e_1,\ldots,e_n$, let $P^1,\ldots,P^n,Q$ denote the resulting first- and zeroth-order coefficient endomorphisms. Then
\begin{equation}\label{eq: gauge exterior equation proof}
\begin{split}
\nabla_{A_D}^*\nabla_{A_D}{\bf\Psi}^\perp+4\eps^{-2}(1-\eps H)^2{\bf\Psi}^\perp&=\sum_{j=1}^n P^j\nabla_{A_D,e_j}{\bf\Psi}^\perp+Q{\bf\Psi}^\perp,\\
|P|&\lesssim1+\|{\bf\Psi}\|_{X_{\eps,h,\theta}}/\eps,\\
|Q|&\lesssim1+\eps^{-1}+\|{\bf\Psi}\|_{X_{\eps,h,\theta}}\eps^{-2}.
\end{split}
\end{equation}
Here $|P|^2=\sum_j|P^j|^2$, with the operator norms induced by $|\cdot|_\eps$.
Taking the inner product of \eqref{eq: gauge exterior equation proof} with ${\bf\Psi}^\perp$, use
\begin{equation*}
 2|P||{\bf\Psi}^\perp|_\eps|\nabla_{A_D}{\bf\Psi}^\perp|_\eps\leq |\nabla_{A_D}{\bf\Psi}^\perp|_\eps^2+|P|^2|{\bf\Psi}^\perp|_\eps^2.
\end{equation*}
We absorb $|P|^2+|Q|$ into the positive mass term by choosing $m_0$ and then $\eps$ small. Thus
\begin{equation*}
(\Delta_0+c\eps^{-2})|{\bf\Psi}^\perp|_\eps^2\leq0.
\end{equation*}
Choose a smooth nonnegative function $\rho$ that vanishes near $\partial K$, is comparable to $\dist(x,K)$ outside a larger compact set, and has bounded first and second derivatives. For sufficiently small fixed $\kappa>0$, the function
$C\eps^4\|{\bf\Psi}\|_{X_{\eps,h,\theta}}^2e^{-2\kappa\rho/\eps}$
is a supersolution of $\Delta_0+c\eps^{-2}$ and dominates $|{\bf\Psi}^\perp|_\eps^2$ on the boundary, by the exterior solution norm. Comparison, followed by interior estimates at scale $\eps$, yields, after enlarging $K$ and decreasing $\kappa$,
\begin{equation}\label{eq: gauge exponential decay proof}
|{\bf\Psi}^\perp|_\eps+\eps|\nabla_{A_D}{\bf\Psi}^\perp|_\eps\lesssim\eps^2\|{\bf\Psi}\|_{X_{\eps,h,\theta}}\,e^{-\kappa\dist(x,K)/\eps}.
\end{equation}
The same estimates control the second derivatives. Bounded geometry gives at most exponential volume growth, so ${\bf G}_{(\Phi_\eps,A_\eps)}\xi_{\eps,h}\in L^2_\eps$ and ${\bf G}_{(\Phi_\eps,A_\eps)}^*{\bf G}_{(\Phi_\eps,A_\eps)}\xi_{\eps,h}\in L^1$.

The exterior decay norms imply that $\beta$ tends to zero. The local gauge identity ${\bf G}_{(\Phi_\eps,A_\eps)}^*{\bf S}_\eps(\Phi_\eps,A_\eps)=0$ gives
\begin{equation*}
\begin{split}
{\bf G}_{(\Phi_\eps,A_\eps)}^*{\bf G}_{(\Phi_\eps,A_\eps)}\beta&=\eps c_\eps{\bf G}_{(\Phi_\eps,A_\eps)}^*\mathfrak B_{\eps,h,\theta}^*(0,0,0,1)+{\bf G}_{(\Phi_\eps,A_\eps)}^*{\bf G}_{(\Phi_\eps,A_\eps)}(\Pi_h\beta)\in L^1.
\end{split}
\end{equation*}
For $t>0$, the truncation $(1-t/|\beta|)_+\beta$ is compactly supported because $\beta$ tends to zero. Testing the equation against it gives
\begin{equation*}
\begin{split}
&\int_M\left(1-\frac{t}{|\beta|}\right)_+\left(\eps|d_{A_\eps}\beta|^2+\eps^{-1}|[\Phi_\eps,\beta]|^2\right)\,d\vol_g\leq\|\beta\|_{L^\infty}\|{\bf G}_{(\Phi_\eps,A_\eps)}^*{\bf G}_{(\Phi_\eps,A_\eps)}\beta\|_{L^1}.
\end{split}
\end{equation*}
Monotone convergence proves finite gauge energy, and dominated convergence gives convergence of the truncations in that energy. Thus ${\bf G}_{(\Phi_\eps,A_\eps)}\gamma_{\bf\Psi}\in L^2_\eps$, proving \eqref{eq: gauge test integrability proof}. Gauge invariance extends to the test ${\bf G}_{(\Phi_\eps,A_\eps)}\beta$. For ${\bf G}_{(\Phi_\eps,A_\eps)}\xi_{\eps,h}$, the first-variation boundary current
\begin{equation*}
\eps^{-1}\langle d_{A_\eps}\Phi_\eps,[\Phi_\eps,\xi_{\eps,h}]\rangle +\eps\langle F_{A_\eps},d_{A_\eps}\xi_{\eps,h}\rangle
\end{equation*}
has vanishing cutoff integrals by \eqref{eq: gauge exponential decay proof} and the volume-growth bound. Subtracting the gauge identity for $\Pi_h\beta$, which is a constant multiple of $\xi_{\eps,h}$, from the one for $\beta$ proves \eqref{eq: gauge test vanishing proof}.

\smallskip

Next, we prove \eqref{eq: gauge phase pairing proof}.
We use the following two bounds; the second requires $\int_M\chi\langle v,\xi_{\eps,h}\rangle\,d\vol_g=0$:
\begin{subequations}\label{eq: gauge weighted inequalities proof}
\begin{align}
\eps^{-1}\int_K(1+r_h/\eps)^{-4}|v|^2\,d\vol_g&\lesssim\|{\bf G}_{h,\theta}v\|_{L^2_\eps}^2,\label{eq: gauge core weighted bound proof}\\
\eps\|v\|_{L^2(K)}^2&\lesssim\|{\bf G}_{h,\theta}v\|_{L^2_\eps}^2.\label{eq: gauge normalized L2 bound proof}
\end{align}
\end{subequations}
To prove \eqref{eq: gauge core weighted bound proof}, pull back $v$ by $\mathscr F_{\eps,h}$ and use the transported gauge from \Cref{sec: projections linear theory}. The model is irreducible in the following sense: the only gauge function $w$ on $B_2\subset\R^3$ satisfying
\begin{equation*}
 d_{A_\lambda}w=0,\qquad [\Phi_\lambda,w]=0
\end{equation*}
is $w=0$. Indeed, \eqref{eq:monopole-ansatz} and \eqref{eq: BPS} give $D_a\Phi_0(0)=\frac23T_a$, so differentiating $[\Phi_0,w]=0$ yields $[T_a,w(0)]=0$ for $a=1,2,3$. Hence $w(0)=0$, and $d_{A_0}w=0$ implies $w=0$. Compactness and the trace inequality therefore give, for every $w\in H^1(B_2;\su(2))$,
\begin{equation*}
 \|w\|_{L^2(B_2)}^2+\|w\|_{L^2(\partial B_2)}^2
 \lesssim\int_{B_2}\bigl(|d_{A_\lambda}w|^2+|[\Phi_\lambda,w]|^2\bigr)\,dy.
\end{equation*}
Local convergence of the profiles to \eqref{eq: BPS} makes this estimate uniform for $0\leq\lambda\leq\lambda_*$, after decreasing $\lambda_*$ if necessary. The rescaled approximation is a small perturbation of this model, including the term \eqref{eq: noncompact mass correction}, so the same bound applies. Scale back and apply the one-dimensional Hardy inequality along normal rays. For \eqref{eq: gauge normalized L2 bound proof}, use the stated normalisation. If the bound fails uniformly for small $\eps$, choose $\eps_k\downarrow0$, admissible $(h_k,\theta_k)$ and gauge functions $v_k$ with $\int_M\chi\langle v_k,\xi_{\eps_k,h_k}\rangle\,d\vol_g=0$ such that
\begin{equation*}
 \|v_k\|_{L^2(K)}=1,\qquad
 \eps_k^{-1}\|{\bf G}_{h_k,\theta_k}v_k\|_{L^2_{\eps_k}}^2\longrightarrow0.
\end{equation*}
Consequently,
\begin{equation}\label{eq: gauge compactness limits proof}
 \|d_{A_{h_k,\theta_k}}v_k\|_{L^2(K)}\longrightarrow0,\qquad
 \eps_k^{-1}\|[\Phi_{h_k,\theta_k},v_k]\|_{L^2(K)}\longrightarrow0.
\end{equation}
The inequality $|d|v_k||\leq|d_{A_{h_k,\theta_k}}v_k|$ and Rellich compactness give strong $L^2(K)$ convergence of $|v_k|$ along a subsequence, excluding concentration at $\Gamma$. Off $\Gamma$, \eqref{eq: gauge compactness limits proof} forces $v_k$ to converge to $c\sigma$ in the exterior gauge, with one constant $c$ since $K\setminus\Gamma$ is connected. The normalisation gives $c\int_M\chi\,d\vol_g=0$, contradicting $\|v_k\|_{L^2(K)}=1$ and proving \eqref{eq: gauge normalized L2 bound proof}. On the exterior, propagation through overlapping fixed-radius balls gives, for small $\eps$,
\begin{equation*}
\int_{M\setminus K}e^{-\kappa\dist(x,K)/\eps}|v|^2\,d\vol_g \lesssim\|v\|_{L^2(K)}^2+\|d|v|\|_{L^2(M\setminus K)}^2.
\end{equation*}
Indeed, both the propagation constants and the number of balls grow at most exponentially with distance. Since
\begin{equation*}
({\bf G}_{(\Phi_\eps,A_\eps)}-{\bf G}_{h,\theta})v=([\Psi,v],[B,v]),
\end{equation*}
the solution bounds and \eqref{eq: gauge exponential decay proof} give, for $v$ with $\int_M\chi\langle v,\xi_{\eps,h}\rangle\,d\vol_g=0$,
\begin{align}
\|({\bf G}_{(\Phi_\eps,A_\eps)}-{\bf G}_{h,\theta})v\|_{L^2_\eps}&\lesssim\|{\bf\Psi}\|_{X_{\eps,h,\theta}}\|{\bf G}_{h,\theta}v\|_{L^2_\eps},\notag\\
\|({\bf G}_{(\Phi_\eps,A_\eps)}-{\bf G}_{h,\theta})\xi_{\eps,h}\|_{L^2_\eps}&\lesssim\eps\|{\bf\Psi}\|_{X_{\eps,h,\theta}}.\label{eq: gauge phase perturbation bound proof}
\end{align}

Normal harmonicity of the phase profile and minimality bound the terms in $\Delta_{A_{h,\theta}}\xi_{\eps,h}$ that do not contain the mass correction by
\begin{equation*}
 C\left(\frac{\eps}{\eps+r_h}+\frac{\eps\|h\|_{C^{2,\tau}}}{(\eps+r_h)^2}+\frac{\eps\|h\|_{C^{2,\tau}}^2}{(\eps+r_h)^3}\right).
\end{equation*}
This follows from the Fermi defects $O(r^2+r\|h\|_{C^{2,\tau}}+\|h\|_{C^{2,\tau}}^2)$ and $O(r+\|h\|_{C^{2,\tau}})$; the fixed collar contributes $O(\eps)$. The connection component of \eqref{eq: noncompact mass correction} adds a term bounded by $C\eps^{-1}(1+r_h/\eps)^{-4}$. For the actual approximation, write $F_1$ for the terms in ${\bf G}_{h,\theta}^*{\bf G}_{h,\theta}\xi_{\eps,h}$ containing the connection mass correction and $F_0$ for the remaining terms. Then
\begin{equation*}
\begin{split}
{\bf G}_{h,\theta}^*{\bf G}_{h,\theta}\xi_{\eps,h}&=F_0+F_1,\\
\|F_0\|_{L^2}&\lesssim\eps^2,\qquad |F_1|\lesssim(1+r_h/\eps)^{-4},\\
\|{\bf G}_{h,\theta}\xi_{\eps,h}\|_{L^2_\eps}&\lesssim\eps,
\end{split}
\end{equation*}
Both terms are supported in $K$, and $F_1=0$ in the compact case. Estimate \eqref{eq: gauge core weighted bound proof} and the normal volume element $r_h^2\,dr_h$ give
\begin{equation*}
\begin{split}
|\langle F_1,v\rangle_{L^2}|&\lesssim\left(\int_K(1+r_h/\eps)^{-4}\,d\vol_g\right)^{1/2}\left(\int_K(1+r_h/\eps)^{-4}|v|^2\,d\vol_g\right)^{1/2}\\
&\lesssim\eps^{3/2}\eps^{1/2}\|{\bf G}_{h,\theta}v\|_{L^2_\eps}=\eps^2\|{\bf G}_{h,\theta}v\|_{L^2_\eps}.
\end{split}
\end{equation*}
Integration by parts, followed by \eqref{eq: gauge weighted inequalities proof} and \eqref{eq: gauge phase perturbation bound proof}, therefore gives, for $v$ with $\int_M\chi\langle v,\xi_{\eps,h}\rangle\,d\vol_g=0$,
\begin{equation*}
\begin{split}
&|\langle{\bf G}_{(\Phi_\eps,A_\eps)}v,{\bf G}_{(\Phi_\eps,A_\eps)}\xi_{\eps,h}\rangle_{L^2_\eps}|\\
&\qquad\lesssim\eps^2\|v\|_{L^2(K)}+(\eps^2+\eps\|{\bf\Psi}\|_{X_{\eps,h,\theta}})\|{\bf G}_{h,\theta}v\|_{L^2_\eps}\\
&\qquad\lesssim\eps\bigl(\sqrt\eps+\|{\bf\Psi}\|_{X_{\eps,h,\theta}}\bigr)\|{\bf G}_{(\Phi_\eps,A_\eps)}v\|_{L^2_\eps}.
\end{split}
\end{equation*}
This proves \eqref{eq: gauge phase pairing proof}.

\smallskip

Lastly, we show  \eqref{eq: gauge residual injection norm proof} and \eqref{eq: gauge residual positive pairing proof}.
Put $\lambda=\lambda_\eps$ in the compact case and $\lambda=0$ in the noncompact case. Pulling back the pairing in \eqref{eq: dual residual injection} by $\mathscr F_{\eps,h}$, with the transported modes of \Cref{sec: projections linear theory}, cancels the volume Jacobian and gives
\begin{equation*}
\begin{split}
&\|\mathfrak B_{\eps,h,\theta}^*(0,0,0,1)\|_{L^2_\eps}\lesssim\eps^{-1},\\
&\langle\mathfrak B_{\eps,h,\theta}^*(0,0,0,1),{\bf G}_{(\Phi_\eps,A_\eps)}\xi_{\eps,h}\rangle_{L^2_\eps}\\
&\qquad=\operatorname{Vol}(\Gamma)\int_{\R^3}\frac{|{\bf W}_\lambda(y)|^2}{1+|y|^2}\,dy+O(\|{\bf\Psi}\|_{X_{\eps,h,\theta}})+o(1)\\
&\qquad\geq c_0>0.
\end{split}
\end{equation*}
The $O(\|{\bf\Psi}\|_{X_{\eps,h,\theta}})$ error follows by pairing \eqref{eq: gauge phase perturbation bound proof} with the residual injection, using \eqref{eq: gauge residual injection norm proof}. The integral has a uniform positive lower bound for $0\leq\lambda\leq\lambda_*$, by continuity of the phase mode and its nonzero BPS limit, after fixing $\lambda_*$ sufficiently small. Decreasing $m_0$ and then $\eps$ therefore proves \eqref{eq: gauge residual positive pairing proof}.

For compact $M$, take $\chi\equiv1$ and $K=M$. The gauge tests are immediate, and the same local estimates and absorption apply. The phase profile is $q_{\lambda_\eps}$, whose pairing has the uniform positive lower bound just established for $0\leq\lambda\leq\lambda_*$; the gauge-invariant Higgs potential does not alter the identities. This completes the proof.

\end{proof}

\appendix
\section{Several connected components}\label{app: disconnected components}
Let $\Gamma=\bigsqcup_{a=1}^m\Gamma_a$, with $m<\infty$. We outline the changes needed for both admissible ambient classes. Choose disjoint fixed tubes and carry out the approximation and normal linear theory in each, using the same global exterior correction. The additional step is to remove one constant phase residual on each component.

\smallskip\noindent
\emph{Componentwise reduction.}
Retain the constant phases as parameters:
\begin{equation*}
 \theta|_{\Gamma_a}=\vartheta|_{\Gamma_a}+\alpha_a,
 \qquad \int_{\Gamma_a}\vartheta\,d\vol_{\Gamma_a}=0,
 \qquad \alpha=(\alpha_1,\ldots,\alpha_m).
\end{equation*}
Choose cutoffs $\eta_a$ equal to one on the corresponding tubes, with disjoint supports. Place $d\eta_a$ in surrounding collars where $\xi_{\eps,h}=\sigma$, $d_{A_{h,\theta}}\sigma=0$ and $[\Phi_{h,\theta},\sigma]=0$. In \eqref{eq: shifted phase approximation} and \eqref{eq: noncompact shifted approximation}, extend the phase by $\bar\vartheta+\sum_a\alpha_a\eta_a$, where $\bar\vartheta$ is the original extension of its mean-zero part. Replace \eqref{eq: mean zero projection} by
\begin{equation*}
 \left.\Pi_0(g,\varphi)\right|_{\Gamma_a}
 =\left(g|_{\Gamma_a},\varphi|_{\Gamma_a}-\fint_{\Gamma_a}\varphi\,d\vol_{\Gamma_a}\right).
\end{equation*}
Keep the single gauge projection $\Pi_h$ and normalisation \eqref{eq: gauge phase normalization proof}. The complement of finitely many codimension-three submanifolds is connected, so the gauge compactness argument still removes only one global constant.

For each fixed $\alpha$, solve the correction equation and the componentwise reduced equation. The Jacobi and mean-zero Laplace inverses act separately on the components; the other arguments use finitely many disjoint tubes. This gives a corrected pair ${\bf U}_\alpha=(\Phi_{\eps,\alpha},A_{\eps,\alpha})$ with residual
\begin{equation}\label{eq: components residual}
\begin{split}
 {\bf S}_\eps({\bf U}_\alpha)
 &=\sum_{a=1}^m c_a(\alpha){\bf R}_a
 -\eps^{-1}{\bf G}_{{\bf U}_\alpha}\gamma_\alpha,\\
 {\bf R}_a&\coloneqq\mathfrak B_{\eps,h,\theta}^*(0,0,0,\mathbf1_{\Gamma_a}),
 \qquad \gamma_\alpha\coloneqq(I-\Pi_h){\bf G}_{h,\theta}^*{\bf\Psi}(\alpha).
\end{split}
\end{equation}
All parameters here depend on $\alpha$; ${\bf G}_{{\bf U}_\alpha}$ is the gauge operator at the corrected pair.

\smallskip\noindent
\emph{Choosing the constant phases.}
Regard $\alpha$ as a point of $(\R/2\pi\Z)^m$. Increasing $\alpha_a$ by $2\pi$ changes the family by the gauge $\exp(2\pi\xi_{\eps,h})$ near $\Gamma_a$, extended by the identity beyond its collar. This extension is smooth because $\xi_{\eps,h}=\sigma$ and $\exp(2\pi\sigma)=1$ there. The subtraction in the phase ansatz cancels the derivative of the cutoff. Gauge covariance and uniqueness of the contractions therefore make $E_\eps({\bf U}_\alpha)$ periodic. Differentiating the contractions gives a $C^1$ family.

Minimise this energy on the phase torus. To interpret its derivatives, compare $\partial_{\alpha_a}{\bf U}_\alpha$ with $\eta_a{\bf G}_{{\bf U}_\alpha}\xi_{\eps,h}$. After undoing the phase gauge, the remaining parameter dependence lies on the collars and acts through exponentially decaying perpendicular components. The comparison follows by differentiating the two fixed-point equations and using \eqref{eq: gauge exponential decay proof}.

Pair these variations with \eqref{eq: components residual}. The calculation of \eqref{eq: gauge residual positive pairing proof} gives a matrix with positive diagonal leading terms, one from each component. The gauge contribution is handled by the argument for \eqref{eq: gauge phase pairing proof}, retaining the global normalisation. The equations $\partial_{\alpha_a}E_\eps({\bf U}_\alpha)=0$ then remove all the constants. Testing the residual against ${\bf G}_{{\bf U}_\alpha}\gamma_\alpha$ finishes the gauge argument and gives
\begin{equation*}
 c_1=\cdots=c_m=0,\qquad {\bf S}_\eps({\bf U}_\alpha)=0.
\end{equation*}

\smallskip\noindent
\emph{The noncompact case.}
Use the rescaled energy $E_\eps(\Phi,A)=\eps E(\eps^{-1}\Phi,A)$ and keep all phase cutoffs in a fixed compact set. To justify minimisation, apply the weighted scalar and one-form inverses to the exterior fields and their parameter derivatives. Hodge integration by parts on annuli, together with \eqref{eq: noncompact weighted volume}, gives finite energy and allows differentiation under exhaustion. The perpendicular components are treated by \eqref{eq: gauge exponential decay proof}.

The remaining exterior gauge pairing must also be controlled proportionally to $(c_1,\ldots,c_m)$. For this, apply gauge invariance to \eqref{eq: components residual}, solve the resulting gauge equation by scalar Green comparison, and use the global normalisation to determine its constant part. The same weighted-volume assumption makes the annular pairings summable. This supplies the noncompact version of the energy argument, with no additional assumption on the ends. The admissible parameter range and uniform rescaled energy bounds remain unchanged.

\medskip

\noindent{\bf Acknowledgements.}   The author was supported by the European Research Council
under Grant Agreement No.~ 101165368 (MAGNETIC).

\medskip

\noindent{\bf AI usage statement.} ChatGPT-6 Astra was used throughout the preparation of this paper to help organise the arguments and simplify several proofs. Its assistance was particularly valuable in two specific aspects of the analysis:
\begin{itemize}
\item It suggested the dyadic structure of the norms \eqref{eq: source norm} and \eqref{eq: sol norm} in \Cref{sec: norms}, in particular the summation of the longitudinal source contributions over scales in \eqref{eq: source norm}. This choice provides the control needed for Poisson equations with borderline source behaviour. The author claims no credit for this idea.
\item It proposed the strategy adopted in \Cref{sec: reduced equation proof}, combining a spectral decomposition along $\Gamma$ with the adjoint tests in \eqref{eq: reduced adjoint symbols}, constructed from the resolvents
$$
({\bf L}_\lambda+t)^{-1},\qquad t=\eps^2\mu>0,
$$
where $\mu$ is a tangential eigenvalue. These tests recover the coefficients of the reduced equation mode by mode through \eqref{eq: reduced spectral recovery}, substantially simplifying the analysis. The proofs of \Cref{lem: reduced adjoint tests} and \Cref{lem: reduced coefficient operators} were mainly produced by ChatGPT, with minor modifications by the author. Although a more conventional approach based on computations in local coordinates is available, the author adopted the suggested strategy for its clarity and economy.
\end{itemize}
The author takes full responsibility for the mathematical content and final presentation of the paper.

\bibliography{Bib}
\bibliographystyle{siam}

\end{document}